\documentclass[11pt,a4paper,reqno]{amsart}
\usepackage[T1]{fontenc}
\usepackage[utf8]{inputenc}

\usepackage{stmaryrd}

\title[Relative (T), simplices of invariant measures, and e.c.\ models]{Relative Property (T), simplices of invariant measures, and existentially closed models}

\author{Tomás Ibarlucía}
\address{Universit\'e Paris Cit\'e \\
  CNRS \\
  IMJ-PRG \\
  F-75006 Paris \\
  France.}
\urladdr{\url{https://webusers.imj-prg.fr/~tomas.ibarlucia}}

\usepackage{eucal} 

\usepackage[
  left=33mm,
  right=33mm,
  top=31mm,
  bottom=30mm
]{geometry}

\usepackage{hyperref}

\usepackage{my-macros}

\usepackage{enumitem}
\setlist[enumerate,1]{label=(\roman*), font=\normalfont}
\setlist[enumerate,2]{label=(\arabic*), font=\normalfont}

\usepackage{tikz-cd}

\ifx\thmnum\undefined
  \newtheorem{thmnum}{}[section]
\fi

\usepackage{aliascnt}

\newcounter{quotethmcnt}

\newcommand{\mynewthm}[3][]{
  \newaliascnt{#2}{thmnum}
  \newtheorem{#2}[#2]{#3}
  \aliascntresetthe{#2}
  \newtheorem*{#2*}{#3}
  \expandafter\newcommand\csname #2autorefname\endcsname{#3}
  \expandafter\renewcommand\csname the#2\endcsname{\thethmnum}
}

\def\equationautorefname~#1\null{(#1)}
\def\itemautorefname~#1\null{#1}

\mynewthm{thm}{Theorem}

\mynewthm{cor}{Corollary}
\mynewthm{lem}{Lemma}
\mynewthm{prop}{Proposition}
\mynewthm{fact}{Fact}
\mynewthm{conjecture}{Conjecture}

\theoremstyle{definition}
\mynewthm{dfn}{Definition}
\mynewthm{ntn}{Notation}

\theoremstyle{remark}
\mynewthm{question}{Question}
\mynewthm{rmk}{Remark}
\mynewthm{example}{Example}
\mynewthm{claim}{Claim}

\numberwithin{equation}{section}

\newcommand{\qf}{\mathrm{qf}}

\newcommand{\aff}{\mathrm{aff}}
\newcommand{\ext}{\mathrm{ext}}
\newcommand{\full}{\mathrm{full}}
\newcommand{\ec}{\mathrm{ec}}
\newcommand{\aec}{\mathrm{aec}}
\newcommand{\crc}{\mathrm{cr}}
\newcommand{\lsc}{\mathrm{lsc}}
\newcommand{\fM}{\mathfrak{M}}
\newcommand{\fs}{\mathfrak{s}}
\newcommand{\cont}{\mathrm{cont}}
\newcommand{\inv}{\mathrm{inv}}

\newcommand{\Spec}{\mathrm{Spec}}
\newcommand{\fI}{\mathfrak{I}}

\newcommand{\PMP}{\mathrm{PMP}}
\newcommand{\PrA}{\mathrm{PrA}}

\DeclareFontFamily{U}{mathx}{}
\DeclareFontShape{U}{mathx}{m}{n}{<-> mathx10}{}
\DeclareSymbolFont{mathx}{U}{mathx}{m}{n}
\DeclareMathAccent{\widehat}{0}{mathx}{"70}
\DeclareMathAccent{\widecheck}{0}{mathx}{"71}

\begin{document}

\begin{abstract}
We prove a Bauer--Poulsen dichotomy theorem for simplices of invariant measures associated with permutation groups.

More precisely, let $G$ be a transitive group of permutations of a countable set $\cS$, and let $H$ be the stabilizer of a point of $\cS$.
Let $\cl{G}$ and $\cl{H}$ denote their closures in the topology of pointwise convergence.
Assume the Polish group $\cl{H}$ has relative Property (T) in $\cl{G}$.
Then the simplex $\cM_\inv(2^\cS)$ of invariant probability measures for the induced action $G\actson 2^\cS$ is Bauer if and only if $\cl{G}$ has Property (T), and is Poulsen otherwise.
This addresses some examples and questions considered by Austin.

We deduce this result from a more general model-theoretic statement of independent interest.
To this end, we initiate the study of existentially closed models in affine logic.
\end{abstract}

\maketitle

\vspace{-1em}

\setcounter{tocdepth}{1}
\tableofcontents

\vspace{-2em}

\section{Introduction}

In \cite{Glasner1997}, Glasner and Weiss famously proved the following dichotomy theorem on the geometry of simplices of invariant measures, which is also a characterization of Kazhdan groups:

\begin{thm*}[{\cite[Thm.~2]{Glasner1997}}]
Let $G$ be a countable group. Then $\cM_\inv(2^G)$, the collection of invariant probability measures of the topological Bernoulli shift, is either a Bauer simplex or the Poulsen simplex.

Moreover, $\cM_\inv(2^G)$ is a Bauer simplex if and only if $G$ has Property (T).
\end{thm*}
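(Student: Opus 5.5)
We will prove the two halves of the statement separately, identifying invariant probability measures on $2^G$ with (laws of) stationary random subsets of $G$, i.e.\ with measure-preserving actions $G\actson(\Omega,\mu)$ together with a measurable set $A\subseteq\Omega$. Under this identification, the ergodic measures are exactly the extreme points of the simplex $\cM_\inv(2^G)$. Recall that a metrizable simplex is \emph{Bauer} if its extreme boundary is closed, and \emph{Poulsen} if its extreme boundary is dense. Moreover, by Lindenstrauss--Olsen--Sternfeld, any two Poulsen simplices are affinely homeomorphic, so the dichotomy reduces to showing:
\begin{enumerate}
\item if $G$ has (T), the set of ergodic measures is closed;
\item if $G$ fails (T), the set of ergodic measures is dense.
\end{enumerate}

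\textbf{Property (T) implies Bauer.} Let $\mu_n$ be ergodic measures converging weak$^*$ to $\mu$, and suppose $\mu$ is not ergodic. Then $L^2_0(\mu)$ contains a nonzero invariant vector, which we may approximate by a cylinder function $f$ depending on finitely many coordinates. For each $n$, the same cylinder function $f$ viewed in $L^2_0(\mu_n)$ is almost invariant under the finite Kazhdan set $Q$, with an error tending to $0$ as $n\to\infty$; here we use that the integrals of products of cylinder functions converge under weak$^*$ convergence. Its norm stays bounded away from $0$, again because these integrals converge. But $\mu_n$ is ergodic, so $L^2_0(\mu_n)$ has no invariant vectors, and the Kazhdan inequality gives
\[
\max_{g\in Q}\|gf-f\|\ge\varepsilon\|f\|
\]
in $L^2_0(\mu_n)$. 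This contradicts the almost-invariance. A cleaner route is to use the Glasner--Weiss characterization: $G$ has (T) iff ergodic measures are closed in every compact $G$-space.

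\textbf{Failure of (T) implies Poulsen.} This is the main obstacle. It suffices to show that every finite convex combination $\sum t_i\nu_i$ of ergodic measures is a limit of ergodic measures, since such combinations are dense. The plan has three steps.
\begin{enumerate}
\item By Connes--Weiss, a group without (T) admits an ergodic measure-preserving action $G\actson(Y,\eta)$ that is not strongly ergodic. Hence there are asymptotically invariant sets $B_n$, and by a standard refinement we can arrange a partition $Y=B^1_n\sqcup\dots\sqcup B^k_n$ into asymptotically invariant pieces with $\eta(B^i_n)\to t_i$.
\item Form a skew-product: on $Y\times\prod_i\Omega_i$, where $(\Omega_i,\nu_i)$ realizes $\nu_i$, define a random subset of $G$ that, at points $y\in B^i_n$, looks like a sample from $\nu_i$. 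Concretely, set $x\mapsto\{g: g^{-1}\cdot y\in B^{i}_n,\ \text{the } i\text{-th sample contains } g\}$, suitably defined.
\item Take a product with a mixing (e.g.\ Bernoulli) component, so that the combined action is ergodic. The ergodicity of this product should follow from the ergodicity of $Y$ together with the ergodicity of the $\nu_i$.
\end{enumerate}
Because the $B^i_n$ are asymptotically invariant, every finite window of coordinates sees, with probability tending to $1$, only a single index $i$. Therefore the finite-dimensional marginals converge to those of $\sum t_i\nu_i$.

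The delicate point is the ergodicity of the resulting measures in step (iii). Ergodicity of products of ergodic actions can fail in general, so I would use weak mixing, or alternatively arrange the construction as a factor of the ergodic action. If this is awkward, one can instead invoke the affine/e.c.\ model-theoretic criterion developed later in the paper with $H=\{e\}$, where relative (T) of the trivial subgroup is automatic.
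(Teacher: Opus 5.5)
Your first half is correct and is essentially the paper's \autoref{prop:prop-T-implies-Bauer}. Two small corrections: recentre $f$ by $\mu_n(f)$, and note that the defect $\max_{g\in Q}\|gf-f\|_{L^2(\mu_n)}$ tends to $\max_{g\in Q}\|gf-f\|_{L^2(\mu)}$, which is small relative to $\|f\|$ but not $0$. Steps (i)--(ii) of the second half are also fine, and your computation of the finite-dimensional marginals is right \emph{for the product measure}.

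The genuine gap is step (iii). Your approximant is the image of $\eta\otimes\nu_1\otimes\cdots\otimes\nu_k$ under your equivariant map $\Psi_n$, where $\Psi_n(y,\omega)(h)=\omega_i(h)$ when $h^{-1}y\in B^i_n$. This product need not be ergodic, and already $\nu_1\otimes\nu_2$ can fail. For $G=\bZ$, let $\nu_1,\nu_2$ be uniform on the shift-orbits of the periodic points $\cdots0101\cdots$ and $\cdots0011\cdots$; then $\nu_1\otimes\nu_2$ lives on $8$ points forming two orbits of size $4$.

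So the claim that ergodicity ``should follow from the ergodicity of $Y$ together with the ergodicity of the $\nu_i$'' is false, and your remedies do not repair it:
\begin{itemize}
\item Adjoining a Bernoulli factor cannot help. The system $Y\times\prod_i\Omega_i$ is a factor of the enlarged product, and factors of ergodic systems are ergodic.
\item Making $Y$ weakly mixing does not help either, since $\bigotimes_i\nu_i$ may itself be non-ergodic. (Obtaining such a $Y$ would also need extra input such as Bekka--Valette.)
\item ``Arranging the construction as a factor of an ergodic action'' is exactly the missing content. As soon as you replace the product measure by another measure with the same marginals, you lose the independence between $y$ and the $\omega_i$ that your marginal computation uses.
\end{itemize}

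Here is one way to close the gap.
\begin{enumerate}
\item Pass to a compact model of $(Y,\eta)$. Let $\rho$ be an ergodic joining of $\eta,\nu_1,\dots,\nu_k$. It exists because the joinings form a non-empty closed face of the simplex of invariant measures on the product: they are the preimage of the extreme point $(\eta,\nu_1,\dots,\nu_k)$ under the affine marginal map. Hence the extreme points of this face are ergodic.
\item Then $(\Psi_n)_*\rho$ is ergodic, being a factor of $\rho$.
\item Replace independence by the following fact. In an ergodic system $(Z,\rho)$, every asymptotically invariant sequence $D_n$ satisfies $\rho(A\cap D_n)-\rho(A)\rho(D_n)\to0$ for each fixed $A$. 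Indeed, every weak $L^2$-limit point of $1_{D_n}-\rho(D_n)$ is invariant with integral $0$, hence $0$.
\item Apply this with $D_n=\bigcap_{h\in F}hB^i_n\times\prod_j\Omega_j$ and $A=\{\omega_i|_F\in C\}$. This gives $(\Psi_n)_*\rho\bigl(\{x : x|_F\in C\}\bigr)\to\sum_i t_i\nu_i(C)$.
\end{enumerate}
The same fact, combined with a diagonal argument, also justifies your ``standard refinement'' in step (i). With this repair, your route is a direct and elementary proof for countable $G$.

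The paper avoids the issue altogether. It uses essentially your construction only with $\nu_1=\delta_{\mathbf 1}$ and $\nu_2=\delta_{\mathbf 0}$. In that case $\Psi_n$ is a factor map of $(Y,\eta)$ itself, so ergodicity is automatic, and this gives non-Bauer (\autoref{prop:prop-T-iff-Bauer}). The paper then upgrades non-Bauer to Poulsen through the abstract dichotomy for $\PMP_G$ (\autoref{thm:abstract-dichotomy}, \autoref{thm:Glasner-Weiss-revisited}). Your final fallback is that route, but invoking it replaces your second half rather than completing it.
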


We recall here that a Choquet simplex is \emph{Bauer} if its extreme points (in this case, the ergodic measures) form a closed set. The \emph{Poulsen simplex} is the unique, up to affine homeomorphism, metrizable Choquet simplex with at least two elements whose extreme points are dense \cite{Poulsen1961,Lindenstrauss1978}.

By contrast, the simplices $\cM_\inv(X)$ arising from general group actions $G\actson X$ by homeomorphisms on compact metrizable spaces can take any possible form.
Indeed, Downarowicz \cite{Downarowicz} had previously shown that every metrizable Choquet simplex is affinely homeomorphic to $\cM_\inv(X)$ for some subshift $X\subseteq 2^\bZ$ of the group of integers.
In turn, a classical result of Choquet and Haydon \cite{Haydon1975} states that every Polish space can be obtained, up to homeomorphism, as the set of extreme points of some metrizable Choquet simplex.

An interesting intermediate situation is given by the simplices of invariant measures $\cM_\inv(X)$ of $G$-systems of the form $X = 2^\cS$, and more generally,
\begin{equation*}
X = K^\cS,
\end{equation*}
where the action is induced by a permutation group $G\actson \cS$ on a countable set, and $K$ is a compact metrizable space.
Simplices of this form have been studied principally in the theory of \emph{exchangeable random variables}.
A foundational result of the area, proved in the 1930s by de Finetti, yields for instance that for the group $G=\Sym_0(\cS)$ of finitely supported permutations of $\cS$, one can identify:
\begin{equation*}
\cM_\inv\big(K^\cS\big)\cong \cM\big(\cM(K)\big).
\end{equation*}
In other words, $\Sym_0(\cS)$-invariant measures on $K^\cS$ can be represented by probability measures on $\cM(K)$.
In particular, in this case, $\cM_\inv(K^\cS)$ is a Bauer simplex.
A rich theory of representation theorems for more complex \guillemotleft exchangeability contexts\guillemotright\ was developed by Aldous, Hoover and Kallenberg, among others \cite{Kallenberg2005}.

Most concrete examples of simplices $\cM_\inv(K^\cS)$ arising in exchangeability theory are Bauer simplices.
One reason for this was given by Austin in an unpublished note \cite{Austin2008}.
He proved there that if the invariant measures of a system of the form $G\actson K^\cS$ are \emph{representable} in a precise, general sense introduced in that note, then $\cM_\inv(K^\cS)$ must be a Bauer simplex.
Another powerful reason, related to Property (T) for Polish groups, will be recalled later in \autoref{sec:property-T} (see \autoref{rmk:oligomorphic-groups-exchangeability-theory}).

However, in \cite{Austin2008}, Austin also showed that in the case of \emph{cube-exchangeable} measures (an example introduced by Aldous \cite{Aldous1985}), the convex set $\cM_\inv(K^\cS)$ is the Poulsen simplex.
This exchangeability context corresponds to $\cS = \bF_2^{\oplus\bN}$, the infinite-dimensional vector space over $\bF_2$, with the canonical action of the countable group $G = \bF_2^{\oplus\bN}\rtimes\Sym_0(\bN)$ by affine transformations.
A representation theorem à la Aldous--Hoover--Kallenberg is therefore impossible in this case.

At the end of \cite{Austin2008}, Austin asks for a general result that would subsume this particular Poulsen example.
He goes further to conjecture that for arbitrary exchangeability contexts $G\actson \cS$ with no finite orbits, the Poulsen property should in fact be typical, i.e., only fail under very special circumstances.
He also raises the question of whether the simplex $\cM_\inv(K^\cS)$ can ever be neither Bauer nor Poulsen.

In the present paper we prove the following dichotomy theorem, which is a partial answer to these questions.

\begin{thm}[See \autoref{thm:main-final}]
\label{thm:intro:main}
Let $G\actson\cS$ be a transitive permutation group on a countable set, and let $H\leq G$ be the stabilizer of a point of $\cS$.
Let $\cl{G}$ and $\cl{H}$ denote the closures of $G$ and $H$ inside the full symmetric group $\Sym(\cS)$, for the topology of pointwise convergence.
Assume that the pair of Polish groups $(\cl{G},\cl{H})$ has Property (T) (see \autoref{dfn:relative-(T)}).
Then for every non-empty compact metrizable space $K$, the Choquet simplex $\cM_\inv\big(K^\cS\big)$ is either Bauer or Poulsen.

Moreover, under the same assumptions and provided that $|K|>1$, $\cM_\inv\big(K^\cS\big)$ is a Bauer simplex if and only if $\cl{G}$ has Property (T).
\end{thm}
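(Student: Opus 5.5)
We will derive the statement from a Glasner--Weiss type argument, adapted to the permutation-group setting and run through the Koopman representation. We may replace $G$ by $\cl{G}$ throughout, since the actions on $K^\cS$ are continuous and $G$ is dense in $\cl{G}$, so $\cM_\inv(K^\cS)$ is the same for both. Likewise, ergodicity for $G$ and for $\cl{G}$ coincide.

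\emph{Bauer case.} Assume $\cl{G}$ has Property (T). Ergodicity of $\mu$ means that $L^2_0(\mu)$ has no invariant vectors. By Property (T) this is equivalent to the absence of almost invariant vectors, witnessed by a compact Kazhdan pair $(Q,\epsilon)$. We would argue as Glasner--Weiss do: if ergodic $\mu_n\to\mu$ weak$^*$ and $\mu$ were not ergodic, pick a continuous $f$ on $K^\cS$, depending on finitely many coordinates, that nearly approximates a nonconstant invariant function in $L^2(\mu)$. Then $f$ is almost invariant under $Q$ for $\mu$. The relevant quantities $\|gf-f\|_{\mu}$ and $\mathrm{Var}_\mu(f)$ are integrals of continuous functions, uniformly in $g\in Q$ by compactness and continuity of the action. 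Hence they pass to $\mu_n$ for large $n$, which contradicts the Kazhdan inequality for the ergodic $\mu_n$. So the ergodic measures form a closed set, and the simplex is Bauer.

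\emph{Poulsen case.} Assume $\cl{G}$ fails (T) but $(\cl{G},\cl{H})$ has relative (T). We must show that ergodic measures are dense; when $|K|>1$ there are at least two invariant measures, namely distinct product measures. The plan has three steps.
\begin{enumerate}
\item First reduce to showing that every finite convex combination of ergodic measures, and ultimately every invariant measure, is approximated by ergodic ones. By a standard argument it suffices to approximate each $\frac12(\mu_1+\mu_2)$ with $\mu_i$ ergodic, plus iterate.
\item Use the failure of (T) to obtain a unitary representation with almost invariant unit vectors but no invariant vectors. Pass to Gaussian/Poisson-type ergodic actions, as in Connes--Weiss, producing an ergodic $\cl{G}$-action with asymptotically invariant sets of measure $\frac12$.
\item Transfer this to $K^\cS$ by a mixing construction. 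Given $\mu_1,\mu_2$ and the asymptotically invariant set $A_n$, define $\nu_n$ by choosing $\mu_1$ or $\mu_2$ fibrewise according to $A_n$. This requires realizing the action on a space indexed by $\cS=\cl{G}/\cl{H}$, i.e.\ a $\cl{G}$-equivariant coding of points of $\cS$ as random variables.
\end{enumerate}

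Here relative (T) enters. Since $(\cl{G},\cl{H})$ has (T), the almost invariant sets are almost $\cl{H}$-invariant and hence close to genuinely $\cl{H}$-invariant ones. An $\cl{H}$-invariant set is indexed by the coset space $\cS$, which lets us build $\nu_n$ as a measure on $K^\cS$ by letting the coordinate at $s$ be drawn from whichever $\mu_i$ is selected at $s$. Closeness to $\frac12(\mu_1+\mu_2)$ is checked on cylinder functions depending on finitely many coordinates $F\subseteq\cS$. For these we need the selections at points of $F$ to agree with high probability, which follows from asymptotic invariance. Ergodicity of $\nu_n$ should follow from ergodicity of the base action together with ergodicity of the $\mu_i$ relative to it. This step is the one I consider most delicate.

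The main obstacle is step (3): making the glued measure genuinely ergodic and correctly distributed while only having approximate, not exact, invariance. I would try first to formulate it model-theoretically, as the abstract suggests. Ergodic measures correspond to extremal types or models in affine logic, and density of extreme points corresponds to existentially closed models being dense. Relative (T) guarantees that the relevant $\cl{H}$-fixed parts behave definably, so the gluing can be done by compactness rather than by explicit construction.
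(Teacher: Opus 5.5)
\paragraph{The gap: ergodicity of the glued measure.}
Your Bauer half is correct and is the paper's argument (\autoref{lem:prop-T-mu-ergodic-continuous-f}, \autoref{lem:max_Q-continuity}, \autoref{prop:prop-T-implies-Bauer}). Using relative (T) to make the selecting set $\cl{H}$-invariant, so that it is indexed by $\cS$, is also what the paper does.

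The gap is the one you flag in step (3): why the glued measure is ergodic. Your $\nu_n$ is the image, under the coordinatewise selection map, of the base system times $\mu_1\times\mu_2$ (or times an ergodic joining of $\mu_1,\mu_2$). Ergodicity of each component does not make a product ergodic. For arbitrary ergodic $\mu_i$ this is guaranteed only if the base is weakly mixing, that is, a Gaussian system of a representation with almost invariant vectors and \emph{no finite-dimensional subrepresentations}. Obtaining such a representation from the failure of (T) is the Bekka--Valette theorem. It rests on Delorme--Guichardet and is open for non-Archimedean Polish groups; this is exactly the obstruction to the Glasner--Weiss method pointed out in the introduction.

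The same issue already affects step (2): a Gaussian action is ergodic only when the representation has no finite-dimensional subrepresentations. What is actually provable is \autoref{lem:Connes-Weiss-variant}. In the finite-dimensional case one passes to an ergodic component of an action of the compact group $\cl{\pi(G)}$. This gives an ergodic base whose $\cl{H}$-invariant almost invariant set has measure only in $[1/8,7/8]$. That base has discrete spectrum, so it may share finite-dimensional subrepresentations with $\mu_1,\mu_2$, and nothing then forces $\nu_n$ to be ergodic. Your closing suggestion to do the gluing ``by compactness'' model-theoretically does not supply an argument.

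\paragraph{What the paper does instead.}
The paper never glues arbitrary measures. The ergodic base is used only to glue $\delta_{\bf 0}$ and $\delta_{\bf 1}$. The resulting $\nu_k\in\cM_\inv(2^\cS)$ are images of the base alone, hence ergodic, and they converge to $\lambda\delta_{\bf 1}+(1-\lambda)\delta_{\bf 0}$. This shows only that the simplex is not Bauer (\autoref{prop:prop-T-iff-Bauer}).

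Non-Bauer is upgraded to Poulsen by the abstract dichotomy \autoref{thm:abstract-dichotomy}, applied to $\PMP_{G/H}$, whose quantifier-free type spaces are the simplices $\cM_\inv\big((2^x)^\cS\big)$. Relative (T) is used a second time, to verify hypothesis (D):
\begin{enumerate}
\item By \autoref{lem:relative-(T)+full} and \autoref{lem:Tfull-Fix-cap-MG}, every model $M$ of the $\forall\exists$-theory of full models contains all $G$-invariant elements of $M^G$.
\item Hence the ergodic decomposition of $M^G$ descends to $M$ (\autoref{prop:Tfull-decomposability}).
\item The atomic-weight argument then produces an extension $N$, with atomless decomposition, in which a qf-extremal a.e.c.\ model is existentially closed.
\item So $N$, and hence that model, has the convex realization property, which gives density of the extreme types.
\end{enumerate}

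\paragraph{An elementary repair you did not have.}
If you want to stay elementary, step (3) can be repaired without weak mixing by using joinings instead of products. Let $\rho_k$ be an extreme point of the face of joinings of the above $\nu_k$ with an ergodic joining $\eta$ of $\mu_1,\mu_2$. The mean ergodic theorem in $L^2(\eta)$ shows that the almost invariant set $\{z: z(s_0)=1\}$ is asymptotically independent, under \emph{any} joining, of each fixed function of $\eta$. So the selected measures are ergodic and converge to $\lambda\mu_1+(1-\lambda)\mu_2$ for the fixed $\lambda$. A closed set stable under such fixed-ratio combinations is convex, which gives density. This idea is not in your proposal.
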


We note first that this generalizes the dichotomy theorem of Glasner and Weiss, since the pair $(\cl{G},1)$ trivially has Property (T), for any group $G$.

We also note that Aldous's cube-exchangeability context falls within the scope of our result. Indeed, in that example, the closure $\cl{H}$ of the stabilizer of $0\in \cS = \bF_2^{\oplus\bN}$ is the Polish group $\Sym(\bN)$, which has Property (T) as first shown by Tsankov \cite{TsankovUnitary}.
In particular, the pair $(\cl{G},\cl{H})$ associated to this example has Property (T).

The same holds, more generally, for Austin's \guillemotleft finer-grained cube\guillemotright-exchangeability contexts of the form $\cS = (\bZ/m\bZ)^{\oplus\bN}$ and $G = (\bZ/m\bZ)^{\oplus\bN}\rtimes \Sym_0(\bN)$, whose analysis is left open in \cite[\textsection5.5]{Austin2008}.
Moreover, in all these cases, the corresponding groups $\cl{G}$ fail to have Property (T).
Our theorem therefore shows that these exchangeability contexts fall in the Poulsen side of the dichotomy, confirming Austin's suspicion. See \autoref{cor:finer-grained-Poulsen}.

It is worth emphasizing that if one starts with a countable group $G$, Property (T) for the pair $(\cl{G},\cl{H})$ is a weaker and more applicable hypothesis than Property (T) for the pair $(G,H)$, as illustrated by the previous examples.
In this sense, our result can be seen as a demonstration of how Polish group theory can be relevant for problems of more classical origin.

The proof that we provide, on the other hand, is based on model theory.
Part of the motivation for the present work is in fact to exhibit an application of the ideas developed recently in joint work with I. Ben Yaacov and T. Tsankov \cite{BITaffine}, on simplicial theories in affine logic.
As we explain below, we will deduce \autoref{thm:intro:main} from a general, model-theoretic Bauer--Poulsen dichotomy, which I hope may find further applications in ergodic theory.

Before discussing these ideas, however, let us say a word about the possibility of proving the dichotomy part of \autoref{thm:intro:main} with the method of \cite{Glasner1997}.\footnote{The moreover part of \autoref{thm:intro:main} will be proved beforehand in \autoref{prop:prop-T-iff-Bauer}, by a direct adaptation of the argument in \cite[Prop.~2]{Glasner1997}.}
The original argument of \cite{Glasner1997} can be adapted to establish the weaker form of our result where one assumes $G$ is countable and the pair $(G,H)$ has Property (T) as a pair of discrete groups.
In order to make the same argument work for $(\cl{G},\cl{H})$, it would be necessary to adapt a result of Bekka--Valette \cite{BekkaValette} to the setting of non-Archimedean Polish groups.
More precisely, one should show that if $\cl{G}$ does not have Property (T), then $\cl{G}$ admits a continuous unitary representation with almost invariant vectors but without finite-dimensional subrepresentations.
The argument in \cite{BekkaValette}, in turn, relies on Guichardet's theorem that Property (FH) implies Property (T).
However, it is an open question whether this result holds for non-Archimedean Polish groups.

\subsection*{Model-theoretic dichotomies}

The Glasner--Weiss Theorem and its proof have been used by various authors to obtain other interesting Bauer--Poulsen dichotomies \cite{GlasnerYair2024,BowenLamplighter2015}.
It is also worth mentioning that many remarkable recent results continue to show that simplices of invariant measures arising in natural situations (as well as simplices of traces on groups and $C^*$-algebras) tend to be Bauer \cite{Slutsky2026}\footnote{We note that in this recent preprint, relative Property (T) also plays a crucial role.} or Poulsen \cite{BowenIRS2015,IoanaTrace2025,GaoCharacters2026}.
However, to my knowledge, Glasner--Weiss's result remained in essence, for a long time, the only known general dichotomy theorem of this kind.

Recently in \cite{BITaffine}, while studying affine first-order theories whose type spaces form Choquet simplices, we unexpectedly established the following:

\begin{thm*}[{\cite[Thm.~20.8]{BITaffine}}, assuming a countable language]
Let $T$ be an affinely axiomatizable, affinely complete theory in continuous logic.
Assume $T$ is \emph{simplicial}, meaning that the affine type spaces $\tS^\aff_n(T)$, $n\in\bN$, are all Choquet simplices.
Then the simplices $\tS^\aff_n(T)$ are either Bauer or the Poulsen simplex.
\end{thm*}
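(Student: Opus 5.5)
The plan is to reduce the dichotomy to a single convexity statement about the closure of the extreme points. For each $n$, write $E_n=\ext \tS^\aff_n(T)$ and let $F_n=\cl{E_n}$. By Krein--Milman, $\tS^\aff_n(T)$ is the closed convex hull of $F_n$. So if $F_n$ is convex, then $F_n=\tS^\aff_n(T)$, the extreme points are dense, and by the Lindenstrauss--Olsen--Sternfeld uniqueness theorem $\tS^\aff_n(T)$ is the Poulsen simplex (in the non-degenerate case). The theorem therefore follows from the following claim: if some $\tS^\aff_{m}(T)$ is not Bauer, then $F_n$ is convex for every $n$.

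First I will use the standing correspondence between extreme affine types and extremal models. In a simplicial, affinely complete theory, every type $p\in\tS^\aff_n(T)$ has a unique representing boundary measure, obtained from a direct-integral decomposition of a model realizing $p$ into extremal models. Extreme types are exactly those realized in extremal models, i.e.\ models whose full continuous theory is an extreme completion of $T$.

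Non-Bauerness of $\tS^\aff_m(T)$ yields extreme types $p_k\to p$ with $p$ not extreme. By simpliciality and compactness I will arrange that $p=\tfrac12 q_0+\tfrac12 q_1$ with $q_0\neq q_1$. I then translate this into a syntactic \emph{mixing witness}. There is an affine formula $\varphi(x)$, with $|x|=m$, separating $q_0$ from $q_1$. For every $\varepsilon>0$, there are tuples in extremal models whose affine type approximates a half-half mixture of two types that $\varphi$ separates by a fixed amount.

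The core step is to transport this witness to arbitrary pairs. Given extreme $r_0,r_1\in E_n$, realized by $a_0\in M_0$ and $a_1\in M_1$ with $M_0,M_1$ extremal, I want extreme types close to $\tfrac12 r_0+\tfrac12 r_1$. Since $T$ is affinely complete, affine sup/inf-formulas have the same values in $M_0$ and $M_1$. The plan is to use the witness as a ``switch'': take a tuple $c$ in an extremal model whose type approximately splits into a $\varphi$-high half and a $\varphi$-low half. Then, using affine quantification (statements $\sup_y$ of affine combinations that hold in $T$), realize in a single extremal model a tuple $b$ that behaves like $a_0$ on the first half and like $a_1$ on the second. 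The type of $b$ is then extreme and close to the mixture. Iterating over dyadic weights and taking closures gives convexity of $F_n$.

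I expect this transfer step to be the main obstacle. The witness lives in one sort $m$, and coupling it with arbitrary tuples must be expressed purely by affine conditions true in $T$, so that it survives into every extremal model. Here I would use simpliciality (uniqueness of decompositions, so that affine functions on $\tS^\aff_{n+m}$ determine the joint decomposition) together with an ultraproduct argument, to pass from approximate realizations to exact ones in an extremal ultrapower.
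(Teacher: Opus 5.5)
Your reduction is sound. In a countable language $\tS^\aff_n(T)$ is metrizable, so convexity of $F_n$, together with Krein--Milman and Lindenstrauss--Olsen--Sternfeld, gives the Poulsen simplex. The gap is the ``switch'' step, which carries the entire content of the theorem: it is not supplied, and as formulated it cannot work.

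That the affine type of $c$ is close to $\frac12 q_0+\frac12 q_1$ is a statement about a type, not a splitting of the model. An arbitrary affine theory has no Boolean or measure structure inside a structure along which $a_0$ and $a_1$ could be glued. So ``a tuple $b$ behaving like $a_0$ on the first half and like $a_1$ on the second'' has no meaning, and no affine conditions true in $T$ will produce such a $b$ inside an extremal model. Glasner--Weiss can glue only because their systems live on $2^G$, where one glues coordinatewise via a factor map out of a product with a weakly mixing system.

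Your ultrapower fallback fails precisely in the case at hand. Every model of the affinely complete $T$ approximately realizes all extreme types. Hence a non-extreme $p\in\overline{E_m}$ is a limit of types realized in any extremal $M$, and is therefore realized in every nonprincipal ultrapower of $M$ over $\bN$. So no such ultrapower is extremal. (Also, extremal models are the models realizing only extreme types; their continuous theories are not points of $\tS^\aff_0(T)$, which is a singleton here.)

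The missing idea is that gluing is available only in direct integrals. One must pass through a non-extremal model and come back to an extremal one by elementarity. The paper, generalizing \cite{BITaffine}, proceeds as follows:
\begin{enumerate}
\item Fix an extremal model $M$ and consider elementary extensions $N$ of $M$ (full extensions, in the paper's Robinson-theory formulation).
\item Each such $N$ is a direct integral of extremal models; this is where simpliciality enters. By uniqueness of the decomposition, the atomic weight $w(N)$ cannot increase along extensions.
\item If $w(N)=\lambda>0$, write $N\cong\lambda M_0\oplus\dots\oplus\lambda M_{k-1}\oplus(1-k\lambda)K$ and replace each $M_i$ by an elementary extension realizing $p$. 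Such an extension is non-extremal, so the weight strictly drops.
\item A transfinite iteration yields an extension of $M$ with atomless decomposition.
\item Atomless direct integrals have the convex realization property. This property is $\forall\exists$-axiomatizable, so it descends to $M$.
\item An extremal model with this property approximately realizes every type, so $E_n$ is dense for all $n$.
\end{enumerate}

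Your switch intuition can in fact be rescued more cheaply once you relocate it into such a decomposition. A separable $N\equiv M$ realizing $p$ is not extremal, so its extremal decomposition has a measurable set $A$ with $\lambda=\mu(A)\in(0,1)$. Glue measurable sections that approximately realize $r_0$ on $A$ and $r_1$ off $A$; this is possible because each component approximately realizes every extreme type. Applying {\L}o\'s for direct integrals shows that $\lambda r_0+(1-\lambda)r_1$ is approximately realized in $N$, hence in $M$, hence lies in $F_n$. Finally, a closed set stable under a single fixed $\lambda$-combination is already convex.
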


We recall briefly that continuous logic is a generalization of classical first-order logic in which the predicates are real-valued rather than $2$-valued \cite{BenYaacov2010, BBHU}.
The probability measure-preserving (\emph{pmp}) systems of a given group $G$ can be axiomatized in this formalism, and this viewpoint allows for interesting interactions between ergodic theory and model theory \cite{Berenstein2018p, Giraud2019p, IbarluciaTsankov, BerHenIba, GST2025}.

The continuous first-order theory (i.e., the axiomatization) of the pmp $G$-systems, which we denote by $\PMP_G$, is in fact \emph{affine}, in that it only requires linear combinations and constants, rather than more general continuous connectives such as $\max$ and $\min$.
The fragment of continuous logic where the connectives are restricted to affine functions was first considered and studied by Bagheri \cite{Bagheri2014,Bagheri2014a}, and is now referred to as \emph{affine logic} \cite{BITaffine,Bagheri2024p}.
Despite being less expressive, affine logic has richer structural features than full continuous logic, which stem from the fact that affine type spaces are \emph{compact convex sets}.
In particular, affine logic allows for a model-theoretic treatment of the notion of ergodicity \cite[\textsection28]{BITaffine}.

It turns out that the Glasner--Weiss theorem can be derived from our results in \cite{BITaffine}, albeit in a slightly indirect manner which I only realized recently.\footnote{More precisely, the theorem stated above implies a Bauer--Poulsen dichotomy for the extreme completions of $\PMP_G$ (see \cite[Cor.~28.5]{BITaffine}), which is in fact a strong version of a theorem of Abért--Weiss \cite[Thm.~3]{AbertWeiss}.
Their result, in turn, implies the Glasner--Weiss theorem, as explained in their paper.

Furthermore, if one cares only about the dichotomy part of \cite[Thm.~2]{Glasner1997}, then its derivation from \cite[Cor.~28.5]{BITaffine} can also be done in a purely model-theoretic manner.}
However, this is not the approach I would like to develop for proving \autoref{thm:intro:main}.

Our goal is to formulate a more general model-theoretic statement that subsumes both \cite[Thm.~20.8]{BITaffine} and the dichotomy part of \autoref{thm:intro:main} (and hence of \cite[Thm.~2]{Glasner1997}) as direct special cases, while paving the way for other applications of these ideas.
To this end, we will recast much of the theory developed in \cite{BITaffine} in the setting of \emph{Robinson theories},\footnote{The name is due to Hrushovski \cite{HruPatterns}.} namely universal theories with the amalgamation property, and the study of their \emph{existentially closed models}.
This provides the appropriate framework for \guillemotleft low-quantification model theory\guillemotright, which is naturally suited to the present and related applications.

To motivate this shift, we observe that $\PMP_G$ is an affine Robinson theory, and that its \emph{quantifier-free} type spaces are precisely the simplices of invariant measures we are interested in.
Indeed, as was observed in \cite[Prop.~28.2]{BITaffine}, we have canonical affine homeomorphisms
\begin{equation}
\label{eq:M-inv-Sqf-PMPG}\tag{*}
\cM_\inv\big((2^n)^G\big) \cong \tS^\qf_n(\PMP_G)
\end{equation}
between the simplices of invariant measures of the Bernoulli shifts $G\actson (2^n)^G$ and the spaces of quantifier-free types of the theory $\PMP_G$.

As we show in \autoref{sec:homogeneous}, given a subgroup $H\leq G$, one may also define an affine Robinson theory $\PMP_{G/H}$ so that we have, similarly,
\begin{equation*}
\cM_\inv\big((2^n)^{G/H}\big) \cong \tS^\qf_n(\PMP_{G/H}).\footnote{In fact, one could provide a construction that works for the set of invariant measures of any metrizable flow $G\actson X$ and its powers, but the one that we provide in \autoref{sec:homogeneous} is tailored to the case of $G\actson 2^{G/H}$.}
\end{equation*}
The models of $\PMP_{G/H}$ can be seen as pmp $G$-systems generated by a distinguished $\sigma$-algebra of $H$-invariant sets.
The theory $\PMP_{G/H}$ is in addition \emph{irreducible}, meaning its models have the joint embedding property, and \emph{face-preserving}, in the sense we explain next.

\subsection*{Face-preserving Robinson theories}
Robinson theories and their existentially closed models are a well-known generalization of first-order logic.
Indeed, the models of any classical or continuous logic theory can be identified with the existentially closed models of the universal part of its Morleyization, which is a well-behaved Robinson theory.
Conversely, a continuous Robinson theory $T$ is well-behaved (and equivalent to a standard one) when any of the following equivalent conditions holds:
\begin{enumerate}
\item $T$ admits a \emph{model companion/completion};
\item the class of existentially closed models of $T$ is axiomatizable;
\item the variable restriction maps $\tS^\qf_{n+m}(T)\to \tS^\qf_n(T)$ are open.
\end{enumerate}

At this point, it is worth making a short digression to mention that for the theories of the form $\PMP_G$, the existence of a model completion is an intriguing open question.
It was proved to exist for amenable groups early on by Berenstein and Henson \cite{Berenstein2018p}, and for non-abelian free groups only more recently in \cite{BerHenIba}.
In a remarkable recent work \cite{GST2025}, Goldbring, Seward and Tucker-Drob extended the positive answer to all \emph{approximately treeable} groups, precisely by showing that the pushforward maps $\cM_\inv\big((2^{n+m})^G\big) \to \cM_\inv\big((2^n)^G\big)$ are open in that case.

In \autoref{sec:appendix:open-and-face-preserving}, we will show that in the category of compact convex sets, the role of open maps is played by those maps that are both open and \emph{face-preserving}, meaning they send faces to faces.
Correspondingly, as we observe in \autoref{sec:appendix:model-completions}, the following properties are equivalent for an affine Robinson theory $T$:
\begin{enumerate}
\item $T$ admits an \emph{affine model companion/completion};
\item the class of affinely existentially closed models of $T$ is axiomatizable;
\item the canonical maps $\tS^\qf_{n+m}(T)\to \tS^\qf_n(T)$ are open and face-preserving.
\end{enumerate}
The fact that the variable restriction maps $\tS^\aff_{n+m}(T)\to \tS^\aff_n(T)$ between full affine type spaces are always face-preserving was already observed (in an equivalent formulation) and thoroughly exploited in \cite{BITaffine}.

As it turns out, for the theories $\PMP_G$ and $\PMP_{G/H}$, even though we have no reason to believe the variable restriction maps $\tS^\qf_{n+m}(T)\to \tS^\qf_n(T)$ should be always open, they happen to be face-preserving for free.
Indeed, the latter is essentially equivalent to the fact that a factor of an ergodic system is also ergodic.\footnote{By contrast, the theory $\mathrm{TvN}$ of tracial von Neumann algebras, as presented in \cite[\textsection29]{BITaffine}, is an irreducible, affine Robinson theory, but is not face-preserving, since a subalgebra of a von Neumann factor need not be a factor.}

It is therefore natural to wonder:
\begin{question*}
What special properties does the class of \emph{affinely existentially closed} models of an affine Robinson theory have, under the mere assumption that the variable restriction maps are face-preserving?
\end{question*}

As we show in Sections \ref{sec:face-preserving-theories}--\ref{sec:dichotomy}, towards our generalization of \cite[Thm.~20.8]{BITaffine}, this hypothesis is in fact enough to reproduce much of the theory developed in \cite{BITaffine}.
In particular, by an almost verbatim adaptation of \cite[Thm.~12.3]{BITaffine}, we show the following.

\begin{thm}[See \autoref{thm:decomposition-for-aec-models}]
\label{thm:intro:decomposition}
Let $T$ be a qf-simplicial, face-preserving Robinson theory.
Then every affinely existentially closed model of $T$ can be decomposed as a direct integral of qf-extremal models.
\end{thm}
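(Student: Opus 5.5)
The plan is to adapt the proof of \cite[Thm.~12.3]{BITaffine}, replacing full affine types by quantifier-free types and elementary substructures by the affinely existentially closed condition. Fix an affinely existentially closed model $M$ of $T$, which we may assume separable via a countable language and a Löwenheim--Skolem argument applied to a separable substructure. For each finite tuple $a$ in $M$, its quantifier-free type $\tp^\qf(a)$ is a point of the Choquet simplex $\tS^\qf_n(T)$; this is where qf-simplicity enters. The point has a unique representing boundary measure $\mu_a$, concentrated on the qf-extremal types.

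\textbf{Step 1: compatibility of the measures.} For $a\subseteq b$, the restriction map $\pi\colon\tS^\qf_{n+m}(T)\to\tS^\qf_n(T)$ should satisfy $\pi_*\mu_{b}=\mu_a$. This is the one place where face-preservation is used. Since $\pi$ is affine and maps extreme points to extreme points (an extreme point is a face, and faces go to faces), the measure $\pi_*\mu_b$ is a boundary measure representing $\tp^\qf(a)$. By uniqueness of the representing boundary measure in a simplex, it equals $\mu_a$. A Kolmogorov-type projective limit over an enumeration of a countable dense subset of $M$ then produces a single probability space $(\Omega,\mu)$ with a measurable family $\omega\mapsto p_\omega$ of qf-types of the whole enumerated sequence, whose finite restrictions are $\mu$-a.e.\ extremal.

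\textbf{Step 2: building the fibres.} For each $\omega$, realize $p_\omega$ as a structure $M_\omega$ generated by the sequence, using that a consistent qf-type in the variables of a model of a universal theory determines, up to completion, a model of $T$. Then check the integral identity
\begin{equation*}
\varphi^{M}(a)=\int_\Omega \varphi^{M_\omega}(a)\,d\mu(\omega)
\end{equation*}
for quantifier-free affine formulas $\varphi$. This is the barycenter formula, and it says that $M$ embeds quantifier-freely into $\int^\oplus M_\omega\,d\mu$.

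\textbf{Step 3: identifying $M$ with the direct integral.} This is the main obstacle. The map $M\to\int^\oplus M_\omega$ is only a qf-embedding, and we must show its image is all of the direct integral, i.e.\ closed under the gluing of measurable sections. Here affine existential closedness replaces the use of full affine types in \cite{BITaffine}. Given a measurable partition $\Omega=\bigcup_i A_i$ and elements $a_i\in M$, the glued section $\sum_i \mathbf 1_{A_i}a_i$ lives in the direct integral, which is an extension of $M$ modelling $T$. Its qf-type over $M$ is therefore witnessed approximately inside $M$, since $M$ is affinely e.c. and the relevant conditions, of the form $\inf_x$ of affine qf-formulas, are affine existential. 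One first checks that $\int^\oplus M_\omega$ satisfies $T$, because universal affine axioms are preserved by direct integrals. Approximating the glued section by elements of $M$ and using completeness of $M$ then gives the surjectivity.

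To carry this out, I would try to mimic the argument of \cite[\textsection12]{BITaffine} showing that an affinely saturated-enough model absorbs convex combinations. I expect that the e.c.\ hypothesis suffices there because only existential affine formulas with parameters from $M$ are needed. Qf-extremality of a.e.\ fibre $M_\omega$ then follows from Step 1.
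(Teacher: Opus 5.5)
There is a genuine gap in Step~3. Affine existential closedness of $M$ in $K=\int^\oplus_\Omega M_\omega\,d\mu$ only says that every condition $\inf_z\psi(e,z)$, with $\psi$ quantifier-free affine and parameters from $M$, has the same value in $M$ and in $K$. So the qf-type over $M$ of a glued section $b$ is approximately realized in $M$. That does not give an element of $M$ \emph{close to} $b$. Your argument uses nothing about $b$ beyond $b\in K\models T$ and $M\preceq^\aec K$. If it worked, it would show that $M$ is dense in, hence equal to, every model of $T$ in which it is affinely existentially closed, which is false in general (take a proper affine elementary extension).

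What is missing is a \emph{definability} statement: $d(b,z)$ must be uniformly approximable by formulas $\psi(e,z)\in\cL^{\qf,\aff}_{xz}$ over the generating tuple $e$. Only then does a witness $b'\in M$ with $\psi(e,b')\leq 2\epsilon$ satisfy $d(b,b')\leq 3\epsilon$. Such a witness is what a.e.c.\ provides, since $\psi(e,b)\leq\epsilon$.

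Proving this definability is the heart of the paper's proof, and it is where face-preservation is really used. The paper defines the Baire-measurable map $\theta^b\colon\tS^\qf_{xz}(T)\to\tS^\qf_{xyz}(T)$ that appends $t_j(u)$ when $\tp^\qf(u)\in A'_j$. It then proves $\theta^b_*\mu_{ec}=\mu_{ebc}$ for every $c$ in any extension of $K$ modelling $T$, using uniqueness of boundary measures and \autoref{lem:push-bound-measu-qf-simplicial}. Hence in an a.e.c.\ extension $N\supseteq K$, $\tp^\qf(ec)$ determines $\tp^\qf(ebc)$. By the Robinson property and \autoref{lem:types-with-parameters-vs-fibers}, the restriction $\tS^{\qf,\aff}_z(eb)\to\tS^{\qf,\aff}_z(e)$ is then an affine homeomorphism, which yields the required $\psi$. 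This Claim is the analogue of \cite[Lemma~12.2]{BITaffine}, and that, not a saturation or absorption property, is what must be adapted.

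Secondary points:
\begin{itemize}
\item Face-preservation is not used only in Step~1. It is also needed to make the fibres $M_\omega$ qf-extremal, since they contain terms in, and limits of, the realized tuple. This is \autoref{prop:qf-extremal-generated}, via \autoref{prop:ExtremeTypeTwoSteps} and \autoref{lem:Ex(T)-partial-closed}.
\item The L\"owenheim--Skolem reduction to separable $M$ is not legitimate: decomposing a separable substructure does not decompose $M$.
\item The inference that $\pi$ sends boundary measures to boundary measures because it preserves extreme points is only available for metrizable spaces (\autoref{rmk:missing-hypothesis-Villadsen}). The paper uses \autoref{lem:boundary-measure-preservation-face-preserving} instead.
\end{itemize}
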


Now, given an affine Robinson theory $T$, let $T^{\aec,\ext}_{\forall\exists\text{-}\cont}$ denote the common $\forall\exists$-continuous logic theory of its affinely existentially closed, qf-extremal models.

The statement of our general dichotomy theorem can be phrased as follows.

\begin{thm}[See \autoref{thm:abstract-dichotomy}]
\label{thm:intro:general-dichotomy}
Let $T$ be a qf-simplicial, face-preserving, irreducible Robinson theory.
Assume moreover that every model of $T^{\aec,\ext}_{\forall\exists\text{-}\cont}$ can be decomposed as a direct integral of qf-extremal models of $T$.
Then the simplices $\tS^\qf_n(T)$ are either Bauer or the Poulsen simplex.
\end{thm}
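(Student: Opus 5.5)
The plan is to follow the strategy of \cite[Thm.~20.8]{BITaffine}, replacing full affine types by quantifier-free types and models of $T$ by affinely existentially closed models throughout. Write $K_n = \tS^\qf_n(T)$ and $E_n = \ext(K_n)$.

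The first step is to identify extreme points. Since $T$ is qf-simplicial, $K_n$ is a Choquet simplex. Since $T$ is face-preserving, a qf-type is extreme if and only if it is realized in a qf-extremal model. I will also show that every qf-type realized in an affinely existentially closed qf-extremal model is extreme. The tools are \autoref{thm:intro:decomposition} together with the face-preserving property of the restriction maps: the type of a tuple in a qf-extremal model lies in the face determined by that model.

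The second step is the dichotomy itself. Suppose $K_n$ is not Bauer for some $n$, so that $\overline{E_n}$ strictly contains $E_n$. Irreducibility yields joint embeddings. The aim is to show that $E_m$ is dense in $K_m$ for every $m$. Since $K_m$ is metrizable and has more than one point, it is then the Poulsen simplex by \cite{Lindenstrauss1978}.

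The argument should run as follows. Fix a non-extreme limit $p = \lim p_k$ with $p_k \in E_n$. Realize each $p_k$ in an affinely existentially closed qf-extremal model $M_k$; this is possible after extending, because the face-preserving property makes extremality pass to a.e.c.\ extensions. Then take an ultraproduct $\prod_\cU M_k$. This ultraproduct models $T^{\aec,\ext}_{\forall\exists\text{-}\cont}$, because $\forall\exists$ sentences pass to ultraproducts. It realizes $p$, so it is not qf-extremal. By hypothesis, it decomposes as a direct integral $\int^\oplus N_\omega\,d\mu(\omega)$ of qf-extremal models, and this decomposition is nontrivial because $p \notin E_n$.

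Now take an arbitrary $q = \sum_i \lambda_i q_i$ with $q_i \in E_m$, which suffices since such combinations are dense by Krein--Milman. The goal is to approximate $q$ by extreme types. Using irreducibility and amalgamation, I will embed realizations of the $q_i$ into the fibers $N_\omega$ on a partition of $\Omega$ into sets of measure $\lambda_i$. This produces a tuple in the ultraproduct whose qf-type is $q$. Pulling back along the ultraproduct, I obtain tuples in the qf-extremal $M_k$ whose qf-types converge to $q$. Those types are extreme, which gives density.

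The main obstacle is the transfer in the last step. Embedding the $q_i$ into the fibers, and showing that the ultraproduct's integral decomposition lets one mix arbitrary extreme types rather than only those related to $p$, requires an existential-closedness argument. The approach I would try first is the one used in BIT: because the ultraproduct is affinely e.c.\ in the $\forall\exists$-continuous sense, any quantifier-free condition satisfiable in an extension of the direct integral, formed by taking fiberwise amalgams, is approximately satisfiable in the ultraproduct itself. The $\forall\exists$ form of the theory is exactly what ensures this approximate satisfiability reflects back to the factors $M_k$.
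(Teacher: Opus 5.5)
Your setup is sound up to a point. Realizing the $p_k$ in a.e.c.\ qf-extremal models $M_k$ works via \autoref{cor:qf-extremal-realization} and \autoref{prop:Existence-aec-qf-ext}, not because extremality passes to a.e.c.\ extensions, which it does not in general. The ultraproduct $U=\prod_\cU M_k$ then satisfies $T^{\aec,\ext}_{\forall\exists\text{-}\cont}$, realizes $p$, and so has a non-trivial qf-extremal decomposition.

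\textbf{First gap: atoms.} Non-triviality is much weaker than what you need to split $\Omega$ into pieces of measures $\lambda_i$. Nothing in your argument prevents the decomposition of $U$ from having atoms, e.g.\ $U\cong\lambda N_0\oplus(1-\lambda)N_1$ with $N_0,N_1$ qf-extremal. In that case only the weights $0,\lambda,1-\lambda,1$ are available. Eliminating the atoms is the heart of the proof, and your proposal has no mechanism for it.

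\textbf{Second gap: the transfer back to $U$.} The fibers need not realize the $q_i$, so the tuple of type $q$ you build lives in an extension $\int^\oplus N'_\omega$ of $U$, not in $U$. Pulling it back would require $U$ to be existentially closed in that extension in the continuous sense; affine existential closedness only gives closed convex hulls of realized types (cf.\ \autoref{lem:S(T)-cco-realized}). Being a model of a $\forall\exists$-theory only makes $U$ existentially closed in \emph{some} model of the corresponding theory (\autoref{lem:ec-vs-models-AE}), not in the extension you build. Since the a.e.c.\ models are not assumed axiomatizable, $U$ need not even be a.e.c.

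\textbf{What the paper does instead.}
\begin{enumerate}
\item It fixes one qf-extremal $M\models_\aec T$ and works only with \emph{full} extensions $M\preceq^{\ec*}N$, i.e.\ $N\models_\ec D^\cont_\forall(M)$. These satisfy $M\preceq^\ec N$ and $N\models\Th^\cont_{\forall\exists}(M)$, hence are decomposable by (D).
\item By uniqueness of decompositions, the atomic weight cannot increase along extensions (\autoref{cor:weight-decreases}).
\item It splits off the heaviest atoms, $N\cong\lambda M_0\oplus\dots\oplus\lambda M_{k-1}\oplus(1-k\lambda)K$ (\autoref{lem:atoms-extremal-decomposition}), and uses fullness to show each $M_i$ is a.e.c. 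So each $M_i$ has an elementary extension realizing the fixed non-extreme $p_0\in\cl{\cE^\qf_x(T)}$, and by (D) that extension is decomposable with weight $<1$.
\item Replacing the $M_i$ by these extensions and re-extending to a full extension strictly lowers the weight. A transfinite chain then yields a full extension with $w(N)=0$.
\item The convex realization property does the job of your fiberwise mixing. An atomless direct integral has it (\autoref{lem:non-atomic-has-qfCR}). It is $\forall\exists$-axiomatizable, so it descends to $M$ along $M\preceq^\ec N$. An a.e.c.\ qf-extremal model with this property approximately realizes every qf-type (\autoref{lem:extremal-model-with-CRP}).
\end{enumerate}

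Incidentally, if you could show that $U$ has an atomless decomposition, three facts would finish your argument with no existential closedness of $U$ needed: the convex realization property of $U$, {\L}o\'s's theorem, and the density of convex combinations of types realized in each $M_k$. But producing atomlessness is exactly the missing idea.
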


The main assumption in \autoref{thm:intro:general-dichotomy}, which makes the dichotomy work in the face-preserving Robinson setting, requires that the decomposability property enjoyed by the a.e.c.\ models (as per \autoref{thm:intro:decomposition}) be somehow encoded in their $\forall\exists$-continuous theory. However, in fact, it is enough to consider the theory of the qf-extremal a.e.c.\ models.

By the Ergodic Decomposition Theorem (and, more precisely, by \cite[Cor.~28.4]{BITaffine}), every model of $\PMP_G$ can be decomposed as a direct integral of qf-extremal (i.e., ergodic) models.
Therefore, \autoref{thm:intro:general-dichotomy} applies immediately to that case; see \autoref{sec:Glasner-Weiss}.

As we show subsequently in \autoref{sec:pmp}, under the hypothesis that the pair $(\cl{G},\cl{H})$ has Property (T), the main assumption of \autoref{thm:intro:general-dichotomy} is also satisfied by the theory $\PMP_{G/H}$
This yields \autoref{thm:intro:main}.

\subsection*{Acknowledgments}
AI assistants (ChatGPT and Gemini) were used primarily for proofreading and, in that context, contributed minor textual and mathematical corrections.
They were also used for literature searches and stylistic editing.
The mathematical ideas and arguments of the paper were developed by the author.

\section{Preliminaries: convex sets and affine maps}

\subsection{Compact convex sets}
\label{sec:background-compact-convex}
For more details on the following material, see for instance \cite[\textsection1]{BITaffine} and the references therein.

A \emph{compact convex set} is a compact convex subset of a locally convex, Hausdorff topological vector space.
A map $\pi\colon X\to Y$ between convex subsets of two vector spaces is \emph{affine} if $\pi(\lambda x + (1-\lambda)y) = \lambda \pi(x) + (1-\lambda)\pi(y)$ for every $x,y\in X$ and $\lambda\in [0,1]$.
Given a compact convex set $X$, we denote by $\cA(X)$ the space of affine, continuous real-valued functions on $X$.
With its natural order and the constant function $1$, $\cA(X)$ forms an \emph{order unit space}, i.e., an Archimedean ordered vector space with an order unit.
Given an order unit space $A$, we denote by $S(A)\subseteq A^*$ the state space of $A$, i.e., the compact convex set of positive, unital linear functionals on $A$.

The functorial correspondences
\begin{equation*}
X\mapsto \cA(X),\quad  S(A)\mapsfrom A
\end{equation*}
form a duality, due to Kadison, between the category of compact convex sets with continuous affine functions, and the category of complete order unit spaces with positive, unital linear maps.
If $\pi\colon X\to Y$ is an affine continuous function between compact convex sets, we denote by $\pi^{*\aff}\colon \cA(Y)\to \cA(X)$, $f\mapsto f\circ\pi$ the corresponding dual map.

For a compact Hausdorff space $X$, we denote by $C(X)$ the space of continuous real-valued functions on $X$.
We may see it as an \emph{order unit vector lattice}, i.e., an order unit space where the order forms a lattice.
If $A$ is an order unit vector lattice, the space $\Spec(A)\subseteq S(A)$ of lattice-preserving, unital linear maps is a compact Hausdorff space.
The functorial correspondences
\begin{equation*}
X\mapsto C(X),\quad  \Spec(A)\mapsfrom A
\end{equation*}
form a duality between the category of compact Hausdorff spaces and the category of complete order unit vector lattices (a variant of Gelfand duality).
If $\pi\colon X\to Y$ is a continuous map between compact Hausdorff spaces, we denote by $\pi^*\colon C(Y)\to C(X)$, $f\mapsto f\circ\pi$ the dual map.

If $X$ is a compact Hausdorff space, we may identify the state space $S(C(X))$ with $\cM(X)$, the compact convex set of Borel regular probability measures on $X$.
Dually:
\begin{equation*}
C(X) \cong \cA(\cM(X)),\quad f\mapsto f',
\end{equation*}
where $f'(\mu) = \mu(f)$ for $f\in C(X)$ and $\mu\in\cM(X)$.
We also identify $X$ with the subset of $\cM(X)$ consisting of the Dirac measures.

If $\pi\colon X\to Y$ is a continuous function between compact Hausdorff spaces, the Kadison dual of its Gelfand dual is the pushforward map
\begin{equation*}
\pi_*\colon \cM(X)\to \cM(Y),\ \mu \mapsto \mu\circ \pi^*.
\end{equation*}

If $X$ is a compact convex set, the Kadison dual of the embedding $\cA(X)\subseteq C(X)$ is the barycenter map
\begin{equation*}
R\colon \cM(X) \to X.
\end{equation*}
It is characterized by the condition: $f(R(\mu)) = \mu(f)$ for every $f\in\cA(X)$.

A \emph{face} of a compact convex set $X$ is a convex subset $F\subseteq X$ such that whenever $\lambda x + (1-\lambda)y \in F$ for some $x,y\in X$ and $0<\lambda<1$, we have $x,y\in F$.
A face $F$ is \emph{exposed} if $F = \{x\in X : \varphi(x) = \min\varphi\}$ for some $\varphi\in\cA(X)$.
A point $x\in X$ is \emph{extreme} if $\{x\}$ is a face.
The set of extreme points of $X$ will be denoted by $\cE(X)$.

\subsection{Choquet simplices}
Let $X$ be a compact convex set.
We denote by $\widetilde{X} \subseteq \cA(X)^*$ the cone of positive linear functionals on $\cA(X)$.
By Kadison duality, the elements of $\widetilde{X}$ are precisely the linear functionals of the form
\begin{equation}\label{eq:widetilde-X}
\lambda\hat{x}\colon \cA(X)\to\bR,\quad \varphi \mapsto \lambda\varphi(x)
\end{equation}
for $\lambda\in\bR_{\geq 0}$ and $x\in X$.
On the other hand, using the Hahn Decomposition Theorem for signed measures $\mu\in \cM^{\textrm{sg}}(X)\cong C(X)^*$, and the barycenter map, one sees that $\cA(X)^* = \widetilde{X} - \widetilde{X}$.

We consider the order induced on the dual space $\cA(X)^*$ by the cone $\widetilde{X}$:
\begin{equation*}
s\leq_{\widetilde{X}} t\iff t-s\in \widetilde{X}.
\end{equation*}
A compact convex set $X$ is a \emph{Choquet simplex} if $\cA(X)^*$ is a vector lattice in this order.
Equivalently, if every pair of elements of $\widetilde{X}$ has a least upper bound (see \cite[p.~52]{Phelps2001}).

A measure $\mu\in \cM(X)$ is greater than another measure $\nu$ for the \emph{Choquet order} on $\cM(X)$ if $\nu(f)\leq \mu(f)$ for every continuous, convex function $f\colon X\to\bR$.
The maximal elements for this order are called \emph{boundary measures}, and are in a sense concentrated on the extreme points of $X$.
We will denote by $\partial\cM(X)\subseteq \cM(X)$ the set of all boundary measures on $X$.
Given a point $x\in X$, we let $\cM_x(X)$ denote the set of $\mu\in\cM(X)$ with $R(\mu) = x$, and $\partial\cM_x(X) = \partial\cM(X) \cap \cM_x(X)$.
The set $\partial\cM_x(X)$ is always non-empty, and $X$ is a Choquet simplex precisely when $\partial\cM_x(X)$ is a singleton for every $x\in X$.

A compact convex set $X$ is a \emph{Bauer simplex} if the order unit space $\cA(X)$ is a vector lattice.
Equivalently, if $X$ is a Choquet simplex and the set $\cE(X)$ of its extreme points is closed.
If $Z$ is a compact Hausdorff space, then the compact convex set $\cM(Z)$ is a Bauer simplex, and conversely, every Bauer simplex $X$ is affinely homeomorphic to $\cM(\cE(X))$.
The \emph{Poulsen simplex} is, up to affine homeomorphism, the unique metrizable Choquet simplex $X$ with at least two elements such that $\cl{\cE(X)} = X$.

The Choquet simplices we will be mostly interested in are sets of invariant measures for group actions on compact Hausdorff spaces.
The fact that these compact convex sets are indeed Choquet simplices is a classical fact (see \cite[Prop.~12.3]{Phelps2001}).
However, we provide a self-contained argument which shows that this is an instance of a more general fact.

Let $X$ be a compact convex set and let $Y\subseteq X$ be a closed convex subset.
Using the description in \autoref{eq:widetilde-X}, we can see $\widetilde{Y}$ as a subset of $\widetilde{X}$.
Following standard notation, we will denote by $\widetilde{Y}-\widetilde{Y}$ the linear span of $\widetilde{Y}$ inside $\cA(X)^*$.
This is the (injective) image of the canonical map $\cA(Y)^* \to \cA(X)^*$, i.e., the linear dual of the restriction map $\cA(X)\to \cA(Y)$.

Given a Choquet simplex $X$ and an element $s\in \cA(X)^*$, we write $s^+ = s\vee 0$ for the \emph{positive part} of $s$.

\begin{lem}\label{lem:criterion-Choquet-simplex}
Let $X$ be a Choquet simplex, and let $Y\subseteq X$ be a non-empty, closed convex subset.
Assume that for every $s\in\widetilde{Y}-\widetilde{Y}\subseteq \cA(X)^*$, the positive part $s^+$ belongs to $\widetilde{Y}$.
Then $Y$ is a Choquet simplex.
\end{lem}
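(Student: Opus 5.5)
We must show that $\cA(Y)^*$, ordered by the cone $\widetilde{Y}$, is a vector lattice. The plan is to identify $\cA(Y)^*$, via the injective dual of the restriction map, with the subspace $\widetilde{Y}-\widetilde{Y}$ of $\cA(X)^*$, and then transport the lattice structure of $\cA(X)^*$ to this subspace using the hypothesis.

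\textbf{Step 1: the order on $\cA(Y)^*$ is the induced order.} By \autoref{eq:widetilde-X} applied to $Y$, the positive cone of $\cA(Y)^*$ consists of the functionals $\lambda\hat y$ with $\lambda\geq 0$ and $y\in Y$. Under the canonical map $\cA(Y)^*\to\cA(X)^*$ these go to the same functionals $\lambda\hat y$ on $\cA(X)$, since every $\varphi\in\cA(X)$ satisfies $\varphi(y)=(\varphi|_Y)(y)$. So the positive cone of $\cA(Y)^*$ maps onto $\widetilde{Y}\subseteq\widetilde{X}$. Hence $\cA(Y)^*$ is order-isomorphic to $V:=\widetilde{Y}-\widetilde{Y}$, ordered by the cone $\widetilde{Y}$. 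It then suffices to show that $V$ is a lattice for $\leq_{\widetilde{Y}}$.

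\textbf{Step 2: positive parts in $V$.} Take $s\in V$ and let $s^+$ be its positive part in the lattice $\cA(X)^*$; by hypothesis $s^+\in\widetilde{Y}\subseteq V$. We claim $s^+$ is the supremum of $s$ and $0$ in $(V,\leq_{\widetilde{Y}})$.

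It is an upper bound: $s^+-0=s^+\in\widetilde{Y}$. For $s^+-s$, note that $s^+-s=(-s)\vee 0=(-s)^+$, by the standard lattice identity $s\vee 0 - s = 0\vee(-s)$ in $\cA(X)^*$. Since $-s\in V$, the hypothesis gives $(-s)^+\in\widetilde{Y}$.

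It is the least upper bound: let $t\in V$ with $t-s,\,t\in\widetilde{Y}\subseteq\widetilde{X}$. Then $t\geq s\vee 0$ in $\cA(X)^*$, so $t-s^+\in\widetilde{X}$. We still need $t-s^+\in\widetilde{Y}$. Since $t-s^+\in V$ and is $\widetilde{X}$-positive, it equals its own $\cA(X)^*$-positive part, $t-s^+=(t-s^+)^+$, and the hypothesis applied to $t-s^+\in V$ gives $(t-s^+)^+\in\widetilde{Y}$. This trick, that every $\widetilde{X}$-positive element of $V$ already lies in $\widetilde{Y}$, is the only subtle point. It shows that $\widetilde{Y}=\widetilde{X}\cap V$, so the two orders agree on $V$.

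\textbf{Step 3: conclusion.} In an ordered vector space, the existence of $s\vee 0$ for all $s$ gives all joins, via $a\vee b=b+(a-b)\vee 0$, and all meets, via $a\wedge b=-((-a)\vee(-b))$. By Step 2 this holds in $V$, so $V\cong\cA(Y)^*$ is a vector lattice. Since $Y$ is a non-empty closed convex subset of a compact convex set, it is compact convex, and therefore a Choquet simplex by definition.

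The main obstacle is the minimality part of Step 2, namely that the orders $\leq_{\widetilde{X}}$ and $\leq_{\widetilde{Y}}$ coincide on $V$. It is handled by the observation $\widetilde{Y}=\widetilde{X}\cap V$ coming from the hypothesis.
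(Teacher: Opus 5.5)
Your proof is correct and follows essentially the paper's argument: in both, the key point is that the hypothesis forces $\widetilde{Y}=\widetilde{X}\cap(\widetilde{Y}-\widetilde{Y})$, because an $\widetilde{X}$-positive element equals its own positive part, so lattice operations computed in $\cA(X)^*$ remain least upper bounds for $\leq_{\widetilde{Y}}$. The only difference is cosmetic: the paper checks that pairs in the cone $\widetilde{Y}$ have joins (via $s\vee t=(s-t)^++t$) and invokes the equivalent cone characterization of simplices, whereas you verify the vector-lattice definition directly on all of $\widetilde{Y}-\widetilde{Y}\cong\cA(Y)^*$ via positive parts.
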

\begin{proof}
The assumption implies that for every pair of elements $s,t\in\widetilde{Y}$, the least upper bound $s\vee t$ belongs to $\widetilde{Y}$, since $s\vee t = (s-t)^+ +t$.

On the other hand, we claim that the order relation $\leq_{\widetilde{Y}}$ on $\widetilde{Y}$ (induced from $\cA(Y)^*$) is the trace of the order relation $\leq_{\widetilde{X}}$ on $\widetilde{X}$.
Indeed, if $s,t\in\widetilde{Y}$ satisfy $s\leq_{\widetilde{Y}} t$, then $t-s\in \widetilde{Y}\subseteq \widetilde{X}$, and therefore $s\leq_{\widetilde{X}} t$.
Conversely, if $s\leq_{\widetilde{X}} t$, then $t-s = (t-s)^+\in \widetilde{Y}$ by hypothesis, and therefore $s\leq_{\widetilde{Y}} t$.

It follows that if $s,t\in\widetilde{Y}$, then $s\vee t$ as computed in $\widetilde{X}$ is the least upper bound of $s$ and $t$ in $\widetilde{Y}$ with respect to $\leq_{\widetilde{Y}}$.
Therefore, every pair of elements of $\widetilde{Y}$ has a least upper bound in $\widetilde{Y}$, which is enough to conclude.
\end{proof}

Now let $G\actson X$ be a left action of a group $G$ on a compact convex set $X$ by affine homeomorphisms.
We denote by
\begin{equation*}
X_\inv = \{x\in X : gx = x\ \forall g\in G\}
\end{equation*}
the closed convex set of invariant points of the action.

\begin{prop}
\label{prop:invariant-simplex}
Let $X$ be a Choquet simplex, and let $G\actson X$ be a group action by affine homeomorphisms.
If $X_\inv$ is non-empty, then it is a Choquet simplex.
\end{prop}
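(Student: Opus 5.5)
We apply \autoref{lem:criterion-Choquet-simplex} with $Y = X_\inv$, which is a non-empty closed convex subset of $X$. So it suffices to show that for every $s\in\widetilde{Y}-\widetilde{Y}\subseteq\cA(X)^*$, the positive part $s^+$ (computed in the vector lattice $\cA(X)^*$) belongs to $\widetilde{Y}$.

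The first step is to transport the action to the dual. Each $g\in G$ acts on $X$ by an affine homeomorphism, hence on $\cA(X)$ by $\varphi\mapsto\varphi\circ g$, a positive unital linear automorphism. Dually, $g$ acts on $\cA(X)^*$ by $(gs)(\varphi) = s(\varphi\circ g)$. This action is a linear automorphism preserving the cone $\widetilde{X}$: on generators we have $g(\lambda\hat{x}) = \lambda\widehat{gx}$, and the inverse $g^{-1}$ also preserves the cone. A linear bijection that preserves a cone together with its inverse is an order automorphism of $\cA(X)^*$. Hence it preserves the lattice operations, and in particular $g(s^+) = (gs)^+$.

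Next, for $s\in\widetilde{Y}-\widetilde{Y}$, write $s = \lambda\hat{y}-\mu\hat{z}$ with $y,z\in Y$. Then $gs = s$ for every $g$, so $g(s^+) = s^+$, meaning $s^+$ is a $G$-invariant element of $\widetilde{X}$. Write $s^+ = \lambda\hat{x}$ with $\lambda\geq 0$ and $x\in X$. If $\lambda=0$, then $s^+=0\in\widetilde{Y}$ trivially. Otherwise, $\lambda\widehat{gx} = \lambda\hat{x}$, and since the elements of $\cA(X)$ separate the points of $X$, we get $gx = x$ for all $g$. Thus $x\in X_\inv$ and $s^+\in\widetilde{Y}$, and the lemma yields the conclusion.

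The only mildly delicate point is checking that the dual action is a genuine lattice automorphism. This follows because $g$ and $g^{-1}$ both preserve $\widetilde{X}$, and an order isomorphism of a vector lattice automatically preserves suprema. Separation of points by $\cA(X)$ is standard for compact convex sets by Hahn–Banach. I expect no serious obstacle beyond this.
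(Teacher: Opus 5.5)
Your proof is correct and follows the paper's argument. Both apply \autoref{lem:criterion-Choquet-simplex} to $Y = X_\inv$ and use that the dual action on $\cA(X)^*$ is by order automorphisms, so $s^+$ is $G$-invariant; the paper obtains this directly from $g(s^+)\geq s^+$ together with the reverse inequality via $g^{-1}$. Both then identify $\widetilde{Y}$ with the $G$-invariant elements of $\widetilde{X}$, which you justify explicitly via separation of points by $\cA(X)$.
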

\begin{proof}
The action $G\actson X$ extends canonically to an action on the positive cone $\widetilde{X}$, and on the whole dual space $\cA(X)^*$.
Let us denote $Y = X_{\inv}$.
From the descriptions of $\widetilde{X}$ and of $\widetilde{Y}$ given in \autoref{eq:widetilde-X}, we see that $\widetilde{Y}$ consists precisely of the $G$-invariant elements of $\widetilde{X}$.

We use the criterion of \autoref{lem:criterion-Choquet-simplex}.
Let $s\in \widetilde{Y}-\widetilde{Y}\subseteq \cA(X)^*$.
Then for every $g\in G$ we have $g(s^+) \geq gs = s$ and $g(s^+) \geq 0$, so $g(s^+) \geq s^+$.
By considering the corresponding inequality for $g^{-1}$ we have also $g(s^+) \leq s^+$.
That is, $s^+$ is $G$-invariant and therefore $s^+\in \widetilde{Y}$, as desired.
\end{proof}

Every action $G\actson X$ by homeomorphisms on a compact Hausdorff space induces an action $G\actson \cM(X)$ by affine homeomorphisms on the compact convex set of Borel regular probability measures on $X$, given by the corresponding pushforward maps.
Following the notation used in the introduction, we write
\begin{equation*}
\cM_\inv(X) = \big(\cM(X)\big)_\inv = \{\mu\in \cM(X) : g_*\mu = \mu\ \forall g\in G\}
\end{equation*}
for the compact convex set of $G$-invariant Borel regular probability measures on $X$.
We recall that an invariant measure $\mu\in \cM_\inv(X)$ is \emph{ergodic} if the only measurable sets $A\subseteq X$ such that $\mu(A\triangle gA) = 0$ for all $g\in G$ are those with $\mu(A)=0$ or $\mu(A)=1$.

\begin{prop}
\label{prop:cMinv(X)-properties}
Let $G\actson X$ be a group action by homeomorphisms on a compact Hausdorff space, admitting invariant probability measures.
The following hold:
\begin{enumerate}
\item\label{i:cMinv(X)-properties:simplex} The compact convex set $\cM_\inv(X)$ is a Choquet simplex.
\item\label{i:cMinv(X)-properties:ergodic} The extreme points of $\cM_\inv(X)$ are precisely the ergodic measures.
\item\label{i:cMinv(X)-properties:face-pres} Let $G\actson Y$ be another $G$-action by homeomorphisms on a compact Hausdorff space and let $\pi\colon X\to Y$ be a continuous $G$-equivariant map.
Then the pushforward map restricted to the invariant measures,
\begin{equation*}
\pi_{*\inv}\colon \cM_\inv(X)\to \cM_\inv(Y),
\end{equation*}
preserves the extreme points, i.e., $\pi_{*\inv}\big(\cE\big(\cM_\inv(X)\big)\big) \subseteq \cE\big(\cM_\inv(Y)\big)$.

\noindent Moreover, $\pi_{*\inv}$ is face-preserving, i.e., it sends faces to faces.

\noindent (Face-preserving maps will be discussed in detail in \autoref{sec:appendix:open-and-face-preserving}.)
\end{enumerate}
\end{prop}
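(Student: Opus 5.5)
For \ref{i:cMinv(X)-properties:simplex}, we apply \autoref{prop:invariant-simplex} to the action $G\actson\cM(X)$ by pushforward. This action is by affine homeomorphisms, and $\cM(X)$ is a Bauer simplex, hence a Choquet simplex. By hypothesis $\cM_\inv(X)=(\cM(X))_\inv$ is non-empty, so it is a Choquet simplex.

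For \ref{i:cMinv(X)-properties:ergodic} we use the standard argument. Suppose $\mu$ is not ergodic, and let $A$ be an almost invariant set with $0<\mu(A)<1$. Modifying $A$ on a null set if necessary (or working directly with the normalized restrictions), the normalized restrictions $\mu_A=\mu(\cdot\cap A)/\mu(A)$ and $\mu_{A^c}$ are invariant measures. They are distinct, and $\mu=\mu(A)\mu_A+\mu(A^c)\mu_{A^c}$, so $\mu$ is not extreme. Conversely, suppose $\mu$ is ergodic and $\mu=\lambda\nu_1+(1-\lambda)\nu_2$ with $\nu_i$ invariant and $0<\lambda<1$. Then $\nu_1\leq\lambda^{-1}\mu$, so the Radon--Nikodym derivative $f=d\nu_1/d\mu$ exists and is bounded. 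Invariance of $\mu$ and $\nu_1$ gives $f\circ g=f$ $\mu$-a.e. for each $g$. Hence each level set $\{f>c\}$ is almost invariant, so it is null or conull, which makes $f$ a.e.\ constant, and therefore $\nu_1=\mu$. In the non-metrizable setting one should check that regularity is preserved by these operations; the restriction of a regular measure to a Borel set, and a measure with bounded density against a regular measure, are again regular, so I expect no difficulty there.

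For \ref{i:cMinv(X)-properties:face-pres}, I would show directly that $\pi_{*\inv}$ sends faces to faces, and then deduce preservation of extreme points. For extreme points: if $\mu$ is ergodic and $B\subseteq Y$ is almost invariant for $\pi_*\mu$, then $\pi^{-1}(B)$ is almost invariant for $\mu$, by equivariance. So $\mu(\pi^{-1}B)\in\{0,1\}$, and $\pi_*\mu$ is ergodic. For faces, the key lemma is that if $\pi_*\mu=\lambda\nu_1+(1-\lambda)\nu_2$ with $\nu_i\in\cM_\inv(Y)$, then $\mu$ itself decomposes as $\lambda\mu_1+(1-\lambda)\mu_2$ with $\mu_i\in\cM_\inv(X)$ and $\pi_*\mu_i=\nu_i$. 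To construct this, I take the density $h=d\nu_1/d(\pi_*\mu)$, which is bounded by $\lambda^{-1}$ and $G$-invariant a.e. Then I set $\mu_1=(h\circ\pi)\mu$ and $\mu_2=\frac{1-\lambda h\circ\pi}{1-\lambda}\mu$. Both are positive, invariant and regular, and they push forward to $\nu_1$ and $\nu_2$. Now let $F$ be a face of $\cM_\inv(X)$, and suppose $\lambda\nu_1+(1-\lambda)\nu_2\in\pi_*(F)$, say it equals $\pi_*\mu$ with $\mu\in F$. The lemma writes $\mu$ as a combination of $\mu_i$, so $\mu_i\in F$ because $F$ is a face, and hence $\nu_i=\pi_*\mu_i\in\pi_*(F)$. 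Convexity of $\pi_*(F)$ is clear. Applying this to the face $\{\mu\}$ with $\mu$ extreme re-proves the first assertion.

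I expect the main obstacle to be the measure-theoretic care in the non-metrizable case. The densities must be taken with respect to $\pi_*\mu$ on Borel sets of $Y$, and the pulled-back measures must be Radon. This relies on the fact that absolutely continuous measures with bounded density with respect to a Radon measure are Radon, and on the fact that pushforwards of Radon measures under continuous maps between compact spaces are Radon. Both are standard, so they should be citable rather than reproved.
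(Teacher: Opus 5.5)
Your proposal is correct and follows essentially the paper's proof: (i) via \autoref{prop:invariant-simplex}, extreme-point preservation because pushforwards of ergodic measures are ergodic, and face-preservation by the Radon--Nikodym convex-lifting argument of \autoref{prop:M(X)-convex-lifting}, combined with the $G$-invariance of the density. The only differences are that you prove (ii) directly where the paper cites Phelps and Glasner, and that the paper also remarks that in the metrizable case face-preservation already follows from extreme-point preservation via \autoref{prop:ext-preserving-simplex}.
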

\begin{proof}
Item \autoref{i:cMinv(X)-properties:simplex} is a particular case of \autoref{prop:invariant-simplex}.
For \autoref{i:cMinv(X)-properties:ergodic}, see \cite[Prop.~12.4]{Phelps2001} or \cite[Thm.~4.2]{Glasner2003}.

The first assertion of \autoref{i:cMinv(X)-properties:face-pres} is a consequence of \autoref{i:cMinv(X)-properties:ergodic}, since the pushforward map sends ergodic measures to ergodic measures.
In the case where $X$ is metrizable, face-preservation is then automatic: see \autoref{prop:ext-preserving-simplex}.
In the general case, face-preservation can be shown exactly as in the proof of \autoref{prop:M(X)-convex-lifting}, by observing that the Radon--Nikodym derivative between two $G$-invariant measures must be $G$-invariant.
\end{proof}

\section{Preliminaries: Property (T) and Bauer simplices}
\label{sec:property-T}

We recall that a topological group $G$ has \emph{Property (T)} if there exist a compact subset $Q\subseteq G$ and $\epsilon>0$ such that for every continuous unitary representation $G\actson \cH$ without $G$-invariant unit vectors, we have in fact
\begin{equation*}
\max_{g\in Q}\|v - gv\| \geq \epsilon
\end{equation*}
for every unit vector $v\in \cH$.
In that case, the pair $(Q,\epsilon)$ is a \emph{Kazhdan pair} for $G$.

Given a group action $G\actson X$ on a compact Hausdorff space, an element $g\in G$ and a function $f\in C(X)$, we denote by $gf$ the function $x\mapsto f\big(g^{-1}(x)\big)$.

\begin{lem}
\label{lem:prop-T-mu-ergodic-continuous-f}
Let $G$ be a topological group with Property (T), with Kazhdan pair $(Q,\epsilon)$.
Let $G\actson X$ be a continuous action on a compact Hausdorff space.
Then for every invariant probability measure $\mu\in\cM_\inv(X)$, the following are equivalent:
\begin{enumerate}
\item The measure $\mu$ is ergodic.
\item For every $f\in C(X)$ with $\mu(f)=0$ and $\|f\|_{L^2(X,\mu)} = 1$, we have:
\begin{equation*}
\max_{g\in Q}\|f-gf\|_{L^2(X,\mu)} \geq\epsilon.
\end{equation*}
\end{enumerate}
\end{lem}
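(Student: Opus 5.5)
We prove both implications via the Koopman representation $G\actson L^2_0(X,\mu)$, the orthogonal complement of the constants in $L^2(X,\mu)$, given by $(gf)(x)=f(g^{-1}x)$. First we check that this is a continuous unitary representation. Unitarity follows from invariance of $\mu$. For continuity, it suffices to check that $g\mapsto gf$ is continuous at the identity for $f\in C(X)$, because $C(X)$ is dense in $L^2(X,\mu)$ (Radon measure) and the operators are isometries. For $f\in C(X)$, the continuity of the action on compact $X$ gives uniform continuity: $\|gf-f\|_\infty\to 0$ as $g\to 1$. Hence $\|gf-f\|_{L^2}\to 0$.

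(i)$\Rightarrow$(ii). Suppose $\mu$ is ergodic. Then $L^2_0(X,\mu)$ has no nonzero $G$-invariant vectors. Indeed, if $h\in L^2_0$ is invariant, then for each $t$ the level set $A=\{h>t\}$ satisfies $\mu(A\triangle gA)=0$ for all $g$, so $\mu(A)\in\{0,1\}$. Thus $h$ is a.e. constant, and since $\mu(h)=0$ we get $h=0$. By Property (T) with Kazhdan pair $(Q,\epsilon)$, every unit vector $v\in L^2_0$ satisfies $\max_{g\in Q}\|v-gv\|\ge\epsilon$. Any $f\in C(X)$ with $\mu(f)=0$ and $\|f\|_{L^2}=1$ is such a unit vector, which gives (ii).

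(ii)$\Rightarrow$(i). Suppose $\mu$ is not ergodic. Then there is a measurable $A$ with $0<\mu(A)<1$ and $\mu(A\triangle gA)=0$ for all $g$. Set $h=(1_A-\mu(A))/\|1_A-\mu(A)\|_2$. This is a $G$-invariant unit vector in $L^2_0$. By density of $C(X)$ in $L^2$, choose $f_n\in C(X)$ with $f_n\to h$ in $L^2$. Replace $f_n$ by $f_n-\mu(f_n)$; this still converges to $h$, because $|\mu(f_n)|=|\langle f_n-h,1\rangle|\to 0$. Then normalize to $L^2$-norm $1$, which is possible for large $n$ since the norms tend to $1$. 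For each $g$, unitarity gives
\[
\|f_n-gf_n\|\le 2\|f_n-h\|+\|h-gh\| = 2\|f_n-h\|.
\]
This bound is uniform in $g$, so $\max_{g\in Q}\|f_n-gf_n\|\to 0$. (The maximum is attained, as a maximum of a continuous function over compact $Q$.) Hence (ii) fails for large $n$. Note that this direction does not use Property (T).

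The only delicate point is the continuity of the Koopman representation, which is needed to apply the definition of Property (T). It is handled by the uniform-continuity argument above. Density of $C(X)$ in $L^2$ holds because $\mu$ is a regular Borel measure on a compact Hausdorff space.
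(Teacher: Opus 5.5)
Your proof is correct and follows the paper's argument: ergodicity of $\mu$ is equivalent to the Koopman representation on $L^2_0(X,\mu)$ having no invariant unit vectors, Property (T) then gives (i)$\Rightarrow$(ii), and density of centered continuous functions in $L^2_0(X,\mu)$ gives the converse. You also spell out what the paper leaves implicit, namely the continuity of the Koopman representation and the level-set argument for invariant vectors; if $L^2$ is taken to be complex, apply the level-set argument to the real and imaginary parts of $h$ separately.
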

\begin{proof}
The measure $\mu$ is ergodic if and only if the induced unitary representation on $L^2_0(X,\mu) = \{f\in L^2(X,\mu) : \mu(f)=0 \}$ has no $G$-invariant unit vectors.
One of the implications of the statement then follows immediately from Property (T), and the converse follows from the density of $\{f\in C(X) : \mu(f)=0\}$ in $L_0^2(X,\mu)$.
\end{proof}

\begin{lem}
\label{lem:max_Q-continuity}
Let $G$ be a topological group and let $Q\subseteq G$ be a compact subset.
Then for every continuous action $G\actson X$ on a compact Hausdorff space and every continuous function $f\in C(X)$, the map
\begin{equation*}
\Psi_{Q,f}\colon \cM(X)\to \bR,\quad \mu\mapsto \max_{g\in Q}\|f-gf\|_{L^2(X,\mu)}
\end{equation*}
is continuous.
\end{lem}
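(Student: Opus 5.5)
The plan is to expand the squared norm and reduce everything to continuity of finitely many ingredients, using compactness of $Q$ to handle the maximum. For $g\in Q$ and $\mu\in\cM(X)$ we have
\begin{equation*}
\|f-gf\|^2_{L^2(X,\mu)} = \mu\big(|f-gf|^2\big).
\end{equation*}
It therefore suffices to prove that the function
\begin{equation*}
F\colon Q\times\cM(X)\to\bR,\quad (g,\mu)\mapsto \mu\big(|f-gf|^2\big)
\end{equation*}
is jointly continuous, where $\cM(X)$ carries the weak$^*$ topology. Once this is known, a standard compactness argument shows that $\mu\mapsto\max_{g\in Q}F(g,\mu)$ is continuous. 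Taking square roots, which is a continuous operation on $\bR_{\geq0}$, then gives the continuity of $\Psi_{Q,f}$.

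The key step is joint continuity of $F$. The plan is to show that the map $G\to C(X)$, $g\mapsto gf$, is continuous when $C(X)$ has the sup norm.
\begin{itemize}
\item Fix $g_0\in G$ and $\epsilon>0$. By continuity of the action and compactness of $X$, the tube-lemma argument produces a neighbourhood $U$ of $g_0$ with $\sup_x |f(g^{-1}x)-f(g_0^{-1}x)|<\epsilon$ for all $g\in U$.
\item Applied on the compact set $Q$, this says that $g\mapsto h_g:=|f-gf|^2$ is norm-continuous from $Q$ to $C(X)$.
\item Now take $(g,\mu)$ close to $(g_0,\mu_0)$ and split
\begin{equation*}
|\mu(h_g)-\mu_0(h_{g_0})| \leq \|h_g-h_{g_0}\|_\infty + |\mu(h_{g_0})-\mu_0(h_{g_0})|.
\end{equation*}
The first term is small by norm-continuity of $g\mapsto h_g$. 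The second term is small by the weak$^*$ convergence of $\mu$ to $\mu_0$.
\end{itemize}
This gives joint continuity of $F$.

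For the maximum, the argument runs as follows.
\begin{itemize}
\item \emph{Upper semicontinuity.} If $\mu_i\to\mu$ and $g_i\in Q$ realize the maxima, pass to a subnet with $g_i\to g\in Q$. Joint continuity gives $\limsup_i \max_Q F(\cdot,\mu_i)\leq F(g,\mu)\leq \max_Q F(\cdot,\mu)$.
\item \emph{Lower semicontinuity.} This follows from continuity of $\mu\mapsto F(g,\mu)$ for each fixed $g$, since the maximum of a family of continuous functions is lower semicontinuous.
\end{itemize}
Alternatively, the uniform estimate above shows directly that $\{F(g,\cdot)\}_{g\in Q}$ is an equicontinuous family, so its pointwise maximum is continuous.

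The main point requiring care is the uniform (sup-norm) continuity of $g\mapsto gf$ on the compact set $Q$. This is where continuity of the action and compactness of $X$ are used; the other steps are routine.
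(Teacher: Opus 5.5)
Your argument is correct and rests on the same key fact as the paper's proof: the sup-norm continuity of $g\mapsto gf$, which you justify via the tube lemma (a step the paper leaves implicit), combined with compactness of $Q$. The paper packages this more directly by choosing a finite $R\subseteq Q$ such that every $gf$ with $g\in Q$ lies within $\epsilon$ in sup norm of some $hf$ with $h\in R$; the $L^2$ triangle inequality then gives $|\Psi_{Q,f}-\Psi_{R,f}|\leq\epsilon$ uniformly on $\cM(X)$, which is essentially your equicontinuity remark and avoids both the squaring and the subnet/semicontinuity argument.
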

\begin{proof}
Since $Q$ is compact, for every $f\in C(X)$ and $\epsilon>0$ we can find a finite subset $R\subseteq Q$ such that for every $g\in Q$ there is $h\in R$ with $\|gf-hf\|_\infty <\epsilon$.
Therefore $\big|\Psi_{Q,f}(\mu) - \Psi_{R,f}(\mu)\big| \leq \epsilon$, and thus
\begin{equation*}
\big|\Psi_{Q,f}(\mu) - \Psi_{Q,f}(\mu')\big| \leq \big|\Psi_{R,f}(\mu) - \Psi_{R,f}(\mu')\big| + 2\epsilon
\end{equation*}
for every $\mu,\mu'\in\cM(X)$.
The continuity of $\Psi_{Q,f}$ then follows from that of $\Psi_{R,f}$, which is clear.
\end{proof}

The following proposition was proved by Glasner and Weiss \cite{Glasner1997} in the setting of locally compact, second countable groups, but holds in all generality, as observed by Evans and Tsankov \cite{EvaTsa} (see the discussion after their Corollary 1.2).
The argument we give here avoids the use of the ergodic decomposition.

\begin{prop}
\label{prop:prop-T-implies-Bauer}
Let $G$ be a topological group with Property (T).
Then for every continuous action $G\actson X$ on a compact Hausdorff space, if $\cM_\inv(X)$ is non-empty, then it is a Bauer simplex.
\end{prop}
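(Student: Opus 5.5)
By \autoref{prop:cMinv(X)-properties}\autoref{i:cMinv(X)-properties:simplex}, the set $\cM_\inv(X)$ is a Choquet simplex. So it suffices to show that its set of extreme points is closed. By item \autoref{i:cMinv(X)-properties:ergodic} of the same proposition, these extreme points are exactly the ergodic measures. We therefore aim to show that the set of ergodic invariant measures is closed in $\cM_\inv(X)$, using the characterization in \autoref{lem:prop-T-mu-ergodic-continuous-f} together with the continuity provided by \autoref{lem:max_Q-continuity}. This avoids the ergodic decomposition entirely.

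Fix a Kazhdan pair $(Q,\epsilon)$ for $G$. For $f\in C(X)$, condition (ii) of \autoref{lem:prop-T-mu-ergodic-continuous-f} is not phrased in a closed form, because of the normalizations $\mu(f)=0$ and $\|f\|_{L^2(\mu)}=1$. I would remove them by homogenizing. For $f\in C(X)$ and $\mu\in\cM(X)$, set $\tilde f_\mu = f-\mu(f)$. Since $\mu$ is invariant, $g\tilde f_\mu - \tilde f_\mu = gf - f$. Condition (ii) is then equivalent to the following inequality holding for all $f\in C(X)$:
\begin{equation*}
\Psi_{Q,f}(\mu) \;\geq\; \epsilon\,\big(\mu(f^2)-\mu(f)^2\big)^{1/2}.
\end{equation*}
To see this, when the variance is nonzero, apply (ii) to $\tilde f_\mu$ divided by its $L^2(\mu)$-norm, and use homogeneity of $\Psi_{Q,\cdot}$ in $f$. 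When the variance is zero, the inequality is trivial. Hence the set of ergodic measures equals
\begin{equation*}
\bigcap_{f\in C(X)} \Big\{\mu\in\cM_\inv(X) : \Psi_{Q,f}(\mu) \geq \epsilon\big(\mu(f^2)-\mu(f)^2\big)^{1/2}\Big\}.
\end{equation*}

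Each set in this intersection is closed. Indeed, $\Psi_{Q,f}$ is continuous by \autoref{lem:max_Q-continuity}, and $\mu\mapsto\mu(f^2)-\mu(f)^2$ is weak$^*$-continuous. Thus the set of ergodic measures, which is $\cE(\cM_\inv(X))$, is closed, and $\cM_\inv(X)$ is a Bauer simplex.

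The only delicate point is checking that the homogenized inequality really is equivalent to (ii) of \autoref{lem:prop-T-mu-ergodic-continuous-f}. Two things must be verified: the invariance of $\mu$, so that subtracting the constant $\mu(f)$ does not change $gf-f$, and the degenerate zero-variance case. Both are routine.
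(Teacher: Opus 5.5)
Your argument is correct and is essentially the paper's proof: you reduce to closedness of the ergodic measures via the Choquet-simplex and extreme-point facts, then combine the Kazhdan-pair characterization of ergodicity with continuity of $\Psi_{Q,f}$, and your homogenized inequality $\Psi_{Q,f}(\mu)\geq\epsilon\,(\mu(f^2)-\mu(f)^2)^{1/2}$ makes explicit the step the paper calls ``a direct consequence.'' One small remark: the identity $g\tilde f_\mu-\tilde f_\mu=gf-f$ does not need invariance of $\mu$, since the $G$-action fixes constant functions.
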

\begin{proof}
By \autoref{prop:cMinv(X)-properties}, it is enough to show that the set of ergodic measures is closed.
In turn, this is a direct consequence of \autoref{lem:prop-T-mu-ergodic-continuous-f} and \autoref{lem:max_Q-continuity}.
\end{proof}

\begin{rmk}
\label{rmk:oligomorphic-groups-exchangeability-theory}
As we mentioned in the introduction, the simplices $\cM_\inv(K^\cS)$ arising in concrete examples from exchangeability theory tend to be Bauer simplices.
This can be explained using \autoref{prop:prop-T-implies-Bauer} (and, more precisely, \cite[Cor.~1.2]{EvaTsa}).
Indeed, the permutation groups $G\actson\cS$ of interest in that setting are often \emph{oligomorphic}, meaning that the orbit spaces $\cS^n/G$ are finite for all $n\in\bN$.
In that case, the closure $\cl{G}\leq \Sym(\cS)$ within the full symmetric group of $\cS$ (with the topology of pointwise convergence) is a \emph{Polish oligomorphic group}, and these groups have Kazhdan's Property (T) by the main result of Evans and Tsankov \cite{EvaTsa} (see also \cite{Iba2021}).
Since the $G$-invariant and the $\cl{G}$-invariant measures on $K^\cS$ coincide, one has that in the oligomorphic exchangeability contexts, $\cM_\inv(K^\cS)$ is always a Bauer simplex.

This argument applies to the classical setting of \emph{hypergraph-exchangeability} (where $\cS = \binom{\bN}{k}$ and $G = \Sym_0(\bN)$), but also, for instance, to the case of \emph{affine-exchangeable} measures established recently in \cite[Thm.~1.6]{Candela2023} (where $\cS = \bF_2^{\oplus\bN}$ and $G = \operatorname{AGL}_\infty(\bF_2) = \bF_2^{\oplus\bN}\rtimes\operatorname{GL}(\bF_2^{\oplus\bN})$).
There, the Bauer property is obtained as a corollary of a finer representation theorem.
However, if one is interested in this property alone, it suffices to observe that the affine space $\bF_2^{\oplus\bN}$ is an \emph{$\aleph_0$-categorical} countable structure, and hence the action of its automorphism group, $\operatorname{AGL}_\infty(\bF_2)$, is oligomorphic.

The same argument also yields the following broad generalization, as well as other similar statements: for every finite field $\bF$ and every $k\in\bN$, letting $\operatorname{Graff}(k,A)$ denote the space of $k$-dimensional affine subspaces of $A=\bF^{\oplus\bN}$, the $\operatorname{AGL}_\infty(\bF)$-invariant measures on $K^{\operatorname{Graff}(k,A)}$ form a Bauer simplex.
\end{rmk}

\section{Relative Property (T) and Bauer simplices}
\label{sec:permutation-groups-relative-T}

\begin{dfn}
\label{dfn:relative-(T)}
Let $G$ be a topological group and $H\leq G$ be a closed subgroup.
We say the pair $(G,H)$ has \emph{Property (T)}, or that $H$ has \emph{relative Property (T)} in $G$, if for every $\delta>0$ there exist a compact subset $Q\subseteq G$ and $\epsilon>0$ such that for every continuous unitary representation $G\actson \cH$, if
\begin{equation*}
\max_{g\in Q}\|v - gv\| < \epsilon
\end{equation*}
for some unit vector $v\in \cH$, then there is an $H$-invariant unit vector $w\in\cH$ with 
\begin{equation*}
\|v-w\|\leq\delta.
\end{equation*}
\end{dfn}

\begin{rmk}
Let $G$ be a topological group, $H\leq G$ a closed subgroup. Then:
\begin{enumerate}
\item If $H$ has Property (T), the pair $(G,H)$ has Property (T).
\item $G$ has Property (T) if and only if the pair $(G,G)$ has Property (T).
\end{enumerate}
\end{rmk}

\begin{rmk}
\label{rmk:relative-(T)-definitions}
In the context of countable groups, Property (T) for a pair $(G,H)$ is usually defined by the weaker condition that whenever there is a non-zero vector $v\in\cH$ with $\max_{g\in Q}\|v - gv\| < \epsilon$, then there is a non-zero $H$-invariant vector (not necessarily close to $v$).
The two definitions are known to be equivalent for locally compact, $\sigma$-compact groups: see \cite{Jolissaint2005}.
I do not know whether the equivalence holds for more general classes of groups, and in particular for closed permutation groups.
In fact, the tools used in \cite{Jolissaint2005} to prove this equivalence are closely related to the representation theoretic facts used in the proof of the main theorem of Glasner--Weiss \cite{Glasner1997}, and which are lacking in the context of non-Archimedean Polish groups, as I mentioned in the introduction.

I believe the \emph{correct} definition in the general setting is the one given above, and this is indeed the definition that we will need.
\end{rmk}

In what follows, given an action $G\actson M$ of a group by automorphisms of a measure algebra, and a subgroup $H\leq G$, we denote by
\begin{equation*}
\Fix_H(M) = \{b\in M: gb=b\ \forall g\in H\}
\end{equation*}
the subalgebra of $H$-invariant elements of $M$.

\begin{lem}
\label{lem:Connes-Weiss-variant}
Let $G$ be a Polish group and $H\leq G$ be a closed subgroup.
Suppose the pair $(G,H)$ has Property (T) but $G$ does not.
Then for every finite subset $F\subseteq G$ and every $\eta>0$ one may find a continuous, ergodic action $G\actson M$ by automorphisms on the measure algebra of a probability space, and an element $a\in \Fix_H(M)$, such that
\begin{equation*}
\max_{g\in F}\mu(a\triangle ga)\leq \eta\quad \text{and}\quad 1/8\leq \mu(a)\leq 7/8.
\end{equation*}
\end{lem}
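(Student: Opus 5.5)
The approach is a Connes--Weiss type Gaussian construction. Use the failure of Property (T) for $G$ to obtain almost invariant vectors. Use relative Property (T) to move them to $H$-invariant vectors. Pass to a Gaussian action and take a half-space set $a$. Finally, extract an ergodic component in which $a$ stays balanced and almost invariant, using a chaos-decomposition lower bound on the within-component variance.

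\emph{Step 1 (representations).} Fix $F$ and $\eta$, and choose a small $\delta>0$ (depending on $\eta$ and $|F|$). Let $(Q,\epsilon)$ be given by \autoref{dfn:relative-(T)} for this $\delta$, and enlarge $Q$ to $Q\cup F$, which is still compact. Since $G$ does not have Property (T), $(Q\cup F,\epsilon')$ is not a Kazhdan pair for any $\epsilon'>0$. So there is a continuous unitary representation $G\actson\cH$ without invariant unit vectors, and a unit vector $v$ with $\max_{g\in Q\cup F}\|v-gv\|<\epsilon'$, where $\epsilon'\le\epsilon$ is small. Passing to the realification, we may assume $\cH$ is a real orthogonal representation; this preserves both the absence of invariant vectors and the almost invariance. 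Relative (T) yields an $H$-invariant unit vector $w$ with $\|v-w\|\le\delta$. Hence $\|w-gw\|\le\epsilon'+2\delta$ for $g\in F$, and we may renormalize $w$ to be a unit vector.

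\emph{Step 2 (Gaussian action).} Let $G\actson(\Omega,\mu)$ be the Gaussian pmp action associated with $\cH$. It is continuous on the measure algebra, and the isometry $\cH\to L^2(\Omega)$, $u\mapsto X_u$, onto the first Wiener chaos is $G$-equivariant. Put $a=\{X_w>0\}$. Then $a\in\Fix_H(M)$ because $w$ is $H$-invariant, and $\mu(a)=1/2$. Moreover
\[
\mu(a\triangle ga)=\tfrac1\pi\arccos\langle w,gw\rangle,
\]
which is small uniformly for $g\in F$ once $\epsilon',\delta$ are small. This action need not be ergodic, since $\cH$ may have finite-dimensional subrepresentations, which is exactly the Bekka--Valette issue mentioned in the introduction. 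So we disintegrate it as $\mu=\int\mu_z\,dz$ into ergodic components; this uses a Borel model of the continuous action. Write $p(z)=\mu_z(a)$.

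\emph{Step 3 (finding a good component; the main point).} The key estimate is a lower bound on $\int p(1-p)\,dz=\|1_a-E[1_a\mid\mathrm{Inv}]\|_2^2$. The orthogonal projection onto the first chaos commutes with $G$. Since $\cH$ has no $G$-invariant vectors, every $G$-invariant function in $L^2(\Omega)$ is orthogonal to the first chaos. Hence
\[
\|1_a-E[1_a\mid\mathrm{Inv}]\|_2^2\ \ge\ \|\text{first chaos part of }1_a\|_2^2=\big(E[1_aX_w]\big)^2=\tfrac{1}{2\pi}.
\]
Now $p(1-p)\le1/4$ everywhere, and $p(1-p)<7/64$ when $p\notin[1/8,7/8]$. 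Since $1/(2\pi)>7/64$, the set $Z_0=\{z:p(z)\in[1/8,7/8]\}$ has measure bounded below by an absolute constant $c>0$ (about $0.35$). On the other hand,
\[
\int\sum_{g\in F}\mu_z(a\triangle ga)\,dz=\sum_{g\in F}\mu(a\triangle ga)
\]
can be made $<c\eta$ by the choice of $\epsilon',\delta$. Markov's inequality then gives $z\in Z_0$ with $\max_{g\in F}\mu_z(a\triangle ga)\le\eta$. Since $H$-invariance of $a$ holds $\mu$-a.e.\ and hence for a.e.\ component, we may also choose this $z$ so that $a\in\Fix_H$ in the measure algebra of $\mu_z$. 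The ergodic action $G\actson(\Omega,\mu_z)$ with this $a$ is then the required one.

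The step I expect to need the most care is making the ergodic decomposition rigorous for a continuous action of a Polish, non-locally-compact group. This means producing a Borel (near-)action model, verifying that a.e.\ component yields a continuous action on its measure algebra, and checking that $H$-invariance of $a$ passes to a.e.\ component. The combinatorial heart of the argument is the first-chaos variance bound in Step 3.
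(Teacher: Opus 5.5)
\textbf{There is a genuine gap, in the step you flag at the end.} It is not a routine verification.

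For a Polish group that is not locally compact, a continuous action on a measure algebra need not admit a Borel model (the non-spatial phenomenon, cf.\ \cite{Glasner2005}). Disintegrating with respect to a countable dense subgroup $\Gamma\leq G$ does not rescue this: almost every ergodic component may fail to carry a continuous $G$-action. For example, let $G=L^0([0,1],\bR/\bZ)$ act on $[0,1]\times\bR/\bZ$ by $f\cdot(x,t)=(x,t+f(x))$.
\begin{enumerate}
\item This action is continuous on the measure algebra, and its ergodic components are the fibres $\{x\}\times\bR/\bZ$.
\item On the fibre over $x$, $\Gamma$ acts through the evaluation $f\mapsto f(x)$.
\item For almost every $x$ this evaluation is discontinuous for convergence in measure: a continuous extension would be a non-trivial continuous character of $G$, and there are none.
\end{enumerate}
Without continuity on the component, you cannot extend the component action from $\Gamma$ to $G$. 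You also cannot pass from invariance under a countable dense subset of $H$ to $H$-invariance of $a$ there. So Step~3, as written, does not produce the continuous ergodic $G$-system the statement requires.

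\textbf{What is missing is the structure of $\cH$, which is what the paper uses.} Split $\cH=\cK_0\oplus\cK_1$, with $\cK_0$ spanned by the finite-dimensional subrepresentations.
\begin{enumerate}
\item The Gaussian action of $\cK_1$ is weakly mixing, hence ergodic with no decomposition needed.
\item On $\cK_0$, the closure $K$ of the image of $G$ is compact metrizable. So Mackey's point realization and the classical ergodic decomposition apply, and each component is a continuous $K$-system, hence a continuous ergodic $G$-system.
\end{enumerate}
The paper replaces $w$ by its normalized projection onto whichever summand carries at least half of its norm, and treats the two cases separately. In the compact case, balance comes from some $g_0$ with $\Re\langle v,g_0v\rangle\le 0$, which forces $\mu(b\triangle g_0b)\ge 1/2$.

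You could instead keep all of $w$. The Gaussian space is the product $\Omega_0\times\Omega_1$ of the two Gaussian systems, and its invariant algebra lives on $\Omega_0$. Hence the components $\mu_{0,z}\times\mu_1$ are continuous and ergodic. With that repair, your first-chaos bound $\int p(1-p)\,dz\ge 1/(2\pi)$ is correct. It is a neat substitute for both the paper's choice of summand and its $g_0$-argument.
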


The proof is an adaptation of the argument in Connes--Weiss \cite{Connes1980}.
In their argument they use that for countable groups, Property (T) is equivalent to the trivial representation being isolated in the space of irreducible representations.
I thank Todor Tsankov for pointing out to me that this is not really necessary.

\begin{proof}
We fix $0<\eta<1$ and a non-empty finite subset $F\subseteq G$, and we let
\begin{equation*}
\delta = \frac{1}{3}\left(1-\cos\frac{\pi\eta}{3|F|}\right)^{1/2} > 0.
\end{equation*}
Since the pair $(G,H)$ has Property (T), we may find a compact set $Q\subseteq G$ and $\epsilon>0$ satisfying the conditions of \autoref{dfn:relative-(T)} with respect to $\delta$.
Since $G$ does not have Property (T), there is a continuous unitary representation $G\actson\cK$ on a separable Hilbert space $\cK$ without non-zero invariant vectors such that for some unit vector $u\in\cK$, $\|u - gu\|<\min\{\delta,\epsilon\}$ for every $g\in Q\cup F$.
By choice of $Q$ and $\epsilon$, there is an $H$-invariant unit vector $w\in \cK$ with $\|u-w\| \leq \delta$.
In particular, $\|w - gw\|<3\delta$ for every $g\in F$.

Let $\cK_0$ be the closed subspace of $\cK$ generated by its finite-dimensional subrepresentations, and let $\cK_1$ be its orthogonal complement. Let $w = w_0 + w_1$ be the corresponding orthogonal decomposition of $w$. For the rest of the proof we fix $k\in \{0,1\}$ such that
\begin{equation*}
\|w_k\| = \max\{\|w_0\|,\|w_1\|\}.
\end{equation*}
We will consider the unitary representation $G\actson \cH$ where $\cH\subseteq\cK_k$ is the cyclic subrepresentation generated by the unit vector $v = w_k/\|w_k\|\in \cH$.
We observe that $v$ is fixed by the subgroup $H$ and satisfies $\|v - gv\|<3\sqrt{2}\delta$ for every $g\in F$.
In other words, for every $g\in F$,
\begin{equation}
\label{eq:CW-cos}
\arccos\big(\Re\langle v,gv\rangle\big) < \frac{\pi\eta}{3|F|}.
\end{equation}

Let $\Gamma\leq G$ be a countable dense subgroup of $G$.
Consider the positive definite function $\varphi\colon \Gamma\to\bR$ given by
\begin{equation*}
\varphi(g) = \Re\langle v,gv\rangle,
\end{equation*}
and the corresponding Gaussian space $(\Omega,\mu) = (\bR^\Gamma,\mu_\varphi)$ (see, for instance, \cite[Appendix~C]{KechrisGlobal}).
In particular, the projection maps $X_g\colon \bR^\Gamma\to \bR$, $X_g(\omega) = \omega_g$ are centered Gaussian random variables, with covariance given by
\begin{equation*}
\bE(X_gX_h) = \varphi(g^{-1}h) = \Re\langle gv,hv\rangle.
\end{equation*}
The group $\Gamma$ acts on $\bR^\Gamma$ by permutation of the coordinates, and this action is measure-preserving.
We therefore have an action of $\Gamma$ by automorphisms of the measure algebra $M_\varphi = \MALG(\Omega,\mu)$, i.e., a group homomorphism $\alpha_\Gamma\colon \Gamma\to \Aut(M_\varphi)$.
On the other hand, there is a topological group embedding
\begin{equation*}
\rho\colon \cU(\cH) \to \Aut(M_\varphi)
\end{equation*}
that takes the representation $\Gamma\to \cU(\cH)$ into $\alpha_\Gamma$ (see \cite[Appendix~E]{KechrisGlobal}).
Composing $\rho$ with the original representation $\pi\colon G\to\cU(\cH)$, we also get a continuous action $G\actson M_\varphi$ by automorphisms, which restricts to $\alpha_\Gamma$.

Let $b = \llbracket X_{1_G}\geq 0\rrbracket = \{\omega\in \Omega : X_{1_G}(\omega) \geq 0\}$, which we may see as an element of $M_\varphi$.
Since $X_{1_G}$ is centered, $\mu(b) = 1/2$.
For every $g\in \Gamma$, a standard computation (see \cite[p.~234]{Glasner2003}) gives
\begin{equation*}
\mu(b\triangle gb) = \mu\big(\llbracket X_{1_G}\geq 0\ \&\ X_g < 0\rrbracket\cup \llbracket X_{1_G}< 0\ \&\ X_g\geq 0\rrbracket\big) = \frac{1}{\pi}\arccos(\varphi(g)).
\end{equation*}
By density and continuity, the same holds for every $g\in G$.
Therefore, by \autoref{eq:CW-cos},
\begin{equation}
\label{eq:max_Fmu(b-triangle-gb)}
b\in\Fix_H(M_\varphi)\quad \text{and}\quad \max_{g\in F}\mu(b\triangle gb) < \frac{\eta}{3|F|}.
\end{equation}

We now distinguish two cases, according to the value of $k\in\{0,1\}$ set at the beginning of the proof.

If $k=1$, the unitary representation $\Gamma\actson\cH$ has no finite-dimensional subrepresentations.
This implies that the Gaussian system $\Gamma\actson M_\varphi$, as well as the induced action $G\actson M_\varphi$, are ergodic (see \cite[Thm.~3.59]{Glasner2003}).
We may then take $M = M_\varphi$ and $a = b$, and we are done.

If $k=0$, then $\cH$ is a direct sum of finite-dimensional representations.
In that case, the closure $K = \cl{\pi(G)}$ of the image of $G$ inside the unitary group $\cU(\cH)$ is a compact metrizable group.
Let $\nu$ be its normalized Haar measure.
Since the representation $G\actson \cH$ does not have non-zero invariant vectors, the vector $\int_K kv\,d\nu(k)\in\cH$ must be zero.
Therefore, $\int_K\langle v,kv\rangle\,d\nu(k) = 0$, which implies that there is $g_0\in G$ with $\varphi(g_0)\leq 0$.
In turn, this ensures that
\begin{equation}
\label{eq:mu(b-triangle-g_0b)}
\mu(b\triangle g_0b)\geq 1/2.
\end{equation}
On the other hand, by the classical theory of probability measure-preserving (pmp) systems of locally compact second countable groups (see \cite{Mackey1962,Ramsay1971}; see also \cite{Glasner2005}), the action $K\actson M_\varphi$ induced by $\rho$ can be realized as a Borel pmp system $K\actson (\Omega',\mu')$ on a standard probability space.
Let $(\mu_z : z\in\Omega')$ denote the ergodic decomposition of this system.
Let $B\subseteq \Omega'$ be a measurable set representing $b\in M_\varphi$.
From \autoref{eq:mu(b-triangle-g_0b)} we have
\begin{equation*}
\mu'\big(\{z\in \Omega' : \mu_z(B\triangle \pi(g_0)B)\leq 1/4\}\big) \leq 2/3,
\end{equation*}
and from \autoref{eq:max_Fmu(b-triangle-gb)} and Markov's inequality,
\begin{equation*}
\mu'\Big(\big\{z\in \Omega' : \sum_{g\in F}\mu_z(B\triangle \pi(g)B)\geq \eta\big\}\Big) < 1/3.
\end{equation*}
In addition, for every $g\in H$, $\mu_z(B\triangle \pi(g)B) = 0$ for almost every $z\in \Omega'$.
Let $H_0\subseteq H$ be a countable dense subset.
It follows that for some $z_0\in \Omega'$, the ergodic component $\mu_{z_0}$ satisfies $\mu_{z_0}(B\triangle \pi(g_0)B) > 1/4$ as well as $\mu_{z_0}(B\triangle \pi(g)B) < \eta$ for every $g\in F$ and $\mu_{z_0}(B\triangle \pi(h)B) = 0$ for every $h\in H_0$.
Since the Borel pmp action $K\actson (\Omega',\mu_{z_0})$ induces a continuous action on the corresponding measure algebra, we have in fact $\mu_{z_0}(B\triangle \pi(h)B) = 0$ for every $h\in H$.
Finally, we observe that the condition $\mu_{z_0}(B\triangle \pi(g_0)B) > 1/4$ implies $1/8 \leq \mu_{z_0}(B) \leq 7/8$.
We may therefore take
\begin{equation*}
M = \MALG(\Omega',\mu_{z_0}),
\end{equation*}
and let $a\in M$ be the class of $B$.
To conclude, it is enough to note that since the action $K\actson M$ is continuous and ergodic, then so is the induced action $G\actson M$.
\end{proof}

The following result generalizes \cite[Prop.~2]{Glasner1997}.
The proof strategy is essentially the same.

\begin{prop}
\label{prop:prop-T-iff-Bauer}
Let $G$ be a Polish group with an open subgroup $H\leq G$.
In particular, $H$ is closed and $G$ acts by permutations of the countable set $G/H$.
Assume the pair $(G,H)$ has Property (T).
Then $G$ has Property (T) if and only if $\cM_\inv\big(2^{G/H}\big)$ is a Bauer simplex.
\end{prop}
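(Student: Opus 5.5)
Since $H$ is open, $G/H$ is countable and $G$ acts continuously on the compact metrizable space $2^{G/H}$: the stabilizer of each coordinate is a conjugate of $H$, hence open. The direction ``$G$ has Property (T) $\Rightarrow$ Bauer'' is therefore immediate from \autoref{prop:prop-T-implies-Bauer}; note that $\cM_\inv(2^{G/H})$ is non-empty, as it contains the Dirac masses at the two constant configurations.

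For the converse I would argue by contraposition, following Glasner--Weiss. Assume $G$ fails Property (T). The goal is to show that the ergodic measures are not closed, by exhibiting a sequence of ergodic measures converging to a non-ergodic one. I would take the limit to be $\frac12(\delta_{\bar 0}+\delta_{\bar 1})$, where $\bar 0,\bar 1$ are the constant configurations; this measure is invariant and clearly not ergodic.

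Fix an increasing sequence of finite sets $F_n\subseteq G$ whose union is dense, and choose $\eta_n\to 0$. \autoref{lem:Connes-Weiss-variant} then gives a continuous ergodic action $G\actson M_n$ and an element $a_n\in\Fix_H(M_n)$ with $\max_{g\in F_n}\mu(a_n\triangle g a_n)\le\eta_n$ and $1/8\le\mu(a_n)\le 7/8$. Since $a_n$ is $H$-invariant, the assignment $gH\mapsto g a_n$ is well defined, and it yields an equivariant ``coding'' of the system into $2^{G/H}$. Concretely, I would define $\nu_n\in\cM_\inv(2^{G/H})$ by requiring that the cylinder set $\{x: x(g_iH)=\epsilon_i,\ i\le k\}$ have measure $\mu\bigl(\bigwedge_i (g_i a_n)^{\epsilon_i}\bigr)$, where $b^1=b$ and $b^0=b^c$. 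These values are consistent, so Kolmogorov extension produces $\nu_n$, and $\nu_n$ is invariant. Moreover $\nu_n$ is ergodic, because it is a factor of an ergodic system. To make this rigorous I would either realize $M_n$ on a standard space, or note that the map from the $\sigma$-algebra of $2^{G/H}$ into $M_n$ is an equivariant measure-algebra embedding, so any invariant set for $\nu_n$ pulls back to an invariant element of $M_n$, which must be trivial.

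The remaining task is to pass to the limit. By compactness, pick a weak$^*$ limit point $\nu$ of the $\nu_n$. For each $g$ in the dense set $\bigcup F_n$, the probability $\nu_n(x(H)\ne x(gH))=\mu(a_n\triangle g a_n)\to0$. Since the events involved are clopen, this gives $\nu(x(H)\ne x(gH))=0$. By invariance, and because $\bigcup_n F_nH=G$ (the set $\bigcup F_n$ is dense and $H$ is open), the limit $\nu$ is concentrated on the constant configurations. Finally, the marginal $\nu_n(x(H)=1)=\mu(a_n)$ lies in $[1/8,7/8]$, so $\nu$ gives mass in $[1/8,7/8]$ to $\delta_{\bar 1}$. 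Hence $\nu$ is a nontrivial combination of $\delta_{\bar0}$ and $\delta_{\bar1}$, so it is not ergodic, and the set of extreme points is not closed. The main step requiring care is the ergodicity of the coded measures $\nu_n$. \autoref{lem:Connes-Weiss-variant} already supplies ergodicity of $M_n$, so this reduces to the factor argument, which should be routine.
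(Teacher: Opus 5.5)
Your proof is correct and follows essentially the same route as the paper: it uses the Connes--Weiss-type ergodic systems with almost invariant $H$-fixed elements of measure in $[1/8,7/8]$, codes them into $2^{G/H}$ via cylinder sets, and shows they converge to a nontrivial combination of $\delta_{\bar 0}$ and $\delta_{\bar 1}$. The paper instead enumerates coset representatives and passes to a subsequence with $\mu_k(a_k)\to\lambda$, where you use increasing dense finite sets and a weak$^*$ limit point; also drop your opening claim that the limit is $\frac12(\delta_{\bar0}+\delta_{\bar1})$, since what you actually obtain (and need) is $\lambda\delta_{\bar1}+(1-\lambda)\delta_{\bar0}$ for some $\lambda\in[1/8,7/8]$.
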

\begin{proof}
One implication is just \autoref{prop:prop-T-implies-Bauer}.
Note that $\cM_\inv\big(2^{G/H})$ is never empty, since it contains the product measures.

For the converse, suppose $G$ does not have Property (T).
Let $(g_kH : k\in\N)$ be an enumeration of $G/H$.
By \autoref{lem:Connes-Weiss-variant}, for every $k\in\N$ we may find a continuous ergodic action $G\actson (M_k,\mu_k)$ by automorphisms on a probability algebra and an element $a_k\in \Fix_H(M_k)$ such that $\mu_k(a_k\triangle g_i a_k)\leq 1/k$ for every $i<k$, and $\mu_k(a_k)\in [1/8,7/8]$.
Up to passing to a subsequence, we may assume that the measures $\mu_k(a_k)$ converge to some $\lambda\in [1/8,7/8]$.

Given $n\in \N$ and a finite sequence $z\in 2^n$ of zeros and ones, let $U_z\subseteq 2^{G/H}$ denote the basic clopen set given by
\begin{equation*}
U_z = \{x\in 2^{G/H} : \forall i<n,\, x_{g_iH} = z_i\}.
\end{equation*}
Then for each $k\in \N$, let $\nu_k\in \cM_\inv\big(2^{G/H}\big)$ be the probability measure determined by the conditions
\begin{equation*}
\nu_k(U_z) = \mu_k\Big(\bigcap_{i<n}g_i a_k^{z_i}\Big)
\end{equation*}
for every $z\in 2^n$, $n\in \N$, where $a_k^1$ denotes $a_k$ and $a_k^0$ denotes its complement in $M_k$.
The measure $\nu_k$ is indeed $G$-invariant because $a_k$ is $H$-invariant.
(If the action $G\actson M_k$ were represented by a Borel pmp system $G\actson (\Omega,\mu_k)$, the measure $\nu_k$ would be the pushforward of $\mu_k$ under the equivariant map $\theta_k\colon \Omega\to 2^{G/H}$, $(\theta_k(\omega))_{g_iH} = \chi_{g_ia_k}(\omega)$.)
Since the action $G\actson M_k$ is ergodic, so are the measures $\nu_k$.

Finally, let $\bf0$ and $\bf1$ denote the two constant sequences of zeros and ones in $2^{G/H}$, and let $\delta_{\bf0}$ and $\delta_{\bf1}$ be the corresponding Dirac measures.
We claim that the ergodic measures $\nu_k$ converge to the non-ergodic measure $\nu = \lambda\delta_{\bf1} + (1-\lambda)\delta_{\bf0}$.
The latter is characterized by the property that $\nu(U_{{\bf1}_n}) = \lambda$ and $\nu(U_{{\bf0}_n}) = 1-\lambda$ for every $n\in\N$, where ${\bf1}_n\in 2^n$ denotes the constant sequence of ones, and ${\bf0}_n\in 2^n$ denotes the constant sequence of zeros.
Now, since $\mu_k(a_k\triangle g_i a_k)\to 0$ for every fixed $i\in \N$ and since $\mu_k(a_k)\to \lambda$, we have, for every $n\in\N$,
\begin{equation*}
\lim_k \nu_k(U_{{\bf1}_n}) = \lim_k\mu_k\Big(\bigcap_{i<n}g_ia_k\Big) = \lim_k\mu_k(a_k) = \lambda.
\end{equation*}
Similarly, $\lim_k\nu_k(U_{{\bf0}_n}) = 1-\lambda$.
This proves our claim, and lets us conclude that the simplex $\cM_\inv\big(2^{G/H}\big)$ is not Bauer.
\end{proof}

We end this section with a lemma that we will need later.

\begin{lem}
\label{lem:relative-(T)}
Let $G$ be a topological group, let $H$ be a closed subgroup with relative Property (T) in $G$, and let $\delta>0$ be arbitrary.
Let $Q\subseteq G$ and $\epsilon>0$ satisfy the conditions of \autoref{dfn:relative-(T)} with respect to $\delta$.
Then the following hold:
\begin{enumerate}
\item\label{i:relative-(T)-pmp}
For every continuous action $G\actson M$ by automorphisms on the measure algebra of a probability space, whenever 
\begin{equation}
\label{eq:relative-(T)-pmp}
\max_{g\in Q}\mu(a\triangle ga) < \epsilon^2/4
\end{equation}
for some $a\in M$, there is $b\in \Fix_H(M)$ such that
\begin{equation*}
\mu(a\triangle b) \leq \delta^2.
\end{equation*}

\item\label{i:relative-(T)-measures}
Assume moreover that $H$ is an open subgroup of $G$.
Then for any set $x$ and every clopen set $A\subseteq (2^x)^{G/H}$ there is a finite set $F\subseteq Q$ such that for every $\mu\in \cM_\inv\big((2^x)^{G/H}\big)$ we have:
\begin{equation}
\label{eq:relative-(T)-measures}
\inf_{[B]_\mu\in\Fix_H(M_\mu)} \mu(A\triangle B) \leq \frac{4}{\epsilon^2}\max_{g\in F}\mu(A\triangle gA) + \delta^2,
\end{equation}
where $M_\mu = \MALG\big((2^x)^{G/H},\mu\big)$.
\end{enumerate}
\end{lem}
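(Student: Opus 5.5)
For part (i), I would pass to the Koopman representation. Let $\cH = L^2(M,\mu)$ with the induced continuous unitary $G$-action, and set $v = 2\chi_a - 1$. This is a unit vector, and
\begin{equation*}
\|v-gv\|^2 = 4\mu(a\triangle ga).
\end{equation*}
So the hypothesis \autoref{eq:relative-(T)-pmp} gives $\max_{g\in Q}\|v-gv\|<\epsilon$. By the choice of $(Q,\epsilon)$, there is an $H$-invariant unit vector $w$ with $\|v-w\|\leq\delta$.

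Next I would convert $w$ into an $H$-invariant set. Since $v$ is real-valued, I replace $w$ by its real part, which is still $H$-invariant and no farther from $v$. I then take $b=\llbracket \Re w\geq 0\rrbracket$. Because the action is by measure algebra automorphisms, $b\in\Fix_H(M)$. On $a\triangle b$ the value $v=\pm1$ and $\Re w$ have opposite signs (or $\Re w=0$), so $|v-\Re w|\geq 1$ there. Hence
\begin{equation*}
\mu(a\triangle b)\leq \|v-\Re w\|^2\leq\delta^2.
\end{equation*}
The only point needing care is this sign-rounding estimate, and it is routine.

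For part (ii), I would apply (i) to $M_\mu$ with the action of $G$ on $(2^x)^{G/H}$. This action is continuous on the measure algebra: $H$ is open, so the coordinates are permuted by a continuous action on a discrete set, and clopen sets have open stabilizers. Compactness of $Q$ and continuity of $g\mapsto gA$ in the clopen algebra (the orbit of $A$ under $Q$ is finite because $\mathrm{Stab}(A)$ is open) yield a finite $F\subseteq Q$ with $\{gA:g\in Q\}=\{gA:g\in F\}$. Consequently $\max_{g\in Q}\mu(A\triangle gA)=\max_{g\in F}\mu(A\triangle gA)$ uniformly in $\mu$.

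Now fix $\mu$ and set $m=\max_{g\in F}\mu(A\triangle gA)$. If $m<\epsilon^2/4$, part (i) gives the bound $\delta^2$. Otherwise the right-hand side of \autoref{eq:relative-(T)-measures} is at least $1+\delta^2$, while the left-hand side is at most $1$ (take $B=\emptyset$, which is $H$-invariant). This proves \autoref{eq:relative-(T)-measures}. I expect the main obstacle to be only the finiteness of $F$, and the open stabilizer argument handles it.
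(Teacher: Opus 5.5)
Your proposal is correct and follows essentially the same route as the paper. For (i) you use the Koopman representation with $v=2\chi_a-1$ and round the $H$-invariant vector to $b=\llbracket \Re w\geq 0\rrbracket$; for (ii) you use compactness of $Q$ together with an open subgroup fixing $A$ (the paper takes $\bigcap_{i<n}g_iHg_i^{-1}$) to obtain the finite set $F$. The case split you make explicit in (ii) — the trivial bound $1$ when the maximum is at least $\epsilon^2/4$ — and your continuity check for $G\actson M_\mu$ are exactly what the paper leaves implicit when it says the inequality ``follows directly from (i)''.
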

\begin{proof}
We first prove \autoref{i:relative-(T)-pmp}.
Let $M = \MALG(\Omega,\mu)$ be the measure algebra of a probability space, and let $G\actson M$ be a continuous action by automorphisms.
We consider the Koopman representation $G\actson \cH = L^2(\Omega,\mu)$.
Given $a\in M$, we let $v_a = 2\chi_a -1 = \chi_a - \chi_{\Omega\setminus a}$.
Then $\|v_a\| = 1$, and for every $g\in G$ we have
\begin{equation*}
\| v_a - gv_a\|^2 = 4\| \chi_a - g\chi_a\|^2 = 4\mu(a\triangle ga).
\end{equation*}
If \autoref{eq:relative-(T)-pmp} holds, then $\max_{g\in Q} \| v_a - gv_a\| < \epsilon$. Therefore, by choice of $Q$ and $\epsilon$, there is an $H$-invariant unit vector $w\in \cH$ such that
\begin{equation*}
\|v_a - w\| \leq \delta.
\end{equation*}
Now let $b=\{\omega\in\Omega : \Re(w)\geq 0\}$.
Clearly, $b\in \Fix_H(M)$.
On the other hand, on $a\triangle b$ we have $|v_a-w|\geq 1$.
Therefore,
\begin{equation*}
\mu(a\triangle b) \leq \int_{a\triangle b}|v_a-w|^2 d\mu \leq \|v_a-w\|^2 \leq \delta^2,
\end{equation*}
as desired.

To prove \autoref{i:relative-(T)-measures}, we assume now that $H$ is open and we let $A\subseteq (2^x)^{G/H}$ be a clopen subset.
In particular, $A$ depends only on finitely many coordinates of the index set $G/H$, say $g_iH$ for $i<n$.
Since $Q$ is compact and $\bigcap_{i<n}g_iHg_i^{-1}$ is open, there is a finite subset $F\subseteq Q$ such that for every $g\in Q$ there is $h\in F$ with $gg_iHg_i^{-1} = hg_iHg_i^{-1}$ for all $i<n$.
Therefore, for every $g\in Q$ there is $h\in F$ with $gA = hA$, and we have:
\begin{equation*}
\max_{g\in Q}\mu(A\triangle gA) = \max_{g\in F}\mu(A\triangle gA).
\end{equation*}
The inequality \autoref{eq:relative-(T)-measures} then follows directly from \autoref{i:relative-(T)-pmp}, applied to the action of $G$ on $M_\mu$.
\end{proof}

\section{Preliminaries: affine logic}

From \autoref{sec:face-preserving-theories} onward we will assume familiarity with continuous logic, and revisit many aspects of affine logic developed in \cite{BITaffine}.
In this section, we recall several basic definitions, results, and notation.

Throughout this section, $\cL$ denotes a metric language with concave continuity moduli (see \cite[\textsection2]{BITaffine}).

\subsection{Type spaces in continuous and affine logic}
For every tuple of variables $x$, we denote:
\begin{itemize}
\item $\cL^\cont_x$: the set of continuous logic $\cL$-formulas with free variables in $x$.
\item $\cL^\qf_x$: the set of quantifier-free continuous logic $\cL$-formulas in those variables (a more precise but cumbersome notation would be $\cL^{\qf,\cont}_x$).
\item $\cL^\aff_x$: the set of \emph{affine} $\cL$-formulas with variables in $x$, i.e., continuous logic formulas whose only connectives are linear combinations and constants.
\item $\cL^{\qf,\aff}_x$: the set of quantifier-free, affine $\cL$-formulas in the variables $x$.
\end{itemize}

Now let $T$ be a continuous logic $\cL$-theory, i.e., a set of conditions of the form $\varphi\leq \psi$ with $\varphi,\psi\in\cL^\cont_\emptyset$.
We may identify two continuous $\cL$-formulas in the variables $x$ if they take the same values on every $x$-tuple of models of $T$.
The corresponding quotients $\cL_x^\cont(T) = {\cL_x^\cont/\equiv_T}$ and ${\cL_x^\qf(T) = \cL_x^\qf/\equiv_T}$ are naturally endowed with a structure of order unit vector lattices (see \autoref{sec:background-compact-convex}). The dual spaces
\begin{equation*}
\tS_x^\cont(T) = \Spec\big(\cL_x^\cont(T)\big),\quad \tS_x^\qf(T) = \Spec\big(\cL_x^\qf(T)\big)
\end{equation*}
are the spaces of \emph{types} and \emph{quantifier-free types} of $T$ in the variables $x$, respectively.

Similarly, if $T$ is an affine $\cL$-theory, the quotient $\cL_x^\aff(T) = {\cL_x^\aff/\equiv_T}$, as well as ${\cL_x^{\qf,\aff}(T) = \cL_x^{\qf,\aff}/\equiv_T}$, form order unit spaces. The state spaces
\begin{equation*}
\tS_x^\aff(T) = S\big(\cL_x^\aff(T)\big),\quad \tS_x^\qf(T) = S\big(\cL_x^{\qf,\aff}(T)\big)
\end{equation*}
are the corresponding spaces of \emph{affine types} and \emph{quantifier-free (affine) types} of $T$ in the variables $x$.
The \emph{type} (affine or quantifier-free, respectively) of a tuple $a\in M^x$ in a model $M\models T$ is denoted by $\tp^\aff(a)$ or $\tp^\qf(a)$.
It is the state sending a formula $\varphi(x)$ of the appropriate kind to the value $\varphi(a)\in \bR$.

\begin{rmk}
For affine theories (which are, in particular, continuous theories), the canonical map $\Spec\big(\cL^\qf_x(T)\big) \to S\big(\cL^{\qf,\aff}_x(T)\big)$ is a homeomorphism. Indeed, a quantifier-free continuous type is determined by its atomic part, and in particular by its affine part.
This justifies the use of the same notation, $S^\qf_x(T)$, for both spaces in either context.
\end{rmk}

The order unit spaces $\cL^\aff_x(T)$, $\cL^\qf_x(T)$ are in general not complete.
Their completions, which may be identified by Kadison duality with the spaces $\cA\big(\tS^\aff_x(T)\big)$, $\cA\big(\tS^\qf_x(T)\big)$, consist precisely of the \emph{affine definable predicates} of~$T$, i.e., the uniform limits of affine formulas modulo $T$. Similarly, mutatis mutandis, in continuous logic.

The sets of \emph{extreme types} of an affine theory $T$ are denoted as follows:
\begin{equation*}
\cE_x(T) = \cE\big(\tS^\aff_x(T)\big),\quad \cE^\qf_x(T) = \cE\big(\tS^\qf_x(T)\big).
\end{equation*}
An \emph{extreme completion} of $T$ is an extreme point of $\tS^\aff_0(T)$.
More generally, a \emph{face} of $T$ is an extension of $T$ whose completions form a face of $\tS^\aff_0(T)$.

An \emph{extremal model} of $T$ is a model $M\models T$ that only realizes extreme (full) types, i.e., such that $\tp^\aff(a) \in \cE_x(T)$ for every $a\in M^x$ and every tuple of variables $x$.
These models were studied in detail in \cite{BITaffine} (see also \cite{Bagheri2024p}).
In \autoref{sec:face-preserving-theories}, we will introduce and study \emph{qf-extremal} models, which omit the non-extreme types of $\tS^\qf_x(T)$.

The \emph{affine part} of a continuous logic theory $T$, denoted by $T_\aff$, is the set of all affine conditions implied by $T$.
The following lemma relates the quantifier-free type spaces of a continuous theory to those of its affine part.
Compare with \cite[Lemma~13.3]{BITaffine}.

\begin{lem}
\label{lem:E(Taff)-S(T)-S(Taff)}
For every continuous theory $T$ and every tuple of variables $x$, we have canonical inclusions:
\begin{equation*}
\cl{\cE^\qf_x(T_\aff)} \subseteq \tS^\qf_x(T) \subseteq \tS^\qf_x(T_\aff).
\end{equation*}
\end{lem}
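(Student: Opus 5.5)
We prove the two inclusions separately; in both, $\tS^\qf_x(T)$ is identified with the set of quantifier-free types $\tp^\qf(a)$ realized by tuples $a$ in models of $T$.

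\emph{The inclusion $\tS^\qf_x(T) \subseteq \tS^\qf_x(T_\aff)$.} We use the remark above: a quantifier-free type, whether continuous or affine, is determined by its values on affine quantifier-free formulas. Every model of $T$ is a model of $T_\aff$, so every element of $\tS^\qf_x(T)$, namely $\tp^\qf(a)$ with $a\in M^x$ and $M\models T$, is also a quantifier-free type of $T_\aff$. The map sending this type to its restriction to $\cL^{\qf,\aff}_x$ is injective by the remark. It is also continuous, since evaluation at each formula is continuous. As $\tS^\qf_x(T)$ is compact, the map is a topological embedding onto a closed subset of $\tS^\qf_x(T_\aff)$.

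\emph{The inclusion $\cl{\cE^\qf_x(T_\aff)}\subseteq \tS^\qf_x(T)$.} Since the image of $\tS^\qf_x(T)$ is closed, it suffices to show $\cE^\qf_x(T_\aff)\subseteq \tS^\qf_x(T)$. Let $K$ denote the image of $\tS^\qf_x(T)$ in $\tS^\qf_x(T_\aff)$, and let $C=\cl{\mathrm{conv}}(K)$. The plan has three steps.

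First, we show $C=\tS^\qf_x(T_\aff)$. If not, Hahn--Banach together with Kadison duality gives an affine continuous function on $\tS^\qf_x(T_\aff)$ separating a point $p\notin C$ from $C$. Such a function is a uniform limit of affine quantifier-free formulas, so we obtain $\varphi\in\cL^{\qf,\aff}_x$ and $r$ such that $\varphi\le r$ on $K$ but $\varphi(p)>r$. The condition $\sup_x\varphi(x)\le r$ is then an affine condition implied by $T$. Indeed, every tuple in every model of $T$ has its quantifier-free type in $K$, where $\varphi\le r$; it remains to check that $\sup$ is an affine quantifier, which it is in affine logic. Hence this condition belongs to $T_\aff$. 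But $p$ is realized in some model of $T_\aff$, so $\varphi(p)\le r$, a contradiction.

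Second, by Milman's converse to the Krein--Milman theorem, every extreme point of $C=\cl{\mathrm{conv}}(K)$ lies in the closed set $K$. Therefore $\cE^\qf_x(T_\aff)\subseteq K$, and taking closures gives the result.

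The main point requiring care is the first step. It needs the separating functional to be approximated by genuine quantifier-free affine formulas, and it needs the universally quantified condition $\sup_x\varphi\le r$ to count as an affine consequence of $T$. Both hold by the definitions in \cite{BITaffine}, and the argument mirrors \cite[Lemma~13.3]{BITaffine}, with full types replaced by quantifier-free types.
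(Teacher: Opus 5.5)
Your proof is correct. It is the paper's argument read on the other side of Kadison duality.

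The paper notes that $\cL^{\qf,\aff}_x(T_\aff)\to\cL^\qf_x(T)$ is an embedding of order unit spaces. Its dual $\cM\bigl(\tS^\qf_x(T)\bigr)\to\tS^\qf_x(T_\aff)$ is therefore an affine continuous surjection, which restricts to the inclusion on Dirac masses. The paper then uses that every extreme point of the target is the image of an extreme point of the source, i.e.\ of a point of $\tS^\qf_x(T)$.

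Your Hahn--Banach step is exactly the proof that this dual map is surjective. Your observation that $\sup_x\varphi(x)\le r$ is an affine consequence of $T$, and hence belongs to $T_\aff$, is what makes the map an order embedding; the paper leaves this implicit. Your use of Milman's theorem replaces the lifting of extreme points through the surjection. This is the same mechanism the paper itself uses in \autoref{lem:S(T)-cco-realized}.

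The paper's version is shorter once the duality machinery is in place. Yours is more elementary and makes the role of $T_\aff$ explicit.
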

\begin{proof}
We have a canonical embedding of order unit spaces, $\iota^\qf_x\colon \cL^{\qf,\aff}_x(T_\aff)\to \cL^\qf_x(T)$.
As the latter is a vector lattice, the Kadison dual map has the form:
\begin{equation*}
\iota^{\qf*}_x\colon \cM\big(\tS^\qf_x(T)\big) \to \tS^\qf_x(T_\aff).
\end{equation*}
The restriction of $\iota^{\qf*}_x$ to $\tS^\qf_x(T)$ is the map taking a continuous quantifier-free type to its affine part, and it is injective because quantifier-free types are determined by their atomic parts.
We thus see it as an inclusion $\tS^\qf_x(T) \subseteq \tS^\qf_x(T_\aff)$.
Moreover, the image of this inclusion (being the restriction of $\iota^{\qf*}_x$ to the set of extreme points of its domain) must contain the extreme points of $\tS^\qf_x(T_\aff)$.
Since moreover $\tS^\qf_x(T)$ is compact, we have $\cl{\cE^\qf_x(T_\aff)} \subseteq \tS^\qf_x(T)$.
\end{proof}

Given a subset $A$ of an $\cL$-structure $M$, we denote by $\cL(A)$ the language $\cL$ expanded with constants for the elements of $A$, and we see $M$ as an $\cL(A)$-structure in the obvious way.
The \emph{elementary diagram} of $A$ in $M$ (i.e., the continuous logic theory of the $\cL(A)$-structure $M$), will be denoted by $D^\cont(A)$.
Similarly, the \emph{affine diagram} of $A$ in $M$ will be denoted by $D^\aff(A)$.
These depend of course on $M$, although the notation does not reflect it.

The type spaces associated to the diagrams $D^\cont(A)$ and $D^\aff(A)$ are the spaces of \emph{types over $A$} (or \emph{types with parameters from $A$}).
The only type spaces with parameters that we shall consider in this paper are the quantifier-free type spaces of the affine diagrams, which we may denote by
\begin{equation*}
\tS^{\qf,\aff}_x(A) = \tS^\qf_x\big(D^\aff(A)\big),\quad \cE^\qf_x(A) = \cE^\qf_x\big(D^\aff(A)\big).
\end{equation*}
The ambient structure $M$ will always be clear from the context.

An affine theory $T$ is \emph{simplicial} if all its (full) type spaces $\tS^\aff_x(T)$ are Choquet simplices.
It is a \emph{Bauer theory} if $\tS^\aff_x(T)$ is a Bauer simplex for every tuple of variables $x$, and it is a \emph{Poulsen theory} if it is simplicial and $\cE_x(T)$ is dense in $\tS^\aff_x(T)$ for every tuple $x$.
In each case, it is enough to verify the condition for finite tuples~$x$.
In \autoref{sec:qf-simplicial} and \autoref{sec:dichotomy} we will introduce quantifier-free counterparts of these definitions.

As mentioned in the introduction, an important result of \cite{BITaffine} is that every affinely complete, simplicial theory is Bauer or Poulsen.
We aim to generalize this and other results from full type spaces to quantifier-free type spaces.
Accordingly, we shift our focus from general affine theories to universal affine theories.

\subsection{Universal theories}
Let $T$ be a continuous logic $\cL$-theory.
We will use the following definitions and notation:
\begin{itemize}
\item The \emph{universal part} of $T$, denoted by $T_\forall$, is the set of conditions of the form 
\begin{equation*}
\sup_x \varphi(x) \leq 0
\end{equation*}
implied by $T$, 
where $\varphi\in \cL^\qf_x$.
\item The theory $T$ is \emph{universal} if it is equivalent to $T_\forall$.
\item The \emph{universal affine part} of $T$, denoted by $T_{\forall^\aff}$, is the set of consequences of $T$ of the form $\sup_x \varphi(x) \leq 0$, where $\varphi\in\cL^{\qf,\aff}_x$.
\end{itemize}

We may identify two theories if they are equivalent; for instance, we write $T=T_\forall$ if $T$ is universal.

The models of $T_\forall$ are precisely the substructures of models of $T$. The following affine analogue of this fact was observed early on by Bagheri and Safari (see \cite[Lemma~4.2]{Bagheri2014a}).

\begin{lem}
Let $T$ be an affine theory.
Then the models of $T_{\forall^\aff}$ are precisely the substructures of models of $T$.
\end{lem}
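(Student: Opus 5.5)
One inclusion is immediate: if $M\subseteq N$ with $N\models T$, then every condition $\sup_x\varphi(x)\leq 0$ with $\varphi$ quantifier-free affine holds in $N$, hence in $M$, because quantifier-free formulas take the same values in $M$ and in $N$ on tuples from $M$ and the supremum over $M$ is at most the supremum over $N$. So substructures of models of $T$ are models of $T_{\forall^\aff}$.

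For the converse, let $M\models T_{\forall^\aff}$. I will show that the theory $T\cup D^{\qf}(M)$ is satisfiable, where $D^{\qf}(M)$ is the quantifier-free (atomic) diagram of $M$: all conditions $\varphi(a)=r$ for $\varphi$ atomic, $a$ in $M$, and $r$ the value of $\varphi(a)$ in $M$. A model of this theory contains a copy of $M$ as a substructure, which is what we want. By compactness it suffices to check finite subsets. A finite subset involves finitely many parameters $a\in M^x$ and finitely many atomic formulas $\varphi_1,\dots,\varphi_k$ with values $r_i=\varphi_i^M(a)$, and it is enough to show that for each $\epsilon>0$ some model of $T$ has a tuple $b$ with $|\varphi_i(b)-r_i|\leq\epsilon$ for all $i$.

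The key step, and the main obstacle, is that we may only use \emph{affine} consequences of $T$, whereas the natural approach (looking at $\sup_x\max_i|\varphi_i(x)-r_i|$) uses non-affine connectives. I will get around this with convexity and Hahn--Banach in the type space. Consider the set $C=\{(\varphi_1(b),\dots,\varphi_k(b)) : N\models T,\ b\in N^x\}\subseteq\bR^k$. Because $T$ is affine, it is closed under direct integrals/ultraproduct-style convex combinations; concretely, the image of $\tS^\aff_x(T)$ under the continuous affine map $p\mapsto(\varphi_i(p))_i$ is a compact convex set $K$. Every point of $K$ is realized in some model of $T$, since affine types over $T$ are realized. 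Suppose $r=(r_i)\notin K$. Separation gives $\lambda\in\bR^k$ and $c$ with $\sum\lambda_i s_i\leq c<\sum\lambda_ir_i$ for all $s\in K$. Then $T$ implies the affine universal condition $\sup_x\sum_i\lambda_i\varphi_i(x)\leq c$, which lies in $T_{\forall^\aff}$ and fails in $M$ at $a$. This is a contradiction, so $r\in K$, and in fact it is realized exactly, with no $\epsilon$ needed.

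The point to verify carefully is the description of $K$: the realized tuples of values coincide with the image of the compact convex set $\tS^\aff_x(T)$. This follows from the facts recalled in the preliminaries, namely that $\tS^\aff_x(T)=S(\cL^\aff_x(T))$ and that each state is the affine type of a tuple in some model of $T$, which is the affine completeness theorem of \cite{BITaffine}. With that in hand, compactness of continuous logic finishes the argument.
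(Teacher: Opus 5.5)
Your proof is correct and takes the same route as the paper, which simply invokes the classical diagram argument together with Bagheri's Compactness Theorem for Affine Logic. Your finite-dimensional separation step makes that affine compactness step explicit: it uses that the realized value tuples form the compact convex image of $\tS^\aff_x(T)$, every state being realized in some model of $T$.
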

\begin{proof}
One implication is clear, and the converse follows exactly as in the classical or continuous setting, using Bagheri's Compactness Theorem for Affine Logic.
\end{proof}

The previous lemma may be rephrased as follows: if $T$ is an affine theory, then $T_{\forall^\aff} = T_\forall$.
Slightly more generally:

\begin{lem}
\label{lem:forall-aff}
For any continuous logic theory $T$, we have $T_{\forall^\aff} = (T_\aff)_\forall$.
\end{lem}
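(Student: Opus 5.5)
We need to show $T_{\forall^\aff} = (T_\aff)_\forall$, i.e.\ that the two theories have the same models. The plan is to prove the two inclusions between their classes of models.

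For one direction, every condition in $T_{\forall^\aff}$ has the form $\sup_x\varphi(x)\leq 0$ with $\varphi$ quantifier-free affine, and is implied by $T$. Such a condition is itself an affine condition, so it belongs to $T_\aff$. It is also universal, hence it belongs to $(T_\aff)_\forall$. Thus $(T_\aff)_\forall \models T_{\forall^\aff}$.

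For the converse, we need every model of $T_{\forall^\aff}$ to be a model of $(T_\aff)_\forall$. By the preceding lemma applied to the affine theory $T_\aff$, we have $(T_\aff)_\forall = (T_\aff)_{\forall^\aff}$. So it suffices to show that every universal affine consequence of $T_\aff$ is a consequence of $T$. This holds because $T\models T_\aff$, and $T_{\forall^\aff}$ is by definition the set of all universal affine consequences of $T$. Hence $(T_\aff)_{\forall^\aff}\subseteq T_{\forall^\aff}$, and so every model of $T_{\forall^\aff}$ satisfies $(T_\aff)_{\forall^\aff} = (T_\aff)_\forall$.

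Combining the two directions gives the equivalence. The only substantive input is the previous lemma, which rests on Bagheri's compactness theorem for affine logic. It is exactly what lets us replace arbitrary quantifier-free matrices in $(T_\aff)_\forall$ by affine ones, and that replacement is the crux of the argument.
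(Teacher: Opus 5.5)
Your proof is correct and is essentially the paper's argument. The paper writes it as the one-line chain $T_{\forall^\aff} = (T_\aff)_{\forall^\aff} = (T_\aff)_\forall$: the first equality holds because $T$ and $T_\aff$ have the same affine consequences, and the second is the preceding lemma applied to the affine theory $T_\aff$. Your two inclusions are that chain written out.
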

\begin{proof}
$T_{\forall^\aff} = (T_\aff)_{\forall^\aff} = (T_\aff)_\forall$.
\end{proof}

In particular, an affine theory $T$ is universal (as a continuous logic theory) if and only if it is equivalent to its universal affine part, $T_{\forall^\aff}$.

\begin{rmk}
In general $T_{\forall^\aff}\subseteq (T_\forall)_\aff$, but the inclusion can be strict, as is the case when $T$ is the continuous theory of two named points at distance one.
\end{rmk}

We will also employ the following notation:
\begin{itemize}
\item The \emph{universal theory} of a structure $M$ is $\Th^\cont_\forall(M) = \big(\Th^\cont(M)\big)_\forall$, that is, the set of universal conditions that hold in $M$.
\item The \emph{universal diagram} of $A\subseteq M$ is $D^\cont_\forall(A) = \big(D^\cont(A)\big)_\forall$.
\item Similarly, the \emph{universal affine theory} of $M$ is $\Th^\aff_\forall(M) = \big(\Th^\aff(M)\big)_\forall$, the set of universal affine conditions true in $M$.
\item The \emph{universal affine diagram} of $A\subseteq M$ is $D^\aff_\forall(A) = \big(D^\aff(A)\big)_\forall$.
\end{itemize}
Clearly, $\Th^\aff_\forall(M) = \big(\Th^\cont(M)\big)_{\forall^\aff}$, but possibly $\Th^\aff_\forall(M) \subsetneq \big(\Th^\cont_\forall(M)\big)_\aff$.

Recall that a class of $\cL$-structures $\cK$ has the \emph{joint embedding property} if any two structures in $\cK$ can be $\cL$-embedded jointly into another member of $\cK$.
The class $\cK$ has the \emph{amalgamation property} if this property holds \emph{over members of $\cK$}.
More precisely: if for any three structures $A,B_0,B_1\in\cK$ and $\cL$-embeddings $f_0\colon A\to B_0$, $f_1\colon A\to B_1$, there are $C\in \cK$ and $\cL$-embeddings $g_0\colon B_0\to C$, $g_1\colon B_1\to C$ with $g_0 f_0 = g_1 f_1$.

Following Hrushovski \cite{HruPatterns}, we will employ the following terminology in the setting of universal theories.

\begin{dfn}
Let $T$ be a universal continuous logic theory.
\begin{itemize}
\item $T$ is \emph{irreducible} if its models have the joint embedding property.
\item $T$ is a \emph{Robinson theory} if its models have the amalgamation property.
\end{itemize}
\end{dfn}

Irreducibility is the appropriate notion of completeness for universal theories, as explained by the following basic fact.
Note however that an irreducible universal theory need not be maximal among consistent universal theories.

\begin{lem}
\label{lem:irreducible-theories}
Let $T$ be a consistent, universal, continuous logic $\cL$-theory.
Then the following are equivalent:
\begin{enumerate}
\item\label{i:lem:irred:irred} $T$ is irreducible.
\item\label{i:lem:irred:Th-cont-forall(M)} There is an $\cL$-structure $M$ such that $T = \Th^\cont_\forall(M)$.
\item\label{i:lem:irred:disjunction} Whenever $\varphi,\psi\in\cL^\qf_x$ are such that $T\models \sup_x\varphi(x) \wedge \sup_x\psi(x) \leq 0$, we have $T\models \sup_x\varphi(x) \leq 0$ or $T\models \sup_x\psi(x) \leq 0$.
\end{enumerate}
If moreover $T$ is an affine theory, then the preceding conditions are also equivalent to the following:
\begin{enumerate}[resume]
\item\label{i:lem:irred:Th-aff-forall(M)} There is an $\cL$-structure $M$ such that $T = \Th^\aff_\forall(M)$.
\item\label{i:lem:irred:sum} Whenever $\varphi,\psi\in\cL^\qf_x$ are such that $T\models \sup_x\varphi(x) + \sup_x\psi(x) \leq 0$, we have $T\models \sup_x\varphi(x) \leq 0$ or $T\models \sup_x\psi(x) \leq 0$.
\end{enumerate}
\end{lem}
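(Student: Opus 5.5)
The natural order is the cycle (i)$\Rightarrow$(iii)$\Rightarrow$(ii)$\Rightarrow$(i) in the continuous case, followed by the affine additions (iv) and (v).

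For (i)$\Rightarrow$(iii), suppose $T\not\models\sup_x\varphi\leq 0$ and $T\not\models\sup_x\psi\leq 0$. Then there are models $M_0,M_1\models T$ with tuples $a\in M_0^x$ and $b\in M_1^x$ such that $\varphi(a)>0$ and $\psi(b)>0$. By the joint embedding property, both embed in some $N\models T$. Since $\varphi$ and $\psi$ are quantifier-free, their values are preserved under embeddings, so $\varphi(a)>0$ and $\psi(b)>0$ in $N$. Hence $\sup_x\varphi\wedge\sup_x\psi>0$ in $N$, contradicting the hypothesis.

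For (iii)$\Rightarrow$(ii), I would show that $T\cup\{\sup_x\varphi\geq r\}$ is consistent whenever $T\not\models\sup_x\varphi\leq r'$ for some $r'<r$. Here I reduce to the form $\sup\leq 0$ by subtracting constants. Property (iii) says that finitely many such non-consequences can be realized simultaneously. Indeed, $\min_i(\sup_x\varphi_i-r_i)$ is equivalent to $\sup_{x_1\ldots x_n}\min_i(\varphi_i(x_i)-r_i)$ with disjoint variable tuples, which is again universal with a quantifier-free matrix, so iterating the binary case handles finite sets. By compactness, there is then a single model $M\models T$ in which every universal condition not implied by $T$ fails. Strictly, this needs a small margin: for each non-consequence $\sup\varphi\leq 0$ there is $\epsilon>0$ with $T\not\models\sup\varphi\leq\epsilon$, and one uses that margin. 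It follows that $\Th^\cont_\forall(M)=T$.

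For (ii)$\Rightarrow$(i), take $M$ with $\Th^\cont_\forall(M)=T$ and models $A,B\models T$. By the standard diagram argument, the quantifier-free diagrams of $A$ and $B$, on disjoint constants, together with $\Th^\cont(M)$, are finitely satisfiable. A finite fragment asserts the existence in $M$ of tuples with prescribed quantifier-free values. Its failure in $M$ would be a universal condition true in $M$, hence in $T$, hence in $A$ or $B$, which is impossible; here one uses that existentials over disjoint variables combine. A model of the whole set is a model of $T$ into which both $A$ and $B$ embed.

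In the affine case, (iv)$\Rightarrow$(ii)-type reasoning works the same way, except that we use Bagheri's affine compactness and $T=T_{\forall^\aff}$ from \autoref{lem:forall-aff}. The implication (i)$\Rightarrow$(v) uses the same joint embedding argument as (i)$\Rightarrow$(iii), now applied to a sum: realize positive values of $\varphi$ and $\psi$ in a common model, so that $\sup\varphi+\sup\psi>0$.

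The main obstacle I expect is (v)$\Rightarrow$(iv), which must build $M$ with $\Th^\aff_\forall(M)=T$ using only affine compactness. There the finite-combination step must use sums, $\sup_{x,y}(\varphi(x)+\psi(y))=\sup\varphi+\sup\psi$, instead of minima. To realize finitely many non-consequences at once, take a sum of suitably shifted formulas and invoke (v) repeatedly. A non-consequence of a sum then yields one model in which the sum is positive, but not each summand. I would handle this by rescaling each summand with large weights, which gives one summand failing for each weighting. Alternatively, and more likely to succeed, I would take the direct integral or convex-combination closure of universal affine theories: affine universal theories of products or ultraproducts of convex combinations of models. Concretely, I would use that $T_{\forall^\aff}$ of a family equals that of a single structure built by an ultraproduct of the members, and define $M$ as an ultraproduct over the non-consequences, each witnessed by (v).
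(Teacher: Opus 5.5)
Your continuous-logic cycle matches the paper's argument. (i)$\Rightarrow$(iii) and (i)$\Rightarrow$(v) are the same joint-embedding observation. (iii)$\Rightarrow$(ii) is compactness plus (iii); the paper uses the thresholds $r_\varphi=\sup\{\varphi(a):N\models T,\ a\in N^x\}$ where you use margins, and your opening sentence there should read ``for every $r'<r$'', not ``for some''. Your diagram argument for (ii)$\Rightarrow$(i) just reproves inline that models of $\Th^\cont_\forall(M)$ embed into models of the complete theory $\Th^\cont(M)$, which have joint embedding.

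\textbf{The gap is (v)$\Rightarrow$(iv), which you do not actually prove.} Iterating (v) over disjoint variables only yields $T\not\models\sum_i\lambda_i(\sup_x\varphi_i-r_i)\leq-\delta$ for all weights $\lambda_i>0$ and $\delta>0$. That is, \emph{for each weighting separately} some model makes the weighted sum almost non-negative. No rescaling turns this into one model with all $\sup_x\varphi_i\geq r_i$. For instance, a set of value vectors such as $\{(1,-\tfrac12),(-\tfrac12,1)\}$ meets every half-plane $\{v:\lambda\cdot v>0\}$ with $\lambda\geq 0$, $\lambda\neq0$, yet misses the positive quadrant. Your ultraproduct route is circular. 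An ultraproduct over the non-consequences realizes all of them only if the sets $\{j: M_j\models\sup_x\varphi>\epsilon_\varphi\}$ have the finite intersection property, which is exactly the simultaneous realization in question. Also, the affine universal theory of an ultraproduct can be strictly larger than the intersection of those of its factors, so your claimed identity is false.

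\textbf{The missing ingredient is convexity plus separation}, which the paper invokes as the Compactness Theorem for Affine Logic.
\begin{enumerate}
\item Since $\sup_x\varphi$ is an affine sentence, its value in $\bigoplus_j\lambda_jN_j$ is $\sum_j\lambda_j(\sup_x\varphi)^{N_j}$. Hence $C=\{((\sup_x\varphi_i)^N)_{i<n}:N\models T\}\subseteq\bR^n$ is convex, and it is compact as the image of $\tS^\aff_0(T)$.
\item If $C$ missed $\prod_{i<n}[r_{\varphi_i},\infty)$, strict separation would give $\lambda_i\geq 0$, not all zero, and $\delta>0$ with $T\models\sum_i\lambda_i(\sup_x\varphi_i-r_{\varphi_i})\leq-\delta$.
\item Let $k$ be the number of $i$ with $\lambda_i>0$. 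The previous step says that a sum of the $k$ universal sentences $\sup_x\big(\lambda_i(\varphi_i-r_{\varphi_i})+\delta/k\big)$, in pairwise disjoint variables so that partial sums are again universal, is $\leq 0$.
\item Applying (v) inductively gives $T\models\sup_x\varphi_i\leq r_{\varphi_i}-\delta/(k\lambda_i)$ for some $i$. This contradicts the definition of $r_{\varphi_i}$.
\item Compactness then yields $M\models T\cup\{\sup_x\varphi\geq r_\varphi:\varphi\in\cL^{\qf,\aff}_x\}$, and for this $M$ we get $\Th^\aff_\forall(M)=T$.
\end{enumerate}

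Your one-line ``same way'' for (iv)$\Rightarrow$(i) silently needs this same step, with sums replacing minima. An inconsistent finite fragment of $\Th^\aff(M)$ together with the two diagrams is witnessed by a positive combination, which splits over disjoint variables as a sum. Alternatively, do as the paper does: embed models of $\Th^\aff_\forall(M)$ into models of $\Th^\aff(M)$ and use joint embedding for complete affine theories.
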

\begin{proof}
The implications \autoref{i:lem:irred:irred}$\Rightarrow$\autoref{i:lem:irred:disjunction} and \autoref{i:lem:irred:irred}$\Rightarrow$\autoref{i:lem:irred:sum} are easy.

Let us prove \autoref{i:lem:irred:disjunction}$\Rightarrow$\autoref{i:lem:irred:Th-cont-forall(M)} and \autoref{i:lem:irred:sum}$\Rightarrow$\autoref{i:lem:irred:Th-aff-forall(M)}.
For each tuple of variables $x$ and formula $\varphi\in\cL^\qf_x$, let
\begin{equation*}
r_\varphi = \sup\{\varphi(a) : M\models T,\ a\in M^x\}.
\end{equation*}
The Compactness Theorem of Continuous Logic together with \autoref{i:lem:irred:disjunction} imply the theory
\begin{equation*}
T' = T \cup \{\textstyle\sup_x\varphi(x)\geq r_\varphi : \varphi\in\cL^\qf_x\}
\end{equation*}
is consistent.
Then any model $M\models T'$ satisfies $T = \Th^\cont_\forall(M)$.
Similarly, if $T$ is affine, the theory
$T'' = T \cup \{\textstyle\sup_x\varphi(x)\geq r_\varphi : \varphi\in\cL^{\qf,\aff}_x\}$
is consistent as a consequence of the Compactness Theorem for Affine Logic and \autoref{i:lem:irred:sum}, and any model $M\models T''$ satisfies $T = \Th^\aff_\forall(M)$.

Finally, for \autoref{i:lem:irred:Th-cont-forall(M)}$\Rightarrow$\autoref{i:lem:irred:irred} and \autoref{i:lem:irred:Th-aff-forall(M)}$\Rightarrow$\autoref{i:lem:irred:irred}, assume $T = \Th^\cont_\forall(M)$ and $N_0,N_1\models T$.
Then $N_0$ and $N_1$ are substructures of models of $\Th^\cont(M)$.
Since models of complete continuous theories satisfy joint embedding, the first implication follows.
The same argument applies for the second: the models of $\Th^\aff_\forall(M)$ embed into models of $\Th^\aff(M)$, and the models of complete affine theories also have joint embedding (\cite[Prop.~3.16]{BITaffine}).
\end{proof}

\begin{rmk}
For any $A\subseteq M$, the universal diagram $D^\aff_\forall(A)$ is irreducible.\end{rmk}

The following fact is a counterpart to \cite[Lemma~3.21, Lemma~5.2]{BITaffine}.

\begin{lem}
\label{lem:S(T)-cco-realized}
Let $T$ be an irreducible universal affine theory, and let $M$ be a structure with $T = \Th^\aff_\forall(M)$.
Then for every tuple of variables $x$, the following hold:
\begin{equation*}
\tS^\qf_x(T) = \cl{\co\{\tp^\qf(a) : a\in M^x\}},
\end{equation*}
\begin{equation*}
\cE^\qf_x(T) \subseteq \cl{\{\tp^\qf(a) : a\in M^x\}}.
\end{equation*}
\end{lem}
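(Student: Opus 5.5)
The first equality will follow from Kadison duality together with a separation argument; the inclusion of extreme points then follows from Milman's converse to the Krein--Milman theorem.

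Let $C = \cl{\co\{\tp^\qf(a) : a\in M^x\}} \subseteq \tS^\qf_x(T)$. It is a closed convex subset, since every $a\in M^x$ has a quantifier-free type consistent with $T = \Th^\aff_\forall(M)$. Suppose for contradiction that some $p\in \tS^\qf_x(T)\setminus C$ exists. The affine continuous functions $\cA(\tS^\qf_x(T))$ separate points from closed convex sets (Hahn--Banach in the ambient locally convex space, restricted to the state space). They are uniform limits of (classes of) quantifier-free affine formulas. So we obtain $\varphi\in\cL^{\qf,\aff}_x$ and a real $r$ with $\varphi(a)\leq r$ for all $a\in M^x$ but $\varphi(p) > r$. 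Then the universal affine condition $\sup_x (\varphi(x) - r)\leq 0$ holds in $M$, so it belongs to $\Th^\aff_\forall(M) = T$. Hence every quantifier-free type of $T$ satisfies $\varphi\leq r$, contradicting $\varphi(p)>r$.

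The only point needing care is the identification of $\tS^\qf_x(T)$ with the state space of $\cL^{\qf,\aff}_x(T)$, so that evaluation of formulas at types is exactly the affine duality pairing. This holds by definition. The identification also guarantees that $\varphi(p)\leq r$ is forced by $T$: a state of the order unit space $\cL^{\qf,\aff}_x(T)$ is positive, and $r-\varphi\geq 0$ modulo $T$.

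For the second assertion, let $Z = \{\tp^\qf(a) : a\in M^x\}$. We have just shown that $\tS^\qf_x(T) = \cl{\co}(Z)$, and this is a compact convex set. By Milman's theorem, every extreme point of $\cl{\co}(Z)$ lies in $\cl{Z}$. This gives $\cE^\qf_x(T)\subseteq \cl{Z}$. I expect no real obstacle here; the main care is in the separation step, where we need that $T$ is universal and affine, so that the separating condition is in $T$ itself.
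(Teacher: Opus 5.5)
Your proposal is correct and follows essentially the same route as the paper: a Hahn--Banach separation produces a quantifier-free affine formula bounded above on the types realized in $M$, the resulting universal affine condition then lies in $\Th^\aff_\forall(M)=T$ and contradicts $p$, and Milman's theorem gives the statement about extreme types. The only cosmetic difference is that you obtain the contradiction from positivity of the state $p$ on $\cL^{\qf,\aff}_x(T)$, whereas the paper realizes $p$ in a model of $T$; these are equivalent.
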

\begin{proof}
If the convex hull of the types realized in $M$ is not dense in $\tS^\qf_x(T)$, then by the Hahn-Banach Separation Theorem there exist $p\in \tS^\qf_x(T)$ and a formula $\varphi\in\cL^{\qf,\aff}_x$ such that $\varphi(p) = 1$ and $M\models \sup_x\varphi(x)\leq 0$.
Since $p$ can be realized in a model of $T$, we have that $T\not\models \sup_x\varphi(x)\leq 0$, contradicting that $\Th^\aff_\forall(M)\subseteq T$.

The second assertion follows from the first one and Milman's Theorem.
\end{proof}

\begin{rmk}
\label{rmk:Sqf(Tforall)}
For every continuous theory $T$ (in particular, for every affine theory) and every tuple of variables $x$, we have
\begin{equation*}
\tS^\qf_x(T) = \tS^\qf_x(T_\forall),
\end{equation*}
i.e., the canonical embedding $\tS^\qf_x(T) \to \tS^\qf_x(T_\forall)$ is surjective.
In particular, for any structure $M$ and $A\subseteq M$ we have $\tS^{\qf,\aff}_x(A) = \tS^\qf_x\big(D^\aff_\forall(A)\big)$.
\end{rmk}

Before stating some basic facts about Robinson theories, let us recall and introduce the notion of existential closedness in continuous and in affine logic.

\subsection{Existentially closed models}
Let $M\subseteq N$ be an extension of $\cL$-structures.
We recall that $M$ is \emph{existentially closed} in $N$, which we will write $M\preceq^\ec N$, if for every $\varphi\in \cL^\qf_{xy}$ and $a\in M^x$ we have
\begin{equation}
\label{eq:exist-closed}
\big(\inf_y\varphi(a,y)\big)^M = \big(\inf_y\varphi(a,y)\big)^N.
\end{equation}
We will say $M$ is \emph{affinely existentially closed} in $N$, and write $M\preceq^\aec N$, if the equality \autoref{eq:exist-closed} holds for every $\varphi\in\cL^{\qf,\aff}_{xy}$ and $a\in M^x$.

\begin{rmk}
\label{rmk:ec-vs-universal-diagram}
Suppose $M\subseteq N$.
Then $M\preceq^\ec N$ if and only if, seen as an $\cL(M)$-structure, $N$ is a model of the universal diagram $D^\cont_\forall(M)$.
Equivalently: $M\preceq^\ec N$ if and only if there is an extension $N\subseteq M'$ such that $M\preceq^\cont M'$ (i.e., such that $M$ is an elementary substructure of $M'$).

Similarly, $M\preceq^\aec N$ if and only if $N\models D^\aff_\forall(M)$, if and only if there is an extension $N\subseteq M'$ with $M\preceq^\aff M'$ (i.e., $M$ is an affine substructure of $M'$).
\end{rmk}

The study of existentially closed structures is also tightly related to $\forall\exists$-theories.
Let us define:
\begin{itemize}
\item The \emph{$\forall\exists$-part} of $T$, denoted by $T_{\forall\exists}$, is the set of conditions of the form
\begin{equation*}
\sup_x\inf_y \varphi(x,y) \leq 0
\end{equation*}
implied by $T$, 
where $\varphi\in \cL^\qf_{xy}$.
\item The \emph{$\forall\exists$-theory} of a structure $M$ is $\Th^\cont_{\forall\exists}(M) = \big(\Th^\cont(M)\big)_{\forall\exists}$.
\item The \emph{$\forall\exists$-affine part} of $T$, denoted by $T_{\forall\exists^\aff}$, is the set of consequences of $T$ of the form $\sup_x\inf_y \varphi(x,y) \leq 0$, where $\varphi\in\cL^{\qf,\aff}_{xy}$.
\item The \emph{$\forall\exists$-affine theory} of $M$ is $\Th^\aff_{\forall\exists}(M) = \big(\Th^\cont(M)\big)_{\forall\exists^\aff}$.
\end{itemize}

Some of the basic results below are stated in the continuous logic setting, although being of course classical and well-known, because we shall use them later.
Other basic facts we only state for affine theories.

\begin{lem}
\label{lem:ec-vs-models-AE}
Let $T$ be a continuous logic theory.
Then the models of $T_{\forall\exists}$ are precisely the existentially closed substructures of models of $T$.

In particular, for any pair of structures $M,N$, if $M\preceq^\ec N$ then $M\models\Th^\cont_{\forall\exists}(N)$.

Similarly, mutatis mutandis, in the affine setting.
\end{lem}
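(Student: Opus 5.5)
We prove the two directions separately and then note the consequence. Throughout, we use the standard compactness argument on diagrams, as in the classical setting.

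For the first direction, suppose $M\models T_{\forall\exists}$. We show that $M$ is an existentially closed substructure of a model of $T$.

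First, $M$ is a model of $T_\forall$, since universal conditions are in particular $\forall\exists$ (with empty $y$). Hence $M$ embeds into some model of $T$.

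To obtain existential closedness, consider the theory $T\cup D^\cont_\forall(M)$ in the language $\cL(M)$. We claim it is consistent. By compactness, it suffices to treat finitely many universal conditions from $D^\cont_\forall(M)$. These can be combined via $\max$ into a single condition $\sup_y\varphi(a,y)\leq 0$ with $\varphi\in\cL^\qf_{xy}$. If $T\cup\{\sup_y\varphi(a,y)\leq\epsilon\}$ is inconsistent for some $\epsilon>0$, the value $r=\big(\inf_x\sup_y\varphi(x,y)\big)$ is at least $\epsilon$ in every model of $T$. Written with a negation, this is a $\forall\exists$ condition $\sup_x\inf_y(\epsilon-\varphi(x,y))\leq 0$ implied by $T$. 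It therefore holds in $M$, contradicting $M\models\sup_y\varphi(a,y)\leq 0$. Taking limits in $\epsilon$ via compactness yields a model $N\models T$ of $D^\cont_\forall(M)$. By \autoref{rmk:ec-vs-universal-diagram}, we then have $M\preceq^\ec N$.

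For the converse, suppose $M\preceq^\ec N\models T$, and let $\sup_x\inf_y\varphi\leq 0$ be a condition in $T_{\forall\exists}$. For each $a\in M^x$, existential closedness gives
\[
\big(\inf_y\varphi(a,y)\big)^M=\big(\inf_y\varphi(a,y)\big)^N\leq 0,
\]
so $M$ satisfies the condition. The "in particular" clause follows by applying this to $T=\Th^\cont(N)$.

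In the affine setting, the same argument goes through with three substitutions. We replace $\max$ by convex combinations, using affine compactness (Bagheri). We replace $D^\cont_\forall$ by $D^\aff_\forall$. Finally, we use the affine version of the existential-closedness characterisation in \autoref{rmk:ec-vs-universal-diagram}.

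The main obstacle is the compactness step in the affine case. Finitely many affine universal conditions cannot be merged with $\max$. Instead, one applies the Hahn--Banach/minimax form of affine compactness: if the conditions are inconsistent with $T$, then some convex combination $\sum\lambda_i\sup_y\varphi_i$ is bounded below by a positive constant modulo $T$. Since the $y$ are distinct variable tuples, this sum equals $\sup_y\sum\lambda_i\varphi_i$. This produces the required affine $\forall\exists$ condition.
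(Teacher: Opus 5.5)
Your proof is correct and follows the paper's route. The forward direction is immediate from the definitions, and the converse is a compactness argument showing that $T\cup D^\cont_\forall(M)$ is satisfiable, which yields $M\preceq^\ec N$ via \autoref{rmk:ec-vs-universal-diagram}; you supply more detail than the paper's one-line proof, in particular the separation (convex-combination) step needed in the affine case, where finitely many universal conditions cannot be merged with $\max$.
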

\begin{proof}
The fact that $M\preceq^\ec N\models T$ implies $M\models T_{\forall\exists}$ is immediate from the definitions.
Conversely, if $M\models T_{\forall\exists}$, then an easy application of the Compactness Theorem shows that $T\cup D^\cont_\forall(M)$ is satisfiable.
The same argument works in affine logic.
\end{proof}

Recall that a \emph{chain of structures} is a family $(M_i:i\in I)$ of $\cL$-structures indexed by a directed set $(I,<)$, such that $M_i\subseteq M_j$ whenever $i<j$.
The direct limit of such a chain, whose domain in each sort is the completion of the corresponding union, will be denoted by $\cl{\bigcup_{i\in I}M_i}$.

A theory $T$ is \emph{inductive} if the class of its models is closed under direct limits of chains.
It is a classical fact that a theory is inductive if and only if it admits an $\forall\exists$-axiomatization.
Similarly, an affine theory $T$ is inductive if and only if $T = T_{\forall\exists^\aff}$ (see, for instance, \cite[Prop.~4.3]{Bagheri2014a}).

The definitions and facts of the rest of this section can be stated (up to appropriate modifications, especially for those concerning model completions) for general inductive theories.
However, we restrict our attention to the case of universal theories, and even to Robinson theories when this makes the presentation simpler.

\begin{dfn}
Let $T$ be a universal continuous logic theory.
A model $M$ of $T$ is \emph{existentially closed} if for every extension $M\subseteq N$ with $N\models T$, $M$ is existentially closed in $N$.
In that case, we write $M\models_\ec T$.

Similarly, if $T$ is a universal affine theory, we will say that a model $M$ of $T$ is \emph{affinely existentially closed (a.e.c.)}\ if $M$ is affinely existentially closed in every model of $T$ extending it.
In that case, we shall write $M\models_\aec T$.
\end{dfn}

\begin{lem}
\label{lem:embedding-into-ec-model}
Let $T$ be a universal continuous logic theory.
Then for every $M\models T$ there is an extension $M\subseteq N$ with $N\models_\ec T$.

If $T$ is affine, then, in particular, $N\models_\aec T$.
\end{lem}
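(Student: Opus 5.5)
The plan is the classical chain construction, carried out in continuous logic. The affine clause will then follow for free from the comparison of the two notions of existential closedness.

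First, for a model $M\models T$, I build an extension $M\subseteq M_1\models T$ that already realizes, up to the values attainable in extensions, all existential conditions with parameters from $M$. Enumerate all pairs $(\varphi,a)$ with $\varphi\in\cL^\qf_{xy}$ (or a dense subset of it) and $a\in M^x$, indexed by an ordinal. By transfinite recursion I construct a continuous chain of models $(N_\alpha)$ of $T$, taking direct limits at limit stages. At a successor stage I pass to an extension $N_{\alpha+1}\supseteq N_\alpha$, with $N_{\alpha+1}\models T$, in which $\inf_y\varphi_\alpha(a_\alpha,y)$ comes within $2^{-n}$ of its infimum over all $T$-extensions of $N_\alpha$; the witness is then placed inside $N_{\alpha+1}$. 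Because $T$ is universal, the class of its models is closed under direct limits of chains, so every stage, and the final union $M_1$, is a model of $T$.

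The essential observation is monotonicity. For $N\subseteq N'$, the value $(\inf_y\varphi(a,y))^{N'}$ is at most $(\inf_y\varphi(a,y))^{N}$. Moreover, the infimum over all $T$-extensions can only increase when we pass to a larger model, since every extension of the larger model is also an extension of the smaller one. Consequently $M_1$ achieves, for every parameter $a\in M$ and every $\varphi$, the optimal value over all extensions of $M_1$, to within the required tolerance. Iterating this $\omega$ times gives $M=M^0\subseteq M^1\subseteq\cdots$, and I set $N=\cl{\bigcup_k M^k}$, which is again a model of $T$.

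To verify that $N\models_\ec T$, take an extension $N\subseteq N'\models T$, a formula $\varphi$, and a tuple $a\in N^x$. By the uniform continuity moduli of $\varphi$, I may approximate $a$ by some $a'\in (M^k)^x$. For $a'$, the construction of $M^{k+1}$ gives
\[
(\inf_y\varphi(a',y))^{M^{k+1}} \le (\inf_y\varphi(a',y))^{N'}+\epsilon,
\]
because $N'$ is an extension of $M^{k+1}$. Combined with monotonicity and the continuity moduli, this yields equality for $a$ in $N$ versus $N'$.

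The main obstacle is bookkeeping: making precise the optimization over all extensions, which form a proper class, and handling the approximations. One way is to fix a tolerance sequence at each step and use the fact that the relevant infimum is a well-defined real number. Alternatively, one can invoke compactness to show that the extension realizing $\inf$ over $T\cup D^\cont_\forall(N_\alpha)$ exists, in which case the infimum is actually attained.

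For the affine clause, assume $T$ is affine. Since $\cL^{\qf,\aff}_{xy}\subseteq\cL^\qf_{xy}$, the ec condition in \autoref{eq:exist-closed} implies the aec condition. Hence every model $N$ with $N\models_\ec T$ also satisfies $N\models_\aec T$.
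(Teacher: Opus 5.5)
Your proposal is correct and is essentially the paper's argument: the paper just invokes the standard chain construction that adds witnesses for existential conditions (written out later, in the proof of \autoref{prop:Existence-aec-qf-ext}, with constant-shifted conditions $\inf_y\varphi\leq 0$ where you use tolerances), and the affine clause follows exactly as you say, since affine quantifier-free formulas are among the quantifier-free ones. One small slip, in your aside only: to attain the infimum by compactness you should use $T$ together with the quantifier-free (atomic) diagram of $N_\alpha$, not $D^\cont_\forall(N_\alpha)$, whose models are the extensions in which $N_\alpha$ is already existentially closed, so the infimum over them would just be the value in $N_\alpha$.
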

\begin{proof}
By a standard chain construction, successively adding witnesses for all the relevant existential conditions.
We shall revisit this argument below, in the proof of \autoref{prop:Existence-aec-qf-ext}.
\end{proof}

\begin{lem}
\label{lem:irreducible-T-ec-model-Th-forall}
Let $T$ be an irreducible universal affine theory, and let $M\models_\aec T$.
Then $T = \Th^\aff_\forall(M)$.

In particular, $\tS^\qf_x(T) = \cl{\co\{\tp^\qf(a) : a\in M^x\}}$ and $
\cE^\qf_x(T) \subseteq \cl{\{\tp^\qf(a) : a\in M^x\}}$.
\end{lem}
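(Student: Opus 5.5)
We need $T=\Th^\aff_\forall(M)$ for $M\models_\aec T$ with $T$ irreducible. The inclusion $T\subseteq\Th^\aff_\forall(M)$ is immediate: $M\models T$ and $T$ is universal, so every universal affine consequence of $T$ holds in $M$. The work is in the reverse inclusion. Once equality is established, the ``in particular'' clause follows directly from \autoref{lem:S(T)-cco-realized}, applied with this $M$.

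For the reverse inclusion I use irreducibility in the form of \autoref{lem:irreducible-theories}\autoref{i:lem:irred:Th-aff-forall(M)}: there is a structure $N$ with $T=\Th^\aff_\forall(N)$, so in particular $N\models T$. By irreducibility (joint embedding), $M$ and $N$ embed jointly into some $P\models T$. We may assume $M\subseteq P$ is an actual extension, so $N$ is isomorphic to a substructure of $P$.

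Now take a universal affine condition $\sup_x\varphi(x)\le 0$ with $\varphi\in\cL^{\qf,\aff}_x$ that holds in $M$; we must show it holds in $N$. Because $M\preceq^\aec P$, applying the definition with no parameters to the quantifier-free affine formula $-\varphi$ gives
\[
\big(\inf_x -\varphi(x)\big)^M=\big(\inf_x-\varphi(x)\big)^P,
\]
that is, $\big(\sup_x\varphi\big)^P=\big(\sup_x\varphi\big)^M\le 0$. Quantifier-free formulas are preserved under substructures, so $\sup_x\varphi\le 0$ also holds in the substructure $N$ of $P$. Hence $\Th^\aff_\forall(M)\subseteq\Th^\aff_\forall(N)=T$.

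There is no real obstacle. The only point needing care is that affine existential closedness is applied to a sentence-level infimum with an empty parameter tuple, which is allowed since the definition quantifies over all $a\in M^x$, including the empty tuple. One could alternatively invoke the equivalence in \autoref{rmk:ec-vs-universal-diagram}: $P\models D^\aff_\forall(M)\supseteq\Th^\aff_\forall(M)$.
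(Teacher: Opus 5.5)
Your proof is correct and follows essentially the same route as the paper: joint embedding of $M$ with a model of $T$ into a common $P\models T$, affine existential closedness of $M$ in $P$ to transfer $\Th^\aff_\forall(M)$ to $P$, downward preservation of universal conditions to the embedded model, and \autoref{lem:S(T)-cco-realized} for the ``in particular'' clause. The only cosmetic difference is that the paper runs the argument for an arbitrary $N\models T$, so your preliminary appeal to \autoref{lem:irreducible-theories} to choose $N$ with $T=\Th^\aff_\forall(N)$ is harmless but unnecessary.
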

\begin{proof}
Let $N\models T$.
Since $T$ is irreducible, there is an extension $M\subseteq K$ with $K\models T$ into which $N$ embeds.
By the hypothesis on $M$, we have $M\preceq^\aec K$, and in particular $K\models\Th^\aff_\forall(M)$.
Since $N$ embeds into $K$, we also have $N\models \Th^\aff_\forall(M)$.
This proves the first assertion.

The second assertion follows from \autoref{lem:S(T)-cco-realized}.
\end{proof}

\begin{lem}
Let $T$ be an irreducible universal affine theory.
Then for any two models $M,N\models_\aec T$, we have $\Th^\aff_{\forall\exists}(M) = \Th^\aff_{\forall\exists}(N)$.
\end{lem}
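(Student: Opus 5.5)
By symmetry it suffices to show $\Th^\aff_{\forall\exists}(M)\subseteq\Th^\aff_{\forall\exists}(N)$. I will follow the classical argument: use joint embedding to put $M$ and $N$ inside a common model of $T$, and then use the fact that both are affinely existentially closed there.

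First, since $T$ is irreducible, there is $K\models T$ together with $\cL$-embeddings of $M$ and $N$ into $K$. We identify $M$ and $N$ with substructures of $K$. Because $M\models_\aec T$ and $K\models T$ extends $M$, we have $M\preceq^\aec K$, and likewise $N\preceq^\aec K$.

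Next, take an affine quantifier-free $\varphi(x,y)$ with $M\models\sup_x\inf_y\varphi(x,y)\le0$, and let $b\in N^x$. We want $\big(\inf_y\varphi(b,y)\big)^N\le0$. Since $N\preceq^\aec K$, we have $\big(\inf_y\varphi(b,y)\big)^N=\big(\inf_y\varphi(b,y)\big)^K$. It therefore suffices to show that $K\models\sup_x\inf_y\varphi\le0$, or at least that this holds at tuples from $N$.

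For this step I use \autoref{rmk:ec-vs-universal-diagram}: since $M\preceq^\aec K$, there is an extension $K\subseteq M'$ with $M\preceq^\aff M'$. As $\sup_x\inf_y\varphi$ is an affine sentence, elementarity gives $M'\models\sup_x\inf_y\varphi\le0$. Now apply this in $M'$ to the tuple $b\in N\subseteq K\subseteq M'$. Then $N\subseteq M'$ is a model of $T$, and $M'$ itself may not be a model of $T$, since it is only an affine elementary extension of $M$. The key observation is that this does not matter. The affine existential closedness of $N$ is only needed in models of $T$. To get around this, I replace $K$ by $M'$'s reduct: $M'\models\Th^\aff(M)\supseteq\Th^\aff_\forall(M)=T$ by \autoref{lem:irreducible-T-ec-model-Th-forall}, and $T$ is universal, so $M'\models T$. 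Hence $N\preceq^\aec M'$, and so $\big(\inf_y\varphi(b,y)\big)^N=\big(\inf_y\varphi(b,y)\big)^{M'}\le0$.

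The only delicate point is precisely this justification that $M'\models T$, which comes from \autoref{lem:irreducible-T-ec-model-Th-forall}. Once that is in place, the inclusion follows, and exchanging the roles of $M$ and $N$ gives equality.
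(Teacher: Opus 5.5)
Your proof is correct and uses the same key step as the paper's: from an a.e.c.\ inclusion obtain $K\subseteq M'$ with $M\preceq^\aff M'$ via \autoref{rmk:ec-vs-universal-diagram}, observe $M'\models T$, and then use that the other a.e.c.\ model is affinely existentially closed in $M'$. The only difference is organizational. The paper first reduces to $M\subseteq N$ via a common a.e.c.\ extension (\autoref{lem:embedding-into-ec-model}) and checks both inclusions, whereas you work in an arbitrary common extension and conclude by symmetry. Also, $M'\models T$ already follows from $M\models T$, $M\preceq^\aff M'$ and $T$ being universal affine, so \autoref{lem:irreducible-T-ec-model-Th-forall} is not needed there.
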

\begin{proof}
By irreducibility and \autoref{lem:embedding-into-ec-model}, we may assume that $M\subseteq N$.
By \autoref{lem:ec-vs-models-AE}, we already know that $M\models\Th^\aff_{\forall\exists}(N)$, and we must show the converse.
By \autoref{rmk:ec-vs-universal-diagram} and since $N\models_\aec T$, there is an extension $N\preceq^\aec M'$ such that $M\preceq^\aff M'$.
Therefore, again by \autoref{lem:ec-vs-models-AE}, $N$ is a model of $\Th^\aff_{\forall\exists}(M') = \Th^\aff_{\forall\exists}(M)$, as desired.
\end{proof}

\begin{lem}
\label{lem:ec-chains}
Let $(M_i:i\in I)$ be a chain of $\cL$-structures with $M_i\preceq^\ec M_j$ whenever $i<j$.
Then $M_i\preceq^\ec \cl{\bigcup_{i\in I}M_i}$ for every $i\in I$.

In addition, if $T$ is a universal $\cL$-theory such that $M_i\models_\ec T$ for every $i\in I$, then $\cl{\bigcup_{i\in I}M_i}\models_\ec T$.

Similarly, mutatis mutandis, in affine logic.
\end{lem}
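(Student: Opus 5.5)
The plan is to prove the first assertion directly from the definition of $\preceq^\ec$ together with the fact that a quantifier-free formula evaluated on a tuple takes the same value in any superstructure. Then I deduce the second assertion from the first by an approximation argument inside the direct limit. Write $M = \cl{\bigcup_{i\in I}M_i}$.

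For the first assertion, fix $i\in I$, $\varphi\in\cL^\qf_{xy}$ and $a\in M_i^x$. Since $M_i\subseteq M$, we automatically have $\big(\inf_y\varphi(a,y)\big)^{M}\leq \big(\inf_y\varphi(a,y)\big)^{M_i}$. For the converse inequality, take $b\in M^y$ with $\varphi(a,b)$ close to the infimum in $M$. Since $\bigcup_j M_j$ is dense in $M$ and $I$ is directed, we may approximate $b$ by some $b'\in M_j^y$ with $j\geq i$. Uniform continuity of $\varphi$, given by the continuity moduli of $\cL$, makes $\varphi(a,b')$ close to $\varphi(a,b)$. The value $\varphi(a,b')$ computed in $M$ equals the value computed in $M_j$, because $\varphi$ is quantifier-free. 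Hence $\big(\inf_y\varphi(a,y)\big)^{M_j}\leq \big(\inf_y\varphi(a,y)\big)^{M}+\epsilon$ for every $\epsilon>0$. Finally $M_i\preceq^\ec M_j$ (or $i=j$) gives $\big(\inf_y\varphi(a,y)\big)^{M_i} = \big(\inf_y\varphi(a,y)\big)^{M_j}$, which concludes.

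For the second assertion, I first check that $M\models T$. Since $T$ is universal, it is inductive, and the direct limit of models of $T$ is a model of $T$. Directly: each condition $\sup_x\psi(x)\leq 0$ holds on the dense set $\bigcup_i M_i$, and hence on $M$ by continuity.

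Next, let $N\supseteq M$ be a model of $T$, and take $\varphi\in\cL^\qf_{xy}$ and $a\in M^x$. First suppose $a\in M_i^x$ for some $i$. Since $M_i\subseteq N$ and $M_i\models_\ec T$, we get $\big(\inf_y\varphi(a,y)\big)^{N} = \big(\inf_y\varphi(a,y)\big)^{M_i}$. By the first part, the latter equals $\big(\inf_y\varphi(a,y)\big)^{M}$. For general $a$, approximate it by tuples $a'$ from $\bigcup_i M_i$. The functions $a\mapsto \inf_y\varphi(a,y)$ are uniformly continuous, with the same modulus in both $M$ and $N$, so the equality passes to the limit.

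The affine case is verbatim, restricting $\varphi$ to $\cL^{\qf,\aff}_{xy}$ and using $\models_\aec$. The only point needing care, and the main obstacle such as it is, is the density and approximation step. It relies on formulas admitting uniform continuity moduli independent of the structure, which holds in continuous logic.
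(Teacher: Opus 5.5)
Your argument is correct. It is exactly the routine density-plus-uniform-continuity argument that the paper omits (its proof is left to the reader). One point deserves to be said explicitly: when you invoke the first part inside the second assertion, its hypothesis $M_i\preceq^\ec M_j$ holds automatically, because $M_j\models T$ and $M_i\models_\ec T$. Alternatively, once you have shown $M\models T$, you get $M_i\preceq^\ec M$ directly from $M_i\models_\ec T$.
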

\begin{proof}
The easy proof is left to the reader.
\end{proof}

The following basic fact shows that when considering types over parameters from a model $M$ of an ambient Robinson theory $T$, the model $M$ should be taken a.e.c.\ in order to ensure compatibility with parameter-free types.

\begin{lem}
\label{lem:types-with-parameters-vs-fibers}
Let $T$ be an affine universal theory, and let $x,y$ be two tuples of variables.
Let $\pi^\qf_{x,y}\colon \tS^\qf_{xy}(T)\to \tS^\qf_x(T)$ be the variable restriction map.
For every model $M\models T$ and tuple $a\in M^x$, the map
\begin{equation*}
\tS^\qf_y(a)\to \big(\pi^\qf_{x,y}\big)^{-1}\big(\tp^\qf(a)\big)\subseteq \tS^\qf_{xy}(T),\quad \tp^\qf(b/a)\mapsto \tp^\qf(ab),
\end{equation*}
for $b\in N^y$, $M\preceq^\aec N$,
is an affine embedding.

If $T$ is a Robinson theory and $M\models_\aec T$, then it is an affine homeomorphism.
\end{lem}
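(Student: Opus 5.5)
We will prove the two assertions separately: first that the map is well defined, affine, injective and continuous onto its image, and then, under the Robinson and a.e.c.\ hypotheses, that it is surjective, so that compactness makes it a homeomorphism.

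For the first assertion we describe the map dually, at the level of formulas. Every quantifier-free affine formula $\varphi(x,y)\in\cL^{\qf,\aff}_{xy}$ gives a formula $\varphi(a,y)$ in $\cL(M)^{\qf,\aff}_y$. Conversely, every quantifier-free affine $\cL(M)$-formula in $y$ is, modulo the theory, of the form $\varphi(a',y)$ for some finite tuple $a'$ of parameters. Here we read $\tS^\qf_y(a)$ as types over the tuple $a$, i.e., over the diagram of $a$, so only the parameters in $a$ occur.

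Fix $b\in N^y$ with $M\preceq^\aec N$. The value $\tp^\qf(ab)(\varphi)=\varphi(a,b)^N$ depends only on $\tp^\qf(b/a)$. Hence the map is well defined, and it is the restriction of the Kadison dual of the positive unital linear map
\begin{equation*}
\cL^{\qf,\aff}_{xy}(T)\to \cL^{\qf,\aff}_y(D^\aff(a)),\qquad \varphi\mapsto\varphi(a,y).
\end{equation*}
We must check that this map is well defined modulo the theories, i.e., that it sends formulas that are $T$-equivalent to formulas that are $D^\aff(a)$-equivalent. This holds because every $N$ used is a model of $T$, since $T$ is universal and $N\models T$ by \autoref{rmk:Sqf(Tforall)}.

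By \autoref{rmk:Sqf(Tforall)}, $\tS^\qf_y(a)=\tS^\qf_y(D^\aff_\forall(a))$, so every type over $a$ is realized in some $N\models D^\aff_\forall(a)$. By \autoref{rmk:ec-vs-universal-diagram}, such an $N$ may be taken with $M\preceq^\aec N$; the remark is stated for parameter sets, and we apply it with $a\subseteq M$. Every formula over $a$ has the form $\varphi(a,y)$, so the dual linear map is surjective, and therefore the state map is injective. It is affine and continuous, hence an embedding of compact convex sets. Its image lies in the fiber over $\tp^\qf(a)$, since $\tp^\qf(ab)$ restricts to $\tp^\qf(a)$.

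For surjectivity, assume $T$ is Robinson and $M\models_\aec T$. Let $p\in\tS^\qf_{xy}(T)$ with $\pi^\qf_{x,y}(p)=\tp^\qf(a)$, and realize $p$ by $a'b'$ in some $N'\models T$. Since $\tp^\qf(a')=\tp^\qf(a)$, the substructure generated by $a'$ is isomorphic to the one generated by $a$, say $A=\langle a\rangle\subseteq M$. We then use amalgamation over $A$, which is a model of $T$ because $T$ is universal, to get $K\models T$ with embeddings of $M$ and $N'$ agreeing on $A$. Identifying $M$ with its image, $M\subseteq K$, and $M\preceq^\aec K$ because $M$ is a.e.c. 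The image $b$ of $b'$ then satisfies $\tp^\qf(ab)=p$. The continuous bijection from a compact space onto a Hausdorff space is then a homeomorphism.

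The main obstacle is the bookkeeping in the first part: making precise that "types over $a$" means types of the diagram $D^\aff(a)$, rather than over all of $M$, and that quantifier-free types over $a$ are determined by, and separated by, formulas $\varphi(a,y)$. Amalgamation over the generated substructure is routine.
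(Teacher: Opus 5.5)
Your proof is correct and follows the paper's argument. You make explicit, via the dual map $\varphi(x,y)\mapsto\varphi(a,y)$, the first part (well-definedness, affineness, continuity, injectivity) that the paper calls clear, and you get surjectivity exactly as the paper does, by amalgamating $M$ with a model realizing $p$ over the substructure generated by $a$ and using that $M$ is a.e.c.

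One step is loose: your appeal to the remark on universal diagrams to realize every type over $a$ in some $N$ with $M\preceq^\aec N$. That remark concerns the diagram of all of $M$, so you would also need affine joint embedding over $a$. You do not need this step, since your dual description already defines the map on all of $\tS^\qf_y(a)$, and injectivity follows from the surjectivity of $\varphi\mapsto\varphi(a,y)$.
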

\begin{proof}
Given $M\preceq^\aec N$, by \autoref{rmk:ec-vs-universal-diagram} we have $N\models D^\aff_\forall(a)$, which ensures the type $\tp^\qf(b/a)$ is well-defined.
Continuity, affineness and injectivity of the map $\tp^\qf(b/a)\mapsto \tp^\qf(ab)$ are clear.

If $T$ Robinson and $M\models_\aec T$, let us argue that it is also surjective onto the fiber of $\tp^\qf(a)$.
Every $p\in \big(\pi^\qf_{x,y}\big)^{-1}\big(\tp^\qf(a)\big)$ is realized by a tuple $a'b'$ in a model $N'\models T$.
Since $\tp^\qf(a') = \tp^\qf(a)$ and $T$ is a Robinson theory, we may amalgamate $M$ and $N'$ over the substructure generated by $a$.
Since $M\models_\aec T$, this means there is an extension $M\preceq^\aec N$ into which $N'$ embeds.
The image $b$ of $b'$ in $N$ satisfies $p = \tp^\qf(ab)$.
\end{proof}

\begin{dfn}
A \emph{model completion} for a continuous Robinson $\cL$-theory $T$ is a continuous $\cL$-theory $T^*$ such that $(T^*)_\forall = T$ and $T^*$ has quantifier elimination.

Similarly, an \emph{affine model completion} for an affine Robinson theory $T$ is an affine theory $T^{*\aff}$ such that $(T^{*\aff})_\forall = T$ and $T^{*\aff}$ has affine quantifier elimination.
\end{dfn}

\begin{rmk}
For Robinson theories, model completions and \emph{model companions} are the same thing.
\end{rmk}

\begin{rmk}
\label{rmk:irreducible-model-completion}
Let $T$ be an irreducible Robinson theory.
If $T$ admits a model completion $T^*$, then $T^*$ is a complete continuous logic theory.

Similarly, the affine model completion of an irreducible, affine Robinson theory is affinely complete.
\end{rmk}

It is a classical fact that a Robinson theory $T$ has a model completion if and only if the class of its existentially closed models is axiomatizable.
In that case, since the existentially closed models form an inductive class, they are axiomatized by their common $\forall\exists$-theory.
We record the analogous statement in affine logic.

\begin{lem}
\label{lem:model-completion-vs-ec}
Let $T$ be an affine Robinson $\cL$-theory.
Then the following are equivalent:
\begin{enumerate}
\item\label{i:model-completion-vs-ec:mc} 
$T$ admits an affine model completion, $T^{*\aff}$.
\item\label{i:model-completion-vs-ec:ec} The class of the a.e.c.\ models of $T$ is affinely axiomatizable.
\end{enumerate}
If these conditions hold, then $T^{*\aff}$ is precisely the common affine theory of the a.e.c.\ models of $T$.
In particular, the affine model completion is unique and inductive.
\end{lem}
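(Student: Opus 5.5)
The plan is to adapt the classical proof in the continuous setting to affine logic, using affine compactness (Bagheri) and the fact that affine quantifier elimination amounts to every affine formula being approximable by quantifier-free affine formulas modulo the theory.

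\emph{From (i) to (ii).} Assume $T^{*\aff}$ is an affine model completion. We first show that every model of $T^{*\aff}$ is a.e.c.\ for $T$. Let $M\models T^{*\aff}$ and $M\subseteq N\models T$. Since $N\models T=(T^{*\aff})_\forall$, $N$ embeds into some $N'\models T^{*\aff}$. Fix $\varphi\in\cL^{\qf,\aff}_{xy}$ and $a\in M^x$. The formula $\inf_y\varphi(x,y)$ is, modulo $T^{*\aff}$, a uniform limit of quantifier-free affine formulas $\psi_k(x)$. Quantifier-free formulas are preserved under the embeddings $M\subseteq N\subseteq N'$, so
\begin{equation*}
(\inf_y\varphi(a,y))^M=(\inf_y\varphi(a,y))^{N'}\leq(\inf_y\varphi(a,y))^N\leq(\inf_y\varphi(a,y))^M,
\end{equation*}
and hence $M\preceq^\aec N$.

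Conversely, let $M\models_\aec T$. Embed $M$ into some $N\models T^{*\aff}$. Then $M\preceq^\aec N$, so by \autoref{rmk:ec-vs-universal-diagram} there is $M'\supseteq N$ with $M\preceq^\aff M'$. The issue is to see that $M$ satisfies $T^{*\aff}$. By affine quantifier elimination, $T^{*\aff}$ is model complete in the affine sense: the first paragraph applied to $M'$ shows every embedding between its models is affine elementary, so its models are exactly the structures whose affine theory agrees with it. The cleanest route is to iterate the construction into a chain $M\subseteq N_0\subseteq M'_0\subseteq N_1\subseteq\cdots$ with $N_i\models T^{*\aff}$ and $M'_i\succeq^\aff M$. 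Since $T^{*\aff}$ is inductive (it is a $\forall\exists$-affine theory, because affine QE makes every axiom equivalent to one of that form), the limit models $T^{*\aff}$. The limit is also an affine elementary extension of $M$, because the $M'_i$ form an affine elementary chain over $M$ and the $N_i$ are cofinal. Therefore $M\models T^{*\aff}$. So the a.e.c.\ models are exactly the models of $T^{*\aff}$, which proves (ii) together with the final description of $T^{*\aff}$.

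\emph{From (ii) to (i).} Let $T^*$ be the common affine theory of the a.e.c.\ models, which by hypothesis axiomatizes them. Then $(T^*)_\forall\supseteq T$. Conversely, every model of $T$ extends to an a.e.c.\ model by \autoref{lem:embedding-into-ec-model}, so $(T^*)_\forall=T$. For affine QE, take $M,N\models T^*$ and tuples $a\in M^x$, $b\in N^x$ with $\tp^\qf(a)=\tp^\qf(b)$. Amalgamate $M$ and $N$ over the structure generated by $a\cong b$, using that $T$ is Robinson. By the a.e.c.\ property and \autoref{rmk:ec-vs-universal-diagram}, iterate back and forth to build chains of alternating aec extensions. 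Their common limit $L$ is a.e.c.\ by \autoref{lem:ec-chains}, and $M\preceq^\aff L$, $N\preceq^\aff L$, because aec embeddings between models of $T^*$ are affine elementary. This last fact is itself proved by the same chain argument combined with the Tarski--Vaught-type induction on formulas, which in affine logic only involves $\sup$ and $\inf$. Hence $\tp^\aff(a)=\tp^\aff(b)$. Finally, a Stone--Weierstrass-free argument, namely Kadison duality together with the injectivity of $\tS^\aff_x(T^*)\to\tS^\qf_x(T^*)$, which is affine and continuous and therefore a homeomorphism, gives that affine formulas are uniform limits of quantifier-free affine ones.

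\emph{Main obstacle.} I expect the hardest step to be showing that aec embeddings between a.e.c.\ models are affine elementary, without the full Boolean/lattice connectives available. I would try first to encode this as an alternating chain whose limit is simultaneously an affine elementary extension of both ends, relying on affine compactness at each stage. Uniqueness of $T^{*\aff}$ then follows immediately from the description.
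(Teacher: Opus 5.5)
Your direction (ii)$\Rightarrow$(i) is fine, and so is the first half of (i)$\Rightarrow$(ii), showing that models of $T^{*\aff}$ are a.e.c. In (ii)$\Rightarrow$(i), the alternating chain for model completeness works precisely because, under (ii), every affine elementary extension of an a.e.c.\ model is again a model of the affine theory $T^*$, hence a.e.c. So \autoref{rmk:ec-vs-universal-diagram} applies at every stage. Also, after amalgamating over the structure generated by $a\cong b$, it is enough to extend the amalgam to a single a.e.c.\ model; no back-and-forth is needed.

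The gap is in the second half of (i)$\Rightarrow$(ii), where you show that an a.e.c.\ model $M$ satisfies $T^{*\aff}$.
\begin{itemize}
\item In your chain $M\subseteq N_0\subseteq M'_0\subseteq N_1\subseteq\cdots$, the construction only gives $M\preceq^\aff M'_i$ for each $i$ separately. Nothing gives $M'_i\preceq^\aff M'_{i+1}$.
\item Getting $M'_i\preceq^\aff M'_{i+1}$ would require $M'_i\preceq^\aec N_{i+1}$, i.e.\ that $M'_i$ be a.e.c. That closure property is exactly what you do not yet have, and it is the difference from the chain in (ii)$\Rightarrow$(i).
\item A union of structures, each of which is an affine elementary extension of $M$, need not itself be one.
\item In fact the argument is circular. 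The $N_i$ form an affine elementary chain by quantifier elimination, so $N_0\preceq^\aff L$. Hence $M\preceq^\aff L$ is equivalent to $M\preceq^\aff N_0$, which is already the conclusion you want.
\end{itemize}

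The repair is short and uses only what you already have. Since $M\preceq^\aec N_0\models T^{*\aff}$, \autoref{lem:ec-vs-models-AE} gives $M\models\Th^\aff_{\forall\exists}(N_0)$. It then remains to note that $T^{*\aff}$ is equivalent to its $\forall\exists$-affine part, and there are two ways to see this.
\begin{itemize}
\item Use inductiveness. By quantifier elimination every embedding between models of $T^{*\aff}$ is affine elementary, and unions of affine elementary chains are affine elementary extensions. So $T^{*\aff}$ is inductive, and \cite[Prop.~4.3]{Bagheri2014a} applies.
\item Axiomatize $T^{*\aff}$ explicitly by $T$ together with the $\forall\exists$-affine conditions $\sup_x\inf_y\bigl(\varphi(x,y)-\alpha_\varphi(x)\bigr)\leq 0$ for $\varphi\in\cL^{\qf,\aff}_{xy}$. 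Here $\alpha_\varphi$ is the quantifier-free affine definable predicate equivalent to $\inf_y\varphi$. The reverse inequalities are universal consequences of $T^{*\aff}$, hence of $T$. An induction on prenex affine formulas then reduces every affine formula to a quantifier-free definable predicate modulo these axioms, which recovers all of $T^{*\aff}$.
\end{itemize}
Your phrase ``affine QE makes every axiom equivalent to one of that form'' is circular as stated, because those equivalences hold only modulo $T^{*\aff}$ itself. One of the two formulations above is needed. This is the standard argument the paper's one-line proof refers to.
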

\begin{proof}
The standard arguments work.
See \cite[\textsection4]{Bagheri2014a} for affine analogues of the classical criteria for quantifier elimination.
\end{proof}

\subsection{The convex realization property}
\label{subsec:Tcr}
An important basic construction of affine logic is the \emph{direct convex combination} of structures.
More precisely, if $(M_i:i<n)$ are $\cL$-structures and $(\lambda_i:i<n)$ are positive scalars adding up to $1$, the direct convex combination
\begin{equation*}
K = \bigoplus_{i<n}\lambda_i M_i
\end{equation*}
is an $\cL$-structure with domain $\prod_{i<n}M_i$ that satisfies $\varphi^K(a) = \sum_{i<n}\lambda_i\varphi^{M_i}(a_i)$ for every affine formula $\varphi(x)$ and every tuple $a\in\prod_{i<n} M_i^x$.
In particular, if the $M_i$ are models of an affine theory $T$, then so is $K$.
See \cite[Lem.~3.4]{BITaffine}.

A far-reaching generalization of this construction, introduced in \cite{BITaffine}, is given by the \emph{direct integral} 
\begin{equation*}
K = \int^\oplus_\Omega M_\omega d\mu(\omega)
\end{equation*}
of a \emph{measurable field} $M_\Omega = (M_\omega:\omega\in\Omega)$ of $\cL$-structures, indexed by a probability space $(\Omega,\mu)$.
This construction will be revisited and adapted to our quantifier-free setting in \autoref{sec:qf-measurable-fields}.

Finally, we recall an important notion from \cite[\textsection17]{BITaffine}.
The \emph{convex realization property} is an axiom scheme saying, roughly, that every affine type realized in a finite direct convex self-combination $\bigoplus\lambda_i M$ of a model $M$ can be approximately finitely realized in $M$.
In \autoref{sec:convex-realization}, we will revisit this property and show, among other things, that it is $\forall\exists$-axiomatizable.

\section{Face-preserving Robinson theories}
\label{sec:face-preserving-theories}

As mentioned earlier, a function $\pi\colon X\to Y$ between compact convex sets is \emph{face-preserving} if for every face $F\subseteq X$, the image $\pi(F)$ is a face of $Y$.
For basic general properties and characterizations of these maps, see \autoref{sec:appendix:open-and-face-preserving}.

\begin{dfn}
A theory $T$ is \emph{face-preserving} if it is affine and the variable restriction maps
\begin{equation*}
\pi^\qf_{x,y}\colon \tS^\qf_{xy}(T)\to \tS^\qf_x(T)
\end{equation*}
are face-preserving, for every pair of finite tuples $x,y$.
\end{dfn}

We note that by \autoref{rmk:Sqf(Tforall)}, the property of the definition depends only on the universal part $T_\forall$.

\begin{lem}
\label{lem:pi-qf-xy-face-pres-arbitrary}
Let $T$ be a face-preserving theory, and let $x,y$ be tuples of variables of arbitrary length.
Then the variable projection map $\pi^\qf_{x,y}$ is face-preserving.
\end{lem}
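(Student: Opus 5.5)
The plan is to reduce to finite tuples by writing $\tS^\qf_{xy}(T)$ as an inverse limit of the finite-variable type spaces, and then to use a characterization of face-preserving maps from \autoref{sec:appendix:open-and-face-preserving}. The characterization I have in mind is the following: an affine continuous $\pi\colon X\to Y$ between compact convex sets is face-preserving if and only if, whenever $\pi(z)=\lambda y_0+(1-\lambda)y_1$ with $0<\lambda<1$, we can write $z=\lambda z_0+(1-\lambda)z_1$ with $\pi(z_i)=y_i$. In other words, convex decompositions downstairs lift upstairs. One direction is clear: take $F$ to be the smallest face containing $z$. I will assume this lifting form is available in the appendix, or prove it directly, since that is routine.

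First, reduce to the case where $x$ is finite and $y$ is arbitrary. Fix $p\in\tS^\qf_{xy}(T)$ with $\pi_{x,y}(p)=\lambda q_0+(1-\lambda)q_1$. For each finite $x'\subseteq x$ and finite $y'\subseteq y$, let $L_{x'y'}\subseteq \tS^\qf_{xy}(T)^2$ be the set of pairs $(p_0,p_1)$ satisfying three conditions:
\begin{enumerate}
\item $\lambda p_0+(1-\lambda)p_1$ agrees with $p$ on the variables $x'y'$;
\item $p_i$ agrees with $q_i$ on $x'$;
\item the restrictions of $p_0,p_1$ to $x'y'$ actually come from types in $\tS^\qf_{x'y'}(T)$.
\end{enumerate}
Each $L_{x'y'}$ is closed, since all maps involved are continuous and the type spaces are compact. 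The family is also directed downward.

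Next I show each $L_{x'y'}$ is non-empty. Apply the finite-tuple hypothesis to $\pi^\qf_{x',y'}$ at the point $p{\restriction}_{x'y'}$, using the decomposition $\lambda q_0{\restriction}_{x'}+(1-\lambda)q_1{\restriction}_{x'}$. This yields $r_0,r_1\in\tS^\qf_{x'y'}(T)$ with $\lambda r_0+(1-\lambda)r_1=p{\restriction}_{x'y'}$ and $r_i{\restriction}_{x'}=q_i{\restriction}_{x'}$. I then lift each $r_i$ to a full type in $\tS^\qf_{xy}(T)$, using surjectivity of the restriction maps: any realization of $r_i$ in a model extends to an $xy$-tuple by adding arbitrary elements. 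At this stage I only need the constraints on $x'y'$, so the lift can be arbitrary elsewhere.

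By compactness, the intersection of all the $L_{x'y'}$ is non-empty. Any pair $(p_0,p_1)$ in the intersection satisfies $\lambda p_0+(1-\lambda)p_1=p$ and $\pi_{x,y}(p_i)=q_i$, because quantifier-free types in infinitely many variables are determined by their finite restrictions. This gives the required lift.

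The main obstacle is the justification of the lifting characterization in the infinite-dimensional (possibly non-metrizable) setting. If the appendix only gives it for metrizable sets, I will prove directly what is needed. Given a face $F\subseteq X$ and $\pi(z)=\lambda y_0+(1-\lambda)y_1$ with $z\in F$, the lifts $z_i$ obtained above lie in $F$ because $F$ is a face, and so $y_i\in\pi(F)$. Convexity of $\pi(F)$ is immediate. Hence the lifting property implies face-preservation in full generality, and this is the only direction I need for the conclusion.
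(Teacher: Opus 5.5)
Your route is the paper's. The paper's proof says that the convex lifting property is preserved by inverse limits and then invokes \autoref{lem:Villadsen}; your sets $L_{x'y'}$ and the compactness argument carry out that first step correctly. However, you have the two directions of the equivalence reversed, and this affects the one step where the hypothesis on $T$ is used.

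The direction you prove at the end, lifting property $\Rightarrow$ face-preserving, is the trivial one. It is purely algebraic, so metrizability never enters. The direction you actually need is used in showing $L_{x'y'}\neq\emptyset$. There you must turn the hypothesis that the finite-variable maps $\pi^\qf_{x',y'}$ are face-preserving into the existence of $r_0,r_1$ with $\lambda r_0+(1-\lambda)r_1=p|_{x'y'}$ and $r_i|_{x'}=q_i|_{x'}$. That is the implication face-preserving $\Rightarrow$ lifting property, which is the non-obvious one (Villadsen's Lemma~1.1, recorded as \autoref{lem:Villadsen}).

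Your remark ``take $F$ to be the smallest face containing $z$'' does not prove it. Face-preservation only tells you that $\pi(F)$ is a face containing $\lambda y_0+(1-\lambda)y_1$. Hence there are $w_0,w_1\in F$ with $\pi(w_i)=y_i$, but nothing forces $\lambda w_0+(1-\lambda)w_1=z$. Correcting one of them, e.g.\ setting $z_1=(z-\lambda w_0)/(1-\lambda)$, may leave $X$. So your claim that the easy direction is the only one you need is false. The proof is complete only once you cite \autoref{lem:Villadsen} for the hard direction.

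That lemma is stated for arbitrary compact convex sets, so no metrizability issue arises. This matters because even $\tS^\qf_{x'y'}(T)$ with $x',y'$ finite need not be metrizable when the language is uncountable. The metrizability caveat in the appendix (\autoref{rmk:missing-hypothesis-Villadsen}) concerns a different result of Villadsen.

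Two cosmetic points. The third condition in your definition of $L_{x'y'}$ holds automatically. The announced reduction to finite $x$ is never used.
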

\begin{proof}
It is easy to see that the \emph{convex lifting property} (\autoref{dfn:convex-lifting}) is preserved by inverse limits.
The result thus follows from the equivalence recalled in \autoref{lem:Villadsen}.
\end{proof}

\begin{lem}
\label{lem:inverse-limit-ext}
Let $T$ be a face-preserving theory, and let $x=(x_i:i\in I)$ be a tuple of variables.
Then for every $p \in \tS^\qf_x(T)$ we have  $p \in \cE^\qf_x(T)$ if and only if $\pi_F(p)\in \cE^\qf_{x|_F}(T)$ for every finite subset $F\subseteq I$, where $\pi_F$ denotes the variable restriction map $\tS^\qf_x(T)\to \tS^\qf_{x|_F}(T)$.
In other words,
\begin{equation*}
\cE^\qf_x(T) = \varprojlim_{x' \subseteq x \ \text{finite}} \cE^\qf_{x'}(T).
\end{equation*}
\end{lem}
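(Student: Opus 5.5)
The plan is to prove the two inclusions separately: the forward one uses face-preservation, and the converse uses only the fact that quantifier-free types are determined by their finite restrictions.

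For the forward direction, fix $p\in\cE^\qf_x(T)$ and a finite $F\subseteq I$. Write the variables as $x = (x|_F, x|_{I\setminus F})$, so that $\pi_F$ is the variable restriction map $\pi^\qf_{x|_F,\,x|_{I\setminus F}}$. By \autoref{lem:pi-qf-xy-face-pres-arbitrary}, this map is face-preserving even though $x|_{I\setminus F}$ may be infinite. Hence the image of the face $\{p\}$, namely the singleton $\{\pi_F(p)\}$, is a face of $\tS^\qf_{x|_F}(T)$. That is, $\pi_F(p)\in\cE^\qf_{x|_F}(T)$.

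For the converse, suppose $\pi_F(p)$ is extreme for every finite $F$, and write $p=\lambda q+(1-\lambda)r$ with $q,r\in\tS^\qf_x(T)$ and $0<\lambda<1$. Since $\pi_F$ is affine, $\pi_F(p)=\lambda\pi_F(q)+(1-\lambda)\pi_F(r)$. Extremality of $\pi_F(p)$ then gives $\pi_F(q)=\pi_F(r)=\pi_F(p)$ for every finite $F$. Every quantifier-free affine formula in $x$ involves only finitely many variables. So a state on $\cL^{\qf,\aff}_x(T)$ is determined by its restrictions to the subspaces $\cL^{\qf,\aff}_{x|_F}(T)$, and therefore $q=r=p$. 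Thus $p$ is extreme.

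The displayed inverse-limit identity is a reformulation of these two directions. The canonical map
\[
\tS^\qf_x(T)\to\varprojlim_F\tS^\qf_{x|_F}(T)
\]
is a homeomorphism. It is injective by the observation above. It is surjective because a compatible family of states defines a state on the union $\bigcup_F\cL^{\qf,\aff}_{x|_F}(T)=\cL^{\qf,\aff}_x(T)$, which is positive and unital since each member of the family is. It is then a continuous bijection from a compact space to a Hausdorff space. Under this identification, the two directions say exactly that $\cE^\qf_x(T)$ corresponds to the compatible families all of whose components are extreme, i.e. to $\varprojlim\cE^\qf_{x'}(T)$.

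The only substantive input is \autoref{lem:pi-qf-xy-face-pres-arbitrary}, which lets us use projections that discard infinitely many variables in the forward direction; everything else is routine.
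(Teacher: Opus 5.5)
Your proof is correct and follows the paper's argument. The forward direction is exactly an application of \autoref{lem:pi-qf-xy-face-pres-arbitrary} to the face $\{p\}$. The converse is the easy, face-preservation-free observation, which the paper cites from \cite[Lemma~5.7]{BITaffine}, that a quantifier-free type is determined by its finite restrictions; you also spell out the routine check of the inverse-limit identity, which the paper leaves implicit.
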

\begin{proof}
The right-to-left implication is easy (and does not use that $T$ is face-preserving); see \cite[Lemma~5.7]{BITaffine}.
The converse follows from \autoref{lem:pi-qf-xy-face-pres-arbitrary}.
\end{proof}

\begin{prop}
\label{prop:ExtremeTypeTwoSteps}
Let $T$ be a face-preserving Robinson theory and let $M\models_\aec T$.
For any tuples $a\in M^x$ and $b\in M^y$, we have $\tp^\qf(ab) \in \cE^\qf_{xy}(T)$ if and only if $\tp^\qf(a) \in \cE^\qf_x(T)$ and $\tp^\qf(b/a) \in \cE^\qf_y(a)$.
\end{prop}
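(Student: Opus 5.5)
Let $p=\tp^\qf(a)$ and $\pi=\pi^\qf_{x,y}\colon\tS^\qf_{xy}(T)\to\tS^\qf_x(T)$. Since $M\models_\aec T$ and $T$ is Robinson, \autoref{lem:types-with-parameters-vs-fibers} gives an affine homeomorphism
\begin{equation*}
\tS^\qf_y(a)\cong F_p:=\pi^{-1}(p),\qquad \tp^\qf(b/a)\mapsto\tp^\qf(ab).
\end{equation*}
So $\tp^\qf(b/a)\in\cE^\qf_y(a)$ if and only if $\tp^\qf(ab)$ is an extreme point of the closed convex set $F_p$. The statement then reduces to a purely convex-geometric fact. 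Let $\pi\colon X\to Y$ be an affine continuous face-preserving surjection (it is surjective by the above, or simply because any type in $\tS^\qf_x(T)$ extends). Then for $q\in X$ we should have $q\in\cE(X)$ if and only if $\pi(q)\in\cE(Y)$ and $q\in\cE(\pi^{-1}(\pi(q)))$.

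For the direction ($\Leftarrow$), I do not need face-preservation. If $\pi(q)$ is extreme in $Y$, then $\pi^{-1}(\pi(q))$ is a face of $X$, being the preimage of a face under an affine map. An extreme point of a face is extreme in $X$. So if $q$ is extreme in the fiber, then $q\in\cE(X)$.

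For the direction ($\Rightarrow$), the part $q\in\cE(F_{\pi(q)})$ is immediate, since $F_{\pi(q)}\subseteq X$. The substantive part is that $\pi(q)\in\cE(Y)$. This is exactly where face-preservation is used: $\{q\}$ is a face of $X$, so $\pi(\{q\})=\{\pi(q)\}$ is a face of $Y$, i.e.\ $\pi(q)$ is extreme. Here $x,y$ may be arbitrary tuples, since they come from tuples of $M$. I will therefore invoke \autoref{lem:pi-qf-xy-face-pres-arbitrary} rather than the finite-tuple definition.

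The only point needing care is the identification of extremality in $\tS^\qf_y(a)$ with extremality within the fiber. Here I must check that the map of \autoref{lem:types-with-parameters-vs-fibers} is an affine homeomorphism onto the whole fiber, which is where a.e.c.\ and Robinson are needed. Then extreme points correspond under it, and the argument above concludes.
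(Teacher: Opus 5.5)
Your proposal is correct and follows essentially the same route as the paper. The paper also identifies $\tS^\qf_y(a)$ with the fiber via \autoref{lem:types-with-parameters-vs-fibers}, where surjectivity onto the fiber uses that $M$ is a.e.c.\ and $T$ is Robinson. It likewise gets the extremality of $\tp^\qf(a)$ for free from face-preservation, via \autoref{lem:pi-qf-xy-face-pres-arbitrary} for infinite tuples; this is precisely the ``non-trivial step'' of \cite[Prop.~5.4]{BITaffine} that becomes automatic here.
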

\begin{proof}
Just as in \cite[Prop.~5.4]{BITaffine}, with the non-trivial step now granted for free, by the hypothesis that $T$ is face-preserving (and \autoref{lem:pi-qf-xy-face-pres-arbitrary}, in the case of infinite tuples).
The easier part uses \autoref{lem:types-with-parameters-vs-fibers}.
\end{proof}

\begin{dfn}
A model $M$ of an affine theory $T$ is \emph{qf-extremal} if for every finite (equivalently, arbitrary) tuple $a\in M^x$, we have $\tp^\qf(a)\in \cE^\qf_x(T)$.
\end{dfn}

Let $T$ an affine Robinson theory and $x$ be a finite tuple of variables.
On the type spaces $\tS^\qf_x(T)$ we define a generalized metric (i.e., a metric allowing to take the value $+\infty$) in the usual way:
\begin{equation*}
d(p, q) = \inf \, \bigl\{ d(a,b) : M \models T, \ a,b \in M^x, \ a \models p, \ b \models q \bigr\},
\end{equation*}
where the infimum of $\emptyset$ is $+\infty$.
The fact that $T$ is Robinson ensures that the triangle inequality is satisfied.
Compare with \cite[Prop.~3.19]{BITaffine}.

\begin{lem}
\label{lem:Ex(T)-partial-closed}
Let $T$ be a face-preserving Robinson theory.
Then for any finite tuple of variables $x$, the set $\cE^\qf_x(T)$ is $d$-closed in $\tS^\qf_x(T)$.

In particular, if $a_n\in M^x$ is a sequence in a model of $T$ converging to some $a\in M^x$, and if $\tp^\qf(a_n)\in \cE^\qf_x(T)$ for all $n$, then $\tp^\qf(a)\in \cE^\qf_x(T)$.
\end{lem}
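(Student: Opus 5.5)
The plan is to exploit the fact that face-preservation of $\pi^\qf$ is equivalent to the convex lifting property (\autoref{lem:Villadsen}, \autoref{dfn:convex-lifting}). Concretely: if $\pi\colon X\to Y$ is face-preserving, $r\in X$, and $\pi(r)=\lambda q_1+(1-\lambda)q_2$, then $r=\lambda r_1+(1-\lambda)r_2$ for some $r_i\in X$ with $\pi(r_i)=q_i$. I will apply this to the projection onto a second copy $x'$ of the variables, using the atomic formula $d(x,x')$, which is an affine quantifier-free formula, to control distances.

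\textbf{Setup.} Let $p_n\in\cE^\qf_x(T)$ and $q\in\tS^\qf_x(T)$ with $d(p_n,q)\to0$. Write $q=\lambda q_1+(1-\lambda)q_2$ with $0<\lambda<1$; we must show $q_1=q_2$.
\begin{itemize}
\item By definition of $d$, for each $n$ there are a model $M_n\models T$ and tuples $a_n,b_n\in M_n^x$ with $a_n\models p_n$, $b_n\models q$ and $d(a_n,b_n)\leq\epsilon_n$, where $\epsilon_n\to0$.
\item Let $r_n=\tp^\qf(a_nb_n)\in\tS^\qf_{xx'}(T)$. Then $\pi_x(r_n)=p_n$, $\pi_{x'}(r_n)=q$ and $r_n(d(x,x'))\leq\epsilon_n$.
\end{itemize}

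\textbf{Lifting the decomposition.} Since $T$ is face-preserving, $\pi_{x'}$ has the convex lifting property, so $r_n=\lambda r^n_1+(1-\lambda)r^n_2$ with $\pi_{x'}(r^n_i)=q_i$. The fiber $\pi_x^{-1}(p_n)$ is a face of $\tS^\qf_{xx'}(T)$, because it is the preimage of an extreme point under an affine map. It contains $r_n$, hence also $r^n_1$ and $r^n_2$, so $\pi_x(r^n_i)=p_n$.

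Evaluating the affine formula $d(x,x')$ gives $\lambda r^n_1(d)+(1-\lambda)r^n_2(d)\leq\epsilon_n$. Hence $r^n_i(d(x,x'))\leq\epsilon_n/\min(\lambda,1-\lambda)$. Each $r^n_i$ is realized in some model of $T$, so this yields $d(p_n,q_i)\leq\epsilon_n/\min(\lambda,1-\lambda)$.

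\textbf{Conclusion.} By the triangle inequality, which holds because $T$ is Robinson, $d(q_1,q_2)\leq d(q_1,p_n)+d(p_n,q_2)\to0$, so $d(q_1,q_2)=0$. It remains to note that $d(q_1,q_2)=0$ forces $q_1=q_2$. By compactness of $\tS^\qf_{xx'}(T)$ the infimum in the definition of $d$ is attained: take a limit of the types $\tp^\qf(ab)$ of realizing pairs with $d(a,b)\to0$. The limit type has $d(x,x')=0$, so its two restrictions coincide. Hence $q$ is extreme, and $\cE^\qf_x(T)$ is $d$-closed.

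\textbf{The ``in particular''.} If $a_n\to a$ in $M\models T$, then $d(\tp^\qf(a_n),\tp^\qf(a))\leq d(a_n,a)\to0$, so the first part applies.

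\textbf{Main obstacle.} The key step is the lifting: one must see that the convex lifting property (rather than face-preservation directly) is what carries the decomposition of $q$ back to the joint type while staying inside the face $\pi_x^{-1}(p_n)$. I would invoke \autoref{lem:Villadsen} for this, and check that the lifting is available at every point of the domain and not merely on faces.
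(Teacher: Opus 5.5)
Your argument is correct and is essentially the one the paper intends by deferring to \cite[Prop.~5.6(ii)]{BITaffine}. That argument takes the joint type of a close pair realizing $p_n$ and $q$, lifts the decomposition of $q$ through it using the convex lifting property (supplied here by face-preservation and \autoref{lem:Villadsen}), and uses extremality of $p_n$ to keep the other coordinate fixed. One cosmetic correction: for a tuple $x$, $d(x,x')$ is not an atomic (or, for the max metric, even affine) formula, so you should use the affine quantifier-free formula $\sum_i d(x_i,x'_i)$ instead; it agrees with the tuple metric up to a factor $|x|$, so nothing else in your argument changes.
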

\begin{proof}
Just as in \cite[Prop.~5.6(ii)]{BITaffine}.
\end{proof}

\begin{prop}
\label{prop:qf-extremal-generated}
Let $T$ be a face-preserving Robinson theory.
Let $a\in M^x$ be any tuple in a model $M\models T$.
If $\tp^\qf(a)\in\cE^\qf_x(T)$, then the substructure $\cl{\langle a\rangle}_\cL\subseteq M$ generated by $a$ is a qf-extremal model of $T$.
\end{prop}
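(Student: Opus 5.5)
Write $N = \cl{\langle a\rangle}_\cL$. Since $T$ is universal, $N\models T$ as a substructure of $M$. Quantifier-free types of tuples from $N$ are the same computed in $N$ or in $M$. So we must show that $\tp^\qf(c)\in\cE^\qf_y(T)$ for every finite tuple $c\in N^y$. The plan is to handle first the dense subset $\langle a\rangle_\cL$ of terms in $a$, then pass to limits with \autoref{lem:Ex(T)-partial-closed}.

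\emph{Step 1 (terms in $a$).} Let $c = t(a)$ for a tuple of terms $t$; only finitely many variables $x'\subseteq x$ occur. Then $ac$ is a tuple over $x y$. Its quantifier-free type is the image of $\tp^\qf(a)$ under a continuous affine map $\tS^\qf_x(T)\to\tS^\qf_{xy}(T)$. This map comes from substituting $t(x)$ for $y$ in quantifier-free affine formulas, and it is injective: the type of $ac$ projects back to that of $a$. Its image is the set of types satisfying $d(y,t(x))=0$. This set is an exposed face of $\tS^\qf_{xy}(T)$, cut out by the affine formula $d(y,t(x))\geq 0$ attaining its minimum $0$. Since the map is an affine bijection onto a face, it sends extreme points to extreme points. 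Hence $\tp^\qf(ac)\in\cE^\qf_{xy}(T)$, and for infinite $x$ this uses \autoref{lem:inverse-limit-ext} if convenient. Face-preservation of $\pi^\qf_{xy,y}$ then gives $\tp^\qf(c)\in\cE^\qf_y(T)$; for infinite $x$ we use \autoref{lem:pi-qf-xy-face-pres-arbitrary}. The point is that a face-preserving map sends an extreme point $\{p\}$ to a face consisting of a single point, which is therefore extreme.

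\emph{Step 2 (closure).} A general finite tuple $c\in N^y$ is a limit of tuples $c_n$ from $\langle a\rangle_\cL$. By Step 1, $\tp^\qf(c_n)\in\cE^\qf_y(T)$. The second part of \autoref{lem:Ex(T)-partial-closed}, applied in $M$, then gives $\tp^\qf(c)\in\cE^\qf_y(T)$. Thus $N$ is qf-extremal.

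The main obstacle is the face argument in Step 1, in particular when $x$ is infinite. There one should reduce to the finite subtuple $x'$ of variables occurring in $t$. First, $\tp^\qf(a|_{x'})$ is extreme, as the image of an extreme point under a face-preserving projection. Then we run the finite-variable argument with $x'$ in place of $x$. This avoids any subtleties with infinite tuples beyond \autoref{lem:pi-qf-xy-face-pres-arbitrary}.
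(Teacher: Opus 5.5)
Your proof is correct. Step~2 is exactly the paper's closure step via \autoref{lem:Ex(T)-partial-closed}, but Step~1 takes a different route.

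The paper first passes to an a.e.c.\ extension of $M$. It notes that for $b=\bar t(a)$ the type $\tp^\qf(b/a)$ is an exposed point of $\tS^\qf_y(a)$, and then applies \autoref{prop:ExtremeTypeTwoSteps} twice. The first application makes $\tp^\qf(ab)$ extreme; this direction needs \autoref{lem:types-with-parameters-vs-fibers}, hence amalgamation and $M\models_\aec T$, to identify $\tS^\qf_y(a)$ with the fiber over $\tp^\qf(a)$. The second application, with the tuples swapped, descends to $\tp^\qf(b)$ by face-preservation.

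You instead work globally with the graph of the term map. Substitution identifies $\tS^\qf_{x'}(T)$ affinely and homeomorphically with the exposed face $\{q : q(d(y,t(x')))=0\}$ of $\tS^\qf_{x'y}(T)$. You assert surjectivity onto this face without comment; it follows by realizing such a $q$ by some $uv$ and noting that $v=t(u)$. Extremality therefore transfers without any types over parameters, and a single application of face-preservation finishes.

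Your version makes Step~1 independent of amalgamation and of a.e.c.\ models; the Robinson hypothesis enters only through \autoref{lem:Ex(T)-partial-closed}. The paper's version reuses an existing tool. It also applies verbatim to any $b$ whose distance predicate $d(y,b)$ is merely quantifier-free definable over $a$, where no global substitution map on $\tS^\qf_x(T)$ is available.

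A small notational slip: in the paper's convention the restriction to the $y$-variables is $\pi^\qf_{y,x}$, not $\pi^\qf_{xy,y}$.
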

\begin{proof}
By \autoref{lem:embedding-into-ec-model}, we may assume that $M\models_\aec T$.
We observe that if $b = \bar{t}(a)\in M^x$ is obtained from $a\in M^x$ by applying $\cL$-terms $t_i$ (and more generally, if the distance function $d(y,b)$ is quantifier-free definable over $a$) then the type $\tp^\qf(b/a)$ is an exposed point of $\tS^\qf_y(a)$, and in particular $\tp^\qf(b/a)\in \cE^\qf_y(a)$.
Therefore, if $\tp^\qf(a)\in \cE^\qf_x(T)$, then $\tp^\qf(b)\in\cE^\qf_y(T)$ by two applications of \autoref{prop:ExtremeTypeTwoSteps}.
Together with \autoref{lem:Ex(T)-partial-closed} (for passing to the closure), this shows $\cl{\langle a\rangle}_\cL$ is qf-extremal.
\end{proof}

\begin{cor}
\label{cor:qf-extremal-realization}
Let $T$ be a face-preserving Robinson theory and $x$ any tuple of variables.
Then every $p\in \cE^\qf_x(T)$ is realized in a qf-extremal model of~$T$.
\end{cor}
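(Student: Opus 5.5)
The plan is to realize $p$ in some model of $T$, and then apply \autoref{prop:qf-extremal-generated} to the substructure generated by the realizing tuple.

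First, since $p\in\tS^\qf_x(T)$, there is a model $M\models T$ and a tuple $a\in M^x$ with $\tp^\qf(a)=p$. This holds by definition of the quantifier-free type space: states on $\cL^{\qf,\aff}_x(T)$ correspond to continuous quantifier-free types, which are realized by compactness. Next, \autoref{prop:qf-extremal-generated} is stated for a tuple $a\in M^x$ with $\tp^\qf(a)\in\cE^\qf_x(T)$, with no finiteness restriction on $x$. It yields that $N=\cl{\langle a\rangle}_\cL\subseteq M$ is a qf-extremal model of $T$. Since $T$ is universal, $N$ is indeed a model of $T$. Moreover, $a\in N^x$ and quantifier-free types are preserved under passing to substructures, so $\tp^\qf(a)$ computed in $N$ is still $p$. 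Hence $p$ is realized in the qf-extremal model $N$.

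The main point to check is that \autoref{prop:qf-extremal-generated} applies to infinite tuples $x$. Its proof uses two ingredients. The first is \autoref{prop:ExtremeTypeTwoSteps}, which allows arbitrary tuples via \autoref{lem:pi-qf-xy-face-pres-arbitrary}. The second is the observation that term-images $b=\bar t(a)$ have exposed types over $a$. For a finite $b$, such a $b$ depends only on finitely many coordinates of $a$. Alternatively, one can use \autoref{lem:inverse-limit-ext}: $\tp^\qf(a)$ extreme implies $\tp^\qf(a|_F)$ is extreme for each finite $F$. Then each finite tuple of $\langle a\rangle$ lies in $\langle a|_F\rangle$ for some finite $F$, and the finite case of the proposition applies. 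Elements of the closure are handled by \autoref{lem:Ex(T)-partial-closed}, as in the proof of that proposition. Either route shows that every finite tuple of $N$ has extreme quantifier-free type, which is what qf-extremality requires.
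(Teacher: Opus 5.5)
Your proof is correct and is the paper's argument: take any realization $a$ of $p$ and apply \autoref{prop:qf-extremal-generated} to $\cl{\langle a\rangle}_\cL$. The proposition is already stated for arbitrary tuples, so your extra check that infinite $x$ is covered is not needed, though both routes you sketch are sound (via \autoref{lem:pi-qf-xy-face-pres-arbitrary}, or via \autoref{lem:inverse-limit-ext} plus \autoref{lem:Ex(T)-partial-closed}).
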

\begin{proof}
We may take any realization of $p$ and consider the structure it generates.
\end{proof}

\begin{prop}
\label{prop:chains-qf-ext}
Let $T$ be a face-preserving Robinson theory.
Let $(M_i:i\in I)$ be a chain of structures.
If each $M_i$ is a qf-extremal model of $T$, then so is $\cl{\bigcup_{i\in I}M_i}$. 
\end{prop}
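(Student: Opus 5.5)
We need to show that $M=\cl{\bigcup_{i\in I}M_i}$ is a model of $T$ and that every finite tuple of $M$ has an extreme quantifier-free type. That $M\models T$ is immediate: $T$ is universal, hence inductive, so its models are closed under direct limits of chains. (Concretely, a universal condition $\sup_x\varphi(x)\leq 0$ holds on tuples from the union, and by uniform continuity of $\varphi$ also on the completion.) The real content is qf-extremality.

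The first step handles tuples lying in the union. Let $a\in \big(\bigcup_i M_i\big)^x$ with $x$ finite. Since $I$ is directed and $x$ is finite, $a\in M_i^x$ for some $i$. Quantifier-free types are preserved under substructures, so $\tp^\qf(a)$ computed in $M$ equals $\tp^\qf(a)$ computed in $M_i$. The latter lies in $\cE^\qf_x(T)$ because $M_i$ is qf-extremal. Note that this step does not even need the $M_i$ to be a.e.c.\ in one another, because qf-extremality refers only to the parameter-free space $\tS^\qf_x(T)$, and that space is independent of the ambient model.

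The second step passes to the completion. An arbitrary finite tuple $a\in M^x$ is the limit of a sequence $a_n$ of tuples from $\bigcup_i M_i$; in a multi-sorted setting this is done coordinatewise in each sort. By the first step, $\tp^\qf(a_n)\in\cE^\qf_x(T)$ for all $n$. The second assertion of \autoref{lem:Ex(T)-partial-closed}, applied inside the model $M\models T$, then gives $\tp^\qf(a)\in\cE^\qf_x(T)$. This is where the face-preserving and Robinson hypotheses enter. Finally, since finite tuples suffice in the definition of qf-extremal (equivalently, by \autoref{lem:inverse-limit-ext} for arbitrary tuples), $M$ is qf-extremal.

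The only potentially delicate point is this closure step. For it we rely on the $d$-closedness of $\cE^\qf_x(T)$ from \autoref{lem:Ex(T)-partial-closed}, together with the observation that $d(\tp^\qf(a_n),\tp^\qf(a))\leq d(a_n,a)\to 0$.
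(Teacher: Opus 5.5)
Your proof is correct and follows the paper's route: the paper's proof is the single line ``Follows from \autoref{lem:Ex(T)-partial-closed}'', and you have filled in the details. Finite tuples from $\bigcup_i M_i$ have extreme quantifier-free types because each lies in some $M_i$, and the passage to the completion is exactly the second assertion of that lemma, applied inside the model $\cl{\bigcup_{i\in I}M_i}\models T$.
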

\begin{proof}
Follows from \autoref{lem:Ex(T)-partial-closed}.
\end{proof}

\begin{prop}
\label{prop:qf-ext-amalgam-and-JE}
Let $T$ be a face-preserving Robinson theory.
\begin{enumerate}
\item\label{i:qf-ext-amalgam} The class of qf-extremal models of $T$ has the amalgamation property.

\item\label{i:qf-ext-JE} If moreover $T$ is irreducible, then the class of qf-extremal models of $T$ has the joint embedding property.
\end{enumerate}
\end{prop}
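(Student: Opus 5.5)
For (i), take qf-extremal models $A\subseteq B_0$, $A\subseteq B_1$ of $T$, where the embeddings are inclusions. The aim is to realize, over a suitable a.e.c.\ extension of $B_0$, a copy of $B_1$ whose quantifier-free type over $B_0$ is extreme. Then \autoref{prop:ExtremeTypeTwoSteps} will make the joint tuple extreme, and \autoref{prop:qf-extremal-generated} will make the structure it generates qf-extremal.

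Concretely, enumerate $B_0$ as a tuple $b_0$ that begins with an enumeration $a$ of $A$, and enumerate $B_1$ as $a b_1$. By \autoref{lem:embedding-into-ec-model}, choose $N\models_\aec T$ extending $B_0$. Let $q=\tp^\qf(ab_1)\in\cE^\qf(T)$. The fiber $(\pi^\qf)^{-1}(q)$ over $\tp^\qf(a)$ must be related to the fiber over $\tp^\qf(b_0)$. I expect this step to be the main obstacle. By \autoref{lem:types-with-parameters-vs-fibers}, $\tS^\qf_{y}(b_0)$, computed in $N$, is affinely homeomorphic to the fiber of $\tS^\qf_{x'y}(T)\to\tS^\qf_{x'}(T)$ over $p_0=\tp^\qf(b_0)$, where $x'$ is the variable tuple of $b_0$. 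Now consider the restriction map $\rho$ from this fiber onto the set $\Phi$ of types in $\tS^\qf_{xy}(T)$ extending $\tp^\qf(a)$ (the $x$-variables being the $A$-part). $\rho$ is surjective because $T$ is Robinson: amalgamate $B_0$ and $B_1$ over $A$ inside $N$.

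The key claim is that $q$ is extreme in $\Phi$ and that some extreme point of $\rho^{-1}(q)$ is an extreme point of the whole fiber over $p_0$. First, $\Phi$ is a closed face of $\tS^\qf_{xy}(T)$, since it is the preimage of the point $\tp^\qf(a)$, which is extreme. Hence $q$, being extreme in $\tS^\qf_{xy}(T)$, is extreme in $\Phi$. Next, $\rho^{-1}(q)$ is a closed face of the fiber, so any of its extreme points, which exist by Krein--Milman, is extreme in the fiber. This gives $r=\tp^\qf(b_1'/b_0)\in\cE^\qf_y(b_0)$ realized in some $N'\succeq^\aec N$, with $\tp^\qf(ab_1')=q$. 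Here the face-preserving hypothesis was not even needed; it enters through \autoref{prop:ExtremeTypeTwoSteps}. That proposition, applied with $\tp^\qf(b_0)$ extreme because $B_0$ is qf-extremal, gives $\tp^\qf(b_0b_1')\in\cE^\qf(T)$. The infinite-tuple case is handled via \autoref{lem:pi-qf-xy-face-pres-arbitrary}, or by reducing to finite subtuples via \autoref{lem:inverse-limit-ext}. Then \autoref{prop:qf-extremal-generated} shows that $C=\cl{\langle b_0b_1'\rangle}$ is qf-extremal. Since $b_1'$ has the same qf-type over $a$ as $b_1$, the map $B_1\to C$ sending $ab_1\mapsto ab_1'$ is an embedding agreeing with $B_0\subseteq C$ on $A$. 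This proves amalgamation.

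For (ii), run the same argument with $A$ empty. Irreducibility (joint embedding for $T$) replaces the amalgamation used for surjectivity of $\rho$, and $\Phi$ is now all of $\tS^\qf_y(T)$, so no face argument is needed at that step. Alternatively, reduce to (i) by amalgamating over the substructure generated by the empty tuple. If the language has constants, that substructure is qf-extremal by \autoref{prop:qf-extremal-generated}, provided the type of the empty tuple is extreme, which is trivial. Irreducibility is still needed to get a common extension at all in the constant-free case.
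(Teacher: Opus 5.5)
Your argument is correct and is essentially the paper's: your set $\rho^{-1}(q)$ inside the fiber over $\tp^\qf(b_0)$ is exactly the closed face of amalgams (joinings) of $\tp^\qf(b_0)$ and $\tp^\qf(ab_1)$ over $\tp^\qf(a)$. The paper likewise takes an extreme point of this face by Krein--Milman and realizes it in a qf-extremal model via \autoref{cor:qf-extremal-realization}. The detour through the a.e.c.\ model $N$ and types over $b_0$ is unnecessary, since an extreme point of that face is already extreme in $\tS^\qf_{x'y}(T)$. Also, the face-preserving hypothesis is really used inside \autoref{prop:qf-extremal-generated}, not in the easy direction of \autoref{prop:ExtremeTypeTwoSteps} that you invoke.
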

\begin{proof}
For \autoref{i:qf-ext-JE}, the proof of the first part of \cite[Prop.~5.15]{BITaffine} works verbatim, using \autoref{cor:qf-extremal-realization} to realize the type of an extreme joining in a qf-extremal model.
For \autoref{i:qf-ext-amalgam}, it is more straightforward to give a direct argument, essentially the same as for \autoref{i:qf-ext-JE}, rather than to \emph{reduce} it to \autoref{i:qf-ext-JE} as done in the second part of \cite[Prop.~5.15]{BITaffine}. (For that reduction, one would first need to replace $M$ and $N$ by a.e.c.\ qf-extremal extensions, whose existence we prove below in \autoref{prop:Existence-aec-qf-ext}.)
We leave the choice and the details to the reader.
\end{proof}

Next we prove fundamental properties of a.e.c.\ qf-extremal models.

\begin{lem}
\label{lem:charact-qf-ext-models}
Let $T$ be a face-preserving Robinson theory and let $M$ be a qf-extremal model of $T$.
The following are equivalent:
\begin{enumerate}
\item $M$ is an affinely existentially closed model of $T$.
\item Whenever $M\subseteq N$ with $N$ a qf-extremal model of $T$, we have $M\preceq^\aec N$.
\end{enumerate}
\end{lem}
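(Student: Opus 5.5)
The implication (i)$\Rightarrow$(ii) is immediate from the definition of $M\models_\aec T$, since a qf-extremal model $N\supseteq M$ is in particular a model of $T$. The work lies in (ii)$\Rightarrow$(i). Assume (ii), and let $M\subseteq N$ with $N\models T$ arbitrary. We must show $(\inf_y\varphi(a,y))^M = (\inf_y\varphi(a,y))^N$ for every $\varphi\in\cL^{\qf,\aff}_{xy}$ and $a\in M^x$. The inequality $\geq$ is trivial. For the other inequality, fix $b\in N^y$. I will show that $\varphi(a,b)\geq r := (\inf_y\varphi(a,y))^M$.

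The plan is to replace $b$ by a convex decomposition into extreme types over $M$, and to realize each piece in a qf-extremal extension. First embed $N$ into an a.e.c.\ model (\autoref{lem:embedding-into-ec-model}), so that we may work with $q=\tp^\qf(b/M)$ in the type space $\tS^\qf_y(M)$ of $D^\aff_\forall(M)$; more economically, work with $q=\tp^\qf(b/a')$, where $a'\supseteq a$ is a tuple enumerating all of $M$. The value $\varphi(a,b)$ is an affine continuous function $\hat\varphi(q)$ of $q$. By Krein--Milman applied to the compact convex set $\tS^\qf_y(M)$ (or Choquet--Bishop--de Leeuw), $q$ lies in the closed convex hull of its extreme points. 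Since $\hat\varphi$ is affine and continuous, we have $\hat\varphi(q)\geq\inf\{\hat\varphi(e): e\in\cE^\qf_y(M)\}$. It therefore suffices to show $\hat\varphi(e)\geq r$ for every extreme $e$.

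Given $e\in\cE^\qf_y(M)$, realize it as $\tp^\qf(c/M)$ for a tuple $c$ in some $N'\supseteq M$ with $N'\models D^\aff_\forall(M)$. Let $m$ enumerate $M$. Since $M$ is qf-extremal, $\tp^\qf(m)\in\cE^\qf(T)$. I now want to conclude $\tp^\qf(mc)\in\cE^\qf(T)$, via the ``if'' direction of \autoref{prop:ExtremeTypeTwoSteps} for infinite tuples, together with \autoref{lem:inverse-limit-ext}. That proposition assumes the ambient model is a.e.c., which is the main obstacle. What I actually need is only the easy direction: an extreme type over $m$ in the fiber, combined with an extreme base type, gives an extreme point of $\tS^\qf(T)$. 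By \autoref{lem:types-with-parameters-vs-fibers}, $\tS^\qf_y(m)$ embeds affinely into the fiber $(\pi^\qf)^{-1}(\tp^\qf(m))$. That fiber is a face, because the preimage of an extreme point under an affine map is a face. So I must check that $e$ is extreme in the whole fiber, not just in the image of the embedding. This holds if the image is itself a face of the fiber, e.g.\ if the embedding is onto. To arrange this, I first pass to an extension: take any $N''\models T$ with $M\subseteq N''$ realizing the relevant types. Alternatively I bypass the issue by decomposing $\tp^\qf(mb)$ directly inside the fiber $(\pi^\qf)^{-1}(\tp^\qf(m))$ of $\tS^\qf(T)$. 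That fiber is a closed face, so its extreme points are extreme in $\tS^\qf(T)$, and each extreme point is realized by some $m'c$. By quantifier-free type equality and amalgamation (the Robinson property), we may take $m'=m$ inside an extension of $M$. This second route avoids the a.e.c.\ hypothesis entirely, and it is the one I would use.

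Given $mc$ with extreme quantifier-free type, \autoref{prop:qf-extremal-generated} shows that the structure $N_c=\cl{\langle mc\rangle}_\cL$ is a qf-extremal model of $T$ containing $M$. By (ii), $M\preceq^\aec N_c$, hence $\varphi(a,c)\geq r$. Since $\varphi(a,\cdot)$ depends affinely and continuously on the type in the fiber, Krein--Milman gives $\varphi(a,b)\geq r$. This proves (ii)$\Rightarrow$(i).
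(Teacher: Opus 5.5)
Your second route is correct and is essentially the paper's proof. Both extend $a$ to enumerate $M$ and observe that the fiber of $\pi^\qf_{x,y}$ over the extreme type $\tp^\qf(a)$ is a closed face. Both then use Krein--Milman to reduce to extreme points of that fiber, realize each such point in a qf-extremal model containing $M$ via \autoref{prop:qf-extremal-generated} (equivalently \autoref{cor:qf-extremal-realization}), and apply (ii). The appeal to amalgamation is unnecessary: since $m$ enumerates $M$, equality of quantifier-free types already gives an embedding of $M$.
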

\begin{proof}
For the non-trivial implication, suppose $M$ satisfies the second condition and let $M\subseteq N$ be any extension with $N\models T$.
Suppose also that $\varphi(a,b) < 0$ for some formula $\varphi\in\cL^{\qf,\aff}_{xy}$ and tuples $a\in M^x$, $b\in N^y$.
Up to adding dummy variables and extending the tuple $a$, we may assume that $a$ enumerates $M$.
Let $q = \tp^\qf(a)\in \cE^\qf_x(T)$, and consider the fiber
\begin{equation*}
F = \big(\pi^\qf_{x,y}\big)^{-1}(q) \subseteq \tS^\qf_{xy}(T).
\end{equation*}
Since $F$ is a closed face and $\tp^\qf(ab)\in F$, by Krein--Milman there is an extreme type $p\in\cE(F)\subseteq \cE_{xy}^\qf(T)$ such that $p(\varphi)<0$.
By \autoref{cor:qf-extremal-realization}, there is a realization $a'b'$ of $p$ in a qf-extremal model $N'$ of $T$.
Identifying $a$ and $a'$ we have $M\subseteq N'$, and therefore $M\preceq^\aec N'$ by our hypothesis on $M$.
In particular, $M\models \inf_y\varphi(a,y)<0$, as desired.
\end{proof}

\begin{prop}
\label{prop:Existence-aec-qf-ext}
Let $T$ be a face-preserving Robinson theory.
For every qf-extremal model $M\models T$, there is a qf-extremal model $N\models_\aec T$ with $M\subseteq N$.
\end{prop}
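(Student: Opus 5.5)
We build $N$ by a chain construction, as in the proof of \autoref{lem:embedding-into-ec-model}, but we take care at each step to stay inside the class of qf-extremal models. Then we use \autoref{lem:charact-qf-ext-models} to check that the limit is a.e.c.

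\emph{One-step extension.} Let $M'$ be a qf-extremal model of $T$. We want a qf-extremal extension $M'\subseteq M''$ such that for every $\varphi\in\cL^{\qf,\aff}_{xy}$ (finite $x,y$) and every $a\in M'^x$, if some qf-extremal extension $N\supseteq M'$ satisfies $(\inf_y\varphi(a,y))^N<r$ for a rational $r$, then $(\inf_y\varphi(a,y))^{M''}<r$.

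We enumerate these requirements $(\varphi_\alpha,a_\alpha,r_\alpha)_{\alpha<\kappa}$ and build a continuous increasing chain $(M'_\alpha)_{\alpha\le\kappa}$ of qf-extremal models. At a successor step $\alpha+1$, suppose some qf-extremal $N\supseteq M'$ witnesses $\varphi_\alpha(a_\alpha,b)<r_\alpha$. We use the amalgamation property for qf-extremal models (\autoref{prop:qf-ext-amalgam-and-JE}\autoref{i:qf-ext-amalgam}) to amalgamate $M'_\alpha$ and $N$ over $M'$. This gives a qf-extremal $M'_{\alpha+1}\supseteq M'_\alpha$ containing the witness. Embeddings preserve quantifier-free values, so the witness keeps its value. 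At limit steps we use \autoref{prop:chains-qf-ext}. We set $M''=M'_\kappa$.

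\emph{Iteration.} We put $N_0=M$, $N_{k+1}=(N_k)''$, and $N=\cl{\bigcup_k N_k}$. By \autoref{prop:chains-qf-ext}, $N$ is a qf-extremal model of $T$; it is a model of $T$ because $T$ is universal. To see that $N$ is a.e.c., by \autoref{lem:charact-qf-ext-models} it suffices to show $N\preceq^\aec K$ for every qf-extremal $K\supseteq N$.

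So let $\varphi\in\cL^{\qf,\aff}_{xy}$, let $a\in N^x$, and suppose $(\inf_y\varphi(a,y))^K<r$. We approximate $a$ by $a'\in N_k^x$ for some $k$. By the uniform continuity moduli of $\varphi$, we have $(\inf_y\varphi(a',y))^K<r'$ for some rational $r'$ slightly above $r$. The structure $K$ is a qf-extremal extension of $N_k$, so the construction gives a witness in $N_{k+1}\subseteq N$. Passing back from $a'$ to $a$ costs only a small amount. This shows $(\inf_y\varphi(a,y))^N\le(\inf_y\varphi(a,y))^K$; the reverse inequality is trivial.

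\emph{Expected obstacle.} The delicate point is the one-step construction. The requirements must be indexed by extensions of $M'$ alone, while the amalgamation is carried out over $M'$ into the growing $M'_\alpha$. This uses amalgamation within the qf-extremal class, not just for $T$: naive amalgamation in $T$ would destroy qf-extremality. The approximation argument that passes from parameters in $N$ to parameters in some $N_k$ is routine. Set-theoretic size issues are handled by letting $\kappa$ range over the requirements for $M'$.
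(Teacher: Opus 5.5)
Your proof is correct. It follows the paper's overall scheme (a transfinite chain of qf-extremal models, iterated $\omega$ times, then \autoref{lem:charact-qf-ext-models}), but your one-step construction takes a detour the paper avoids.

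\textbf{Your route.} You index the requirements by data over the fixed base $M'$. You then use amalgamation in the class of qf-extremal models (\autoref{prop:qf-ext-amalgam-and-JE}\autoref{i:qf-ext-amalgam}) to carry each witness into the growing model $M'_\alpha$.

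\textbf{The paper's route.} The paper enumerates the conditions $\inf_x\varphi_\alpha(x,a)\le 0$ with parameters from the base. At stage $\alpha+1$ it takes any qf-extremal extension of the \emph{current} model $M_\alpha$ satisfying that condition, if one exists, and otherwise keeps $M_\alpha$. This already suffices. Any qf-extremal extension $K$ of the limit $N_1$ is in particular a qf-extremal extension of every $M_\alpha$. So if $K$ satisfies the condition, stage $\alpha$ was not vacuous, and the condition holds in $M_{\alpha+1}$, hence in $N_1$.

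\textbf{Comparison.} Contrary to your ``expected obstacle'', no amalgamation is needed, either in $T$ or in the qf-extremal class. The paper's argument rests only on \autoref{prop:chains-qf-ext} and \autoref{lem:charact-qf-ext-models}. Your version yields a formally stronger one-step property (witnesses for all qf-extremal extensions of $M'$, not only of $M''$), but that extra strength is not needed.

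\textbf{A caution on your dependency.} Your route depends on \autoref{prop:qf-ext-amalgam-and-JE}\autoref{i:qf-ext-amalgam}, whose proof the paper leaves to the reader. If you rely on it, use the direct argument: the qf-types of amalgams of the enumerations form a closed face, nonempty because $T$ is Robinson. Take an extreme point of that face and realize it in a qf-extremal model via \autoref{cor:qf-extremal-realization}. Do not use the reduction to joint embedding mentioned there, since the paper notes that this reduction requires the very proposition you are proving.
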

\begin{proof}
The standard chain argument works.
More precisely, let $(\varphi_\alpha : \alpha<\lambda)$ be a transfinite enumeration of all quantifier-free, affine formulas $\varphi_\alpha = \varphi_\alpha(x,a)$ with parameters $a$ from $M$.
We may assume $\lambda$ is a limit ordinal.
We start with $M_0 = M$ and successively define $M_{\alpha+1}$ as any extension $M'\supseteq M_\alpha$ satisfying both:
\begin{itemize}
\item $M'\models \inf_x\varphi_\alpha(x,a) \leq 0$,
\item $M'$ is a qf-extremal model of $T$;
\end{itemize}
or $M_{\alpha+1} = M_\alpha$ if there is no such extension.
At limits steps, we take the direct union, which remains a qf-extremal model of $T$ by \autoref{prop:chains-qf-ext}.
This yields a qf-extremal model $N_1 = M_\lambda$ of $T$, which has witnesses for all existential affine conditions with parameters from $M$ satisfied in qf-extremal extensions of $N_1$ that are models of $T$.
We then iterate the procedure starting with $N_1$.

Ultimately, we obtain a countable chain $(N_n : n\in\bN)$ of qf-extremal models.
Its direct union, $N$, is affinely existentially closed in every qf-extremal model of $T$ extending it. By \autoref{lem:charact-qf-ext-models}, $N\models_\aec T$.
\end{proof}

The following is an analogue of \cite[Prop.~16.22]{BITaffine}.

\begin{prop}
\label{prop:Mprec1N-qfextremal}
Let $T$ be an affine theory, and let $M$ and $N$ be qf-extremal models of $T$.
If $M\preceq^\aec N$, then $M\preceq^\ec N$.
\end{prop}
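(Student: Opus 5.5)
The plan is to show that for every $\varphi\in\cL^\qf_{xy}$ and $a\in M^x$ we have $\big(\inf_y\varphi(a,y)\big)^M\leq\big(\inf_y\varphi(a,y)\big)^N$; the reverse inequality is automatic since $M\subseteq N$. So fix $b\in N^y$. I will approximate $b$, in quantifier-free type over $a$, by tuples from $M$. Only the qf-extremality of $N$ should be needed.

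First, since $M\preceq^\aec N$, \autoref{rmk:ec-vs-universal-diagram} gives $N\models D^\aff_\forall(a)$. Hence $\tp^\qf(b/a)$ is a well-defined point of $\tS^{\qf,\aff}_y(a)=\tS^\qf_y\big(D^\aff_\forall(a)\big)$ (\autoref{rmk:Sqf(Tforall)}).

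Next I claim $\tp^\qf(b/a)\in\cE^\qf_y(a)$. Since $\tS^\qf$ only depends on the universal part, I may replace $T$ by $T_\forall$. By \autoref{lem:types-with-parameters-vs-fibers}, the map $\tp^\qf(b'/a)\mapsto\tp^\qf(ab')$ is an injective affine map from $\tS^\qf_y(a)$ into $\tS^\qf_{xy}(T)$. Because $N$ is qf-extremal, $\tp^\qf(ab)\in\cE^\qf_{xy}(T)$. Now suppose $\tp^\qf(b/a)$ were a proper convex combination of two distinct types $p,q\in\tS^\qf_y(a)$. Applying the injective affine map would write $\tp^\qf(ab)$ as a proper convex combination of the two distinct images of $p$ and $q$. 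That contradicts extremality, so the claim holds.

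Then I apply \autoref{lem:S(T)-cco-realized} to the irreducible universal affine $\cL(a)$-theory $D^\aff_\forall(a)=\Th^\aff_\forall(M,a)$. This gives $\cE^\qf_y(a)\subseteq\cl{\{\tp^\qf(b'/a): b'\in M^y\}}$. So there is a net $b_i\in M^y$ with $\tp^\qf(b_i/a)\to\tp^\qf(b/a)$.

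Finally, I use that the continuous quantifier-free formula $\varphi(a,y)$ induces a continuous function on $\tS^\qf_y(a)$. This holds because $\varphi(a,y)$ is a continuous combination of atomic formulas with parameters $a$, and those atomic formulas are affine, hence evaluate continuously on this state space. This is the same identification of continuous and affine qf type spaces as in the remark after the definition of type spaces. Therefore $\varphi(a,b_i)\to\varphi(a,b)$, which gives $\inf_y\varphi^M(a,y)\leq\varphi(a,b)$. Taking the infimum over $b\in N^y$ concludes.

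The step needing most care is the claim that $\tp^\qf(b/a)$ is extreme. One must check that the embedding of \autoref{lem:types-with-parameters-vs-fibers} is genuinely affine and injective in the universal, non-Robinson setting, which the lemma provides. Note that face-preservation of $T$ is not needed: only injectivity of the affine map is used, not surjectivity onto the fiber.
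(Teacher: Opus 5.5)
Your proof is correct and follows essentially the same route as the paper. You get extremality of $\tp^\qf(b/a)$ from the injective affine map of \autoref{lem:types-with-parameters-vs-fibers}, then density of the $M$-realized types over $a$ among the extreme ones from \autoref{lem:S(T)-cco-realized} applied to $D^\aff_\forall(a)$. The only difference is cosmetic: the paper reduces at the outset to finitely many affine quantifier-free conditions $\varphi_i(a,b)<0$, whereas you handle a general continuous quantifier-free $\varphi$ through continuity of $\varphi(a,\cdot)$ on $\tS^\qf_y(a)$, which amounts to the same thing.
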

\begin{proof}
It is enough to see that whenever we have formulas $\varphi_i\in\cL^{\qf,\aff}_{xy}$ and tuples $a\in M^x$, $b\in N^y$ with $\varphi_i(a,b) < 0$ for all $i<n$, we can find $b'\in M^y$ with $\varphi_i(a,b')<0$ for all $i<n$.
Since $N$ is qf-extremal, $\tp^\qf(ab)\in\cE^\qf_{xy}(T)$, and since the map $\tp^\qf(c/a)\mapsto \tp^\qf(ac)$ is an affine embedding (\autoref{lem:types-with-parameters-vs-fibers}), we have $\tp^\qf(b/a) \in \cE^\qf_y(a)$.
Now, by \autoref{lem:S(T)-cco-realized} applied to the theory $D^\aff_\forall(a)$, $\tp^\qf(b/a)$ is in the closure of the types realized in $M$, and therefore there is $b'\in M^y$ with $\varphi_i(a,b')<0$ for all $i<n$, as desired.
\end{proof}

\begin{cor}
\label{cor:qf-aec-common-theory}
Let $T$ be a face-preserving, irreducible Robinson theory.
Then for all qf-extremal models $M,N\models_\aec T$, we have $\Th^\cont_\forall(M) = \Th^\cont_\forall(N)$.
\end{cor}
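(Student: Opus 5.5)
The plan is to reduce to the situation where one model embeds affinely existentially closed into the other, and then apply \autoref{prop:Mprec1N-qfextremal}.

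\emph{Step 1: joint embedding.} Let $M,N$ be qf-extremal models with $M,N\models_\aec T$. By \autoref{prop:qf-ext-amalgam-and-JE}\autoref{i:qf-ext-JE}, the class of qf-extremal models of $T$ has the joint embedding property, since $T$ is irreducible. So there is a qf-extremal model $K\models T$ into which both $M$ and $N$ embed. By \autoref{prop:Existence-aec-qf-ext}, we may enlarge $K$ and assume that $K$ is itself a.e.c.\ (and still qf-extremal).

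\emph{Step 2: existential closedness.} Since $M\models_\aec T$ and $M\subseteq K\models T$, we have $M\preceq^\aec K$. Both $M$ and $K$ are qf-extremal, so \autoref{prop:Mprec1N-qfextremal} gives $M\preceq^\ec K$. By the same argument, $N\preceq^\ec K$.

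\emph{Step 3: comparing universal theories.} If $M\preceq^\ec K$, then $M$ and $K$ have the same universal continuous theory. Indeed, $M\subseteq K$ gives $\Th^\cont_\forall(K)\subseteq\Th^\cont_\forall(M)$. For the reverse inclusion, $M\preceq^\ec K$ means $\inf_y\varphi$ agrees in $M$ and $K$ for every quantifier-free $\varphi$ (taking the parameter tuple empty). Applying this to $-\varphi$ gives
\[
\big(\sup_y\varphi\big)^M=\big(\sup_y\varphi\big)^K,
\]
so every universal condition true in $M$ is true in $K$. Alternatively, one can invoke \autoref{rmk:ec-vs-universal-diagram}: $K$ embeds into an elementary extension $M'$ of $M$, and universal conditions pass down to substructures. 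Hence $\Th^\cont_\forall(M)=\Th^\cont_\forall(K)=\Th^\cont_\forall(N)$.

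Once the preceding results are granted, nothing here is genuinely hard. The one point needing care is Step 1: the joint embedding has to be carried out inside the class of qf-extremal models, and the common extension then made a.e.c.\ without losing qf-extremality. Strictly, a.e.c.-ness of $K$ is not even needed, since Step 2 uses only that $M$ and $N$ are a.e.c.\ and that $K$ is qf-extremal.
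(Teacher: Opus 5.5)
Your proof is correct and matches the paper's: its proof combines joint embedding inside the class of qf-extremal models (\autoref{prop:qf-ext-amalgam-and-JE}\autoref{i:qf-ext-JE}) with \autoref{prop:Mprec1N-qfextremal}, which upgrades $M\preceq^\aec K$ to $M\preceq^\ec K$ so that the universal theories coincide. As you observe, making the common extension $K$ a.e.c.\ is unnecessary.
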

\begin{proof}
Follows from \autoref{prop:Mprec1N-qfextremal} and \autoref{prop:qf-ext-amalgam-and-JE}\autoref{i:qf-ext-JE}.
\end{proof}

\section{Quantifier-free-measurable fields}
\label{sec:qf-measurable-fields}

We recall some terminology from \cite[\textsection8]{BITaffine}. A \emph{field of structures} is any family of $\cL$-structures $M_\Omega = (M_\omega : \omega \in \Omega)$, and a \emph{section} of such a field is any function $f \colon \Omega \to \coprod_{\omega \in \Omega} M_\omega$ with $f(\omega) \in M_\omega$ for every $\omega\in\Omega$.
(If the language is many-sorted, a section must also specify a sort, and take all its values in that sort; for simplicity, in what follows we will assume that we work in a single-sorted language.)
A \emph{pointwise enumeration} of $M_\Omega$ is a family of sections $e_I = (e_i : i \in I)$ such that $\bigl\{ e_i(\omega) : i \in I \bigr\}$ is dense in $M_\omega$ for every $\omega \in \Omega$.

Given an $\cL$-structure $M$, a subset $A\subseteq M$, and a sublanguage $\cL_0\subseteq \cL$, we denote by $\cl{\langle A\rangle}_{\cL_0}$ the (closed) $\cL_0$-substructure generated by $A$.
Relaxing the definition of a pointwise enumeration, let us say that a family of sections $e_I$ of a field of $\cL$-structures $M_\Omega$ is a \emph{pointwise generating family} if for every $\omega\in\Omega$ we have:
\begin{equation*}
M_\omega = \cl{\langle \{ e_i(\omega) : i \in I \}\rangle}_\cL.
\end{equation*}

\begin{dfn}
Let $(\Omega,\cB,\mu)$ be a probability space, $M_\Omega$ a field of $\cL$-structures, and $e_I$ a pointwise generating family of $M_\Omega$.
We say $(M_\Omega,e_I)$ is a \emph{qf-measurable field of structures} if for every atomic $\cL$-formula $\varphi(x)$ (equivalently, every $\varphi\in\cL^\qf_x$) and every $x$-subtuple $\bar{e}$ of $e_I$, the function
\begin{equation*}
\omega\mapsto \varphi^{M_\omega}\big(\bar{e}(\omega)\big)
\end{equation*}
is $\mu$-measurable.
\end{dfn}

We recall that \emph{$\mu$-measurable} means measurable with respect to the $\sigma$-algebra $\cB_\mu$ generated by $\cB$ and the $\mu$-null sets.
One could consider only $\cB$-measurable functions (in which case $\mu$ would play no role in the definition), but we shall have no use for this finer version of the definition.

\begin{rmk}
\label{rmk:qf-measurable-vs-measurable}
Every measurable field of structures in the sense of \cite[Def.~8.2]{BITaffine} is qf-measurable in the sense we propose here.
In fact, our definition is more general than the one in \cite{BITaffine} in two ways, one essential and the other merely cosmetic.
The essential difference, which makes our framework \guillemotleft quantifier-free\guillemotright, is the dropping of condition (iii) from \cite[Def.~8.2]{BITaffine}.

Other differences in this and subsequent definitions, compared to those in \cite{BITaffine}, come from working with pointwise generating families rather than with pointwise enumerations.
The reason for this is just to render the presentation more natural in our setting, and to simplify the argument of the proof of \autoref{thm:decomposition-for-aec-models} in the next section.
On the other hand, given a pointwise generating family $e_I$ of a field $M_\Omega$, we may consider the associated family of sections $e'_J$ indexed by the set $J$ of $\cL$-terms of the form $j=t(x_{i_0},\dots,x_{n-1})$ for $i_0,\dots,i_{n-1}\in I$, and such that $e'_j(\omega) = t^{M_\omega}(e_{i_0}(\omega),\dots,e_{i_{n-1}}(\omega))$ for every $\omega\in\Omega$.
Then $e'_J$ is a pointwise enumeration of $M_\Omega$, and the pair $(M_\Omega,e'_J)$ satisfies conditions (i), (ii), and a quantifier-free analogue of (iii) from \cite[Def.~8.2]{BITaffine} (where one considers the collections $\fI^\qf(M_\Omega,e_I,\cL_0) = \{I_0\in \cP_{\aleph_0}(I) : \text{for $\mu$-a.e.\ } \omega \in \Omega,\ M_{\omega,I_0} \subseteq_{\cL_0} M_\omega\}$ in place of $\fI^\aff(M_\Omega,e_I,\cL_0)$).
Moreover, the \emph{measurable sections} and the \emph{direct integral} of the field $(M_\Omega,e_I)$, as defined below, coincide with those of $(M_\Omega,e'_J)$, as defined in \cite{BITaffine}.
\end{rmk}

\begin{dfn}
Let $(M_\Omega,e_I)$ be a qf-measurable field of $\cL$-structures.
\begin{enumerate}
\item Given a point $\omega\in\Omega$, a subset $I_0\subseteq I$ and a sublanguage $\cL_0\subseteq\cL$, we denote by
\begin{equation*}
M_{\omega,I_0,\cL_0} = \cl{\langle\{ e_i(\omega) : i \in I_0 \}\rangle}_{\cL_0}
\end{equation*}
the $\cL_0$-substructure of $M_\omega$ generated by the elements $e_i(\omega)$ with $i\in I_0$.

\item A section $f$ of the field $M_\Omega$ is \emph{measurable} if there exist a countable subset $I_0\subseteq I$ and a countable sublanguage $\cL_0\subseteq \cL$ such that the following two conditions hold:
\begin{itemize}
\item $f(\omega)\in M_{\omega,I_0,\cL_0}$ for every $\omega$ outside a $\mu$-null set,
\item for every $\cL_0$-term $t(x)$ and every $x$-subtuple $\bar{e}$ of $e_{I_0} = (e_i:i\in I_0)$, the function
\begin{equation*}
\omega\mapsto d^{M_\omega}\big(f(\omega),t(\bar{e}(\omega))\big)
\end{equation*}
is $\mu$-measurable.
\end{itemize}
\item We may identify two measurable sections that coincide outside a $\mu$-null set.
The collection of all measurable sections up to this identification will be denoted by $M_{\Omega,I}$.
\end{enumerate}
\end{dfn}

\begin{lem}
Let $(M_\Omega,e_I)$ be a qf-measurable field.
Then for every pair of measurable sections $f,g\in M_{\Omega,I}$, the function $\omega\mapsto d^{M_\omega}\big(f(\omega),g(\omega)\big)$ is $\mu$-measurable.
\end{lem}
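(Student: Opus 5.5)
The plan is to reduce to a common countable index set and a common countable sublanguage, and then write the distance between $f$ and $g$ as a countable infimum of measurable functions. Let $f$ be witnessed by $(I_0,\cL_0)$ and $g$ by $(I_1,\cL_1)$. Put $I_2 = I_0\cup I_1$ and $\cL_2 = \cL_0\cup\cL_1$; both are countable.

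First I will check that the conditions defining measurability of $f$ still hold with $(I_2,\cL_2)$ in place of $(I_0,\cL_0)$. The first condition is clear, since $M_{\omega,I_0,\cL_0}\subseteq M_{\omega,I_2,\cL_2}$. The second condition is where the real work lies. Fix an $\cL_2$-term $t$ and a tuple $\bar e$ from $e_{I_2}$. For $\mu$-a.e.\ $\omega$, $f(\omega)$ is a limit of elements $s^{M_\omega}(\bar e'(\omega))$, where $s$ ranges over the countably many $\cL_0$-terms and $\bar e'$ over the countably many tuples from $e_{I_0}$. Hence, almost everywhere,
\begin{equation*}
d\big(f(\omega),t(\bar e(\omega))\big)=\inf_{s,\bar e'}\Big[d\big(f(\omega),s(\bar e'(\omega))\big)+d\big(s(\bar e'(\omega)),t(\bar e(\omega))\big)\Big].
\end{equation*}
To justify this, note that the right-hand side is always $\ge$ the left by the triangle inequality. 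Conversely, if we choose $s(\bar e'(\omega))$ close to $f(\omega)$, the sum is at most the left-hand side plus $2\epsilon$.

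Each first summand is $\mu$-measurable by the hypothesis on $f$. Each second summand is $\omega\mapsto d^{M_\omega}$ applied to two terms in elements of the generating family, which is a quantifier-free formula evaluated on a subtuple of $e_I$. It is therefore $\mu$-measurable by the definition of a qf-measurable field. A countable infimum of measurable functions is measurable, and modifying a function on a $\mu$-null set preserves $\mu$-measurability, since we work with the completed $\sigma$-algebra $\cB_\mu$. Therefore both $f$ and $g$ satisfy the second condition relative to $(I_2,\cL_2)$.

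Finally I apply the same approximation trick to the pair $(f,g)$. For almost every $\omega$, $g(\omega)\in M_{\omega,I_2,\cL_2}$, so it is a limit of points $t(\bar e(\omega))$ with $t$ an $\cL_2$-term and $\bar e$ a tuple from $e_{I_2}$. This gives
\begin{equation*}
d\big(f(\omega),g(\omega)\big)=\inf_{t,\bar e}\Big[d\big(f(\omega),t(\bar e(\omega))\big)+d\big(g(\omega),t(\bar e(\omega))\big)\Big]
\end{equation*}
almost everywhere, again a countable infimum of measurable functions, which proves the lemma.

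The only subtle point is the enlargement step: the witnesses $(I_0,\cL_0)$ for $f$ may not control terms built from the extra generators and symbols in $(I_1,\cL_1)$. This is exactly what qf-measurability of the field supplies, together with the density of terms in the generated substructure.
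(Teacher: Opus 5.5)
Your proof is correct and follows essentially the intended route: the paper defers to an adaptation of \cite[Lemma~8.4]{BITaffine}, which likewise passes to countable witnesses and writes the distance as a countable infimum over approximating term values via the triangle inequality. The term-to-term distances are measurable by qf-measurability of the field, and the exceptional null set is absorbed by the completed $\sigma$-algebra. (Enlarging the witnesses for $g$ is not actually needed: in the last step you could approximate $g(\omega)$ by $\cL_1$-terms over $e_{I_1}$ and use the hypothesis on $g$ directly.)
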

\begin{proof}
By a straightforward adaptation of the argument in \cite[Lemma~8.4]{BITaffine}.
\end{proof}

From now on, we endow the set $M_{\Omega,I}$ of measurable sections with the metric:
\begin{equation*}
d(f,g) = \int_\Omega d^{M_\omega}\big(f(\omega),g(\omega)\big) d\mu(\omega).
\end{equation*}

\begin{dfn}
Let $(M_\Omega,e_I)$ be a qf-measurable field of $\cL$-structures.
A section $f$ of the field $M_\Omega$ is \emph{simple} if there is a finite measurable partition $\Omega = \bigsqcup_{j\in J} A_j$ and a collection of $\cL$-terms $t_j(x)$ with variables in $x=(x_i:i\in I)$ such that $f(\omega) = t_j(e_I(\omega))$ for every $\omega\in A_j$ and each $j\in J$.
\end{dfn}

\begin{lem}
\label{lem:simple-sections-are-dense}
Let $(M_\Omega,e_I)$ be a qf-measurable field of structures.
Then the metric space $(M_{\Omega,I},d)$ is complete, and the set of simple sections is dense.
\end{lem}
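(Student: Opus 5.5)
The plan is to prove completeness and density separately, following the pattern of the corresponding statement in \cite[\textsection8]{BITaffine}, with terms in the generators playing the role of enumerated elements.

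\textbf{Density of simple sections.} Let $f\in M_{\Omega,I}$ be measurable, witnessed by a countable $I_0\subseteq I$ and a countable $\cL_0\subseteq\cL$. For almost every $\omega$ we have $f(\omega)\in M_{\omega,I_0,\cL_0}$. This structure is the closure of the countable set
\begin{equation*}
\{t(\bar e(\omega)) : t \text{ an } \cL_0\text{-term},\ \bar e \text{ a finite subtuple of } e_{I_0}\}.
\end{equation*}
Enumerate these pairs $(t,\bar e)$ as $(s_k : k\in\bN)$. By definition of measurability, each function $\omega\mapsto d(f(\omega),s_k(\omega))$ is $\mu$-measurable. Fix $\epsilon>0$ and set
\begin{equation*}
A_k=\{\omega : d(f(\omega),s_k(\omega))<\epsilon,\ \text{and } d(f(\omega),s_j(\omega))\ge\epsilon \text{ for all } j<k\}.
\end{equation*}
The $A_k$ are measurable, pairwise disjoint, and cover $\Omega$ up to a null set. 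Choose $N$ with $\mu(\bigcup_{k\ge N}A_k)<\epsilon$.

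Define $g=s_k$ on $A_k$ for $k<N$, and $g=s_0$ on the remainder. Then $g$ is simple. It is also measurable: each distance $\omega\mapsto d(g(\omega),t(\bar e(\omega)))$ is piecewise a distance between terms, which is a quantifier-free formula evaluated on $e_I$, so qf-measurability of the field applies.

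The main obstacle is that the diameter of $M_\omega$ may be infinite, so $d(f,g)$ could be large on the leftover set. I would handle this as follows.
\begin{itemize}
\item If the language has bounded diameter, the leftover set contributes at most $\epsilon\cdot\mathrm{diam}$, and we are done.
\item Otherwise, I would first check that $\int d(f(\omega),s_0(\omega))\,d\mu$ is finite. If it is, dominated convergence applied to the leftover set (choosing $N$ larger) gives $d(f,g)\le 2\epsilon$.
\item If that integral is not finite, I would follow the convention of \cite{BITaffine} and truncate the metric.
\end{itemize}

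\textbf{Completeness.} Take a Cauchy sequence $(f_n)$ and pass to a subsequence with $d(f_n,f_{n+1})<2^{-n}$. By Borel--Cantelli, the sequence $f_n(\omega)$ is Cauchy in $M_\omega$ for almost every $\omega$. Define the pointwise limit $f(\omega)$, which exists since each $M_\omega$ is complete.

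Let $I_0$ and $\cL_0$ be the unions of the countable witnesses for the $f_n$. Since $M_{\omega,I_0,\cL_0}$ is closed, $f(\omega)$ lies in it almost surely. Each function $\omega\mapsto d(f(\omega),t(\bar e(\omega)))$ is a pointwise limit of measurable functions, hence measurable. Finally, Fatou's lemma gives $d(f_n,f)\le\sum_{m\ge n}2^{-m}\to0$, so $f_n\to f$ in $M_{\Omega,I}$.
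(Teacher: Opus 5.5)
Your proof is correct and is essentially the standard argument the paper invokes by reference to \cite[Lemma~8.5, Prop.~8.8]{BITaffine}: for density, truncate a countable partition into finitely many pieces; for completeness, take a fast Cauchy subsequence, its a.e.\ pointwise limit, and apply Fatou. Two small points. Only your first bullet is needed, since sorts have bounded diameter in this framework (quantifier-free formulas take values in $L^\infty$). In the completeness step, the measurability of $\omega\mapsto d\big(f_n(\omega),t(\bar e(\omega))\big)$ for terms and indices outside $f_n$'s own witnesses is not given by the definition; it follows from the preceding lemma on distances between measurable sections, applied to the measurable section $\omega\mapsto t(\bar e(\omega))$.
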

\begin{proof}
Similarly as in \cite[Lemma~8.5, Prop.~8.8]{BITaffine}.
\end{proof}

\begin{prop}
\label{prop:measurability-terms-and-formulas}
Let $(M_\Omega,e_I)$ be a qf-measurable field of structures over a probability space $(\Omega,\mu)$.
The following hold:
\begin{enumerate}
\item For every $\cL$-term $t(x)$ and tuple $f\in (M_{\Omega,I})^x$, the section $\omega\mapsto t^{M_\omega}\big(f(\omega)\big)$ is measurable.
\item For every quantifier-free formula $\varphi\in\cL^\qf_x$ and every tuple $f\in (M_{\Omega,I})^x$, the function
\begin{equation*}
\oset{\varphi(f)} \colon \omega \mapsto \varphi^{M_\omega}\bigl( f(\omega) \bigr)
\end{equation*}
is $\mu$-measurable, and thus belongs to $L^\infty(\Omega)$.
\end{enumerate}
\end{prop}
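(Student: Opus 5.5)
The plan is to reduce both items to the case of simple sections, where everything is a finite combination of values at the generators, and then pass to general measurable sections by approximation, using \autoref{lem:simple-sections-are-dense} together with the uniform continuity moduli of the symbols of $\cL$.

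\emph{Step 1: simple sections.} First treat the case where every coordinate $f_k$ of $f$ is simple. By refining finitely many partitions, fix a common finite measurable partition $\Omega=\bigsqcup_{j\in J}A_j$ and, for each $k$ and $j$, terms $t_{k,j}$ over finitely many $e_i$ such that $f_k(\omega)=t_{k,j}(e_I(\omega))$ on $A_j$. On $A_j$ we then have
\begin{equation*}
t^{M_\omega}(f(\omega)) = (t\circ \bar t_j)(e_I(\omega)),
\end{equation*}
which is again a term evaluated at the generators, so $\omega\mapsto t^{M_\omega}(f(\omega))$ is simple. Simple sections are measurable: take $I_0$ to be the finitely many indices used and $\cL_0$ the symbols occurring, and note that $d(t_j(\bar e),s(\bar e))$ is an atomic formula evaluated along a subtuple of $e_I$, hence $\mu$-measurable by qf-measurability of the field. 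For item (ii), $\varphi^{M_\omega}(f(\omega))$ restricted to $A_j$ is the quantifier-free formula $\varphi(\bar t_j(x))$ evaluated at a subtuple of $e_I$. It is therefore $\mu$-measurable on each piece, hence globally, and it is bounded because $\varphi$ has a bounded range.

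\emph{Step 2: approximation.} For general $f$, use \autoref{lem:simple-sections-are-dense} to choose simple $f^n\to f$ in $d$. Passing to a subsequence, $d^{M_\omega}(f^n_k(\omega),f_k(\omega))\to 0$ for a.e.\ $\omega$, by Borel--Cantelli, since $\sum_n d(f^n,f)<\infty$. Terms and formulas are uniformly continuous with moduli independent of the structure, by the uniform continuity moduli of $\cL$. Hence $\varphi^{M_\omega}(f^n(\omega))\to\varphi^{M_\omega}(f(\omega))$ pointwise a.e., and a.e.\ limits of $\mu$-measurable functions are $\mu$-measurable; this gives (ii). For (i), set $g=t(f)$ and $g^n=t(f^n)$. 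Then $g^n$ is simple, hence measurable, and $g^n(\omega)\to g(\omega)$ a.e. What remains is to show that $g$ is itself a measurable section.

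\emph{Main obstacle.} This last point is where the real work lies, because the definition of a measurable section requires a countable $I_0$ and $\cL_0$ that work simultaneously for the section. Take $I_0$ and $\cL_0$ to be the countable unions of those used by the $f_k$, together with the symbols of $t$. Then $g(\omega)\in M_{\omega,I_0,\cL_0}$ a.e., because this closed substructure contains the $f_k(\omega)$ and is closed under $t$. For the measurability condition, $d(g(\omega),s(\bar e(\omega)))$ is the a.e.\ limit of $d(g^n(\omega),s(\bar e(\omega)))$, and each of these is measurable since it is piecewise atomic, as in Step 1. I would also check that the $f^n$ can be chosen with terms only from $\cL_0$ and indices only from $I_0$. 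This can be arranged by inspecting the density argument of \autoref{lem:simple-sections-are-dense}, essentially \cite[Prop.~8.8]{BITaffine}, which approximates inside $M_{\omega,I_0,\cL_0}$.
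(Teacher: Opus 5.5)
Your proof is correct, but it is organized differently from the argument the paper relies on. The paper gives no independent proof and defers to \cite[Lemmas~8.6, 8.7]{BITaffine}. Those lemmas work directly from the definition of a measurable section, as follows.
\begin{itemize}
\item Given $f_k$ with witnesses $(I_0,\cL_0)$, enumerate the countably many sections $\omega\mapsto s(\bar e(\omega))$, where $s$ is an $\cL_0$-term and $\bar e$ a finite subtuple of $e_{I_0}$.
\item On the set where the $m$-th of these is the first one within $\epsilon$ of $f_k(\omega)$, replace $f_k(\omega)$ by it. These sets are $\mu$-measurable precisely by the second clause of the definition.
\item This yields countably-valued approximants within $\epsilon$ of $f$ off a null set. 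The uniform continuity moduli then give convergence of $t(f)$ and $\varphi(f)$ uniformly off a null set, and each approximant is piecewise an atomic formula evaluated at generators.
\end{itemize}

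You instead outsource the approximation step to \autoref{lem:simple-sections-are-dense}. You then recover pointwise a.e.\ convergence from $L^1$-convergence by passing to a subsequence. This also silently uses the preceding lemma that $\omega\mapsto d^{M_\omega}(f(\omega),g(\omega))$ is $\mu$-measurable for measurable sections, needed for your Borel--Cantelli step. Given the results stated before the proposition, your route is shorter and legitimate. There is no circularity, since the density lemma can itself be proved by the first-index construction above without appeal to the proposition. The direct route is more self-contained and gives uniform rather than merely pointwise approximation.

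The ``main obstacle'' you flag at the end is not actually an obstacle, and the extra check you propose is unnecessary. The qf-measurability hypothesis on the field applies to every atomic $\cL$-formula and every finite subtuple of $e_I$, not just to $\cL_0$ and $I_0$. So on each piece of the partition of a simple $g^n$, the function $\omega\mapsto d^{M_\omega}\big(g^n(\omega),s(\bar e(\omega))\big)$ is an atomic $\cL$-formula evaluated at a subtuple of $e_I$. It is therefore $\mu$-measurable whatever terms and indices $g^n$ uses. The choice of $(I_0,\cL_0)$ only matters for the clause $g(\omega)\in M_{\omega,I_0,\cL_0}$ a.e., and you already obtain that from the $f_k$ and the closure of $M_{\omega,I_0,\cL_0}$ under the symbols of $t$.
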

\begin{proof}
By essentially the same arguments as in \cite[Lemma~8.6, 8.7]{BITaffine}. 
\end{proof}

\begin{dfn}
\label{dfn:direct-integral}
Let $(M_\Omega,e_I)$ be a qf-measurable field of $\cL$-structures.
The \emph{direct integral} of the field $(M_\Omega,e_I)$ is the $\cL$-structure $K$ with underlying metric space $(M_{\Omega,I},d)$ and with interpretations given by
\begin{equation*}
P^K = \int_\Omega P^{M_\omega}\big(f(\omega)\big)d\mu(\omega),\quad F^K(f)(\omega) = F^{M_\omega}\big(f(\omega)\big),
\end{equation*}
for every predicate symbol $P\in\cL$, every function symbol $F\in\cL$, and every tuple $f$ from $K$ of the appropriate size.
We will denote the direct integral $K$ by
\begin{equation*}
M_{\Omega,I} = \int^\oplus_\Omega M_\omega d\mu.
\end{equation*}
\end{dfn}

One can check exactly as in \cite{BITaffine} that the preceding construction defines indeed an $\cL$-structure.

{\L}os's Theorem for Direct Integrals (see \cite[Thm.~8.11]{BITaffine}) is trivial in our quantifier-free setting (once measurability has been established, as per \autoref{prop:measurability-terms-and-formulas}), but still worth stating.

\begin{prop}
\label{prop:qf-Los}
Let $(M_\Omega,e_I)$ be a qf-measurable field.
Then for every $\varphi\in\cL^{\qf,\aff}_x$ and $f\in (M_{\Omega,I})^x$, we have:
\begin{equation*}
\varphi^{M_{\Omega,I}}(f) = \int_\Omega \varphi^{M_\omega}\big(f(\omega)\big)d\mu(\omega).
\end{equation*}
\end{prop}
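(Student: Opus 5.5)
The plan is to argue by induction on the construction of quantifier-free affine formulas, using linearity of the integral at each stage.

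\textbf{Atomic case.} The formula $\varphi$ is either $P(t_1(x),\dots,t_k(x))$ for a predicate symbol $P$ and terms $t_j$, or $d(t_1(x),t_2(x))$.
\begin{enumerate}
\item First show by induction on terms that for every $\cL$-term $t(x)$ and $f\in(M_{\Omega,I})^x$, the element $t^{M_{\Omega,I}}(f)$ equals the section $\omega\mapsto t^{M_\omega}(f(\omega))$ up to a null set. This holds for variables trivially. For $F(t_1,\dots,t_k)$ it follows from the definition $F^K(g)(\omega)=F^{M_\omega}(g(\omega))$ together with the inductive hypothesis. Measurability of these sections is given by \autoref{prop:measurability-terms-and-formulas}.
\item For predicate symbols, the definition of $P^K$ then gives
\begin{equation*}
P^K(t(f))=\int_\Omega P^{M_\omega}\big(t^{M_\omega}(f(\omega))\big)\,d\mu.
\end{equation*}
\item For the distance symbol, the metric on $M_{\Omega,I}$ is by definition $\int_\Omega d^{M_\omega}(f(\omega),g(\omega))\,d\mu$. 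Applying this to $g=t_1(f)$ and $h=t_2(f)$ gives the formula.
\end{enumerate}

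\textbf{Inductive step.} Affine quantifier-free formulas are built from atomic ones using constants and linear combinations. For a constant $r$, both sides equal $r$ since $\mu$ is a probability measure. For $\varphi=\sum_i\lambda_i\psi_i+r$, the identity follows from linearity of the integral. The integrals are well defined because each $\omega\mapsto\psi^{M_\omega}(f(\omega))$ is $\mu$-measurable and bounded, by \autoref{prop:measurability-terms-and-formulas}.

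\textbf{Main point to check.} The only genuine point is the term claim in step (i): that $K$ is well defined, i.e.\ that $F^K$ sends measurable sections to measurable sections. This is exactly what \autoref{prop:measurability-terms-and-formulas}(i) supplies, so no real obstacle remains.
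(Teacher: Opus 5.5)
Your proof is correct and is the paper's argument spelled out. Atomic formulas, including $d(t_1,t_2)$, satisfy the identity by the very definition of the direct integral, with terms computed pointwise via $F^K(f)(\omega)=F^{M_\omega}(f(\omega))$; the affine connectives then follow from linearity of the integral, with $\mu(\Omega)=1$ handling constants. The paper records only this last step and treats everything else as immediate once the measurability of \autoref{prop:measurability-terms-and-formulas} is available, which is the same input you rely on.
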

\begin{proof}
By linearity of the integral and the fact that $\mu(\Omega) = 1$.
\end{proof}

\begin{cor}
\label{cor:Los-universal-theories}
Let $T$ be a universal affine theory, and let $(M_\Omega,e_I)$ be a qf-measurable field of models of $T$.
Then the direct integral $M_{\Omega,I}$ is a model of $T$.
\end{cor}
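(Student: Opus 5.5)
The plan is to verify each axiom of $T$ in the direct integral directly, using the quantifier-free {\L}os theorem (\autoref{prop:qf-Los}). Since $T$ is a universal affine theory, by \autoref{lem:forall-aff} and the discussion following it we may assume that $T$ consists of conditions of the form $\sup_x\varphi(x)\leq 0$ with $\varphi\in\cL^{\qf,\aff}_x$. It therefore suffices to fix such an axiom and a tuple $f\in(M_{\Omega,I})^x$ of measurable sections, and to show that $\varphi^{M_{\Omega,I}}(f)\leq 0$.

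First, by \autoref{prop:measurability-terms-and-formulas}, the function $\omega\mapsto\varphi^{M_\omega}(f(\omega))$ is $\mu$-measurable and bounded, so the integral in \autoref{prop:qf-Los} makes sense. Second, for every $\omega$ the structure $M_\omega$ is a model of $T$ and $f(\omega)\in M_\omega^x$, hence $\varphi^{M_\omega}(f(\omega))\leq 0$ pointwise. Integrating and applying \autoref{prop:qf-Los} then gives
\begin{equation*}
\varphi^{M_{\Omega,I}}(f)=\int_\Omega\varphi^{M_\omega}\big(f(\omega)\big)\,d\mu(\omega)\leq 0 .
\end{equation*}
Because $f$ was an arbitrary tuple from $M_{\Omega,I}$, the supremum over $x$ is also at most $0$, so the axiom holds in $M_{\Omega,I}$.

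The only points needing care are preliminary, not computational. We must check that the direct integral is a genuine $\cL$-structure; the paper notes that this is verified exactly as in \cite{BITaffine}. We must also make sure the axiomatization involves only affine quantifier-free matrices, since with non-affine connectives the integral identity of \autoref{prop:qf-Los} would fail. This second point is handled by the identity $T_{\forall^\aff}=(T_\aff)_\forall$ from \autoref{lem:forall-aff}, so I expect no real obstacle. If the axioms are written as $\sup_x\varphi\leq\psi$ with $\psi$ a sentence constant, we simply move the constant into $\varphi$.
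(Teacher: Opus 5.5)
Your proof is correct and is exactly the paper's argument. Since $T$ is equivalent to its universal affine part, you only need to check conditions $\sup_x\varphi(x)\leq 0$ with $\varphi\in\cL^{\qf,\aff}_x$. These pass to the direct integral by integrating the pointwise inequalities via the quantifier-free {\L}os theorem (\autoref{prop:qf-Los}), which is why the paper states the result as an immediate corollary without further proof.
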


Two important particular cases of the direct integral construction are \emph{direct convex combinations}, denoted by $\bigoplus_\omega\lambda_\omega M_\omega$, and \emph{direct multiples}, denoted by $L^1(\Omega,M)$.
The former, which we already mentioned earlier, correspond to measurable fields over a finite or countably infinite set $\Omega$ in which all singletons are measurable, with measure $\mu(\{\omega\}) = \lambda_\omega$.
The latter correspond to \emph{constant} measurable fields, i.e., those with $M_\omega = M$ for all $\omega\in\Omega$ and with pointwise generating family $e_I$ given by constant sections; for instance, $e_i(\omega) = i$ for all $i\in I$ and $\omega\in\Omega$, where $I$ is a subset of $M$ with $\cl{\langle I\rangle}_\cL = M$.
Another distinguished particular case is when the index set $I$ and the language $\cL$ are countable.

In these three special cases, qf-measurable fields are \guillemotleft affinely\guillemotright\ measurable, i.e., measurable in the sense of \cite[Def.~8.2]{BITaffine} (up to the inessential differences discussed in \autoref{rmk:qf-measurable-vs-measurable}).
As pointed out in \cite[Rmk.~8.19]{BITaffine}, these fields are in fact \emph{elementarily measurable}, which implies that the functions of the form $\oset{\varphi(f)}\colon \omega\mapsto \varphi^{M_\omega}\big(f(\omega)\big)$, for any continuous logic formula $\varphi\in\cL^\cont_x$, are $\mu$-measurable as well.

We now introduce an appropriate notion of embedding (and isomorphism) of qf-measurable fields, analogous to that for affinely measurable fields from \cite[Def.~12.9]{BITaffine}.
See also \cite[Rmk.~12.13]{BITaffine}.

Following the convention used there, given two probability spaces $\Omega$, $\Xi$ and an embedding $\fs\colon \MALG(\Omega) \to \MALG(\Xi)$ between their measure algebras, we denote also by $\fs$ the induced embedding
\begin{equation*}
\fs\colon L^\infty(\Omega)\to L^\infty(\Xi)
\end{equation*}
of ordered unit vector lattices: the unique one satisfying $\fs(\chi_a) = \chi_{\fs(a)}$ for every $a\in\MALG(\Omega)$, where $\chi_a$ and $\chi_{\fs(a)}$ denote the corresponding characteristic functions.

\begin{dfn}
\label{dfn:field-embedding-and-iso}
Let $(M_\Omega,e_I)$ and $(N_\Xi,e'_J)$ be qf-measurable fields of $\cL$-structures over probability spaces $(\Omega, \cB, \mu)$ and $(\Xi,\cC,\nu)$.
An \emph{embedding} of $(M_\Omega,e_I)$ into $(N_\Xi,e'_J)$ is a pair $(\sigma,\fs)$ such that:
\begin{itemize}
\item $\sigma\colon M_{\Omega,I} \to N_{\Xi,J}$ is a map between the corresponding direct integrals,
\item $\fs \colon\MALG(\Omega) \to \MALG(\Xi)$ is an embedding between the measure algebras,
\item for every predicate symbol $P\in\cL$ and every tuple of measurable sections $f$ of $M_\Omega$ of the appropriate size, we have $\fs\oset{P(f)} = \oset{P(\sigma f)}$,
\item for every function symbol $F\in\cL$ and every tuple of measurable sections $f$ of $M_\Omega$ of the appropriate size, we have $\sigma F^{M_{\Omega,I}}(f) = F^{N_{\Xi,J}}(\sigma f)$.
\end{itemize}
If $\sigma$ and $\fs$ are bijective, then $(\sigma,\fs)$ is an \emph{isomorphism} of qf-measurable fields.
\end{dfn}

Clearly, the last two items of the definition together are equivalent to the condition that for every quantifier-free formula $\varphi\in\cL^\qf_x$ and every tuple $f\in (M_{\Omega,I})^x$, we have
\begin{equation*}
\fs\oset{\varphi(f)} = \oset{\varphi(\sigma f)}.
\end{equation*}
We also note that if $(\sigma,\fs)$ is an embedding of $(M_\Omega,e_I)$ into $(N_\Xi,e'_J)$, then $\sigma\colon M_{\Omega,I} \to N_{\Xi,J}$ is an embedding of $\cL$-structures.

We end this section with two mild technical conditions on qf-measurable fields that we shall need.
The first one is a complete analogue of \cite[Def.~8.22]{BITaffine}, and will be important in \autoref{thm:decomposition-uniqueness}.

\begin{dfn}
\label{dfn:non-degenerate}
Let $(M_\Omega,e_I)$ be a qf-measurable field of $\cL$-structures over a probability space $(\Omega,\mu)$, and let $f$ be a $\kappa$-tuple of measurable sections enumerating a dense subset of the direct integral $M_{\Omega,I}$.
Let $\theta\colon \Omega\to \tS^\qf_\kappa(\cL)$ denote the map $\omega\mapsto\tp^\qf\big(f(\omega)\big)$, where $\tS^\qf_\kappa(\cL)$ is the type space in $\kappa$ many variables over the empty $\cL$-theory, endowed with the $\sigma$-algebra of Baire sets (which renders $\theta$ $\mu$-measurable).
Let $\theta^*$ denote the induced dual embedding of measure algebras:
\begin{equation*}
\theta^*\colon \MALG\bigl( \tS^\qf_\kappa(\cL), \theta_* \mu \bigr) \to \MALG(\Omega,\mu).
\end{equation*}
We say then that the field $(M_\Omega,e_I)$ is \emph{non-degenerate} if $\theta^*$ is an isomorphism.
\end{dfn}

\begin{rmk}
One may see just as in \cite{BITaffine} that the surjectivity of the embedding $\theta^*$ does not depend on the choice of the dense tuple $f$.
\end{rmk}

\begin{rmk}
\label{rmk:non-degenerate-two-constants}
Under mild assumptions, every qf-measurable field of structures each of them having at least two elements is non-degenerate.
In particular, as is easy to see, if the language $\cL$ has at least two constants, then every qf-measurable field of $\cL$-structures in which these two constants are always distinct is non-degenerate.
This includes all examples of interest in ergodic theory.
See also \cite[Lemma~8.24]{BITaffine}.
\end{rmk}

The second technical condition is that the atoms of the measure space behave as in standard probability spaces, in the sense of the following definition.
This condition will be used in \autoref{lem:atoms-extremal-decomposition}.

\begin{dfn}
\label{dfn:point-like-atoms}
Let $(\Omega,\cB,\mu)$ be a probability space.
Given an atom $A\in\cB$, we will say $A$ \emph{concentrates} at a point $\omega\in\Omega$ if for every $B\subseteq A$, $B\in\cB$, we have $\mu(B) = \mu(A)$ if $\omega\in B$ and $\mu(B) = 0$ if $\omega\notin B$.
We say the space has \emph{point-like atoms} if every atom $A\in\cB$ concentrates at some point.
\end{dfn}

Note that in the previous definition, the singleton $\{\omega\}$ need not be measurable.

Our main source of non-standard probability spaces will be non-metrizable Choquet simplices with a boundary measure.
These satisfy the property of the definition.

\begin{lem}
\label{lem:point-like-atoms-Choquet-simplex}
Let $X$ be a Choquet simplex, let $\cB$ be the $\sigma$-algebra of Baire subsets of $X$, and let $\mu$ be a boundary probability measure on $X$.
Then every atom of $(X,\cB,\mu)$ concentrates at an extreme point of $X$.
In particular, $(X,\cB,\mu)$ has point-like atoms.
\end{lem}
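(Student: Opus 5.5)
The plan is to show that every atom $A\in\cB$ concentrates at a single extreme point. I would argue by contradiction, using the fact that a boundary measure on a Choquet simplex gives measure zero to every Baire set disjoint from $\cE(X)$. I would also use that the restriction of a boundary measure to a Baire set, once normalized, is again a boundary measure. This second fact holds because the Choquet order is witnessed by continuous convex functions, and maximality passes to measures dominated by $\mu$.

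Fix an atom $A$ and consider the normalized restriction $\mu_A = \mu(\cdot\cap A)/\mu(A)$. This is a Baire probability measure taking only the values $0$ and $1$, and it extends uniquely to a regular Borel measure on $X$. Every continuous function is Baire measurable. Hence for each $f\in C(X)$ the value of $f$ is $\mu_A$-a.e.\ constant: the preimages $\{f< r\}$ are Baire sets of measure $0$ or $1$. It follows that $\mu_A$ acts as a character on $C(X)$, so its regular extension is a Dirac mass $\delta_\omega$ for some point $\omega\in X$.

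Next I would check that $\omega$ is the point at which $A$ concentrates. Take a Baire set $B\subseteq A$. If $\omega\in B$ but $\mu(B)<\mu(A)$, then $\mu_A(B)=0$. Since $B$ is Baire and the extension is regular, this forces $\delta_\omega(B)=0$, contradicting $\omega\in B$. The case $\omega\notin B$ is handled the same way. Here I must be careful, because $\mu_A$ and $\delta_\omega$ agree on Baire sets only as long as $\delta_\omega$ is the unique regular extension. This is guaranteed because the Baire measure determines the functional on $C(X)$. In particular $\omega\in A$.

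Finally, I would show $\omega\in\cE(X)$. The measure $\mu_A=\delta_\omega$ is dominated by $\mu/\mu(A)$. Since it is the normalized restriction of a boundary measure, it is itself a boundary measure. A Dirac measure $\delta_\omega$ is maximal in the Choquet order exactly when $\omega$ is extreme. Indeed, if $\omega=\tfrac12(y+z)$ with $y\neq z$, then $\tfrac12(\delta_y+\delta_z)\succ\delta_\omega$ strictly, since a strictly convex continuous function separates them. For non-metrizable $X$ such a function may not exist globally, but a continuous convex function built from one $\varphi\in\cA(X)$ with $\varphi(y)\neq\varphi(z)$, namely $\varphi^2$, suffices.

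The main obstacle I expect is the restriction step: justifying that the normalized restriction of a boundary measure to a Baire set is again boundary. I would handle it with Mokobodzki's criterion, which says that $\mu$ is boundary iff $\mu(\hat f)=\mu(f)$ for every continuous convex $f$, where $\hat f$ is the upper envelope. Since $\hat f\ge f$ everywhere and equality holds $\mu$-a.e., equality also holds $\mu_A$-a.e. Measurability of $\hat f$ is not an issue, because it is upper semicontinuous and can be approximated by continuous functions.
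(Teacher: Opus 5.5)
Your proof is correct, but it runs in the opposite order from the paper's.

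The paper's argument goes through extremality first:
\begin{itemize}
\item It shows that the barycenter $p=R(\mu_A)$ is extreme, and this is where the simplex hypothesis enters.
\item If $p=\lambda p_0+(1-\lambda)p_1$ with $p_0\neq p_1$, uniqueness of boundary representing measures gives $\mu_A=\lambda\mu_0+(1-\lambda)\mu_1$ with $\mu_0\neq\mu_1$ boundary.
\item A Baire set $B$ with $\mu_0(B)<\mu_1(B)$ then contradicts the fact that $\mu_A$ only takes the values $0$ and $1$.
\item It concludes $\mu_A=\delta_p$, because an extreme point is represented only by its Dirac mass.
\end{itemize}

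You instead identify $\mu_A$ as a Dirac mass directly. You use only that it is a $\{0,1\}$-valued Baire probability measure on a compact space: integration against it is then a character of $C(X)$, and Baire measures are determined by their integrals on $C(X)$. Only afterwards do you obtain extremality, from the maximality of $\delta_\omega$ via the convex function $\varphi^2$.

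Both proofs need that the normalized restriction $\mu_A$ is again a boundary measure. The paper cites \cite[Lemma~1.26]{BITaffine} for this. Your Mokobodzki argument is fine, provided you integrate the upper envelope against the regular Borel extension, which agrees with $\mu(\cdot\cap A)/\mu(A)$ since $A$ is Baire.

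What your route buys is generality: you never use that $X$ is a Choquet simplex. So the statement holds for boundary measures on arbitrary compact convex sets. The cost is the Baire-versus-regular-Borel bookkeeping in the non-metrizable case, which the paper's argument sidesteps by working only with barycenters and representing measures.

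Your opening mention of a contradiction argument is never used and can be dropped. The same goes for the fact that boundary measures vanish on Baire sets disjoint from $\cE(X)$.
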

\begin{proof}
Let $A\in\cB$ be an atom.
We consider the conditional probability measure $\mu_A$ given by $\mu_A(B) = \mu(A\cap B)/\mu(A)$ for every $B\in\cB$.
Then $\mu_A$ is a boundary measure as well (see \cite[Lemma~1.26]{BITaffine}).
We claim that its barycenter, $p = R(\mu_A)$, is an extreme point of $X$.
Otherwise, we may write $p = \lambda p_0 + (1-\lambda)p_1$ with $0<\lambda<1$ and $p_0\neq p_1$.
Let $\mu_0$, $\mu_1$ be the unique boundary measures on $X$ with $R(\mu_0)=p_0$, $R(\mu_1)=p_1$.
In particular, $\mu_A = \lambda\mu_0 + (1-\lambda)\mu_1$.
Since $p_0\neq p_1$, we must have $\mu_0\neq \mu_1$, and so there is $B\in\cB$ with $\mu_0(B)<\mu_1(B)$.
It follows that $\mu_A(B)>0$, and since $A$ is an atom, $\mu_A(B) = 1$.
But then $\mu_0(B)=1=\mu_1(B)$, a contradiction.

We deduce that $\mu_A$ is the Dirac measure on $p$, which is what we wanted.
\end{proof}

\section{Quantifier-free-simplicial theories and decomposable models}
\label{sec:qf-simplicial}

\begin{dfn}
Let $T$ be a consistent affine theory.
We say $T$ is \emph{qf-simplicial} if for every finite tuple $x$, the type space $\tS^\qf_x(T)$ is a Choquet simplex.
The condition then holds for arbitrary tuples $x$, using \cite[Lemma~1.28]{BITaffine}.
\end{dfn}

We observe that this property, as that of being face-preserving, only depends on the universal part $T_\forall$.

Given a qf-simplicial theory $T$ and a type $p\in\tS^\qf_x(T)$, let $\mu_p$ denote the unique boundary measure on $\tS^\qf_x(T)$ with barycenter $R(\mu_p) = p$.

\begin{lem}
\label{lem:push-bound-measu-qf-simplicial}
Let $T$ be a qf-simplicial, face-preserving theory.
Then for every $p\in \tS^\qf_{xy}(T)$ we have $\big(\pi^\qf_{x,y}\big)_*\mu_p = \mu_{\pi^\qf_{x,y}(p)}$.
\end{lem}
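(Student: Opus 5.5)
The plan is to write $\pi = \pi^\qf_{x,y}$, $X = \tS^\qf_{xy}(T)$, $Y = \tS^\qf_x(T)$, and $\nu = \pi_*\mu_p$. Since $\pi$ is affine and continuous, the barycenter of $\nu$ is $\pi(R(\mu_p)) = \pi(p)$. Because $Y$ is a Choquet simplex, the barycenter $\pi(p)$ has a unique boundary representing measure, namely $\mu_{\pi(p)}$. So it suffices to prove that $\nu$ is a boundary measure on $Y$.

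To show that $\nu$ is a boundary measure, I will use that $\pi$ is face-preserving, via the characterization in \autoref{sec:appendix:open-and-face-preserving}. The relevant point is that, by \autoref{lem:Villadsen}, face-preserving maps have the convex lifting property. I will use this to show that pushforward by $\pi$ preserves maximality in the Choquet order. Suppose $\nu \prec \nu'$ in the Choquet order on $\cM(Y)$. By Cartier's theorem (dilations), $\nu'$ is obtained from $\nu$ through a Markov kernel $y\mapsto \kappa_y\in\cM_y(Y)$. For $\mu_p$-almost every $q\in X$, I would lift the measure $\kappa_{\pi(q)}$ to a measure $\tilde\kappa_q\in\cM_q(X)$ with $\pi_*\tilde\kappa_q = \kappa_{\pi(q)}$. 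This lifting is exactly what the convex lifting property provides: a representing measure of $\pi(q)$ lifts to a representing measure of $q$. Integrating, $\tilde\mu = \int\tilde\kappa_q\,d\mu_p(q)$ dominates $\mu_p$ in the Choquet order. By maximality of $\mu_p$, we get $\tilde\mu = \mu_p$, and therefore $\nu' = \pi_*\tilde\mu = \nu$.

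The main obstacle is making the lifting measurable in $q$, since $X$ may be non-metrizable. To sidestep this, I would rather argue through the function-theoretic description of boundary measures: $\nu$ is boundary if and only if $\nu(f) = \nu(\hat f)$ for every continuous convex $f$ on $Y$, where $\hat f$ is the upper envelope (the concave hull). Then $\nu(\hat f) = \mu_p(\hat f\circ\pi)$, and the key pointwise claim is
\begin{equation*}
\widehat{f}\circ\pi = \widehat{f\circ\pi} \quad\text{on } X.
\end{equation*}
The inequality $\geq$ holds because $\hat f\circ\pi$ is concave, upper semicontinuous, and lies above $f\circ\pi$. The reverse inequality is where the convex lifting of representing measures is used: given $\kappa\in\cM_{\pi(q)}(Y)$, a lift $\tilde\kappa\in\cM_q(X)$ gives $\kappa(f) = \tilde\kappa(f\circ\pi) \le \widehat{f\circ\pi}(q)$. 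Taking the supremum over $\kappa$ yields $\hat f(\pi(q)) \le \widehat{f\circ\pi}(q)$.

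With this identity in hand, $\nu(\hat f) = \mu_p(\widehat{f\circ\pi}) = \mu_p(f\circ\pi) = \nu(f)$, where the middle equality holds because $\mu_p$ is boundary. Hence $\nu$ is boundary and equals $\mu_{\pi(p)}$. If the appendix formulates convex lifting only for a finite convex combination $\pi(q)=\sum\lambda_i y_i$ (lifting to $q = \sum\lambda_i x_i$ with $\pi(x_i)=y_i$), this still suffices: $\hat f(y)$ is a supremum over finitely supported representing measures, so finite lifts are enough.
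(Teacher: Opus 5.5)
Your proposal is correct and follows essentially the paper's route: the paper cites \autoref{lem:boundary-measure-preservation-face-preserving}, whose first part is exactly your envelope identity (stated there for lower envelopes, $\check{f}\circ\pi = \widecheck{f\circ\pi}$, and taken from Villadsen), and whose second part is the Mokobodzki-type deduction that $\pi_*$ sends boundary measures to boundary measures; uniqueness of boundary representing measures in the simplex $\tS^\qf_x(T)$ then finishes. The only difference is that you prove the envelope identity yourself, from the measure-lifting property of \autoref{prop:CLP-with-measures} (or from finite convex lifting plus density of simple representing measures), which is a valid self-contained substitute for the citation and, like the paper's argument, needs no metrizability.
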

\begin{proof}
By \autoref{lem:boundary-measure-preservation-face-preserving}\autoref{item:bound-meas-pres:measures}.
\end{proof}

\begin{dfn}
\label{dfn:decomposable-models}
Let $T$ be an affine $\cL$-theory and $M$ be a model of $T$.
We will say the model $M$ is \emph{decomposable} if there is a non-degenerate qf-measurable field of $\cL$-structures $(M_\Omega,e_I)$ over a probability space $(\Omega,\mu)$ with point-like atoms such that each $M_\omega$ is a qf-extremal model of $T$, and
\begin{equation*}
M \cong \int^\oplus_\Omega M_\omega d\mu.
\end{equation*}
\end{dfn}

The following can be seen as a generalization of the Extremal Decomposition Theorem proved in \cite[Thm.~12.3]{BITaffine}.
The argument is essentially the same.

\begin{thm}
\label{thm:decomposition-for-aec-models}
Let $T$ be a qf-simplicial, face-preserving Robinson theory.
Then every affinely existentially closed model of $T$ is decomposable.
\end{thm}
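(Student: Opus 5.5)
The plan is to follow the proof of the Extremal Decomposition Theorem \cite[Thm.~12.3]{BITaffine}, with boundary measures on quantifier-free type spaces playing the role of ergodic decompositions. Let $M\models_\aec T$ and let $a=(a_i:i\in I)$ be a tuple enumerating $M$ (or a generating subset), so $x=(x_i:i\in I)$. Put $p=\tp^\qf(a)\in\tS^\qf_x(T)$. Since $T$ is qf-simplicial, there is a unique boundary measure $\mu_p$ on $X=\tS^\qf_x(T)$ with barycenter $p$. Take $\Omega=X$ with its Baire $\sigma$-algebra and $\mu=\mu_p$. By \autoref{lem:point-like-atoms-Choquet-simplex}, this space has point-like atoms.

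The first task is to show that $\mu_p$ is concentrated on $\cE^\qf_x(T)$ in a measure-theoretic sense usable with Baire sets. For finite subtuples $x'\subseteq x$, \autoref{lem:push-bound-measu-qf-simplicial} shows that $(\pi_{x'})_*\mu_p=\mu_{\pi_{x'}(p)}$. That measure lives on a metrizable simplex, so it gives full measure to the $G_\delta$ set $\cE^\qf_{x'}(T)$. For countable $J\subseteq I$, intersecting countably many such sets and applying \autoref{lem:inverse-limit-ext} gives $\mu_p\big(\pi_J^{-1}\cE^\qf_{x|_J}(T)\big)=1$. I would only use these countable approximations, as in \cite{BITaffine}; they suffice because measurable sections only involve countably many indices.

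Next, realize the field. For each $q\in\Omega$, pick a realization $e(q)$ of $q$, and let $M_q=\cl{\langle e(q)\rangle}_\cL$. If $q$ is extreme, $M_q$ is a qf-extremal model by \autoref{prop:qf-extremal-generated}. The sections $e_i(q)=e(q)_i$ form a pointwise generating family. Qf-measurability holds because $q\mapsto\varphi(e(q))=q(\varphi)$ is continuous. The points where $q$ is non-extreme need care. The simplest fix is to redefine $M_q$ on the complement of $\cE^\qf_x(T)$ as a fixed qf-extremal model. That complement is $\mu$-null in the sense that it is contained in a Baire null set, provided one first works with countable sublanguages and subtuples. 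Alternatively, \autoref{lem:inverse-limit-ext} shows that $q$ is extreme once all finite projections are, and one can argue the exceptional set is contained in a countable union of null Baire sets. I expect this bookkeeping to be the main technical point in the non-metrizable case, as in \cite[\textsection12]{BITaffine}.

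Then define $\sigma$ from $M$ to the direct integral by sending $a_i$ to the section $e_i$. By \autoref{prop:qf-Los}, for affine quantifier-free $\varphi$ we get
\[
\varphi^{\int^\oplus}(e)=\int q(\varphi)\,d\mu_p(q)=p(\varphi)=\varphi^M(a),
\]
so $\sigma$ is an embedding extending to $M$ by density. The real obstacle is surjectivity: every measurable section must be in the image. By \autoref{lem:simple-sections-are-dense}, it suffices to approximate simple sections $f=t_j(e)$ on $A_j$. Here a.e.c.\ enters. The direct integral $K$ is a model of $T$ by \autoref{cor:Los-universal-theories}, and it extends $M$, so $M\preceq^\aec K$. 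For a simple section $f$, use affine formulas of the form $d(y,t_j(x))$ weighted by continuous affine approximations of the indicator of $A_j$; these are available because boundary measures on simplices let Baire sets be approximated by affine functions, via \autoref{lem:boundary-measure-preservation-face-preserving} and the simplex structure. With these, show $\inf_y$ of an affine quantifier-free formula expressing $d(y,f)$ is $0$ in $K$, hence in $M$, giving $b\in M$ with $\sigma(b)$ close to $f$. I would copy the corresponding step of \cite[Thm.~12.3]{BITaffine}, replacing "affine substructure" by "a.e.c." Non-degeneracy holds because the map $\theta$ is essentially the identity on $\Omega=\tS^\qf_x(T)$ when $f\supseteq e$, so $\theta^*$ is an isomorphism.
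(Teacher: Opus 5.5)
Your setup (the boundary measure $\mu_p$, the field of generated models, the embedding $a\mapsto e$ via \autoref{prop:qf-Los}, non-degeneracy) matches the paper. The surjectivity step, however, has a genuine gap.

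Affine logic has no products. So ``$d(y,t_j(x))$ weighted by affine approximations $h_j(x)$ of $\chi_{A_j}$'' is not an affine formula. An affine quantifier-free formula $\psi(e,y)$ evaluates in the direct integral to $\int\psi(e(\omega),y(\omega))\,d\mu$, and there is no direct way to combine $h_j$ and $d(y,t_j(x))$ into one whose integrand is $\chi_{A_j}(\omega)\,d(y(\omega),t_j(e(\omega)))$. The $L^1$-density of affine functions for boundary measures on a simplex does not help here. More importantly, to pull a witness back through existential closedness you need a single $\psi(x,z)\in\cL^{\qf,\aff}_{xz}$ with $\psi(e,z)\approx d(b,z)$ \emph{uniformly in $z$}. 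Only then is any $b'\in M$ with $\psi(e,b')$ small forced to be close to $b$. In other words, $b$ must be qf-affinely definable over $e$, and your sketch gives no route to this.

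The paper obtains definability from the Claim that $\theta^b_*\mu_{ec}=\mu_{ebc}$ for every $c$ in every extension of the direct integral. Here $\theta^b(\tp^\qf(uv))=\tp^\qf(ut_j(u)v)$ when $\tp^\qf(u)\in A'_j$, and the Claim is proved as in \cite[Lemma~12.2]{BITaffine} from \autoref{lem:push-bound-measu-qf-simplicial}. It follows that $\tp^\qf(ec)$ determines $\tp^\qf(ebc)$. Passing to an extension $N\models_\aec T$ and using the Robinson property through \autoref{lem:types-with-parameters-vs-fibers}, the restriction $\tS^{\qf,\aff}_z(eb)\to\tS^{\qf,\aff}_z(e)$ is an affine homeomorphism. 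Hence $d(b,z)$ is a uniform limit of formulas $\psi(e,z)$. Only then does $M\preceq^\aec N$ give $b'\in M$ with $d(b,b')\le 3\epsilon$. Deferring to \cite[Thm.~12.3]{BITaffine} is the right instinct, but the mechanism you describe is not the one used there.

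There is a second problem, with the non-extreme types, once $x$ (or $\cL$) is uncountable. For uncountable $\cL$, $\tS^\qf_{x'}(T)$ need not be metrizable even for finite $x'$.
\begin{itemize}
\item A Baire null set containing $\tS^\qf_x(T)\setminus\cE^\qf_x(T)$ would have the form $\pi_J^{-1}(B_J)$ for some countable $J$.
\item Every type whose restriction to $x|_J$ avoids $B_J$ would then be extreme, whatever its remaining coordinates.
\item This fails in general. For $\PMP_G$ one can add an independent invariant set as an extra coordinate, so $\cE^\qf_x(T)$ is in general not even $\mu_p$-measurable.
\end{itemize}
Consequently, patching the field on the non-extreme points destroys qf-measurability. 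Also, \autoref{lem:inverse-limit-ext} only writes the exceptional set as an uncountable union of null Baire sets, which does not make it null.

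The paper avoids this by taking $\Omega=\cE^\qf_x(T)$ itself, with the trace $\sigma$-algebra and $\mu_\Omega(B\cap\Omega)=\mu(B)$. This is well defined because a boundary measure vanishes on every Baire set disjoint from the extreme points. Every fiber is then qf-extremal by \autoref{prop:qf-extremal-generated}, and no patching is needed.
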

\begin{proof}
Let $M\models_\aec T$, and let $a\in M^I$ be any tuple generating $M$, that is, with $M = \cl{\langle\{a_i:i\in I\}\rangle}_\cL$.
We fix a tuple of variables $x = (x_i:i\in I)$, and we consider the type $p=\tp^\qf(a)\in \tS^\qf_x(T)$ and the unique boundary measure $\mu\in \partial\cM\big(\tS^\qf_x(T)\big)$ with $p = R(\mu)$.

Let $\Omega = \cE^\qf_x(T)$.
For each $\omega\in \Omega$, let $e(\omega)$ be a realization of the type $\omega$ inside a model $M_\omega\models T$.
We may assume that $M_\omega = \cl{\langle \{ e(\omega)_i : i\in I \}\rangle}_\cL$.
By \autoref{prop:qf-extremal-generated}, $M_\omega$ is a qf-extremal model of $T$.
For each $i\in I$, the map $e_i\colon\omega\mapsto e(\omega)_i$ is a section of the field of $\cL$-structures $M_\Omega = (M_\omega:\omega\in\Omega)$, and the tuple $e = e_I = (e_i:i\in I)$ is a pointwise generating family of the field.

Let $\cB$ denote the $\sigma$-algebra of Baire subsets of $\tS^\qf_x(T)$, and consider the trace $\sigma$-algebra on $\Omega$,
\begin{equation*}
\cB_\Omega = \{B\cap \Omega : B\in\cB\}.
\end{equation*}
Recall that if $\tS^\qf_x(T)$ is non-metrizable, $\Omega$ need not be Baire (nor Borel).
However, since $\mu$ is a boundary measure, it \emph{concentrates} on $\Omega$ in the sense that $\mu(B)=0$ for every $B\in \cB$ disjoint from $\Omega$ (cf.~\cite[Thm.~1.24]{BITaffine} and the references therein).
Therefore, as per \cite[Lemma~1.17]{BITaffine},
the formula
\begin{equation*}
\mu_\Omega(B\cap \Omega) = \mu(B)
\end{equation*}
yields a well-defined probability measure on $(\Omega,\cB_\Omega)$.
By \autoref{lem:point-like-atoms-Choquet-simplex}, the probability space $(\Omega,\cB_\Omega,\mu_\Omega)$ has point-like atoms.

Given a formula $\varphi\in\cL^\qf_x$, the map
\begin{equation*}
\omega\mapsto \varphi^{M_\omega}\big(e(\omega)\big) = \omega(\varphi)
\end{equation*}
is continuous, and thus $\cB_\Omega$-measurable.
It follows that $(M_\Omega,e_I)$ is a qf-measurable field.
Moreover, $(M_\Omega,e_I)$ is non-degenerate.
Indeed, let $f\in (M_{\Omega,I})^\kappa$ be a tuple of measurable sections enumerating a dense subset of the direct integral and containing $e_I$ as a subtuple (via some injection $I\to \kappa$).
We consider the map $\theta\colon \Omega\to \tS^\qf_\kappa(\cL)$, $\omega\mapsto\tp^\qf\big(f(\omega)\big)$ as in \autoref{dfn:non-degenerate}, and the projection $\pi\colon\tS^\qf_\kappa(\cL) \to \tS^\qf_x(\cL)$ induced by the injection $I\to \kappa$.
The composition $\pi\circ\theta$ is simply the inclusion of $\Omega$ into $\tS^\qf_x(T)\subseteq \tS^\qf_x(\cL)$.
Therefore, $(\pi\circ\theta)_*\mu_\Omega = \mu$ and the dual embedding $(\pi\circ\theta)^*\colon \MALG\big(\tS^\qf_x(\cL),\mu\big)\to \MALG\big(\Omega,\mu_\Omega\big)$ is the isomorphism sending the class of $B$ to the class of $B\cap\Omega$.
We conclude that the intermediate embedding $\theta^*\colon \MALG\big(\tS^\qf_\kappa(\cL),\theta_*\mu_\Omega\big)\to \MALG\big(\Omega,\mu_\Omega\big)$ is also an isomorphism, as desired.

By \autoref{cor:Los-universal-theories}, the direct integral $M_{\Omega,I}$ is a model of $T$.
In addition, using \autoref{prop:qf-Los}, for every $\varphi\in\cL^{\qf,\aff}_x$ we have
\begin{equation*}
\varphi^M(a)
= p(\varphi)
= \int_\Omega \omega(\varphi) d\mu(\omega)
= \int_\Omega \varphi^{M_\omega}\big(e(\omega)\big) d\mu(\omega)
= \varphi^{M_{\Omega,I}}(e).
\end{equation*}
Therefore, the map $a\mapsto e$ induces an embedding of $M$ into the direct integral $M_{\Omega,I}$.
Let us identify $M$ with its image by this embedding, so that $M\subseteq M_{\Omega,I}$.
We want to show that this inclusion is an equality.

For this, by \autoref{lem:simple-sections-are-dense}, it is enough to show that every simple section of the field $(M_\Omega,e_I)$ belongs to $M$.
Let $b$ be such a simple section, say $b(\omega) = t_j(e(\omega))$ for all $\omega\in A_j$ and $j\in J$, for some finite partition $\Omega = \bigsqcup_{j\in J}A_j$ and $\cL$-terms $t_j(x)$.
Let $\tS^\qf_x(T) = \bigsqcup_{j\in J} A'_j$ be a partition into Baire sets with $A_j = A'_j\cap \Omega$ for each $j\in J$.
Now let $y,z$ be distinct single variables disjoint from $x$, and let $\theta^b\colon \tS^\qf_{xz}(T) \to \tS^\qf_{xyz}(T)$ be the measurable map defined by
\begin{equation*}
\theta^b\big(\tp^\qf(uv)\big) = \tp^\qf\big(ut_j(u)v\big)\quad\text{if}\quad\tp^\qf(u)\in A'_j,
\end{equation*}
for every $xz$-tuple $uv$ in a model of $T$.

Given a $x$-tuple $u$ in a model of $T$, let $\mu_u$ denote the unique boundary measure on $\tS^\qf_x(T)$ with barycenter $\tp^\qf(u)$, and similarly for tuples in other variables.

\begin{claim*}
For every $c\in N$ in an extension $M_{\Omega,I}\subseteq N$, $N\models T$, we have $\theta^b_*\mu_{ec} = \mu_{ebc}$.
\end{claim*}
\begin{proof}
This is proved exactly as in \cite[Lemma~12.2]{BITaffine}, using \autoref{lem:push-bound-measu-qf-simplicial}.
\renewcommand{\qedsymbol}{$\blacksquare$}
\end{proof}

Now let $M_{\Omega,I}\subseteq N$ be an extension with $N\models_\aec T$.
It follows from the Claim that for any $c,c'$ in a model of $T$ extending $N$, if $\tp^\qf(ec) = \tp^\qf(ec')$ then $\tp^\qf(ebc) = \tp^\qf(ebc')$.
Therefore, and since $T$ is Robinson, the restriction of $\pi^\qf_{zx,y}\colon \tS^\qf_{xyz}(T)\to \tS^\qf_{xz}(T)$ to the fiber $\big(\pi^\qf_{xy,z}\big)^{-1}\big(\tp^\qf(eb)\big)$ is injective, and thus an affine homeomorphism onto the fiber $\big(\pi^\qf_{x,z}\big)^{-1}\big(\tp^\qf(e)\big)$.

Recalling \autoref{lem:types-with-parameters-vs-fibers}, we obtain that the parameter restriction map $\tS^{\qf,\aff}_z(eb)\to\tS^{\qf,\aff}_z(e)$ (with the type spaces computed in $N$) is an affine homeomorphism.
As a consequence, for every $\epsilon>0$ there is $\psi\in\cL^{\qf,\aff}_{xz}$ such that 
\begin{equation*}
N\models \sup_z|d(b,z) - \psi(e,z)|\leq \epsilon.
\end{equation*}
In particular, $\psi(e,b)\leq \epsilon$, and since $e\in M$ and $M\models_\aec T$, there is $b'\in M$ such that $\psi(e,b')\leq 2\epsilon$, and thus $d(b,b')\leq 3\epsilon$.
We deduce that $b\in M$, which finishes the proof.
\end{proof}

\begin{lem}
\label{lem:boundary-meas-qf-extremal-field}
Let $T$ be an affine theory, and let $(M_\Omega,e_I)$ be a qf-measurable field over a probability space $(\Omega, \cB, \mu)$ such that every $M_\omega$ is a qf-extremal model of $T$.
Let $f$ be an $x$-tuple of measurable sections, and let $\theta\colon \Omega \rightarrow \tS^\qf_x(T)$ be the measurable map sending $\omega \mapsto \tp^\qf\bigl( f(\omega) \bigr)$, as in \autoref{dfn:non-degenerate}.
Then $\theta_* \mu$ is a boundary measure on $\tS^\qf_x(T)$.
\end{lem}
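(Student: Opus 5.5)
The plan is to use the Choquet order directly, together with a characterization of boundary measures that only requires knowing that $\theta$ takes values in extreme points. Write $X = \tS^\qf_x(T)$ and $\nu = \theta_*\mu$, viewed as a Radon probability measure on $X$; it is determined by its values on Baire sets, and $\theta$ is measurable for the Baire $\sigma$-algebra. Since each $M_\omega$ is qf-extremal, $\theta(\omega) = \tp^\qf(f(\omega)) \in \cE^\qf_x(T)$ for every $\omega$. The first observation is therefore that $\nu$ \emph{concentrates} on $\cE(X)$: for every Baire set $B\subseteq X$ disjoint from $\cE(X)$, we have $\theta^{-1}(B) = \emptyset$, and hence $\nu(B)=0$.

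The second step is to upgrade this to maximality, via the Mokobodzki criterion. Recall that $\nu$ is a boundary measure if and only if $\nu(g) = \nu(\widehat{g})$ for every continuous convex $g\colon X\to\bR$, where $\widehat{g} = \inf\{h \in \cA(X) : h\geq g\}$ is the upper envelope. The standard fact I will use is that $\widehat{g}(p) = g(p)$ for every extreme point $p$. Consequently $\widehat{g}\circ\theta = g\circ\theta$ pointwise on $\Omega$, and in particular this composite is $\mu$-measurable.

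The remaining task is to identify $\nu(\widehat g)$ with $\int_\Omega \widehat g(\theta(\omega))\,d\mu$. I expect this to be the main obstacle, because $\widehat g$ is only upper semicontinuous and need not be Baire when $X$ is non-metrizable. The approach I would try first is to compute $\nu(\widehat g)$ as the infimum, over the downward directed family of continuous concave functions $h\geq g$ (finite minima of affine ones), of $\nu(h) = \int h\circ\theta\,d\mu$, using $\tau$-smoothness of Radon measures.

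To make this infimum interchange with integration on $\Omega$, I would reduce to a metrizable situation. First, approximate $g$ uniformly by convex continuous functions that factor through the restriction map to $\tS^\qf_{x_0}(T)$ for some countable subtuple $x_0$, and through countably many formulas. The restriction of $\theta$ to the subtuple $x_0$ still lands in extreme types, since the $M_\omega$ are qf-extremal. On a metrizable quotient, $\cE$ is a $G_\delta$, so the concentration from the first step is an honest measure-one statement. There the classical metrizable theorem applies: a measure carried by the extreme points is maximal. Hence the projected measure is boundary.

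Finally, I would transfer maximality back to $X$ by the following Choquet-order argument. Suppose $\nu \preceq \nu'$. Pushforwards along affine continuous maps preserve the Choquet order, because $g\circ\pi$ is convex whenever $g$ is. So each projection of $\nu'$ dominates the corresponding projection of $\nu$, which is maximal, and the two projections coincide. Since continuous functions depending on countably many coordinates are dense in $C(X)$ by Stone--Weierstrass, a measure is determined by all its projections. Therefore $\nu'=\nu$, and $\nu$ is maximal, i.e., a boundary measure. The delicate point, which I would check carefully, is the uncountable-language case: there one must ensure that the metrizable quotients used above still see extremality, possibly by working with the qf-types over a countable sublanguage and invoking \autoref{lem:inverse-limit-ext}.
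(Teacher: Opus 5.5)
\paragraph{The countable-language case.} When $\cL$ is countable your argument is correct, and it takes a different route from the paper. The paper only refers to \cite[Lemma~12.15]{BITaffine}. There Mokobodzki's criterion gives $\hat g=g$ at each extreme point $\theta(\omega)$, and is then applied to $\nu=\theta_*\mu$. The step needing care is $\nu(\hat g)=\int_\Omega \hat g(\theta(\omega))\,d\mu$ for the merely upper semicontinuous $\hat g$, which is exactly the interchange you flagged.

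Your projection argument avoids this interchange:
\begin{enumerate}
\item For finite $x_0\subseteq x$, the space $\tS^\qf_{x_0}(T)$ is metrizable.
\item $\pi_{x_0}\circ\theta$ takes values in $\cE^\qf_{x_0}(T)$ directly by qf-extremality of the $M_\omega$; no face-preservation is needed.
\item This $G_\delta$ set has full measure, so $(\pi_{x_0})_*\nu$ is maximal.
\item If $\nu\preceq\nu'$, then $(\pi_{x_0})_*\nu\preceq(\pi_{x_0})_*\nu'$, so the two projections are equal for every $x_0$. Stone--Weierstrass then gives $\nu'=\nu$.
\end{enumerate}
That last step is all you need; the concentration remark and the approximation of $g$ can be dropped. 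The same reduction is also what justifies the Mokobodzki route. It suffices to test $g=g_0\circ\pi_{x_0}$, for which $\hat g\le\widehat{g_0}\circ\pi_{x_0}$, and the integral of the latter is computed on a metrizable space.

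\paragraph{The gap: uncountable $\cL$.} Your proposed fix does not work there. Reducts to a countable sublanguage need not preserve extremality: an ergodic $G$-system need not be ergodic for a countable subgroup of $G$. Also, \autoref{lem:inverse-limit-ext} concerns restricting variables, not symbols, and it uses face-preservation, which is not assumed here.

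In fact no argument can close this case, because the statement fails there. The construction is as follows.
\begin{itemize}
\item Take $G=\bigoplus_{\alpha<\omega_1}\bZ/2\bZ$ and $T=\PMP_G$.
\item Let $\Omega=\omega_1$ with the countable--cocountable $\sigma$-algebra, and let $\mu$ be $0$ on countable sets and $1$ on cocountable ones.
\item Let $M_\alpha=\{\bZero,a_\alpha,\neg a_\alpha,\bOne\}$ with $a_\alpha$ of measure $1/2$. An element $g$ swaps $a_\alpha$ and $\neg a_\alpha$ if its $\alpha$-th coordinate is non-zero, and acts trivially otherwise.
\item Let $f=e$, where $e(\alpha)=a_\alpha$.
\end{itemize}

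Each $M_\alpha$ is ergodic, hence qf-extremal. A quantifier-free formula mentions only finitely many elements of $G$. So for all but finitely many $\alpha$, its value at $a_\alpha$ equals its value at an invariant element $a$ of measure $1/2$ in the four-element algebra with trivial $G$-action. Hence the field is qf-measurable, and $\theta_*\mu=\delta_q$ where $q=\tp^\qf(a)=\frac12\tp^\qf(\bOne)+\frac12\tp^\qf(\bZero)$. This $q$ is not extreme, so $\theta_*\mu$ is not a boundary measure. Replacing $\Omega$ by $\Omega\times[0,1]$ gives an atomless version of the same counterexample.

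So the lemma needs a countability hypothesis, for instance $\cL$ countable; under that hypothesis your proof is complete. This covers $T=\PMP_G$ and $T=\PMP_{G/H}$ for countable $G$.
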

\begin{proof}
By two applications of Mokobodzki's characterization of boundary measures, exactly as in \cite[Lemma~12.15]{BITaffine}.
\end{proof}

The following result concerns the uniqueness of the extremal decomposition.

\begin{thm}
\label{thm:decomposition-uniqueness}
Let $T$ be a qf-simplicial theory.
Let $(M_\Omega,e_I)$ and $(N_\Xi,e'_J)$ be qf-measurable fields over probability spaces $(\Omega, \cB, \mu)$ and $(\Xi,\cC,\nu)$ such that all the structures $M_\omega$ and $N_\xi$ are qf-extremal models of $T$.
Suppose $(M_\Omega,e_I)$ is non-degenerate, and that we are given an $\cL$-embedding $\sigma\colon M_{\Omega,I} \rightarrow N_{\Xi,J}$  between the corresponding direct integrals.

Then there is a measure algebra embedding $\fs \colon\MALG(\Omega) \to \MALG(\Xi)$ such that $(\sigma,\fs)$ is an embedding of the measurable field $(M_\Omega,e_I)$ into $(N_\Xi,e'_J)$, in the sense of \autoref{dfn:field-embedding-and-iso}.
If $\sigma$ is bijective and $N_\Xi$ is non-degenerate as well, then $(\sigma,\fs)$ is an isomorphism of fields.
\end{thm}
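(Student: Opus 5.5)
We follow the proof of \cite[Thm.~12.10]{BITaffine} (the uniqueness part of the Extremal Decomposition Theorem), adapted to quantifier-free types. Fix a $\kappa$-tuple $f$ of measurable sections enumerating a dense subset of $M_{\Omega,I}$, and let $g = \sigma f$, a $\kappa$-tuple of measurable sections of $N_\Xi$. Consider the measurable maps
\begin{equation*}
\theta\colon \Omega\to \tS^\qf_\kappa(T),\ \omega\mapsto \tp^\qf\big(f(\omega)\big),\qquad \theta'\colon \Xi\to \tS^\qf_\kappa(T),\ \xi\mapsto\tp^\qf\big(g(\xi)\big).
\end{equation*}
Both take values in $\tS^\qf_\kappa(T)$, and in fact in $\cE^\qf_\kappa(T)$, since the fibres are qf-extremal models of $T$. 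By \autoref{lem:boundary-meas-qf-extremal-field}, applied to both fields, $\theta_*\mu$ and $\theta'_*\nu$ are boundary measures on $\tS^\qf_\kappa(T)$.

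The next step is to compute their barycenters. By \autoref{prop:qf-Los}, for every $\varphi\in\cL^{\qf,\aff}_\kappa$ we have
\begin{equation*}
R(\theta_*\mu)(\varphi) = \int_\Omega\varphi^{M_\omega}(f(\omega))\,d\mu = \varphi^{M_{\Omega,I}}(f),\qquad R(\theta'_*\nu)(\varphi) = \varphi^{N_{\Xi,J}}(g).
\end{equation*}
Since $\sigma$ is an $\cL$-embedding, $\varphi^{M_{\Omega,I}}(f) = \varphi^{N_{\Xi,J}}(\sigma f)$, so both barycenters equal $\tp^\qf(f)$. Because $T$ is qf-simplicial, $\tS^\qf_\kappa(T)$ is a Choquet simplex (for arbitrary $\kappa$, via \cite[Lemma~1.28]{BITaffine}). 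Uniqueness of the boundary measure with a given barycenter then gives $\theta_*\mu = \theta'_*\nu =: \rho$.

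We now define $\fs$. The map $\theta'$ yields an embedding $\theta'^*\colon \MALG(\tS^\qf_\kappa,\rho)\to\MALG(\Xi)$. Non-degeneracy of $(M_\Omega,e_I)$ says exactly that $\theta^*\colon\MALG(\tS^\qf_\kappa,\rho)\to\MALG(\Omega)$ is an isomorphism. Set
\begin{equation*}
\fs = \theta'^*\circ(\theta^*)^{-1}.
\end{equation*}
We must check $\fs\oset{\varphi(h)} = \oset{\varphi(\sigma h)}$ for every $\varphi\in\cL^\qf_x$ and every tuple $h$ from $M_{\Omega,I}$. When $h$ is a subtuple of $f$, this holds because $\oset{\varphi(h)} = \hat\varphi\circ\theta$ and $\oset{\varphi(\sigma h)} = \hat\varphi\circ\theta'$, where $\hat\varphi$ is the continuous function $p\mapsto p(\varphi)$. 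For general $h$, we approximate by subtuples of $f$ in the integral metric. The maps $h\mapsto\oset{\varphi(h)}$ and $h\mapsto\oset{\varphi(\sigma h)}$ are continuous into $L^1$, by uniform continuity of $\varphi$ and concavity of its modulus; $\sigma$ is isometric; and $\fs$ is an $L^1$-isometry. These three facts let the identity pass to limits. The function-symbol clause of \autoref{dfn:field-embedding-and-iso} holds because $\sigma$ is an $\cL$-embedding.

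The main obstacle is this density and continuity step, together with the measurability subtleties in the non-metrizable setting (Baire $\sigma$-algebras, and $\theta$ being only $\mu$-measurable). We handle them exactly as in \cite{BITaffine}.

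For the final assertion, suppose $\sigma$ is bijective and $(N_\Xi,e'_J)$ is non-degenerate. Then $g = \sigma f$ enumerates a dense subset of $N_{\Xi,J}$, so $\theta'^*$ is also an isomorphism by non-degeneracy of $(N_\Xi,e'_J)$ (using that surjectivity does not depend on the choice of dense tuple). Hence $\fs$ is bijective, and $(\sigma,\fs)$ is an isomorphism of fields.
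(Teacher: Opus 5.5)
Your proposal is correct and takes essentially the paper's route: the paper's proof simply says that the proof of \cite[Thm.~12.16]{BITaffine} carries over verbatim, with \autoref{lem:boundary-meas-qf-extremal-field} as the input. That argument is the one you give. Both pushforwards $\theta_*\mu$ and $\theta'_*\nu$ are boundary measures with barycenter $\tp^\qf(f)$, so they coincide by qf-simpliciality; non-degeneracy then lets you define $\fs=\theta'^*\circ(\theta^*)^{-1}$, which you verify on subtuples of $f$ and extend by density.
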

\begin{proof}
The proof of \cite[Thm.~12.16]{BITaffine} works verbatim here, using \autoref{lem:boundary-meas-qf-extremal-field}.
\end{proof}

The preceding theorem ensures the following is well-defined, and implies the subsequent crucial corollary.

\begin{dfn}
Let $T$ be a qf-simplicial theory.
Let $M\models T$ be a decomposable model, say $M\cong\int^\oplus_\Omega M_\omega d\mu$ where $(M_\Omega,e_I)$ is as in \autoref{dfn:decomposable-models}.
We define the \emph{atomic weight} of $M$ by
\begin{equation*}
w(M) = \max\{\mu(a) : a\in\MALG(\Omega)\ \text{is an atom}\},
\end{equation*}
with $w(M)=0$ if $\MALG(\Omega)$ has no atoms.
\end{dfn}

\begin{cor}
\label{cor:weight-decreases}
Let $T$ be a qf-simplicial theory, and let $M,N$ be two decomposable models of $T$ with $M\subseteq N$.
Then $w(M) \geq w(N)$.
\end{cor}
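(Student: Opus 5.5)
We will apply \autoref{thm:decomposition-uniqueness} to the inclusion $M\subseteq N$. Write $M\cong\int^\oplus_\Omega M_\omega\,d\mu$ and $N\cong\int^\oplus_\Xi N_\xi\,d\nu$ as in \autoref{dfn:decomposable-models}. Both fields are non-degenerate, and all fibers are qf-extremal models of $T$. Composing the inclusion with these isomorphisms gives an $\cL$-embedding $\sigma\colon M_{\Omega,I}\to N_{\Xi,J}$. Then \autoref{thm:decomposition-uniqueness} provides a measure algebra embedding $\fs\colon\MALG(\Omega)\to\MALG(\Xi)$ such that $(\sigma,\fs)$ is an embedding of fields. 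The only point we need from this is that $\fs$ is a measure-preserving Boolean embedding.

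Since $\fs$ preserves measure, the inequality $w(M)\geq w(N)$ follows by a pure measure-algebra argument. Let $b\in\MALG(\Xi)$ be an atom with $\nu(b)=w(N)$; if there are no atoms then $w(N)=0\leq w(M)$ and we are done. (The maximum exists since a probability algebra has at most $1/t$ atoms of measure $\ge t$.) Consider the atoms $a$ of $\MALG(\Omega)$. For each such $a$, $\fs(a)\wedge b$ is either $0$ or $b$, because $b$ is an atom.

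We split into two cases.
\begin{enumerate}
\item Suppose some atom $a$ satisfies $b\leq\fs(a)$. Then $w(M)\geq\mu(a)=\nu(\fs(a))\geq\nu(b)=w(N)$.
\item Otherwise $b\wedge\fs(a)=0$ for every atom $a$. Let $c\in\MALG(\Omega)$ be the complement of the (countable) join of all atoms, so that $c$ is atomless. Since $\fs$ preserves countable joins, $b\leq\fs(c)$. Using atomlessness, split $c$ into finitely many pieces $c_1,\dots,c_k$ each of measure $<\nu(b)$. Then $b=\bigvee_i(b\wedge\fs(c_i))$, so $b\leq\fs(c_i)$ for some $i$ because $b$ is an atom. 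This gives $\nu(b)\leq\mu(c_i)<\nu(b)$, a contradiction.
\end{enumerate}
Hence only the first case occurs, and $w(M)\geq w(N)$.

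The main (mild) obstacle is the well-definedness of $w$: the same model may admit different decompositions. This is exactly the role of the bijective case of \autoref{thm:decomposition-uniqueness}, as noted before the definition, so it is already in hand. Everything else is the measure-algebra computation above. The hypotheses we use are those of \autoref{thm:decomposition-uniqueness}: qf-simplicity of $T$, non-degeneracy of the field for $M$, and qf-extremality of all fibers. The point-like atoms condition is not needed here.
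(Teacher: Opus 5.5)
Your proposal is correct and takes essentially the paper's route: the paper simply says the corollary follows from \autoref{thm:decomposition-uniqueness}, which supplies a measure-preserving embedding $\fs\colon\MALG(\Omega)\to\MALG(\Xi)$. Your case analysis fills in the measure-algebra step the paper leaves implicit, and it is sound: an atom of $\MALG(\Xi)$ cannot lie under the image of the atomless part of $\MALG(\Omega)$, because that part splits into finitely many pieces of measure smaller than the atom, so it must lie under the image of some atom of $\MALG(\Omega)$.
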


We end this section with a basic lemma that we will use later.

\begin{lem}
\label{lem:atoms-extremal-decomposition}
Let $T$ be an affine theory, and let $M$ be a decomposable model with $w(M)>0$.
Then we have
\begin{equation*}
M \cong \lambda M_0 \oplus \lambda M_1 \oplus \dots \oplus \lambda M_{k-1} \oplus (1-k\lambda) K,
\end{equation*}
where $\lambda = w(M)$, each $M_i$ for $i<k$ is a qf-extremal model of $T$, and $K$ is a decomposable model of $T$ with $(1-k\lambda)w(K) < \lambda$.
\end{lem}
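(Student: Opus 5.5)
We work directly from a decomposition witnessing that $M$ is decomposable: fix a non-degenerate qf-measurable field $(M_\Omega,e_I)$ over $(\Omega,\cB,\mu)$ with point-like atoms, each $M_\omega$ qf-extremal, and $M\cong\int^\oplus_\Omega M_\omega\,d\mu$. Put $\lambda=w(M)>0$. Since the atoms of $\MALG(\Omega)$ are pairwise disjoint and their measures sum to at most $1$, only finitely many of them, say $a_0,\dots,a_{k-1}$, have measure exactly $\lambda$. This also shows the maximum in the definition of $w(M)$ is attained. Let $a_*$ be the complement of $\bigvee_{i<k}a_i$, so $\mu(a_*)=1-k\lambda$. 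Every atom contained in $a_*$ has measure strictly less than $\lambda$.

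The first step is to identify each atomic piece with a single fibre. Choose representatives $A_i\in\cB$ of $a_i$. By the point-like atoms hypothesis, $A_i$ concentrates at some $\omega_i\in A_i$. Let $f$ be any measurable section. Then for every $\cL_0$-term $t$ and subtuple $\bar e$, the measurable function $\omega\mapsto d^{M_\omega}(f(\omega),t(\bar e(\omega)))$ is $\mu$-a.e.\ constant on $A_i$, equal to its value at $\omega_i$. Indeed, its level sets restricted to $A_i$ are measurable subsets of $A_i$, hence have measure $0$ or $\mu(A_i)$ according to whether they contain $\omega_i$; if needed we pass to $\cB$-measurable versions, changing things only on null sets. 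Since the $t(\bar e(\omega_i))$ are dense in $M_{\omega_i}$, the restriction of $f$ to $A_i$ is determined up to null sets by the point $f(\omega_i)\in M_{\omega_i}$. Conversely, every point of $M_{\omega_i}$ arises this way: approximate it by terms in $e_I(\omega_i)$ and use completeness (\autoref{lem:simple-sections-are-dense}). Quantifier-free formulas evaluated on $A_i$ likewise equal $\mu(A_i)$ times their value at $\omega_i$, by \autoref{prop:measurability-terms-and-formulas}. This gives an isomorphism between the integral over $A_i$ (with normalized measure) and $M_i:=M_{\omega_i}$, which is qf-extremal by hypothesis.

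The second step splits the direct integral along the finite measurable partition $\Omega=A_0\sqcup\dots\sqcup A_{k-1}\sqcup A_*$:
\begin{equation*}
\int^\oplus_\Omega M_\omega\,d\mu\;\cong\;\bigoplus_{i<k}\lambda\int^\oplus_{A_i}M_\omega\,d\mu_{A_i}\;\oplus\;(1-k\lambda)\int^\oplus_{A_*}M_\omega\,d\mu_{A_*},
\end{equation*}
where $\mu_A$ denotes the normalized restriction of $\mu$. This is a routine check against \autoref{dfn:direct-integral} and \autoref{prop:qf-Los}: a measurable section is the same thing as a tuple of measurable sections on the pieces, and predicates integrate additively. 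If $k\lambda=1$, the last summand is absent.

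Set $K=\int^\oplus_{A_*}M_\omega\,d\mu_{A_*}$ when $\mu(a_*)>0$. It remains to check that $K$ is decomposable with $(1-k\lambda)w(K)<\lambda$. The restricted field is a qf-measurable field of qf-extremal models, and the trace space inherits point-like atoms. Its atoms are exactly the atoms of $\Omega$ below $a_*$, rescaled by $1/(1-k\lambda)$. Hence $(1-k\lambda)w(K)$ is the largest measure of an atom of $\Omega$ below $a_*$, which is $<\lambda$ (and $w(K)=0$ if there is none). The point I expect to require the most care is non-degeneracy of the restricted field. We need that $\theta^*$ for $K$ is onto $\MALG(A_*)$. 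Take a dense tuple $f$ of sections of $M$ and restrict it to $A_*$. Any measurable $B\subseteq A_*$ is, by non-degeneracy of the original field, $\theta_f^{-1}(C)$ modulo null sets for some Baire set $C$. Hence $B=(\theta_f|_{A_*})^{-1}(C)$ modulo null sets, and restricted sections of a dense tuple remain dense in $K$. With this, $K$ is a decomposable model of $T$ (a model by \autoref{cor:Los-universal-theories}), which completes the plan.
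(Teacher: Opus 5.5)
Your proposal is correct and follows essentially the same route as the paper: split off the atoms of maximal measure, identify the normalized integral over each $A_i$ with the fibre $M_{\omega_i}$ at its concentration point, and take $K$ to be the integral of the restricted field over the complement (you also supply the non-degeneracy check, which the paper only asserts). One detail should be stated explicitly in your first step: $\omega_i$ lies in no $\mu$-null set, since a $\cB$-null set containing $\omega_i$ would meet $A_i$ in a set of measure $\mu(A_i)>0$; this is what makes evaluation at $\omega_i$ well defined on classes of $\mu$-measurable sections, and what justifies your passage to $\cB$-measurable versions.
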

\begin{proof}
Fix a qf-extremal decomposition $M\cong \int^\oplus_\Omega M_\omega d\mu$ over a probability space $(\Omega,\cB,\mu)$ with point-like atoms.
Let $A_0,\dots, A_{k-1}\in \cB$ be representatives for all the atoms of $\MALG(\Omega)$ of maximal measure.
The models $M_i$ of the statement are then obtained as the direct integrals of the induced fields $(M_\omega:\omega\in A_i)$ over the normalized, restricted probability spaces $(A_i,\mu_{A_i})$, and the model $K$ is obtained as the direct integral of the induced field $(M_\omega:\omega\in B)$ over $(B,\mu_B)$, for $B = \Omega\setminus\bigcup_{i<k}A_i$ (when this set is non-null).
The latter field is qf-measurable, non-degenerate and has point-like atoms, so $K$ is indeed decomposable.
By construction, $(1-k\lambda)w(K) < w(M)$.

Finally, for each $i<k$, let $\omega_i\in A_i$ be as given by \autoref{dfn:point-like-atoms}.
It is easy to see that the map $M_i\to M_{\omega_i}$ sending the class of a section $f$ to the value $f(\omega_i)$ is then a well-defined isomorphism (even though $\{\omega_i\}$ need not be measurable -- this is in fact a particular case of \cite[Lemma~1.17]{BITaffine}).
Each $M_i$ is thus a qf-extremal model of $T$, as desired.
\end{proof}

\section{The convex realization property revisited}
\label{sec:convex-realization}

The following definition will only be used temporarily, since it will turn out to be equivalent to that of the \emph{convex realization property} from \cite[Def.~17.1]{BITaffine}, which we mentioned in \autoref{subsec:Tcr}.

\begin{dfn}\label{dfn:qfCR}
We will say an $\cL$-structure $M$ has the \emph{qf-convex realization property} if it satisfies all the conditions of the form:
  \begin{equation*}
    \sup_{x_0,\dots,x_{m-1},y} \qinf_z \bigvee_{i<n}\big|\varphi_i(z,y)-\sum_{j<m}\lambda_j\varphi_i(x_j,y)\big| = 0,
  \end{equation*}
  where $\varphi_i\in\cL^{\qf,\aff}_{xy}$ and $\lambda_j\geq 0$, $\sum_{j<m}\lambda_j=1$.
\end{dfn}

\begin{rmk}
\label{rmk:qf-crp-forall-exists}
This is a $\forall\exists$-continuous theory.
\end{rmk}

\begin{lem}
\label{lem:non-atomic-has-qfCR}
Let $(M_\Omega,e_I)$ be a qf-measurable field of $\cL$-structures over an atomless probability space $(\Omega,\mu)$.
Then the direct integral $\int_\Omega^\oplus M_\omega\ud\mu(\omega)$ has the qf-convex realization property.
\end{lem}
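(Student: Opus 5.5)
Let $K=\int^\oplus_\Omega M_\omega\,d\mu$ with $\mu$ atomless. Fix quantifier-free affine formulas $\varphi_i(x,y)$ for $i<n$, weights $\lambda_j\geq 0$ with $\sum_{j<m}\lambda_j=1$, sections $f_0,\dots,f_{m-1}\in K^x$ and a tuple $g\in K^y$, and let $\epsilon>0$. It suffices to find $h\in K^x$ (a tuple of measurable sections) with $\big|\varphi_i^K(h,g)-\sum_j\lambda_j\varphi_i^K(f_j,g)\big|\leq\epsilon$ for all $i<n$. The natural candidate glues the $f_j$ along a measurable partition: $h(\omega)=f_j(\omega)$ for $\omega\in B_j$, where $\Omega=\bigsqcup_{j<m}B_j$. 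By \autoref{prop:qf-Los},
\begin{equation*}
\varphi_i^K(h,g)=\sum_{j<m}\int_{B_j}\varphi_i^{M_\omega}\big(f_j(\omega),g(\omega)\big)\,d\mu(\omega),
\end{equation*}
so the task reduces to choosing the partition so that $\int_{B_j}u_{ij}\,d\mu\approx\lambda_j\int_\Omega u_{ij}\,d\mu$ for the finitely many bounded functions $u_{ij}=\oset{\varphi_i(f_j,g)}\in L^\infty(\Omega)$, which are $\mu$-measurable by \autoref{prop:measurability-terms-and-formulas}.

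This is a Lyapunov-type statement. I would avoid the convexity theorem and argue by discretization. Approximate each $u_{ij}$ uniformly within $\epsilon$ by a simple function, and let $\{C_k\}$ be a finite measurable partition of $\Omega$ on which all these simple functions are constant. Since $\mu$ is atomless, each $C_k$ can be split into measurable pieces $C_{k,j}$ with $\mu(C_{k,j})=\lambda_j\mu(C_k)$; this uses the Sierpiński intermediate-value property, which holds for atomless measures on the completed $\sigma$-algebra. Set $B_j=\bigcup_k C_{k,j}$. For the simple approximants $s_{ij}$ we then get $\int_{B_j}s_{ij}=\lambda_j\int_\Omega s_{ij}$ exactly, and hence each error term is at most $2\epsilon$. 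Summing over $j$ gives an error of at most $2\epsilon$ in each $\varphi_i$, because the $\lambda_j$ sum to $1$ and the pieces $B_j$ are disjoint.

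It remains to check that $h$ is a genuine element of $K$, that is, a measurable section in the sense of the definition. Each $f_j$ comes with a countable $I_j\subseteq I$ and a countable $\cL_j\subseteq\cL$. I would take the unions $I_0=\bigcup_j I_j$ and $\cL_0=\bigcup_j\cL_j$, which are still countable. Then $h(\omega)\in M_{\omega,I_0,\cL_0}$ almost everywhere. For any $\cL_0$-term $t$, the map $\omega\mapsto d\big(h(\omega),t(\bar e(\omega))\big)$ is a finite sum of indicators of the $B_j$ times measurable functions, so it is measurable. Here I will need a small check: the measurability clause for $f_j$ was stated only for $\cL_j$-terms in $e_{I_j}$, not for $\cL_0$-terms in $e_{I_0}$. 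I would handle this by the triangle inequality, approximating $f_j(\omega)$ by terms $t'(\bar e(\omega))$ in $\cL_j$ and using the measurability of distances between measurable sections, which the paper's earlier lemma (the adaptation of \cite[Lemma~8.4]{BITaffine}) provides.

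I expect the main obstacle to be purely technical. The space $(\Omega,\mu)$ may be non-standard, and measurability is with respect to the completion, so the splitting step must use that atomlessness of $\mu$ gives the intermediate-value property on $\cB_\mu$. This holds for any atomless probability space by the usual exhaustion argument. Since $\epsilon$ is arbitrary, the infimum over $z$ of the maximum of the errors is $0$, which is exactly the qf-convex realization condition.
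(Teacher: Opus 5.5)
Your proposal is correct and follows the argument the paper imports verbatim from \cite[Lemma~17.2]{BITaffine}: you glue the sections $f_j$ along a measurable partition $\Omega=\bigsqcup_j B_j$ adapted to the weights $\lambda_j$ and evaluate via \autoref{prop:qf-Los}. The only new ingredient in the quantifier-free setting is the measurability of $\omega\mapsto\varphi_i^{M_\omega}\big(f_j(\omega),g(\omega)\big)$ from \autoref{prop:measurability-terms-and-formulas}, which you correctly identify. Your way of producing the partition, by uniform approximation with simple functions followed by a Sierpi\'nski splitting of each cell, gives approximate rather than exact equalities; this is enough because the condition only involves $\inf_z$.
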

\begin{proof}
The proof of \cite[Lemma~17.2]{BITaffine} works verbatim.
Notice that the argument requires computing the integral of functions of the form $\omega\mapsto \varphi_i^{M_\omega}\big(a(\omega),b(\omega)\big)$ for measurable sections $a,b$ of the qf-measurable field.
Since the formulas $\varphi_i$ in \autoref{dfn:qfCR} are quantifier-free, these functions are indeed measurable, by \autoref{prop:measurability-terms-and-formulas}, and the integral makes sense.
\end{proof}

\begin{lem}
  \label{lem:qfCR-vs-aec-implies-ec}
  Let $M$ be an $\cL$-structure. The following are equivalent:
  \begin{enumerate}
  \item\label{i:qfCR} $M$ has the qf-convex realization property.
  \item\label{i:aec-implies-ec} For every $\cL$-structure $N$, if $M\preceq^\aec N$ then $M\preceq^\ec N$.
  \end{enumerate}
\end{lem}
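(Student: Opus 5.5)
We prove the two implications separately. The idea is that in continuous logic, an existential condition is witnessed by a single $z$ satisfying finitely many affine quantifier-free conditions at once. Affine existential closedness only witnesses one affine formula at a time. The qf-convex realization property is exactly what bridges the two.

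\emph{(i)$\Rightarrow$(ii).} Assume $M$ has the qf-convex realization property and $M\preceq^\aec N$. It suffices to show the following: whenever $a\in M^y$, $b\in N^z$ and $\varphi_i\in\cL^{\qf,\aff}_{yz}$ ($i<n$) satisfy $\varphi_i(a,b)<r_i$, there is $b'\in M^z$ with $\varphi_i(a,b')<r_i+\epsilon$ for every $i$ and a given $\epsilon>0$. Indeed, every quantifier-free continuous formula is a uniform limit of lattice combinations of affine ones, so such simultaneous approximation yields $M\preceq^\ec N$.

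Consider the convex set $C=\{(\varphi_i(a,c))_{i<n} : c\in M^z\}^{\mathrm{co}}\subseteq\bR^n$ and its closure. We claim the point $v=(\varphi_i(a,b))_i$ lies in $\cl{C}$. If not, Hahn--Banach gives coefficients $\alpha_i$ and a margin $\eta>0$ such that $\sum\alpha_i\varphi_i(a,c)\geq\sum\alpha_i\varphi_i(a,b)+\eta$ for all $c\in M^z$. Then $\psi=\sum_i\alpha_i\varphi_i$ is a single affine qf formula, and $\inf_z\psi(a,z)$ is strictly smaller in $N$ than in $M$. This contradicts $M\preceq^\aec N$.

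So there are $c_0,\dots,c_{m-1}\in M^z$ and convex weights $\lambda_j$ with $\sum_j\lambda_j\varphi_i(a,c_j)$ within $\epsilon/2$ of $\varphi_i(a,b)$ for each $i<n$. The qf-convex realization property, applied with $x_j:=c_j$ and $y:=a$, then supplies $b'\in M$ with $|\varphi_i(b',a)-\sum_j\lambda_j\varphi_i(c_j,a)|<\epsilon/2$ for all $i$. This gives the desired $b'$.

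\emph{(ii)$\Rightarrow$(i).} Fix $c_0,\dots,c_{m-1},a$ in $M$ and convex weights $\lambda_j$. Form the direct convex combination $N=\bigoplus_{j<m}\lambda_j M$, and embed $M$ diagonally into $N$. The main step is to check that this diagonal embedding is affinely existentially closed. For this we use \autoref{rmk:ec-vs-universal-diagram}: it suffices that $N\models D^\aff_\forall(M)$. This holds because an affine universal condition $\sup_z\varphi(d,z)\leq0$ with $d$ diagonal is evaluated in $N$ as $\sum_j\lambda_j\varphi(d,z_j)$, and each summand is $\leq 0$ in $M$. (Only qf-ness of $\varphi$ is needed here, via the direct combination formula for affine formulas.)

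Now hypothesis (ii) gives $M\preceq^\ec N$. In $N$, the element $(c_0,\dots,c_{m-1})$ realizes exactly $\varphi_i\big((c_j)_j,a\big)=\sum_j\lambda_j\varphi_i(c_j,a)$ for every $i<n$. Existential closedness then transfers the continuous condition
\begin{equation*}
\inf_z\bigvee_{i<n}\big|\varphi_i(z,a)-\textstyle\sum_j\lambda_j\varphi_i(c_j,a)\big|=0
\end{equation*}
to $M$. Since the $c_j$ and $a$ were arbitrary, $M$ satisfies every instance of the axiom scheme, which is the qf-convex realization property.

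The main obstacle is the Hahn--Banach separation in (i)$\Rightarrow$(ii): one must make sure the separating functional produces a genuine single affine qf formula with parameters $a$, and handle the non-strict versus approximate inequalities carefully.
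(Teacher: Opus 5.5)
Your proof is correct. The direction (i)$\Rightarrow$(ii) is essentially the paper's argument. Your finite-dimensional Hahn--Banach separation is exactly the content of \autoref{lem:S(T)-cco-realized} applied to $D^\aff_\forall(a)$: the qf-type of $b$ over $a$ lies in the closed convex hull of the types over $a$ realized in $M$. The qf-convex realization property then finishes the job.

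For (ii)$\Rightarrow$(i) you take a genuinely different route. The paper uses a single extension, the direct multiple $L^1(\Omega,M)$ over an atomless space, which has the qf-convex realization property by \autoref{lem:non-atomic-has-qfCR}. Since $M\preceq^\aff L^1(\Omega,M)$, hypothesis (ii) makes $M$ existentially closed in it. The property then descends to $M$ because it is $\forall\exists$-axiomatized (\autoref{rmk:qf-crp-forall-exists}, \autoref{lem:ec-vs-models-AE}). You instead build, for each instance of the scheme, the finite combination $\bigoplus_j\lambda_jM$ with the diagonal embedding. There the instance is witnessed exactly by $(c_0,\dots,c_{m-1})$, and you transfer only that one existential condition.

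Your version is more elementary: it needs only the evaluation formula for finite direct convex combinations, not qf-measurable fields or atomlessness. The cost is a different extension for each instance. The paper's version reuses machinery it needs elsewhere and mirrors \cite[Lemma~17.3]{BITaffine}.

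Two small points. First, discard indices with $\lambda_j=0$ before forming $\bigoplus_j\lambda_jM$, since direct convex combinations require positive weights. Second, checking only affine matrices for $N\models D^\aff_\forall(M)$ is legitimate, because this diagram is equivalent to its universal affine part. More simply, the diagonal embedding is affinely elementary, $M\preceq^\aff N$, which gives $M\preceq^\aec N$ at once.
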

\begin{proof}
The proof is the same as in \cite[Lemma~17.3]{BITaffine}.
For the implication \autoref{i:qfCR}$\Rightarrow$\autoref{i:aec-implies-ec} one uses \autoref{lem:S(T)-cco-realized} and \autoref{rmk:ec-vs-universal-diagram} (in place of \cite[Lemma~3.21]{BITaffine}), and for the implication \autoref{i:aec-implies-ec}$\Rightarrow$\autoref{i:qfCR} one uses \autoref{lem:non-atomic-has-qfCR}, \autoref{rmk:qf-crp-forall-exists}, and \autoref{lem:ec-vs-models-AE} (in place of \cite[Remark~16.14(ii)]{BITaffine}).
\end{proof}

\begin{lem}
\label{lem:extremal-model-with-CRP}
Let $T$ be an irreducible universal affine theory.
Let $M\models_\aec T$ have the qf-convex realization property.
Then for every tuple of variables $x$,
\begin{equation*}
\tS^\qf_x(T) = \cl{\{\tp^\qf(a) : a\in M^x\}}.
\end{equation*}
In particular, if $M$ is qf-extremal, then $\cE^\qf_x(T)$ is dense in $\tS^\qf_x(T)$.
\end{lem}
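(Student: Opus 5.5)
We want to show $\tS^\qf_x(T) = \cl{\{\tp^\qf(a) : a\in M^x\}}$. By \autoref{lem:irreducible-T-ec-model-Th-forall}, since $T$ is irreducible and $M\models_\aec T$, we already know $\tS^\qf_x(T) = \cl{\co\{\tp^\qf(a) : a\in M^x\}}$. So it suffices to show that the set $R_x = \cl{\{\tp^\qf(a) : a\in M^x\}}$ is convex. Since it is closed, it is enough to show that finite convex combinations $\sum_{j<m}\lambda_j\tp^\qf(a_j)$ of realized types lie in $R_x$.

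Fix $a_0,\dots,a_{m-1}\in M^x$ and weights $\lambda_j\geq 0$ with $\sum_j\lambda_j=1$, and let $q=\sum_j\lambda_j\tp^\qf(a_j)$. A basic open neighbourhood of $q$ in $\tS^\qf_x(T)$ is determined by finitely many affine quantifier-free formulas $\varphi_i(x)$, $i<n$, and some $\epsilon>0$: it consists of the types $r$ with $|r(\varphi_i)-q(\varphi_i)|<\epsilon$ for all $i<n$. (Since $\tS^\qf_x(T)$ is the state space of $\cL^{\qf,\aff}_x(T)$, the weak$^*$ topology is generated by such conditions; for infinite $x$ each formula involves only finitely many variables, which is harmless.) Now apply the qf-convex realization property of \autoref{dfn:qfCR} with empty parameter tuple $y$ (or any dummy $y$), formulas $\varphi_i$, tuples $x_j=a_j$ and weights $\lambda_j$. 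It yields $z=c\in M^x$ with $\bigvee_{i<n}|\varphi_i(c)-\sum_j\lambda_j\varphi_i(a_j)|<\epsilon$. Since $q(\varphi_i)=\sum_j\lambda_j\varphi_i(a_j)$, the type $\tp^\qf(c)$ lies in the chosen neighbourhood of $q$. Hence $q\in R_x$, so $R_x$ is convex and closed, and therefore equals $\tS^\qf_x(T)$.

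For the ``in particular'': if $M$ is moreover qf-extremal, every realized type $\tp^\qf(a)$, $a\in M^x$, lies in $\cE^\qf_x(T)$. Thus $\{\tp^\qf(a):a\in M^x\}\subseteq\cE^\qf_x(T)$, and taking closures gives $\tS^\qf_x(T)=\cl{\cE^\qf_x(T)}$.

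There is no real obstacle; the only point needing care is the bookkeeping that the $\qinf_z$ in \autoref{dfn:qfCR} is only approximately attained and that the formula tuples may have to be padded to a common finite variable set. This is absorbed by the strict inequality $<\epsilon$ and by treating $x$ as a finite subtuple relevant to the $\varphi_i$.
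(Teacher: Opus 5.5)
Your proof is correct, and it takes a somewhat different route from the paper's.

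The paper argues pointwise. Given $p$ in a non-empty open set $U\subseteq\tS^\qf_x(T)$, it realizes $p$ in an extension $N$ with $M\preceq^\aec N$; this uses irreducibility (joint embedding) and the fact that $M$ is a.e.c. It then invokes \autoref{lem:qfCR-vs-aec-implies-ec} to upgrade this to $M\preceq^\ec N$, so the existential condition placing a realization of $p$ in $U$ transfers down to $M$.

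You instead start from the density of the convex hull of realized types. This is \autoref{lem:irreducible-T-ec-model-Th-forall}, i.e.\ the Hahn--Banach argument of \autoref{lem:S(T)-cco-realized}. You then apply the parameter-free instances of the qf-convex realization axioms directly, showing that convex combinations of realized types are limits of realized types.

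The two arguments rest on the same ingredients. The implication (i)$\Rightarrow$(ii) of \autoref{lem:qfCR-vs-aec-implies-ec} is proved by exactly your combination (convex-hull density for $D^\aff_\forall(a)$ plus the qf-convex realization property), with parameters $a$ from $M$ added.

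What each version buys:
\begin{itemize}
\item The paper's proof is a one-liner given that lemma, and it parallels \cite[Lemma~20.2]{BITaffine}.
\item Yours is self-contained and avoids passing through extensions of $M$. It also shows that only two things are used: the parameter-free instances of the qf-convex realization property, and the equality $T=\Th^\aff_\forall(M)$. Being a.e.c.\ matters only insofar as it yields this equality.
\end{itemize}
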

\begin{proof}
For every non-empty open set $U\subseteq\tS^\qf_x(T)$, we may realize any $p\in U$ in some extension $M\preceq^\aec N$.
Therefore, using \autoref{lem:qfCR-vs-aec-implies-ec}, we may find $a\in M^x$ with $\tp^\qf(a)\in U$.
(This is similar to \cite[Lemma~20.2]{BITaffine}.)
\end{proof}

\begin{prop}
\label{lem:qfCR-vs-CR}
The qf-convex realization property and the convex realization property are equivalent.
\end{prop}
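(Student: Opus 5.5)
The implication from the convex realization property to the qf-convex realization property is immediate. The conditions in \cite[Def.~17.1]{BITaffine} have exactly the shape of those in \autoref{dfn:qfCR}, except that $\varphi_i$ ranges over all affine formulas $\cL^\aff_{xy}$ rather than only over $\cL^{\qf,\aff}_{xy}$. So every instance of the qf-scheme is an instance of the full scheme. The work is therefore in the converse: assuming $M$ has the qf-convex realization property, we must show that for every $m$, every weights $\lambda_j$, and all tuples $a_0,\dots,a_{m-1},b$ in $M$, the full affine type
\begin{equation*}
\textstyle\sum_j\lambda_j\tp^\aff(a_jb)\big|_{\varphi}
\end{equation*}
is approximately realized in $M$ over $b$.

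I would reformulate this through the direct convex combination $K=\bigoplus_{j<m}\lambda_j M$ and the diagonal map $\Delta\colon M\to K$. By the Łoś property for direct convex combinations, $\Delta$ is an affine elementary embedding, and the tuple $(a_j)_j\in K$ has affine type over $\Delta b$ equal to $\sum_j\lambda_j\tp^\aff(a_jb)$. The convex realization property says precisely that every affine type over $\Delta(M)$ realized in $K$ is finitely approximately realized in $M$.

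\autoref{lem:qfCR-vs-aec-implies-ec} already gives one layer of this. Since $M\preceq^\aff K$ via $\Delta$, in particular $M\preceq^\aec K$, and hence $M\preceq^\ec K$. This is the qf-version of what we want, and it already uses the qf-convex realization property of $M$. To climb in quantifier complexity, I would pass to an $\aleph_1$-saturated ultrapower $M^\cU$, and note that $K^\cU\cong\bigoplus_j\lambda_jM^\cU$. The qf-convex realization property is a $\forall\exists$-continuous theory by \autoref{rmk:qf-crp-forall-exists}, so it passes to $M^\cU$ and to its convex self-combinations.

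The plan is then a back-and-forth between $M^\cU$ and $K^\cU$ over $\Delta(b)$. Its partial maps are finite partial maps preserving quantifier-free (continuous) types, and the goal is to show that the tuple $(a_j)_j$ in $K^\cU$ and a suitable tuple of $M^\cU$ have the same full affine type over $b$. The first extension step uses saturation together with $M^\cU\preceq^\ec K^\cU$, which is \autoref{lem:qfCR-vs-aec-implies-ec} applied to the ultrapowers. The step in the other direction would use the diagonal embedding of $M^\cU$ into $K^\cU$. An Ehrenfeucht--Fraïssé-style induction on quantifier depth then transfers the values of all affine formulas, hence all continuous formulas. Finally, Łoś's theorem transfers the approximate realization from $M^\cU$ back to $M$.

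The main obstacle is the induction step of this back-and-forth. After one step, the partial map no longer sends $M^\cU$ onto diagonal elements of $K^\cU$. We therefore need the existential closedness to hold over arbitrary finite tuples $c$ of $K^\cU$ having the same qf-type as some tuple of $M^\cU$, not merely over diagonal tuples.

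My first attempt at closing this gap is to apply \autoref{lem:qfCR-vs-aec-implies-ec} not just to $M\to K$, but to every affinely existentially closed inclusion that arises. To do so, I would verify that a tuple of $K^\cU$ with the same quantifier-free type as a tuple of $M^\cU$ makes the relevant embedding affinely existentially closed. This should follow from the fact that $K^\cU$ is itself a convex combination of copies of $M^\cU$ and therefore satisfies the qf-convex realization property. I would also use that qf-affine types in $K^\cU$ are averages of qf-types in $M^\cU$, so by \autoref{lem:S(T)-cco-realized} they lie in the closed convex hull of types realized in $M^\cU$. If this bookkeeping goes through, the two structures are $\ec$ in each other in a symmetric way, and the standard argument upgrades mutual existential closedness along such a system to elementarity.
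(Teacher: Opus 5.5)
There is a genuine gap, and it sits exactly at the step you flag as the main obstacle. The proposed repair does not close it.

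Your reduction is sound up to $M\preceq^\ec K$, and after passing to ultrapowers, $M^{\mathcal U}\preceq^\ec K^{\mathcal U}$. (Incidentally, the qf-convex realization property passes to $\bigoplus_j\lambda_jM^{\mathcal U}$ by a coordinatewise computation, not because it is $\forall\exists$.)

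The qf-convex realization property gives you, via \autoref{lem:qfCR-vs-aec-implies-ec}, only one thing: existential closedness of affinely existentially closed inclusions. It says nothing about tuples that are not matched by such an inclusion. In particular, equal quantifier-free types do not give equal existential affine types. Suppose $\bar c,\bar c'\in M^{\mathcal U}$ have the same quantifier-free type but different existential affine types. This happens as soon as quantifier-free types do not determine existential ones, even when $M$ has the full convex realization property. Then $\bar c\mapsto\Delta(\bar c')$ preserves quantifier-free types. But $\Delta(\bar c')$ has the existential type of $\bar c'$, because $\Delta$ is affinely elementary and existential affine formulas are affine.

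So the system of qf-type-preserving partial maps has no back-and-forth property. The qf-convex realization property of $K^{\mathcal U}$ does not help either, since you have no aec inclusion to which \autoref{lem:qfCR-vs-aec-implies-ec} could be applied. Nor is there a standard argument that upgrades ``mutual existential closedness'' to elementarity. A Robinson-type climb in quantifier complexity needs each new inclusion to be affinely elementary (or at least aec) again, so that the hypothesis can be reapplied. But producing an elementary extension of $M$ in which $K$ sits affinely elementarily is precisely the convex realization property you are trying to prove, so the argument is circular.

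The idea you are missing, which is how the paper proceeds, is not to prove directly that $\Delta$ is elementary. Instead, the paper transfers the property from a structure already known to have it.
\begin{enumerate}
\item Take $N_0=L^1(\Omega,M)$ with $\Omega$ atomless. It has the full convex realization property by \cite[Lemma~17.2]{BITaffine}.
\item Since $M\preceq^\aff N_0$, \autoref{lem:qfCR-vs-aec-implies-ec} gives $M\preceq^\ec N_0$. Hence $N_0$ embeds into some $M_1$ with $M\preceq^\cont M_1$, and $M_1$ again has the qf-convex realization property.
\item Iterating gives the interleaved chain $M_0\preceq^\ec N_0\subseteq M_1\preceq^\ec N_1\subseteq\cdots$. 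Here $N_n=L^1(\Omega,M_n)$ and $M_n\preceq^\cont M_{n+1}$, and the paper obtains $N_n\preceq^\cont N_{n+1}$ from \cite[Thm.~18.3]{BITaffine}.
\item The union is an elementary extension of both $M$ and $N_0$. So the convex realization property passes from $N_0$ to the union, and then down to $M$.
\end{enumerate}
These two external ingredients supply the higher-quantifier information your back-and-forth lacks: a reference structure with the full property, and preservation of elementarity under direct multiples. A finite combination $\bigoplus_j\lambda_jM$ cannot play the role of $N_0$, since it is not known a priori to have the convex realization property.
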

\begin{proof}
Clearly, the convex realization property implies its quantifier-free version.
Conversely, suppose $M$ satisfies the conditions of \autoref{dfn:qfCR}.
Let us fix an atomless probability space $(\Omega,\mu)$.
We consider $N_0 = L^1(\Omega,M)$, which has the convex realization property by \cite[Lemma~17.2]{BITaffine}.
Since $M\preceq^\aff N_0$, \autoref{lem:qfCR-vs-aec-implies-ec} gives us $M\preceq^\ec N_0$.
Therefore, by \autoref{rmk:ec-vs-universal-diagram}, there is an extension $N_0\subseteq M_1$ such that $M\preceq^\cont M_1$.
Proceeding inductively, we build a chain:
\begin{equation*}
M = M_0\preceq^\ec N_0 \subseteq M_1\preceq^\ec N_1\subseteq \dots
\end{equation*}
where $N_n = L^1(\Omega,M_n)$ and $M_n\preceq^\cont M_{n+1}$ for every $n\in\bN$.
By \cite[Thm.~18.3]{BITaffine}, we have also $N_n\preceq^\cont N_{n+1}$ for every $n\in\bN$.

Now let $N = \cl{\bigcup_{n\in\bN}M_n} = \cl{\bigcup_{n\in\bN}N_n}$ be the limit of this chain.
Since $N_0\preceq^\cont N$, the model $N$ has the convex realization property.
But then so does $M$, because $M\preceq^\cont N$.
\end{proof}

\begin{cor}
\label{cor:crp-forall-exists-axiomatizable}
The convex realization property is $\forall\exists$-axiomatizable.
\end{cor}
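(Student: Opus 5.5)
This follows directly from \autoref{lem:qfCR-vs-CR} together with \autoref{rmk:qf-crp-forall-exists}. I will show that the convex realization property is axiomatized by the conditions of \autoref{dfn:qfCR}.

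Let $\Sigma$ be the set of all conditions in \autoref{dfn:qfCR}, one for each finite family of formulas $\varphi_i\in\cL^{\qf,\aff}_{xy}$ and each choice of convex weights $\lambda_j$. Each such condition has the form $\sup_{x_0,\dots,x_{m-1},y}\inf_z\psi\leq 0$, where $\psi$ is the quantifier-free continuous formula $\bigvee_{i<n}\big|\varphi_i(z,y)-\sum_{j}\lambda_j\varphi_i(x_j,y)\big|$. The sup/inf is nonnegative, so the equality $=0$ is equivalent to the condition $\leq 0$. This is the observation recorded in \autoref{rmk:qf-crp-forall-exists}, which makes $\Sigma$ a $\forall\exists$-continuous theory.

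By definition, the models of $\Sigma$ are exactly the structures with the qf-convex realization property. By \autoref{lem:qfCR-vs-CR}, these are exactly the structures with the convex realization property. Hence $\Sigma$ is a $\forall\exists$-axiomatization of the convex realization property.

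The only step with real content is the equivalence in \autoref{lem:qfCR-vs-CR}, which is already established, so no obstacle remains. The one point to check is that the formulas $\psi$ are genuinely quantifier-free continuous formulas. They are: $\psi$ is built from the quantifier-free affine formulas $\varphi_i$ using only absolute values, finite maxima and linear combinations.
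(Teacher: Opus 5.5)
Your proposal is correct and is exactly the argument the paper intends. The corollary is stated without proof right after \autoref{lem:qfCR-vs-CR}, and it follows by combining that equivalence with \autoref{rmk:qf-crp-forall-exists}, as you do; your checks that $=0$ may be replaced by $\leq 0$ and that the matrix is a quantifier-free continuous formula are sound.
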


\section{A general Bauer--Poulsen dichotomy}
\label{sec:dichotomy}

We start with several basic facts.

\begin{lem}
\label{lem:realizing-p-in-cl-Ex(T)}
Let $T$ be an irreducible universal affine theory.
Let $x$ be any tuple of variables and let $p\in \cl{\cE^\qf_x(T)}$.
Then for every $M\models_\aec T$ there is an elementary extension $M\preceq^\cont N$ realizing $p$.
\end{lem}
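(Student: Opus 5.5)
By \autoref{rmk:ec-vs-universal-diagram}, it is enough to find an extension $M\subseteq N'$ with $N'\models T$ that realizes $p$ and satisfies $M\preceq^\ec N'$. Indeed, $M\preceq^\ec N'$ gives an extension $N'\subseteq N$ with $M\preceq^\cont N$, and $N$ still realizes $p$. Equivalently, we want the continuous theory $D^\cont_\forall(M)\cup T\cup\{\text{$p$ realized by $c$}\}$, in the language $\cL(M)$ with new constants $c$, to be consistent. By compactness of continuous logic, this reduces to a finite statement.

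Fix finitely many conditions of $p$, which determine an open neighbourhood $U$ of $p$ in $\tS^\qf_x(T)$; we may take $x$ finite. Fix also finitely many quantifier-free continuous conditions $\psi_j(a,y)$ with parameters $a\in M^z$ that hold universally in $M$, i.e.\ $M\models \sup_y\psi_j(a,y)\leq 0$. We must find $N'\models T$ extending $M$ in which some tuple $c$ has $\tp^\qf(c)\in U$ and $\sup_y\psi_j(a,y)\le 0$ still holds. Using $p\in\cl{\cE^\qf_x(T)}$, pick an extreme type $q\in U\cap\cE^\qf_x(T)$.

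The key step is to realize $q$ by a tuple $c$ in a model extending $M$ in which every universal condition true in $M$ survives. Taking $c$ in some $N'\supseteq M$ with $M\preceq^\aec N'$ only preserves universal \emph{affine} conditions, which is not enough. My first attempt is to form the direct convex combination $M\oplus$ (something) and then exploit extremality of $q$; but the universal continuous theory of $M$ is not preserved by direct sums. Instead I will use the fiber structure. By \autoref{lem:types-with-parameters-vs-fibers} (and Robinson-ness, if available; otherwise directly from $M\models_\aec T$ together with irreducibility, via \autoref{lem:irreducible-T-ec-model-Th-forall}), the types over $a$ in $\tS^{\qf,\aff}_{xz}$ compatible with $\tp^\qf(a)$ project onto $\tS^\qf_x(T)$. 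Consider the fiber $F$ of $\tp^\qf(a)$ inside $\tS^\qf_{xz}(T)$ and its image in $\tS^\qf_x(T)$. By \autoref{lem:S(T)-cco-realized} applied to $D^\aff_\forall(a)$, the extreme points of $\tS^{\qf,\aff}_x(a)$ lie in the closure of the types realized in $M$, and types realized in $M$ clearly preserve all universal continuous conditions over $a$.

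So I want an extreme point of $\tS^{\qf,\aff}_x(a)$ whose restriction lies in $U$. Since $\tS^{\qf,\aff}_x(a)\to\tS^\qf_x(T)$ is a continuous affine surjection, the preimage of $q$ is a closed face (as $q$ is extreme). Its extreme points are extreme in $\tS^{\qf,\aff}_x(a)$, so such a point $r$ exists by Krein--Milman. Then $r$ is approximated by types $\tp^\qf(c'/a)$ with $c'\in M$. A realization close enough to $r$ has $\tp^\qf(c')\in U$, and $M$ itself witnesses the finite subset of the theory; in fact this even gives realizations inside $M$.

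The main obstacle is justifying surjectivity of $\tS^{\qf,\aff}_x(a)\to\tS^\qf_x(T)$ using only irreducibility and $M\models_\aec T$, without assuming amalgamation. The plan is this: given $q$, realize it in some $N_1\models T$, jointly embed $M$ and $N_1$ into some $K\models T$, and use $M\preceq^\aec K$ to view $\tp^\qf(c/a)$ as a type over $a$.
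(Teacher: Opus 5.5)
Your proof is correct and rests on the same idea as the paper's: extreme quantifier-free types, and hence their limits such as $p$, are approximated by types realized in $M$ itself, so every finite part of the relevant theory is approximately satisfied in $M$, and compactness then yields $M\preceq^\cont N$ realizing $p$. The detour through types over the parameters $a$, the fiber face and Krein--Milman is superfluous, because \autoref{lem:irreducible-T-ec-model-Th-forall}, applied directly to $q$, already gives $c'\in M^x$ with $\tp^\qf(c')\in U$. A witness inside $M$ automatically satisfies all of $D^\cont(M)$, not just $D^\cont_\forall(M)$, and this is exactly the paper's one-line argument. Also, do not pass to a finite subtuple $x'$ of $x$: that would require $p|_{x'}\in\cl{\cE^\qf_{x'}(T)}$, which is not justified without face-preservation, and it is unnecessary since the lemmas you use hold for arbitrary $x$.
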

\begin{proof}
By \autoref{lem:irreducible-T-ec-model-Th-forall}, $p$ is the limit of types realized in $M$, and therefore $p$ is realized in some elementary extension of~$M$.
\end{proof}

\begin{lem}
\label{lem:models-ec-Dforall-implies-ThAE}
Let $M$ and $N$ be $\cL$-structures with $M\preceq^\ec N$.
If $N\models_\ec D^\cont_\forall(M)$, then $N\models \Th^\cont_{\forall\exists}(M)$.
\end{lem}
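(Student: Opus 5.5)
The plan is to sandwich $N$ between $M$ and an elementary extension of $M$, and then transfer $\forall\exists$-conditions down to $N$ using the existential closedness of $N$ as a model of $D^\cont_\forall(M)$. Throughout, structures containing $M$ are viewed as $\cL(M)$-structures by interpreting each new constant as the corresponding element of $M$.

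First, since $M\preceq^\ec N$, \autoref{rmk:ec-vs-universal-diagram} gives an extension $N\subseteq M'$ with $M\preceq^\cont M'$. The inclusions $M\subseteq N\subseteq M'$ are compatible, so $N\subseteq M'$ is an inclusion of $\cL(M)$-structures. Because $M\preceq^\cont M'$, we have $M'\models D^\cont(M)$, and in particular $M'\models D^\cont_\forall(M)$. So $M'$ is an extension of $N$ inside the class of models of the universal $\cL(M)$-theory $D^\cont_\forall(M)$. The hypothesis $N\models_\ec D^\cont_\forall(M)$ then yields that $N$ is existentially closed in $M'$ as $\cL(M)$-structures. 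A fortiori this holds for quantifier-free $\cL$-formulas, which are in particular quantifier-free $\cL(M)$-formulas.

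Next, take a condition $\sup_x\inf_y\varphi(x,y)\leq 0$ in $\Th^\cont_{\forall\exists}(M)$, with $\varphi\in\cL^\qf_{xy}$. Since $M\preceq^\cont M'$, this condition also holds in $M'$. Given any $a\in N^x$, we have $a\in M'^x$, so $(\inf_y\varphi(a,y))^{M'}\leq 0$. By existential closedness of $N$ in $M'$, $(\inf_y\varphi(a,y))^N=(\inf_y\varphi(a,y))^{M'}\leq 0$. Taking the supremum over $a\in N^x$ gives $N\models\sup_x\inf_y\varphi(x,y)\leq 0$, which is what we want.

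The only point needing care is the first step: checking that $M'$, which comes with its $\cL(M)$-structure from the elementary embedding, is a genuine $\cL(M)$-extension of $N$ and is a model of $D^\cont_\forall(M)$. This is what lets us apply the hypothesis that $N$ is existentially closed among models of $D^\cont_\forall(M)$. It follows directly from the compatibility of the inclusions provided by \autoref{rmk:ec-vs-universal-diagram}. The remainder is a routine unwinding of the definitions.
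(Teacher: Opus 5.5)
Your proof is correct and follows the paper's argument. You obtain $M'$ with $N\subseteq M'$ and $M\preceq^\cont M'$, use the hypothesis to get $N\preceq^\ec M'$, and transfer the $\forall\exists$-conditions of $M$ (which hold in $M'$) down to $N$. The only difference is that you unwind this last transfer step by hand, whereas the paper cites \autoref{lem:ec-vs-models-AE}.
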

\begin{proof}
Since $D^\cont_\forall(M) = \big(D^\cont(M)\big)_\forall$, there is an extension $N\subseteq M'$ such that $M\preceq^\cont M'$.
If $N\models_\ec D^\cont_\forall(M)$, then $N\preceq^\ec M'$, which implies $N\models \Th^\cont_{\forall\exists}(M')$ by \autoref{lem:ec-vs-models-AE}.
Hence also $N\models \Th^\cont_{\forall\exists}(M)$, as desired.
\end{proof}

\begin{lem}
\label{lem:qf-ext-embeds-into-summand}
Let $T$ be a universal affine theory and $M\models T$ be a qf-extremal model.
If $M$ embeds into a convex combination of models of $T$, say $M\subseteq \bigoplus_{i<k}\lambda_i N_i$ with $\lambda_i>0$ for each $i<k$, then $M$ embeds into every $N_i$.
\end{lem}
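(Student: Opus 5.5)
Since the $N_i$ are models of the universal theory $T$ and $M$ is generated by some tuple, it suffices to show that the projection $M\to N_i$ onto the $i$-th coordinate is an isometric $\cL$-embedding. Equivalently, for a tuple $a$ enumerating $M$ (or any finite subtuple), with $a_i$ denoting its $i$-th coordinate in $N_i$, we want $\tp^\qf(a_i)=\tp^\qf(a)$ for every $i<k$. Once this holds, the map $a\mapsto a_i$ preserves all atomic formulas, including the distance, so it extends to an embedding $M\to N_i$ on the closure of the generated substructure, which is all of $M$.

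The key step is a convexity argument in the quantifier-free type space. By the defining property of direct convex combinations, for every $\varphi\in\cL^{\qf,\aff}_x$ we have $\varphi(a)=\sum_{i<k}\lambda_i\varphi^{N_i}(a_i)$. Hence
\begin{equation*}
\tp^\qf(a)=\sum_{i<k}\lambda_i\tp^\qf(a_i)
\end{equation*}
as states on $\cL^{\qf,\aff}_x(T)$, i.e.\ in $\tS^\qf_x(T)$. Each $\tp^\qf(a_i)$ lies in $\tS^\qf_x(T)$ because $N_i\models T$. Since $M$ is qf-extremal, $\tp^\qf(a)\in\cE^\qf_x(T)$. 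With all $\lambda_i>0$, extremality then forces $\tp^\qf(a_i)=\tp^\qf(a)$ for each $i$. To see this, group the terms as $\lambda_i\tp^\qf(a_i)+(1-\lambda_i)q$, where $q$ is the normalized remaining combination; the case $k=1$ is trivial.

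It remains to apply this to arbitrary finite tuples, and to the whole generating tuple if one prefers. The equality $\tp^\qf(a_i)=\tp^\qf(a)$ for every finite tuple $a$ from $M$ shows that $\iota_i\colon b\mapsto b_i$ preserves $d$ and all predicates and commutes with function symbols. For function symbols $F$, we have $F^K(b)=(F^{N_j}(b_j))_j$, so $F^M(b)_i=F^{N_i}(b_i)$ directly from the construction, with no appeal to types needed. Thus $\iota_i$ is an $\cL$-embedding $M\to N_i$. I do not expect a real obstacle here. The only point requiring care is the formal identification of the quantifier-free type of $a$ in $K$ with the convex combination of the types of its coordinates, which follows from the formula $\varphi^K(a)=\sum\lambda_i\varphi^{M_i}(a_i)$, here applied to the quantifier-free affine formulas.
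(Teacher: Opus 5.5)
Your proposal is correct and is essentially the paper's proof: write an enumeration $a$ of $M$ as $\bigoplus_{i<k}\lambda_i b_i$, observe that its quantifier-free type is $\sum_{i<k}\lambda_i$ times the quantifier-free types of the $b_i$, and use extremality (with all $\lambda_i>0$) to get that each $b_i$ has the same quantifier-free type as $a$, so the coordinate maps $a\mapsto b_i$ are embeddings. Your extra remarks, on isolating one summand to invoke extremality and on function symbols acting coordinatewise, just fill in details the paper leaves implicit.
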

\begin{proof}
Let $a\in M^x$ be an enumeration of $M$, and let $b_i\in N_i^x$ be tuples such that $a = \bigoplus_{i<k}\lambda_i b_i$.
Since $\tp^\qf(a)\in \cE^\qf_x(T)$, we must have $\tp^\qf(a) = \tp^\qf(b_i)$ for each $i<k$.
Therefore, the maps $a\mapsto b_i$ define embeddings of $M$ into each model $N_i$.
\end{proof}

\begin{lem}
\label{lem:aec-ec-convex-preservation}
For $i<k$, let $M_i, N_i$ be $\cL$-structures with $M_i\subseteq N_i$, and let $\lambda_i> 0$ be scalars with $\sum_{i<k}\lambda_i = 1$.
The following hold:
\begin{enumerate}
\item\label{i:aec-convex-bipreservation}
$\bigoplus_{i<k}\lambda_i M_i \preceq^\aec \bigoplus_{i<k}\lambda_i N_i$ if and only if $M_i\preceq^\aec N_i$ for every $i<k$.
\item\label{i:ec-convex-preservation}
If $M_i\preceq^\ec N_i$ for every $i<k$, then $\bigoplus_{i<k}\lambda_i M_i \preceq^\ec \bigoplus_{i<k}\lambda_i N_i$.
\end{enumerate}
\end{lem}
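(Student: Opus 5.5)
We argue directly from the defining property of direct convex combinations: for every affine formula $\varphi(x)$ and every tuple $a = (a_i)_{i<k}$ with $a_i\in M_i^x$,
\begin{equation*}
\varphi^{\bigoplus_i\lambda_i M_i}(a) = \sum_{i<k}\lambda_i\varphi^{M_i}(a_i).
\end{equation*}
The same holds for the $N_i$. By restricting to quantifier-free affine formulas, this also shows that $\bigoplus_i\lambda_i M_i$ is a substructure of $\bigoplus_i\lambda_i N_i$.

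For \autoref{i:aec-convex-bipreservation}, fix $\varphi\in\cL^{\qf,\aff}_{xy}$ and a tuple $a=(a_i)_i$ from $\bigoplus_i\lambda_i M_i$. Since tuples $y$ in the convex combination range independently over the coordinates, $\inf_y$ distributes over the sum. Hence
\begin{equation*}
\Big(\inf_y\varphi(a,y)\Big)^{\bigoplus\lambda_i M_i} = \sum_{i<k}\lambda_i\Big(\inf_y\varphi(a_i,y)\Big)^{M_i},
\end{equation*}
and likewise in $\bigoplus_i\lambda_iN_i$. Note that this is just the case of an existential affine formula in the affine Łoś-type identity for direct convex combinations.

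For the direction ``if'', each summand is equal term by term, so the sums agree. For the converse, we always have the inequality $\inf^{M_i}\geq\inf^{N_i}$ in each coordinate. Since all $\lambda_i>0$, equality of the weighted sums forces equality in every coordinate. Every tuple $a_i\in M_i^x$ arises as a coordinate of some tuple of the combination (pad the other coordinates with arbitrary elements), so $M_i\preceq^\aec N_i$ for each $i$.

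For \autoref{i:ec-convex-preservation}, full quantifier-free formulas do not distribute, so we use the characterization in \autoref{rmk:ec-vs-universal-diagram}. For each $i$, choose $M'_i\supseteq N_i$ with $M_i\preceq^\cont M'_i$. Then $\bigoplus_i\lambda_i N_i\subseteq\bigoplus_i\lambda_i M'_i$, and it suffices to show that
\begin{equation*}
\bigoplus_i\lambda_i M_i\preceq^\cont\bigoplus_i\lambda_i M'_i,
\end{equation*}
since this yields the existential closedness via the same remark.

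This elementary-preservation step is the main obstacle. Direct convex combinations are not obviously compatible with full continuous formulas, so we would rely on the result of \cite{BITaffine} that direct integrals, and in particular direct convex combinations, preserve elementary embeddings; this is the same kind of fact as \cite[Thm.~18.3]{BITaffine}, used above for direct multiples.

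If that citation is unavailable, the fallback is more hands-on. We would take ultrapowers and use an elementary chain (Keisler–Shelah style) to reduce to the case where each $M'_i$ is an ultrapower of $M_i$. Then $\bigoplus_i\lambda_i M'_i$ is the corresponding ultrapower of $\bigoplus_i\lambda_i M_i$, because ultraproducts commute with finite direct convex combinations. Diagonal embeddings into ultrapowers are elementary, which gives the claim.
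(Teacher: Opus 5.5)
Your proposal is correct and essentially matches the paper's proof. For (i), the paper uses the same facts: for $\psi$ quantifier-free affine, $\inf_y\psi$ evaluated in $\bigoplus_i\lambda_iM_i$ (or $\bigoplus_i\lambda_iN_i$) is the $\lambda$-weighted sum of its values in the summands, and each value in $N_i$ is at most the value in $M_i$ while $\lambda_i>0$. For (ii), it chooses $N_i\subseteq N_i'$ with $M_i\preceq^\cont N_i'$ via \autoref{rmk:ec-vs-universal-diagram}, cites \cite[Lemma~18.1]{BITaffine} for $\bigoplus_i\lambda_iM_i\preceq^\cont\bigoplus_i\lambda_iN_i'$, and passes existential closedness down to the intermediate structure $\bigoplus_i\lambda_iN_i$, so your ultrapower fallback is not needed.
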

\begin{proof}
Let $M=\bigoplus_{i<k}\lambda_i M_i$ and $N=\bigoplus_{i<k}\lambda_i N_i$.

For \autoref{i:aec-convex-bipreservation}, it suffices to recall that for every formula of the form $\varphi(x)=\inf_y\psi(x,y)$ with $\psi\in\cL^{\qf,\aff}_{xy}$, and any tuples $a_i\in M^x$, we have $\varphi^M\big(\bigoplus_{i<k}\lambda_i a_i\big) = \sum_{i<k}\lambda_i\varphi^{M_i}(a_i)$, $\varphi^{N_i}(a_i)\leq \varphi^{M_i}(a_i)$ for each $i<k$, and $\varphi^N\big(\bigoplus_{i<k}\lambda_i a_i\big) = \sum_{i<k}\lambda_i\varphi^{N_i}(a_i)$.

For \autoref{i:ec-convex-preservation}, since $M_i\preceq^\ec N_i$, there are extensions $N_i\subseteq N_i'$ with $M_i\preceq^\cont N_i'$.
By \cite[Lemma~18.1]{BITaffine}, we have $M \preceq^\cont \bigoplus_{i<k}\lambda_i N_i'$.
Therefore, $M$ is existentially closed in the intermediate extension $N$.
\end{proof}

\begin{dfn}
Let $T$ be a qf-simplicial theory.
We will say $T$ is \emph{qf-Bauer} if for every finite tuple of variables $x$, $\tS^\qf_x(T)$  is a Bauer simplex.
We will say $T$ is \emph{qf-Poulsen} if for every finite tuple of variables $x$, $\cE^\qf_x(T)$ is dense in $\tS^\qf_x(T)$.

In either case, the condition holds as well for arbitrary tuples $x$, using \autoref{lem:inverse-limit-ext} in the Bauer case, and \cite[Lemma~1.31]{BITaffine} in the Poulsen case.
\end{dfn}

The following is our main model-theoretic result.
The proof is a more involved version of that of \cite[Thm.~20.8]{BITaffine}.

\begin{thm}
\label{thm:abstract-dichotomy}
Let $T$ be a qf-simplicial, face-preserving, irreducible Robinson theory satisfying the following property:
\begin{enumerate}
\item[(D)] For every qf-extremal model $M\models_\aec T$, every model of $\Th^\cont_{\forall\exists}(M)$ is decomposable, as a model of $T$, in the sense of \autoref{dfn:decomposable-models}.
\end{enumerate}
Then $T$ is Bauer or Poulsen, in the quantifier-free sense.
\end{thm}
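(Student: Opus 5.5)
Fix a qf-extremal model $M\models_\aec T$; it exists by \autoref{prop:Existence-aec-qf-ext}, applied to any qf-extremal model given by \autoref{cor:qf-extremal-realization}. By \autoref{cor:qf-aec-common-theory}, the theory $\Th^\cont_{\forall\exists}(M)$ does not depend on the choice of $M$. The proof splits into two cases, according to whether $M$ has the qf-convex realization property. By \autoref{rmk:qf-crp-forall-exists} this property is expressed by $\forall\exists$-conditions, so it is a property of $\Th^\cont_{\forall\exists}(M)$, and the case split is canonical.

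\emph{Case 1: $M$ has the qf-convex realization property.} Then \autoref{lem:extremal-model-with-CRP} applies directly: $\cE^\qf_x(T)$ is dense in $\tS^\qf_x(T)$ for every $x$, so $T$ is qf-Poulsen.

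\emph{Case 2: $M$ fails the qf-convex realization property.} We aim to show that $\cE^\qf_x(T)$ is closed for every finite $x$, which together with qf-simpliciality gives qf-Bauer. Suppose instead that some $p\in\cl{\cE^\qf_x(T)}\setminus\cE^\qf_x(T)$ exists. By \autoref{lem:realizing-p-in-cl-Ex(T)}, $p$ is realized by a tuple $a$ in some $M\preceq^\cont N$. Now take an extension $N\subseteq N'$ with $N'\models_\ec D^\cont_\forall(N)$, using \autoref{lem:embedding-into-ec-model} for the universal theory $D^\cont_\forall(N)$. By \autoref{lem:models-ec-Dforall-implies-ThAE}, $N'\models\Th^\cont_{\forall\exists}(N)=\Th^\cont_{\forall\exists}(M)$. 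Hypothesis (D) then makes $N'$ decomposable, and it is also a.e.c., being e.c.\ in $\Th_{\forall\exists}$ of an a.e.c.\ model. Since $\tp^\qf(a)=p$ is not extreme, the decomposition of $N'$ is nontrivial: the pushforward of the measure under $\omega\mapsto\tp^\qf(a(\omega))$ is the boundary measure of $p$ (\autoref{lem:boundary-meas-qf-extremal-field}), and that measure is not a Dirac mass. Hence $w(N')<1$.

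The goal is now a contradiction via atomic weight, modelled on the proof of \cite[Thm.~20.8]{BITaffine}. Choose $N'$ (or rather, choose among all such models, all of which satisfy $\Th^\cont_{\forall\exists}(M)$ and are decomposable by (D)) a model $K$ whose weight $\lambda=w(K)$ is close to the infimum $w_0$ of the weights of such models. There are two sub-cases.

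If $w_0=0$, we show that $M$ has the qf-convex realization property. The models with tiny atomic weight are close to atomless direct integrals, which have the property by \autoref{lem:non-atomic-has-qfCR}. Since the property is $\forall\exists$ and $M\preceq^\ec$ such models, an approximation argument transfers it to $M$, which is a contradiction.

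If $w_0>0$, decompose $K\cong\lambda K_0\oplus\dots\oplus(1-k\lambda)K'$ by \autoref{lem:atoms-extremal-decomposition}. Then $M$ embeds, as an a.e.c.\ qf-extremal model, into each qf-extremal summand by \autoref{lem:qf-ext-embeds-into-summand}. Replacing each summand by an a.e.c.\ qf-extremal extension and using \autoref{prop:Mprec1N-qfextremal} together with \autoref{lem:aec-ec-convex-preservation}, one builds an e.c.\ extension of $M$ that is a convex combination with strictly smaller atomic weight. This would contradict minimality, with weights only decreasing along extensions by \autoref{cor:weight-decreases}. More precisely, the plan is to show that $M$ is e.c.\ in $\tfrac12 M\oplus\tfrac12 M$, which yields the qf-convex realization property via \autoref{lem:qfCR-vs-aec-implies-ec}.

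The main obstacle is this weight-minimization step. One has to show that the existence of a non-extreme limit type forces a model of $\Th^\cont_{\forall\exists}(M)$ that splits as a genuine convex combination, and then to propagate the splitting until the weight drops below any bound. I would first try to reproduce the argument of \cite[Thm.~20.8]{BITaffine} verbatim, replacing elementary extensions by e.c.\ extensions and the extremal decomposition by (D).
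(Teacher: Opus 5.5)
\textbf{There is a genuine gap: your proposal has no step that actually lowers the atomic weight.}

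Your outline has the right shape: push the atomic weight of a decomposable extension of $M$ down to $0$, then invoke \autoref{lem:non-atomic-has-qfCR} and pull the $\forall\exists$ convex realization property back to $M$. But replacing the atomic summands $K_i$ of $K\cong\lambda K_0\oplus\dots\oplus(1-k\lambda)K'$ by a.e.c.\ qf-extremal extensions keeps them qf-extremal, so the weight stays $\lambda$. Nothing in your sketch yields $M\preceq^\ec\tfrac12M\oplus\tfrac12M$ either. What splits an atom is realizing the non-extreme type $p$ \emph{inside each atomic summand}. If $K_i\models_\aec T$, then \autoref{lem:realizing-p-in-cl-Ex(T)} gives $K_i\preceq^\cont\tilde K_i$ realizing $p$. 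Applying (D) to the qf-extremal a.e.c.\ model $K_i$ makes $\tilde K_i$ decomposable with $w(\tilde K_i)<1$. Thus $\lambda\tilde K_0\oplus\dots\oplus\lambda\tilde K_{k-1}\oplus(1-k\lambda)K'$ is an elementary extension of $K$ of weight $<\lambda$. Realizing $p$ once over $M$, as in your Case 2, only handles the first step.

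The crux is then showing $K_i\models_\aec T$, and for this your class (arbitrary models of $\Th^\cont_{\forall\exists}(M)$) is the wrong one. The paper works with \emph{full} extensions $M\subseteq K$, meaning $K\models_\ec D^\cont_\forall(M)$. These are models of $\Th^\cont_{\forall\exists}(M)$, so (D) applies, and they also satisfy $M\preceq^\ec K$. The argument for $K_i\models_\aec T$ runs as follows:
\begin{enumerate}
\item Given a qf-extremal $K_i\subseteq K_i'$, you get $M\preceq^\ec K_i'$ from $M\models_\aec T$ and \autoref{prop:Mprec1N-qfextremal}.
\item Let $K''$ be $K$ with $K_i$ replaced by $K_i'$. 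Then $M\preceq^\ec K''$, by \autoref{lem:aec-ec-convex-preservation}\autoref{i:ec-convex-preservation} and $M\preceq^\ec K$.
\item Fullness gives $K\preceq^\ec K''$.
\item \autoref{lem:aec-ec-convex-preservation}\autoref{i:aec-convex-bipreservation} together with \autoref{lem:charact-qf-ext-models} yields $K_i\models_\aec T$.
\end{enumerate}
For a mere model of $\Th^\cont_{\forall\exists}(M)$ you have neither this argument nor $M\preceq^\ec K$. So your $w_0=0$ transfer is unfounded. So is your claim that $N'$ is a.e.c.: without a model completion, models of $\Th^\cont_{\forall\exists}(M)$ need not be a.e.c., which is why (D) is phrased as it is.

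Taking $K$ near an infimum and decreasing strictly also gives no contradiction, since the drop can be arbitrarily small. The paper instead re-extends to a full extension, where weights only drop by \autoref{cor:weight-decreases}. It then runs an $\omega_1$-chain of full extensions, closed under unions by \autoref{lem:ec-chains}, to reach weight exactly $0$.

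Finally, \autoref{cor:qf-aec-common-theory} concerns $\Th^\cont_\forall$, not $\Th^\cont_{\forall\exists}$, so it does not make your case split canonical.
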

\begin{proof}
For the purposes of this proof, we will say that an extension of structures $M\subseteq N$ is \emph{full}, and write $M\preceq^{\ec*} N$, if $N\models_\ec D^\cont_\forall(M)$.
Clearly, if $M\preceq^{\ec*} N$ then $M\preceq^\ec N$ and also, by \autoref{lem:models-ec-Dforall-implies-ThAE}, $N\models \Th^\cont_{\forall\exists}(M)$.
Moreover, whenever $M\preceq^\ec N$, we may find an extension $N\subseteq N'$ with $M\preceq^{\ec*} N'$.

Suppose $T$ is not qf-Bauer, and fix some type $p_0\in \cl{\cE^\qf_x(T)} \setminus \cE^\qf_x(T)$.
Let us also fix a qf-extremal model $M\models_\aec T$ (existence follows from \autoref{cor:qf-extremal-realization} and \autoref{prop:Existence-aec-qf-ext}).
By our hypothesis (D), every full extension $M\preceq^{\ec*} N$ is decomposable.

\begin{claim*}
For every full extension $M\preceq^{\ec*}N$, if $w(N)>0$, then there is a larger full extension $M\preceq^{\ec*} N'$, $N\subseteq N'$ (hence, $N\preceq^\ec N'$) with $w(N')<w(N)$.
\end{claim*}
\begin{proof}
Let $\lambda = w(N)>0$, and apply \autoref{lem:atoms-extremal-decomposition} to obtain, up to isomorphism:
\begin{equation*}
N = \lambda M_0\oplus \lambda M_1 \oplus \ldots \oplus \lambda M_{k-1} \oplus (1-k\lambda) K,
\end{equation*}
where each $M_i\models T$ is qf-extremal, $K$ is decomposable, and $(1-k\lambda)w(K) < \lambda$.
By \autoref{lem:qf-ext-embeds-into-summand}, $M$ embeds into each model $M_i$ (and into $K$, if $k\lambda<1$), and we may in fact assume that $M\subseteq M_i$.
These inclusions are compatible with $M\subseteq N$, in the sense that if we let 
\begin{equation*}
\tilde{M} = \lambda M \oplus \ldots \oplus \lambda M \oplus (1-k\lambda) K,
\end{equation*}
then the natural inclusions $M\subseteq \tilde{M} \subseteq N$ compose as expected.

Next we argue that $M_i\models_\aec T$ for every $i<k$.
Fix $i$, and suppose we have $M_i\subseteq M_i'$ with $M_i'$ qf-extremal.
Since $M\models_\aec T$, we must have $M\preceq^\aec M_i'$, and by \autoref{prop:Mprec1N-qfextremal}, $M\preceq^\ec M_i'$.
Let $M_j'=M_j$ for each $j\neq i$, and consider the model:
\begin{equation*}
M' = \lambda M_0' \oplus \ldots \oplus \lambda M_{k-1}' \oplus (1-k\lambda) K.
\end{equation*}
We have the sequence of inclusions:
\begin{equation*}
M\subseteq \tilde{M} \subseteq N \subseteq M'.
\end{equation*}
By \autoref{lem:aec-ec-convex-preservation}\autoref{i:ec-convex-preservation}, $\tilde{M}\preceq^\ec M'$.
Since $M\preceq^\ec N$, it follows that $M\preceq^\ec M'$.
Moreover, since the extension $M\subseteq N$ is full, we have $N\preceq^\ec M'$.
But then, by \autoref{lem:aec-ec-convex-preservation}\autoref{i:aec-convex-bipreservation}, we must have $M_i\preceq^\aec M_i'$.
Using \autoref{lem:charact-qf-ext-models}, we conclude that $M_i\models_\aec T$, as desired.

Now, by \autoref{lem:realizing-p-in-cl-Ex(T)}, each model $M_i$ admits an elementary extension $M_i\preceq^\cont \tilde{M}_i$ realizing the non-extreme type $p_0$.
By our hypothesis (D) and since $M_i\models_\aec T$, the models $\tilde{M}_i$ are decomposable.
As they are non-extremal, we have $w(\tilde{M}_i)<1$.
Letting
\begin{equation*}
\tilde{N} = \lambda \tilde{M}_0 \oplus \ldots \oplus \lambda \tilde{M}_{k-1} \oplus (1-k\lambda) K,
\end{equation*}
we obtain an extension $N\subseteq \tilde{N}$ with $w(\tilde{N})<w(N)$.
Moreover, by \autoref{lem:aec-ec-convex-preservation}\autoref{i:ec-convex-preservation}, we have $N\preceq^\ec \tilde{N}$ (in fact, $N\preceq^\cont \tilde{N}$), and we may therefore find an extension $\tilde{N}\subseteq N'$ such that $M\preceq^{\ec*} N'$.
It follows that $N\subseteq N'$ and, by \autoref{cor:weight-decreases}, that $w(N')\leq w(\tilde{N}) < w(N)$.
This proves the Claim.
\renewcommand{\qedsymbol}{$\blacksquare$}
\end{proof}

We deduce from the Claim that there is a full extension $M\preceq^{\ec*} N$ with $w(N)=0$. Otherwise, using \autoref{lem:ec-chains} and \autoref{cor:weight-decreases}, we could build a transfinite chain $(N_\alpha)_{\alpha<\omega_1}$ of full extensions of $M$ with $0 < w(N_\beta) < w(N_\alpha)$ for every $\alpha<\beta<\omega_1$, which is impossible.
Finally, since $w(N)= 0$ and recalling \autoref{lem:non-atomic-has-qfCR}, $N$ has the convex realization property.
It follows from the fact that $M$ is existentially closed in $N$, and by \autoref{lem:ec-vs-models-AE}, that $M$ has the convex realization property as well.
Therefore, by \autoref{lem:extremal-model-with-CRP}, $T$ is qf-Poulsen, as desired.
\end{proof}

\begin{rmk}
It can be seen a posteriori (i.e., using the dichotomy) that, under the hypotheses of \autoref{thm:abstract-dichotomy}, every two qf-extremal models $M,N\models_\aec T$ satisfy $\Th^\cont_{\forall\exists}(M) = \Th^\cont_{\forall\exists}(N)$.
\end{rmk}

\begin{rmk}
The hypothesis (D) of the statement of the theorem can be replaced by the following in principle weaker (but more technical) condition: for every qf-extremal model $M\models_\aec T$, every full extension $M\preceq^{\ec*} N$ is decomposable.
\end{rmk}

The following particular case of \autoref{thm:abstract-dichotomy} is equivalent to \cite[Thm.~20.8]{BITaffine}, via the standard coding of first-order logic (affine in this case) into the framework of Robinson theories.

\begin{cor}
\label{cor:dichotomy-model-completion}
Let $T$ be a qf-simplicial, face-preserving, irreducible Robinson theory.
Suppose $T$ admits an affine model completion.
Then $T$ is Bauer or Poulsen, in the quantifier-free sense.
\end{cor}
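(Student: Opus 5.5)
The plan is to verify hypothesis (D) of \autoref{thm:abstract-dichotomy} and then apply that theorem. Let $T^*$ be the affine model completion. By \autoref{lem:model-completion-vs-ec}, $T^*$ is the common affine theory of the a.e.c.\ models of $T$. Its models are exactly the a.e.c.\ models of $T$, since this class is affinely axiomatizable. The same lemma also says $T^*$ is inductive.

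First, fix a qf-extremal model $M\models_\aec T$ and let $N\models\Th^\cont_{\forall\exists}(M)$. Since $M\models T^*$ and $T^*$ is inductive, it has an affine $\forall\exists$-axiomatization $T^* = T^*_{\forall\exists^\aff}$. Every such axiom is a $\forall\exists$-condition of continuous logic that holds in $M$, so it belongs to $\Th^\cont_{\forall\exists}(M)$. Hence $N\models T^*$.

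Second, $N$ is then an a.e.c.\ model of $T$, because the a.e.c.\ models of $T$ are precisely the models of $T^*$. Now \autoref{thm:decomposition-for-aec-models} applies and shows that $N$ is decomposable; this uses only that $T$ is qf-simplicial, face-preserving and Robinson. So (D) holds, and \autoref{thm:abstract-dichotomy} yields that $T$ is qf-Bauer or qf-Poulsen.

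The main point to check is the first step, namely that $\forall\exists$-affine axioms are among the $\forall\exists$-continuous consequences of $\Th^\cont(M)$. This is immediate: an affine quantifier-free formula is in particular a continuous quantifier-free formula, so $\Th^\aff_{\forall\exists}(M)\subseteq\Th^\cont_{\forall\exists}(M)$. One also has to make sure that the models of $T^*$ are exactly the a.e.c.\ models of $T$, and not merely that $T^*$ is their common theory. This is the content of condition (ii) in \autoref{lem:model-completion-vs-ec}: axiomatizability means the a.e.c.\ class equals $\mathrm{Mod}(T^*)$. No further obstacle is expected.
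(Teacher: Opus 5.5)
Your proposal is correct and is essentially the paper's own proof: the paper likewise observes that when the affine model completion exists, every model of $\Th^\aff_{\forall\exists}(M)$ is affinely existentially closed, so hypothesis (D) follows from the decomposition theorem for a.e.c.\ models and the abstract dichotomy applies. Your remarks on the inclusion $\Th^\aff_{\forall\exists}(M)\subseteq\Th^\cont_{\forall\exists}(M)$ and on the inductivity of the model completion spell out details the paper leaves implicit.
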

\begin{proof}
If $T$ has an affine model completion, then for any $M\models_\aec T$, every model of $\Th^\aff_{\forall\exists}(M)$ is affinely existentially closed.
Thus the hypothesis (D) of \autoref{thm:abstract-dichotomy} follows from \autoref{thm:decomposition-for-aec-models}.
\end{proof}

\section{Particular case: the Glasner--Weiss dichotomy}
\label{sec:Glasner-Weiss}

The Glasner--Weiss dichotomy for the simplices $\cM_\inv(2^G)$ can also be derived easily from \autoref{thm:abstract-dichotomy}.
We spell out the details here, as they provide a preview of the arguments in the following sections.

We recall that probability algebras can be axiomatized in affine logic in the language $\cL_\PrA=\{\mu,\cup,\cap,\neg,\bZero,\bOne\}$; see \cite[\textsection25.1]{BITaffine} for details and basic properties.
Given a group $G$, we may consider the expanded language $\cL_G = \cL_\PrA \cup \{g : g\in G\}$, where we see each $g\in G$ as a new unary function symbol.
The theory $\PMP_G$ is the universal affine $\cL_G$-theory containing the axioms of probability algebras together with axioms stating that the interpretations of the new function symbols define an action of $G$ by automorphisms of the $\cL_\PrA$-reduct (i.e., the underlying probability algebra).
See \cite[\textsection28]{BITaffine}.

The theory $\PMP_G$ is a Robinson theory.
Indeed, probability algebras can be canonically amalgamated (independently over the base), and this construction extends to the amalgamation of arbitrary probability measure-preserving $G$-systems (\emph{relatively independent joinings}).
Since all models share the trivial substructure $\{\bZero,\bOne\}$, $\PMP_G$ is moreover irreducible.

Given a set $I$ and a tuple of variables $x = (x_i:i\in I)$, we denote by $2^x$ the compact product space $2^I$.
We let $\proj_{x,y}\colon 2^{xy}\to 2^x$ denote the canonical projection.

\begin{prop}
\label{prop:PMPG-qf}
Let $G$ be a group.
For every tuple of variables $x$ (of arbitrary length) we have a canonical affine homeomorphism:
\begin{equation*}
\Phi^G_x\colon \tS_x^\qf(\PMP_G) \to \cM_\inv\big((2^x)^G\big).
\end{equation*}
In particular, the extreme types correspond to the ergodic measures.

Moreover, the maps $\Phi^G_x$ commute with variable restriction, in the following sense:
\begin{equation*}
\Phi^G_x\circ \pi^\qf_{x,y} = (\proj_{x,y})_*\circ \Phi^G_{xy}.
\end{equation*}
\end{prop}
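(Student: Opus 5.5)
The map $\Phi^G_x$ will be defined directly from realizations and then shown to be a bijective continuous affine map between compact sets. Given an $x$-tuple $a = (a_i : i\in I)$ in a model $M\models\PMP_G$, i.e.\ elements of a probability algebra with a $G$-action, we set $\Phi^G_x(\tp^\qf(a))$ to be the measure $\nu_a$ on $(2^x)^G = 2^{I\times G}$ determined on basic cylinders by
\begin{equation*}
\nu_a\big(\{\xi : \xi_{i_l,g_l} = z_l\ \forall l<n\}\big) = \mu\Big(\bigcap_{l<n} (g_l^{-1} a_{i_l})^{z_l}\Big),
\end{equation*}
with the convention $b^1=b$, $b^0=\neg b$. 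The inverse is chosen to match the $G$-action on $(2^x)^G$ by shifts, and the sign conventions are fixed once and for all. The finitely additive consistency of these values is Boolean algebra, so Kolmogorov extension on the compact space gives a regular probability measure. $G$-invariance follows because each $g$ acts by an automorphism of the algebra. Every atomic $\cL_G$-formula in $x$ has the form $\mu(t(a))$ with $t$ a Boolean term in the $g a_i$. Such a value is a finite sum of cylinder measures of $\nu_a$, and conversely each cylinder measure is such a value. Hence $\nu_a$ depends only on $\tp^\qf(a)$, the map is injective, and it is continuous and affine: a quantifier-free affine formula corresponds to integrating a continuous (clopen-generated) function against $\nu_a$, and affine combinations of types are realized by direct convex combinations, where $\mu$ is additive.

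Surjectivity comes from realizing each invariant measure. Given $\nu\in\cM_\inv((2^x)^G)$, take $M=\MALG((2^x)^G,\nu)$ with the induced $G$-action, which is a model of $\PMP_G$ (possibly after taking the closed substructure generated, which is harmless). Let $a_i$ be the class of the clopen set $\{\xi:\xi_{i,1_G}=1\}$. Then $g^{-1}a_i$ is the clopen $\{\xi_{i,g}=1\}$, up to the chosen convention, so $\nu_a=\nu$. A continuous affine bijection between compact Hausdorff convex sets is an affine homeomorphism. Extreme points correspond under affine homeomorphisms, so by \autoref{prop:cMinv(X)-properties}\autoref{i:cMinv(X)-properties:ergodic} extreme types correspond to ergodic measures.

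The compatibility $\Phi^G_x\circ\pi^\qf_{x,y}=(\proj_{x,y})_*\circ\Phi^G_{xy}$ is checked on cylinders. A cylinder of $(2^x)^G$ pulls back under $\proj_{x,y}$ to a cylinder of $(2^{xy})^G$ involving only $x$-coordinates, and both sides then evaluate the same $\mu$-value of a term in the $a_i$. Since cylinders generate, the measures agree.

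I expect the main care to be bookkeeping rather than any real obstacle. The points to watch are: the left versus right action convention, so that $G$-invariance and the identification of atomic formulas match; the fact that the language uses $\mu$ and Boolean operations, so that atomic formulas are exactly finite sums of cylinder measures; and arbitrary (infinite) $I$, which is handled because cylinders depend on finitely many coordinates and the type space is the inverse limit.
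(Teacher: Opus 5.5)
Your proposal is correct and is essentially the construction the paper relies on: the paper just cites \cite[Prop.~28.2(i)]{BITaffine}, where the measure is obtained by realizing the $G$-system on a probability space (e.g.\ its Stone space) and pushing forward along the equivariant coding map $\omega\mapsto(\chi_{g a_i}(\omega))_{i,g}$, which gives exactly your cylinder values, and surjectivity uses the same canonical tuple in $\MALG\big((2^x)^G,\nu\big)$. Defining the measure directly on cylinders and extending it avoids the point realization. The only real care is the convention you already flag: with the standard shift $(k\xi)_{i,g}=\xi_{i,k^{-1}g}$ the cylinder formula must use $g a_i$ rather than $g^{-1}a_i$, whereas your $g^{-1}a_i$ version requires the action $(k\xi)_{i,g}=\xi_{i,gk}$.
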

\begin{proof}
The first part is precisely \cite[Prop.~28.2(i)]{BITaffine} (the countability assumption on the group is not needed).
The moreover part follows easily from the construction in the proof.
\end{proof}

\begin{cor}
\label{cor:PMPG-basics}
For every group $G$, the theory $\PMP_G$ is a qf-simplicial, face-preserving, irreducible Robinson theory.
\end{cor}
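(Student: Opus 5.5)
We verify the four properties of $\PMP_G$ separately; the real work is in transferring, via \autoref{prop:PMPG-qf}, what we already know about simplices of invariant measures.

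\emph{Robinson and irreducible.} $\PMP_G$ is universal by construction: its axioms are those of probability algebras, which are universal affine conditions, together with universal conditions saying that each $g$ acts as an automorphism and that the action is a group action. That it is a Robinson theory and irreducible was explained just before \autoref{prop:PMPG-qf}. Given embeddings $A\subseteq B_0$ and $A\subseteq B_1$ of models, we take the relatively independent joining $B_0\otimes_A B_1$. This is a $G$-invariant probability algebra containing both $B_0$ and $B_1$ over $A$, so it witnesses amalgamation. Joint embedding follows from amalgamation over the common substructure $\{\bZero,\bOne\}$. Alternatively, it follows from \autoref{lem:irreducible-theories}, since the product joining witnesses joint embedding directly.

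\emph{qf-simplicial.} For a finite tuple $x$, \autoref{prop:PMPG-qf} identifies $\tS^\qf_x(\PMP_G)$ affinely and homeomorphically with $\cM_\inv\big((2^x)^G\big)$. Here $G$ acts by homeomorphisms (shift) on the compact space $(2^x)^G$. This simplex is non-empty, since it contains Dirac measures at constant configurations, and it is a Choquet simplex by \autoref{prop:cMinv(X)-properties}\autoref{i:cMinv(X)-properties:simplex}. No countability of $G$ or metrizability is needed for this.

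\emph{Face-preserving.} Take finite tuples $x,y$. By the commutation relation $\Phi^G_x\circ \pi^\qf_{x,y} = (\proj_{x,y})_*\circ \Phi^G_{xy}$ in \autoref{prop:PMPG-qf}, the map $\pi^\qf_{x,y}$ is conjugate, through the affine homeomorphisms $\Phi^G_x$ and $\Phi^G_{xy}$, to the restriction of the pushforward $(\proj_{x,y})_*$ to invariant measures. The projection $\proj_{x,y}\colon (2^{xy})^G\to(2^x)^G$ is continuous and $G$-equivariant. Hence \autoref{prop:cMinv(X)-properties}\autoref{i:cMinv(X)-properties:face-pres} shows that this pushforward is face-preserving. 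Affine homeomorphisms carry faces to faces, so $\pi^\qf_{x,y}$ is face-preserving.

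The only delicate point is that \autoref{prop:cMinv(X)-properties}\autoref{i:cMinv(X)-properties:face-pres} must be used in full generality when $G$ is uncountable, because $(2^{xy})^G$ is then non-metrizable. That item was stated for arbitrary compact Hausdorff spaces, with the Radon--Nikodym argument covering this case, so we can invoke it directly. Apart from that, the proof reduces to bookkeeping through the identification of \autoref{prop:PMPG-qf}.
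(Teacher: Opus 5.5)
Your proposal is correct and follows the paper's route. Robinson and irreducibility come from relatively independent joinings over the trivial probability algebra, and qf-simpliciality and face-preservation are transferred through \autoref{prop:PMPG-qf} from \autoref{prop:cMinv(X)-properties}, including its non-metrizable case. The appeal to \autoref{lem:irreducible-theories} is superfluous, since irreducibility is by definition the joint embedding property.
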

\begin{proof}
Follows from \autoref{prop:PMPG-qf} and \autoref{prop:cMinv(X)-properties}.
\end{proof}

\begin{thm}
\label{thm:Glasner-Weiss-revisited}
Let $G$ be a countable group.
Then for every non-empty compact metrizable space $K$, $\cM_\inv\big(K^G\big)$ is a Bauer simplex or the Poulsen simplex.
\end{thm}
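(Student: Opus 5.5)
We plan to apply \autoref{thm:abstract-dichotomy} to $T=\PMP_G$. We first treat $K=2^n$, then reduce a general $K$ to this case.

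By \autoref{cor:PMPG-basics}, $\PMP_G$ is a qf-simplicial, face-preserving, irreducible Robinson theory. It remains to verify hypothesis (D). Let $M\models_\aec \PMP_G$ be qf-extremal, and let $N\models\Th^\cont_{\forall\exists}(M)$. Since $\PMP_G$ is universal, $N$ is in particular a model of $\PMP_G$, i.e.\ a pmp $G$-system on a probability algebra. We must show that $N$ decomposes as a direct integral of ergodic systems (qf-extremal models) over a space with point-like atoms, via a non-degenerate field.

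Here we invoke the ergodic decomposition in the form of \cite[Cor.~28.4]{BITaffine}: every model of $\PMP_G$ is a direct integral of ergodic models. When $N$ is separable and $G$ is countable this is the classical ergodic decomposition. In general, the quoted corollary handles it through boundary measures on the simplex $\tS^\qf_x(\PMP_G)\cong\cM_\inv\big((2^x)^G\big)$. We will check that the resulting field is qf-measurable in the sense of \autoref{sec:qf-measurable-fields}. This should be immediate, since measurable fields in the sense of \cite{BITaffine} are qf-measurable by \autoref{rmk:qf-measurable-vs-measurable}. Non-degeneracy follows from \autoref{rmk:non-degenerate-two-constants}, because $\bZero\neq\bOne$ in every fiber. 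Point-like atoms follow from \autoref{lem:point-like-atoms-Choquet-simplex} if the decomposition is realized over the set of extreme types with a boundary measure, as in the proof of \autoref{thm:decomposition-for-aec-models}. Once (D) holds, \autoref{thm:abstract-dichotomy} shows that $\PMP_G$ is qf-Bauer or qf-Poulsen. By \autoref{prop:PMPG-qf}, $\cM_\inv\big((2^n)^G\big)$ is then Bauer for all $n$, or has dense extreme points for all $n$. It is metrizable because $G$ is countable, so in the second case it is the Poulsen simplex, with the trivial exception of one-point simplices.

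For a general non-empty compact metrizable $K$, we embed $K$ into $2^\bN$ when $K$ is totally disconnected. In general we use a continuous surjection $2^\bN\to K$. Let $x$ be a countable tuple. Then $\cM_\inv\big((2^x)^G\big)\cong\tS^\qf_x(\PMP_G)$ is Bauer or Poulsen, by the definition of qf-Bauer/qf-Poulsen for arbitrary tuples. The surjection $\pi\colon(2^\bN)^G\to K^G$ is $G$-equivariant. By \autoref{prop:cMinv(X)-properties}, $\pi_{*\inv}$ is face-preserving and sends ergodic measures to ergodic measures. It is also surjective onto $\cM_\inv(K^G)$, since invariant measures lift along equivariant surjections by an averaging or ergodic-decomposition argument; alternatively, surjectivity follows from \autoref{sec:appendix:open-and-face-preserving}. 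In the Poulsen case, density of ergodic measures therefore passes to the image. In the Bauer case, we transfer closedness of the extreme points using face-preservation: the preimage of an extreme point is a closed face, hence contains extreme points, so $\cE$ of the target equals the image of the closed set $\cE$ of the source, which is compact.

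The main obstacle is verifying (D) for arbitrary, possibly non-separable, models of $\Th^\cont_{\forall\exists}(M)$. This requires the general ergodic decomposition from \cite{BITaffine}, together with checking that it has point-like atoms. The lifting of invariant measures in the last step also needs care.
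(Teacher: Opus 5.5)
Your route is the paper's: apply \autoref{thm:abstract-dichotomy} to $\PMP_G$, with (D) holding in the strong form that every model of $\PMP_G$ is decomposable by \cite[Cor.~28.4]{BITaffine}, then transfer along a continuous surjection $\pi\colon 2^{\aleph_0}\to K$. Your treatment of (D) is fine. So is the Bauer/Poulsen transfer \emph{given} surjectivity: extreme points go to extreme points, and the fiber over an extreme point is a non-empty closed face, so it contains an extreme point.

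The gap is the surjectivity of $\pi^G_{*\inv}\colon\cM_\inv\big((2^{\aleph_0})^G\big)\to\cM_\inv(K^G)$. Both halves of the transfer rely on it, and neither of your justifications works.
\begin{itemize}
\item Averaging along Følner sets needs $G$ amenable, whereas the statement is for arbitrary countable $G$.
\item Invariant measures do not lift along general $G$-equivariant continuous surjections $X\to Y$. Take $Y$ a point and $X$ a compact $G$-space with no invariant probability measure, e.g.\ a non-abelian free group acting on its boundary.
\item \autoref{sec:appendix:open-and-face-preserving} does not help, because it only lifts measures along $\pi_*\colon\cM(X)\to\cM(Y)$ with no invariance. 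Over an invariant $\mu$, the fiber $\pi_*^{-1}(\mu)$ is a compact convex set with an affine $G$-action, and you would need a $G$-fixed point in it, which is exactly what can fail for non-amenable $G$.
\item The ergodic decomposition reduces nothing, since lifting an ergodic invariant measure is the same problem.
\end{itemize}
The missing idea is to use the product structure of $\pi^G$. Choose a universally measurable section $\sigma\colon K\to 2^{\aleph_0}$ of $\pi$ (Jankov--von Neumann). The coordinatewise map $\sigma^G$ commutes with the shift, so for $\mu\in\cM_\inv(K^G)$ the measure $\sigma^G_*\mu$ is $G$-invariant and satisfies $\pi^G_*\sigma^G_*\mu=\mu$. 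This is how the paper argues, and with it your proof goes through.

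Separately, drop the remark about embedding a totally disconnected $K$ into $2^{\bN}$. An embedding alone only makes $\cM_\inv(K^G)$ a closed face of $\cM_\inv\big((2^{\bN})^G\big)$, and closed faces of the Poulsen simplex (e.g.\ the segment between two extreme points) need not be Poulsen. The surjection handles every $K$.
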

\begin{proof}
For the case $K = 2^{\aleph_0}$, by \autoref{prop:PMPG-qf}, it is enough to check the hypotheses of \autoref{thm:abstract-dichotomy} for the theory $T = \PMP_G$.
On the other hand, by the previous corollary, we are only left to check property (D) from the statement of \autoref{thm:abstract-dichotomy}.
However, this property holds for the theory $\PMP_G$ in a strong form: \emph{every} model is decomposable.
Indeed, this is essentially the Ergodic Decomposition Theorem, in the model-theoretic (and more general) form given in \cite[Cor.~28.4]{BITaffine}.

Now, given a compact metrizable space $K\neq\emptyset$, let $\pi\colon 2^{\aleph_0}\to K$ be a continuous surjection (cf.~\cite[(4.18)]{Kechris1995}) and let $\pi^G\colon (2^{\aleph_0})^G\to K^G$ be the induced product map, which is a continuous, $G$-invariant surjection.
The induced pushforward map
\begin{equation*}
\pi^G_{*\inv}\colon \cM_\inv\big((2^{\aleph_0})^G\big)\to \cM_\inv\big(K^G\big)
\end{equation*}
is also surjective.
Indeed, one may find a universally measurable function $\sigma\colon K\to 2^{\aleph_0}$ with $\pi\circ\sigma = \id_K$ (cf.~\cite[(18.3), (21.10)]{Kechris1995}). Then any $\mu\in \cM_\inv\big(K^G\big)$ can be obtained as $\mu = \pi^G_{*\inv}\big(\sigma^G_{*\inv}(\mu)\big)$.
Since $\pi^G_{*\inv}$ is also continuous, it sends closed sets to closed sets and dense sets to dense sets.
Finally, since $\pi^G_{*\inv}$ preserves the extreme points, $\cM_\inv\big(K^G\big)$ must be Bauer or Poulsen, just as $\cM_\inv\big((2^{\aleph_0})^G\big)$.
\end{proof}

\section{The affine theory $\PMP_{G/H}$}
\label{sec:homogeneous}

Throughout this section we fix a group $G$ and a subgroup $H\leq G$.
We do not make any cardinality assumptions, although we will only be interested in the case where $G/H$ is countable.
In the next section we shall consider the group topology on $G$ induced from the permutation group $\Sym(G/H)$, but here we see $G$ and $H$ just as discrete groups.

We consider the language $\cL_G$ and the theory $\PMP_G$ introduced in the preceding section.
The unique sort of the language $\cL_G$ will be denoted by $S_G$.
We enrich the language $\cL_G$ to obtain a two-sorted language,
\begin{equation*}
\cL_{G,H} = \cL_G \cup \cL^H_\PrA \cup \{i\}.
\end{equation*}
Here, $\cL^H_\PrA$ is a copy of the language $\cL_\PrA$ on a new sort $S_H$.
The symbol $i$ is a unary function symbol from the sort $S_H$ into the sort $S_G$.
We then consider the universal affine $\cL_{G,H}$-theory $\PMP_{G,H}$ consisting of the following axioms:
\begin{itemize}
\item the sort $S_G$ satisfies $\PMP_G$;
\item the sort $S_H$ is a probability algebra (as an $\cL^H_\PrA$-structure);
\item the map $\iota\colon S_H\to S_G$ is an embedding of probability algebras;
\item for every $x\in S_H$ and every $h\in H$, $h\iota(x) = \iota(x)$.
\end{itemize}
In other words, the models $M_{G,H} = (M_G,M_H)\models \PMP_{G,H}$ consist of a probability measure-preserving $G$-system $M_G$ together with a distinguished measure subalgebra $\iota(M_H)\subseteq \Fix_H(M_G)$.
In what follows, we identify $\iota(M_H) = M_H$.

Let us now consider the set $\Phi$ of all atomic $\cL_{G,H}$-formulas $\varphi$ of the form
\begin{equation*}
\varphi(x) = \mu(t(x)),
\end{equation*}
where $x$ is a finite tuple of variables of the sort $S_H$ and $t\colon (S_H)^x\to S_G$ is an $\cL_{G,H}$-term with free variables in $x$ and taking values in the sort $S_G$.
For each such atomic formula $\varphi\in\Phi$, let $P_\varphi$ be a new predicate symbol with the same arity, bound and uniform continuity modulus as $\varphi$.
We consider the expanded language
\begin{equation*}
\cL_{G,H}^+ = \cL_{G,H} \cup \{P_\varphi : \varphi\in\Phi\},
\end{equation*}
and the $\cL_{G,H}^+$-theory $\PMP_{G,H}^+$ that expands $\PMP_{G,H}$ with axioms asserting that $P_\varphi(x) = \varphi(x)$ for all $\varphi\in\Phi$.
In other words, $\PMP_{G,H}^+$ is a partial, atomic Morleyization of $\PMP_{G,H}$.

Finally, let us consider the one-sorted language:
\begin{equation*}
\cL_{G/H} = \cL^H_\PrA \cup \{P_\varphi : \varphi\in\Phi\}.
\end{equation*}
The following is our main theory of interest.

\begin{dfn}
The theory $\PMP_{G/H}$ is the universal affine reduct of $\PMP_{G,H}^+$ to the language $\cL_{G/H}$.
In symbols, $\PMP_{G/H} = \big(\PMP_{G,H}^+|_{\cL_{{G/H}}}\big)_{\forall^\aff}$.
\end{dfn}

Every model $M_{G,H} = (M_G,M_H)\models \PMP_{G,H}$ can be expanded in a unique way to a model of the definitional expansion $\PMP_{G,H}^+$.
With the induced structure, the sort $M_H$ becomes then a model of $\PMP_{G/H}$.
Conversely, we have the following.

\begin{lem}
\label{lem:M-induces-MGH}
For every model $M\models \PMP_{G/H}$ there is a uniquely determined, up to isomorphism, model $M_{G,H} = (M_G,M_H)\models \PMP_{G,H}$ with $M_H = M$ (as $\cL_{G/H}$-structures) and $M_G = \cl{\langle M_H\rangle}_{\cL_G}$.
\end{lem}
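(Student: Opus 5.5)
The plan is to construct $M_G$ from the quantifier-free data recorded by the predicates $P_\varphi$, and to obtain uniqueness from the fact that every element of $\cl{\langle M_H\rangle}_{\cL_G}$ is a limit of values of terms over $M_H$.

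\emph{Existence.} Since $M\models\PMP_{G/H}$, the universal affine part of $\PMP^+_{G,H}$ restricted to $\cL_{G/H}$, the structure $M$ is a substructure of the $\cL_{G/H}$-reduct of some model $N^+$ of $\PMP^+_{G,H}$. This is the affine Bagheri--Safari lemma stated before \autoref{lem:forall-aff}, applied to the affine theory $\PMP^+_{G,H}|_{\cL_{G/H}}$. So $M\subseteq N_H$ for some model $N_{G,H}=(N_G,N_H)\models\PMP_{G,H}$, and the predicates $P_\varphi$ on $M$ are the restrictions of those computed in $N_{G,H}$.

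I then set $M_G=\cl{\langle M\rangle}_{\cL_G}\subseteq N_G$, the closed $\cL_G$-substructure generated by $\iota(M)$. This is a probability algebra with a $G$-action, hence a model of $\PMP_G$. The pair $(M_G,M)$ is a substructure of $N_{G,H}$ and therefore satisfies the universal theory $\PMP_{G,H}$. In particular, $\iota(M)\subseteq\Fix_H(M_G)$ and $\iota$ is an embedding of probability algebras. The $P_\varphi$ computed in $(M_G,M)$ agree with those of $M$, because they are atomic and thus preserved under substructures.

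\emph{Uniqueness.} Let $(M_G,M)$ and $(M'_G,M)$ be two such models. Every element of the $\cL_G$-substructure generated by $\iota(M)$ has the form $t(a)$, with $t$ an $\cL_{G,H}$-term into $S_G$ and $a$ a finite tuple from $M$. I define $\sigma(t(a))=t'(a)$, where $t'$ is the same term evaluated in $M'_G$.

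To see that $\sigma$ is well defined and isometric, I use the metric of a probability algebra, $d(u,v)=\mu(u\triangle v)$. Since $u\triangle v$ is again a term in the generators, for terms $t,s$ the quantity $d(t(a),s(a))=\mu\big((t\triangle s)(a)\big)$ is a formula in $\Phi$. Its value is therefore $P_\varphi(a)$ in both models, and so $d(t(a),s(a))=d(t'(a),s'(a))$. Likewise $\mu(t(a))=P_{\mu(t)}(a)$ is preserved.

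Hence $\sigma$ is an isometric bijection between the generated substructures that commutes with $\cup,\cap,\neg$, the $g\in G$, $\mu$ and $\iota$. It extends by continuity to an isomorphism of the completions $M_G\cong M'_G$ that fixes $M$.

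The main obstacle I expect is the existence step, namely making sure $M$ genuinely embeds into a model of $\PMP^+_{G,H}$ as a model of the universal affine reduct. This is exactly the affine Bagheri--Safari lemma, applied to the reduct of $\PMP^+_{G,H}$, and it requires care with the passage from $\cL^+_{G,H}$ to the reduct language $\cL_{G/H}$. The remaining checks are routine.
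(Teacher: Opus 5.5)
Your proof is correct in outline but follows a different route from the paper's, and the existence step needs to be stated more carefully than you do.

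\textbf{Comparison with the paper.} The paper never looks for an ambient model. It builds $M_G$ intrinsically as the completion of the set of formal pairs $(t,a)$, with $t\colon (S_H)^x\to S_G$ an $\cL_{G,H}$-term and $a\in M^x$, modulo the pseudometric $P_{\mu(t(x)\triangle t'(x'))}(a,a')$. It then defines the operations on classes and observes that the required facts are universal affine $\cL_{G/H}$-conditions on predicates of the form $P_{\mu(t)}$, hence consequences of $\PMP_{G/H}$. These facts are the pseudometric axioms, the well-definedness and uniform continuity of the operations, and the axioms of $\PMP_G$. This term-model construction needs no compactness and shows directly that $M^G$ is canonically determined by $M$, which is why uniqueness is called clear there. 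Your uniqueness argument is exactly the computation behind that construction. Your existence argument instead embeds $M$ in an ambient model of $\PMP^+_{G,H}$ and takes the generated $\cL_G$-substructure. This way the axioms of $\PMP_{G,H}$ and the agreement of the $P_\varphi$ come for free from universality, at the price of a compactness argument.

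\textbf{The step that needs fixing.} You apply the Bagheri--Safari lemma to the reduct theory $\PMP^+_{G,H}|_{\cL_{G/H}}$. That only yields $M\subseteq N'$ for some model $N'$ of the reduct. A model of the affine reduct is not a priori the $\cL_{G/H}$-reduct of a model of $\PMP^+_{G,H}$. If instead you mean the full continuous reduct, you do not know beforehand that it is affine, which the lemma requires.

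The fix is to run the Bagheri--Safari proof directly with $T=\PMP^+_{G,H}$ and the atomic $\cL_{G/H}$-diagram of $M$. Suppose the union were inconsistent. Compactness would then give atomic $\cL_{G/H}$-formulas $\varphi_i(x)$, $i<n$, a tuple $a\in M^x$ and $\epsilon>0$ such that the set $C=\{(\varphi_i^N(b))_{i<n} : N\models T,\ b\in N^x\}$ misses the open sup-norm $\epsilon$-ball around $r=(\varphi_i^M(a))_{i<n}$. Since $T$ is affine, its models are closed under direct convex combinations, so $C$ is convex. A separating vector $c\neq 0$ then yields the universal affine $\cL_{G/H}$-condition $\sup_x\big(\sum_i c_ir_i+\epsilon\|c\|_1-\sum_i c_i\varphi_i(x)\big)\leq 0$. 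This condition is implied by $T$, hence belongs to $\PMP_{G/H}$, yet it fails at $a$ in $M$. With this in place, the rest of your argument goes through as written.
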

\begin{proof}
Consider the set $M_0$ consisting of pairs $(t,a)$ where $t\colon (S_H)^x\to S_G$ is an $\cL_{G,H}$-term and $a\in M^x$ is a tuple from $M$.
We may endow $M_0$ with the pseudometric
\begin{equation*}
d_0\big((t,a),(t',a')\big) = P_{\mu(t(x)\triangle t'(x'))}^M(a,a'),
\end{equation*}
where the variables of the tuple $x'$ are first renamed, if necessary, to ensure they are disjoint from those of $x$.
The fact that this defines a pseudometric is indeed encoded in the theory $\PMP_{G/H}$.
We then define $M_G = (\widehat{M_0/d_0}, d)$ as the completion of the quotient of $M_0$ by the relation of being at $d_0$-distance $0$.
Let $[t,a]$ denote the class of the pair $(t,a)$.

We turn $M_G$ into an $\cL_G$-structure in the obvious way, by defining, for instance:
\begin{itemize}
\item $[t(x),a]\cap [t'(x'),a'] = [t(x)\cap t'(x'),aa']$ (for disjoint $x$ and $x'$),
\item $g([t,a]) = [gt,a]$,
\item $\mu([t,a]) = P_{\mu(t(x))}^M(a)$.
\end{itemize}
Again, the axioms of $\PMP_{G/H}$ ensure that these operations and predicate are well-defined and uniformly continuous on $M_0/d_0$, thus extending to the completion, and that the resulting $\cL_G$-structure is a model of $\PMP_G$.
Finally, we may also define, for every $a\in M$ and any single variable $x$ of the sort $S_H$:
\begin{itemize}
\item $\iota(a) = [\iota(x),a]$.
\end{itemize}
We obtain an $\cL_{G,H}$-structure $M_{G,H} = (M_G,M)$ which is indeed a model of $\PMP_{G,H}$, and such that the $\cL_{G/H}$-structure induced on the sort $S_H$ is precisely~$M$.
Moreover, by construction, $M_G = \cl{\langle M\rangle}_{\cL_G}$.

The uniqueness part is clear.
\end{proof}

\begin{rmk}
It is easy to see, using the preceding lemma and \cite[Thm.~19.2]{BITaffine}, that the full continuous logic reduct of the universal affine theory $\PMP_{G,H}^+$ to the language $\cL_{G/H}$ is again universal and affine.
In other words, $\PMP_{G/H}$ is just $\PMP_{G,H}^+|_{\cL_{{G/H}}}$.
\end{rmk}

\begin{dfn}
\label{dfn:MGH}
Given a model $M\models\PMP_{G/H}$, we will denote by
\begin{equation*}
M^{G,H} = (M^G,M)
\end{equation*}
the model of $\PMP_{G,H}$ \emph{generated by $M$}, as given by \autoref{lem:M-induces-MGH}.
In particular, $M^G$ is a model of $\PMP_G$ and $M\subseteq\Fix_H(M^G)$.
\end{dfn}

\begin{lem}
\label{lem:PMP-GH-extensions}
Let $M\subseteq N$ be an extension of models of $\PMP_{G/H}$.
Then $M^{G,H}$ can be identified with a uniquely determined $\cL_{G,H}$-substructure of $N^{G,H}$.
\end{lem}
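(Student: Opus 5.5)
The plan is to define the embedding directly on the dense set of "term-generated" elements of $M^G$ and extend by continuity, using the explicit construction in the proof of \autoref{lem:M-induces-MGH}. Recall that $M^G$ is the completion of $M_0/d_0$, where $M_0$ consists of pairs $(t,a)$ with $t$ an $\cL_{G,H}$-term into $S_G$ and $a$ a tuple from $M$. Similarly, $N^G$ is the completion of $N_0/d_0^N$, and clearly $M_0\subseteq N_0$ since tuples from $M$ are tuples from $N$. Define $\sigma\colon M_0\to N_0$ as this inclusion, $(t,a)\mapsto(t,a)$, and $\sigma$ to be the identity on the sort $S_H$ (i.e.\ the given inclusion $M\subseteq N$).

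The key observation is that all the data defining $M^{G,H}$ are given by the predicates $P_\varphi$, $\varphi\in\Phi$, evaluated at tuples from $M$. Since $M\subseteq N$ is an $\cL_{G/H}$-substructure, $P^M_\varphi(a)=P^N_\varphi(a)$ for all $a\in M^x$.

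First, $d_0^M\big((t,a),(t',a')\big)=d_0^N\big((t,a),(t',a')\big)$, so $\sigma$ is isometric and descends to an isometric embedding $M_0/d_0\to N_0/d_0$. It then extends uniquely to the completions $M^G\to N^G$.

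Second, the operations $\cap,\cup,\neg$ and the $g\in G$, the constants, and the measure $\mu$ are defined on classes $[t,a]$ by syntactic manipulation of terms together with the values $P_{\mu(t(x))}(a)$. Hence $\sigma$ commutes with them on the dense set. By uniform continuity it commutes with them everywhere, so $\sigma$ is an $\cL_G$-embedding.

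Third, $\iota^N(a)=[\iota(x),a]=\sigma(\iota^M(a))$, so the pair $(\sigma,\mathrm{incl})$ is an $\cL_{G,H}$-embedding $M^{G,H}\to N^{G,H}$.

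For uniqueness, let $\tau$ be any $\cL_{G,H}$-embedding extending the identity on the sort $S_H$. Then $\tau$ must send $t^{M^{G,H}}(a)$ to $t^{N^{G,H}}(a)$ for every term $t$ and every tuple $a$ from $M$. These elements are dense in $M^G=\cl{\langle M\rangle}_{\cL_G}$, so $\tau=\sigma$. Equivalently, the image is the closed $\cL_G$-substructure $\cl{\langle M\rangle}_{\cL_G}\subseteq N^G$ together with $M$, which is uniquely determined. This substructure is identified with $M^{G,H}$ by the uniqueness clause of \autoref{lem:M-induces-MGH}.

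The only potentially delicate step is checking that the operations defined on representatives are well-defined and compatible across $M$ and $N$. This is immediate because every quantity involved is a value of some $P_\varphi$ at parameters in $M$, and these values are preserved by the substructure inclusion.
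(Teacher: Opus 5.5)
Your proposal is correct, and it carries out the routine verification that the paper compresses into the single word ``Clear''. The image is the closed $\cL_G$-substructure of $N^G$ generated by $M$, paired with $M$ on the sort $S_H$, and it is identified with $M^{G,H}$ in either of two ways: through the explicit construction of \autoref{lem:M-induces-MGH}, since $d_0$ and all operations there are computed from values $P_\varphi(a)$ preserved by $M\subseteq N$, or through that lemma's uniqueness clause. Uniqueness then follows from the density of the elements $t(a)$ with $a$ from $M$, as you argue.
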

\begin{proof}
Clear.
\end{proof}

\begin{prop}
\label{prop:PMP-GH-Robinson}
The theory $\PMP_{G/H}$ is irreducible and Robinson.
\end{prop}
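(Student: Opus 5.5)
The plan is to transfer amalgamation and joint embedding from $\PMP_G$ (relatively independent joinings) to $\PMP_{G/H}$, using the correspondence $M\mapsto M^{G,H}$ of \autoref{lem:M-induces-MGH} and \autoref{lem:PMP-GH-extensions}. Universality is built into the definition, since $\PMP_{G/H}$ is the universal affine reduct of $\PMP^+_{G,H}$. By \autoref{lem:M-induces-MGH}, the models of $\PMP_{G/H}$ are exactly the $S_H$-sorts of models of $\PMP_{G,H}$, with their induced $\cL_{G/H}$-structure. It therefore suffices to amalgamate models of $\PMP_{G,H}$ and then read off the $S_H$-sort.

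For amalgamation, let $A\subseteq B_0$ and $A\subseteq B_1$ be embeddings of models of $\PMP_{G/H}$. By \autoref{lem:PMP-GH-extensions}, $A^{G,H}$ sits as an $\cL_{G,H}$-substructure of both $B_0^{G,H}$ and $B_1^{G,H}$. In particular, $A^G$ is a common $G$-invariant factor of the pmp $G$-systems $B_0^G$ and $B_1^G$. I will form the relatively independent joining $C_G = B_0^G\otimes_{A^G}B_1^G$, which is a model of $\PMP_G$ into which both $B_j^G$ embed compatibly over $A^G$. For the $S_H$-sort I take $C_H$ to be the measure subalgebra of $C_G$ generated by the images of $B_0$ and $B_1$. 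Since $G$ acts by automorphisms, elements of $\Fix_H$ form a subalgebra closed under limits, so $C_H\subseteq \Fix_H(C_G)$. Thus $(C_G,C_H)\models\PMP_{G,H}$, and its $S_H$-sort, with the induced $\cL_{G/H}$-structure, is a model of $\PMP_{G/H}$.

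It remains to check that the two maps $B_j\to C_H$ are $\cL_{G/H}$-embeddings that agree on $A$. The predicates $P_\varphi$ are values $\mu(t(a))$ of $\cL_{G,H}$-terms, and these are preserved because the maps $B_j^{G,H}\to (C_G,C_H)$ are $\cL_{G,H}$-embeddings. The maps agree on $A$ because the joining identifies the two copies of $A^G$, hence of $A$.

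Joint embedding follows by the same construction over the trivial common substructure $\{\bZero,\bOne\}$, whose generated $G$-system is trivial; this gives the product joining. Alternatively, it can be viewed as amalgamation over that trivial substructure. The only delicate point is the bookkeeping in identifying $A^{G,H}$ inside $B_j^{G,H}$ compatibly with the given embeddings, which is where \autoref{lem:PMP-GH-extensions} is used. I do not expect a real obstacle there.
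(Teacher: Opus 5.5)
Your proposal is correct and follows essentially the paper's proof: reduce to amalgamation for $\PMP_{G,H}$ via \autoref{lem:PMP-GH-extensions}, amalgamate the underlying $G$-systems as models of $\PMP_G$ (the relatively independent joining), and obtain joint embedding by amalgamating over the common trivial substructure $\{\bZero,\bOne\}$. The differences are cosmetic. You take the $S_H$-sort to be the subalgebra generated by the images of $B_0$ and $B_1$, where the paper takes all of $\Fix_H$ of the joining. You also check preservation of the predicates $P_\varphi$ directly, where the paper cites the fact that $\PMP^+_{G,H}$ is a quantifier-free definitional expansion. Both choices work.
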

\begin{proof}
By definition, $\PMP_{G/H}$ is universal.
We note that the trivial probability algebra $\{\bZero,\bOne\}$ can be expanded in a unique way to a model of $\PMP_{G/H}$, and that as such it is a common substructure of all models of $\PMP_{G/H}$.
Therefore, we only need to show that $\PMP_{G/H}$ has the amalgamation property.

By \autoref{lem:PMP-GH-extensions}, and using that $\PMP_{G,H}^+$ is a quantifier-free definitional expansion of $\PMP_{G,H}$, it is enough to show that $\PMP_{G,H}$ has the amalgamation property.
Now, since $\PMP_G$ is Robinson, given $M_{G,H}=(M_G,M_H)$ and $N_{G,H}=(N_G,N_H)$ two models of $\PMP_{G,H}$ with a common substructure $K_{G,H}=(K_G,K_H)$, we may amalgamate $M_G$ and $N_G$ over $K_G$ into a model $L$ of $\PMP_G$.
Letting $L_{G,H}=(L,\Fix_H(L))$, we have $L_{G,H}\models \PMP_{G,H}$, and $L_{G,H}$ amalgamates $M_{G,H}$ and $N_{G,H}$ over $K_{G,H}$.
\end{proof}

\begin{prop}
\label{prop:PMP-GH-qf}
For every tuple of variables $x = (x_i)_{i\in I}$ (of arbitrary length) we have a canonical affine homeomorphism:
\begin{equation*}
\Phi^{G/H}_x\colon \tS_x^\qf(\PMP_{G/H}) \to \cM_\inv\big((2^x)^{G/H}\big).
\end{equation*}
In particular, the extreme types correspond to the ergodic measures.

Moreover, the maps $\Phi^{G/H}_x$ commute with variable restriction:
\begin{equation*}
\Phi^{G/H}_x\circ \pi^\qf_{x,y} = (\proj_{x,y})_*\circ \Phi^{G/H}_{xy}.
\end{equation*}
\end{prop}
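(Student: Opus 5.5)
The plan is to reduce to \autoref{prop:PMPG-qf} via the correspondence of \autoref{lem:M-induces-MGH}. First define $\Phi^{G/H}_x$ at the level of realized types. Let $a\in M^x$ be a tuple in a model $M\models\PMP_{G/H}$, and let $M^{G,H}=(M^G,M)$ be the generated model. Take $\Phi^{G/H}_x(\tp^\qf(a))$ to be the measure $\nu_a$ on $(2^x)^{G/H}$ given on cylinder sets by
\begin{equation*}
\nu_a\Big(\big\{\xi : \xi(g_jH)_{i_j}=z_j\ \forall j<n\big\}\Big)=\mu\Big(\bigcap_{j<n} g_j a_{i_j}^{z_j}\Big),
\end{equation*}
where $a^1=a$ and $a^0=\neg a$. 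The right-hand side does not depend on the representatives $g_j$, because each $a_i$ is $H$-invariant in $M^G$. It is also the value at $a$ of an atomic $\cL_{G/H}$-formula $P_\varphi$ with $\varphi\in\Phi$, so it depends only on $\tp^\qf(a)$. Kolmogorov consistency and $G$-invariance are routine Boolean-algebra identities in $M^G$. Finally, the affine formulas of the form $P_{\mu(t(x))}$ with $t$ a finite intersection of translates of the $x_i$ and their complements span, modulo $\PMP_{G/H}$, all quantifier-free affine formulas. This holds because every $\cL_{G,H}$-term $t(x)$ into $S_G$ is a Boolean combination of translates $g\iota(x_i)$, so $\mu(t)$ is a linear combination of such cylinder measures, and the predicates on the sort $S_H$ itself are special cases. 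So the map is well defined, injective and affine. It is continuous because the cylinder values are continuous functions of the type.

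For surjectivity, take $\nu\in\cM_\inv\big((2^x)^{G/H}\big)$ and consider the pmp system $G\actson\MALG\big((2^x)^{G/H},\nu\big)$. The elements $A_i=\{\xi:\xi(H)_i=1\}$ are fixed by $H$. Let $M_H$ be the measure subalgebra they generate, which is contained in $\Fix_H$, and let $M_G$ be the full algebra. This gives a model of $\PMP_{G,H}$, hence, after expanding to $\PMP^+_{G,H}$ and reducing, a model of $\PMP_{G/H}$ in which $A=(A_i)_i$ realizes a type $p$ with $\Phi^{G/H}_x(p)=\nu$. A continuous affine bijection between compact convex sets is an affine homeomorphism. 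Extreme points then correspond to extreme points, and these are the ergodic measures by \autoref{prop:cMinv(X)-properties}. Commutation with variable restriction is immediate, since restricting the tuple $a$ to $x$ corresponds to marginalizing the cylinder formula, which is exactly $(\proj_{x,y})_*$.

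The main point needing care is the spanning claim: quantifier-free affine $\cL_{G/H}$-formulas must reduce, modulo the theory, to linear combinations of cylinder measures. The atomic formulas include $d(t_1,t_2)$ and the $\mu$ of terms in the $S_H$ sort built from the $\cL^H_\PrA$ operations. In a probability algebra, $d(u,v)=\mu(u\triangle v)$. Moreover, $\iota$ is an embedding commuting with the Boolean operations, so $\mu^{S_H}(s(x))=P_{\mu(\iota s(x))}(x)$ with $\iota s(x)$ an $S_G$-term. The $P_\varphi$ are then handled by the inclusion–exclusion expansion described above. I would verify this explicitly by induction on terms, which is elementary but is the step where the bookkeeping could go wrong.
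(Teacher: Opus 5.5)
Your proposal is correct and is essentially the paper's argument. The paper defines $\Phi^{G/H}_x(\tp^\qf(a))$ as the pushforward of the measure on a point realization (e.g.\ the Stone space) of $M^G$ under the equivariant map given by $\Psi_a(\omega)(gH)(x_i)=1\iff\omega\in ga_i$; its values on cylinders are exactly your formula, and it yields a regular measure directly, bypassing the extension step you call routine. For surjectivity the paper uses the same sets $\{z : z(H)(x_i)=1\}$, taking all of $\Fix_H$ of the associated system rather than the subalgebra they generate, which makes no difference, and it delegates the injectivity, continuity and affineness checks (which you do via your spanning argument) to \cite[Prop.~28.2(i)]{BITaffine}.
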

\begin{proof}
The proof of \autoref{prop:PMPG-qf} (that is, the argument in \cite[Prop.~28.2(i)]{BITaffine}) adapts easily to our setting.
More precisely, given a model $M\models\PMP_{G/H}$, we may realize the $G$-system $M^G$ generated by $M$ as an action by measurable maps on a probability space $(\Omega_M,\mu_M)$ (e.g., the Stone space of $M^G$).
Given the type $p = \tp^\qf(a)$ of a tuple $a\in M^x$, we define the measure $\Phi^{G/H}_x(p)$ as the pushforward of $\mu_M$ by the $G$-equivariant map:
\begin{equation*}
\Psi_{a} \colon \Omega_M \to (2^x)^{G/H}, \quad \Psi_a(\omega)(gH)(x_i) = 1 \iff \omega \in ga_i,
\end{equation*}
where we identify the elements $a_i\in M$ with some measurable representatives $a_i\subseteq \Omega$.
The fact that $M\subseteq\Fix_H(M^G)$ ensures that $\Psi_{a}$ is well-defined.
The good definition of $\Phi^{G/H}_x$, as well as the fact it is continuous, affine and bijective, are seen exactly as in \cite[Prop.~28.2(i)]{BITaffine}.
In particular, for surjectivity, every measure $\nu\in \cM_\inv\big((2^x)^{G/H}\big)$ induces a model $N_\nu\models \PMP_G$ associated to the measure-preserving system $G\actson \big((2^x)^{G/H},\nu\big)$.
We may then consider the model $M_\nu = \Fix_H(N_\nu)$ of $\PMP_{G/H}$ and the tuple $a\in M_\nu^x$ given by
\begin{equation*}
a_i = \big\{z\in (2^x)^{G/H} : z(H)(x_i) = 1\big\}.
\end{equation*}
Note that $N_\nu = (M_\nu)^G$, since $N_\nu = \cl{\langle a\rangle}_{\cL_G}$. If $p$ is the quantifier-free type of $a$, we have $\Phi^{G/H}_x(p) = \nu$.
\end{proof}

\begin{rmk}
The type space $\tS_x^\qf(\PMP_{G/H})$ can be identified naturally with a subset of $\tS_x^\qf(\PMP_G)$, by taking the type of $a\in M^x$ to the type of $\iota(a)\in (M^G)^x$.
Also, the collection $\cM_\inv\big((2^x)^{G/H}\big)$ can be identified naturally with a subset of $\cM_\inv\big((2^x)^G\big)$, via the canonical embedding $(2^x)^{G/H} \to (2^x)^G$.
The map $\Phi^{G/H}_x$ is then simply the restriction of the map $\Phi^G_x$ of \autoref{prop:PMPG-qf} to these two subsets.
\end{rmk}

\begin{cor}
\label{cor:PMP-GH-basic-properties}
The theory $\PMP_{G/H}$ is a qf-simplicial, face-preserving, irreducible Robinson theory.
\end{cor}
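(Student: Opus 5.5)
The proof mirrors \autoref{cor:PMPG-basics}: everything is transferred from the simplices of invariant measures through the homeomorphisms of \autoref{prop:PMP-GH-qf}.

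First, irreducibility and the Robinson property are exactly \autoref{prop:PMP-GH-Robinson}, and the theory is affine and universal by definition. It remains to see that $\PMP_{G/H}$ is qf-simplicial and face-preserving.

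For qf-simpliciality, fix a finite tuple $x$. By \autoref{prop:PMP-GH-qf}, $\tS^\qf_x(\PMP_{G/H})$ is affinely homeomorphic to $\cM_\inv\big((2^x)^{G/H}\big)$. Here $G$ acts by homeomorphisms on the compact space $(2^x)^{G/H}$ via permutation of coordinates. This set of invariant measures is non-empty, since it contains the product measures, so by \autoref{prop:cMinv(X)-properties}\autoref{i:cMinv(X)-properties:simplex} it is a Choquet simplex. Since being a Choquet simplex is invariant under affine homeomorphism, $\tS^\qf_x(\PMP_{G/H})$ is a Choquet simplex. (Note that \autoref{prop:cMinv(X)-properties} needs no topology on $G$ beyond the discrete one, which is all that is used in this section.)

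For face-preservation, fix finite tuples $x,y$. The commutation relation in \autoref{prop:PMP-GH-qf} gives
\begin{equation*}
\pi^\qf_{x,y} = \big(\Phi^{G/H}_x\big)^{-1}\circ(\proj_{x,y})_*\circ \Phi^{G/H}_{xy}.
\end{equation*}
Here I read the identity $\Phi^{G/H}_x\circ \pi^\qf_{x,y} = (\proj_{x,y})_*\circ \Phi^{G/H}_{xy}$ in the only type-correct way. The map $(2^{xy})^{G/H}\to(2^x)^{G/H}$ induced by $\proj_{x,y}$ coordinatewise is continuous and $G$-equivariant. Hence, by \autoref{prop:cMinv(X)-properties}\autoref{i:cMinv(X)-properties:face-pres}, its pushforward restricted to invariant measures is face-preserving. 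Affine homeomorphisms send faces to faces bijectively, so the composite $\pi^\qf_{x,y}$ is face-preserving.

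No real obstacle is expected. The only point needing care is that the identifications in \autoref{prop:PMP-GH-qf} are genuinely affine homeomorphisms compatible with restriction, and that proposition supplies exactly this.
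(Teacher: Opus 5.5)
Your proposal is correct and follows the paper's own argument: the paper proves this corollary by citing exactly \autoref{prop:PMP-GH-Robinson}, \autoref{prop:PMP-GH-qf} and \autoref{prop:cMinv(X)-properties}. You have simply spelled out how these combine, including the transport of faces through the affine homeomorphisms $\Phi^{G/H}_x$, $\Phi^{G/H}_{xy}$ and the type-correct reading of the commutation identity.
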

\begin{proof}
By \autoref{prop:PMP-GH-Robinson}, \autoref{prop:PMP-GH-qf}, and \autoref{prop:cMinv(X)-properties}.
\end{proof}

The following proposition is a counterpart to \autoref{prop:PMP-GH-qf} at the level of the structures.
For every $\cL_{G,H}$-term $t(x)$ with variables of the sort $S_H$ and with target sort $S_G$, we will denote by $A_t\subseteq (2^x)^{G/H}$ the clopen set
\begin{equation*}
A_t = \{z\in (2^x)^{G/H} : \tilde{t}(z) = 1\},
\end{equation*}
where we see $(2^x)^{G/H}$ as a (product) Boolean algebra with the canonically induced $G$-action, and $\tilde{t}\colon (2^x)^{G/H}\to 2$ is the Boolean homomorphism induced by $t$ in the natural way.
For instance, if $t(x) = g\iota(x_i)\cap (1\setminus\iota(x_j))$, then $\tilde{t}(z) = z(gH)(x_i)\cap (1\setminus z(H)(x_j))$ as computed in the Boolean algebra $2$, and thus
\begin{equation*}
A_t = \{z \in (2^x)^{G/H} : z(gH)(x_i) = 1\ \&\ z(H)(x_j)=0\}.
\end{equation*}

\begin{prop}
\label{prop:MGa-vs-mu-a}
Let $M$ be a model of $\PMP_{G/H}$ and $a\in M^x$ be any tuple.
Denote by $M^G_a = \cl{\langle a\rangle}_{\cL_G}$ the $\cL_G$-substructure of $M^G$ generated by $a$.
Let
\begin{equation*}
\mu_a = \Phi^{G/H}_x\big(\tp^\qf(a)\big) \in \cM_\inv\big((2^x)^{G/H}\big)
\end{equation*}
be the measure given by \autoref{prop:PMP-GH-qf}, and let $M_{\mu_a}$ be its corresponding measure algebra.
We see $M_{\mu_a}$ as a model of $\PMP_G$ with the natural $G$-action.

We then have an $\cL_G$-isomorphism
\begin{equation*}
\Theta_a\colon M^G_a\cong M_{\mu_a}
\end{equation*}
satisfying that for every $\cL_{G,H}$-term of the form $t\colon (S_H)^x\to S_G$,
\begin{equation*}
\Theta_a\big(t(a)\big) = A_t.
\end{equation*}
In particular, $\tp^\qf(a)$ is an extreme type if and only if $M^G_a$ is an ergodic $G$-system.
\end{prop}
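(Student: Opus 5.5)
The plan is to build $\Theta_a$ first on the dense set of elements of the form $t(a)$ and then extend by continuity. Let $D\subseteq M^G_a$ be the set of elements $t(a)$, with $t\colon (S_H)^x\to S_G$ an $\cL_{G,H}$-term. This set is closed under the Boolean operations and the $G$-action, since these operations are again realized by terms. Hence, by definition of $M^G_a=\cl{\langle a\rangle}_{\cL_G}$, the set $D$ is dense in $M^G_a$. Likewise, the clopen sets $A_t$ form a $G$-invariant Boolean subalgebra of clopen subsets of $(2^x)^{G/H}$. These clopen sets are generated by the cylinders $\{z : z(gH)(x_i)=1\}$, so they are dense in $M_{\mu_a}$, because the clopen sets are dense in the measure algebra of any Borel probability measure on a compact zero-dimensional metrizable space (or, in general, on the Baire $\sigma$-algebra generated by clopens).

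The key step is the identity
\begin{equation*}
\mu\big(t(a)\big) = \mu_a(A_t)
\end{equation*}
for every term $t$. Recall from the proof of \autoref{prop:PMP-GH-qf} that $\mu_a=(\Psi_a)_*\mu_M$, where $\Psi_a(\omega)(gH)(x_i)=1$ if and only if $\omega\in ga_i$. By construction, $\Psi_a^{-1}$ sends the cylinder for $(gH,x_i)$ to $g\iota(a_i)$, which is well defined because $\iota(a_i)\in\Fix_H(M^G)$. Since $\Psi_a^{-1}$ commutes with Boolean operations, induction on $t$ gives $\Psi_a^{-1}(A_t)=t(a)$ modulo null sets, and hence the identity above. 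Alternatively, one can argue at the level of types: $\mu(t(a))=P^M_{\mu(t(x))}(a)=\tp^\qf(a)(P_{\mu(t(x))})$, and $\Phi^{G/H}_x$ is defined precisely so that this value equals $\mu_a(A_t)$.

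Applying the identity to $t\triangle t'$ shows that $t(a)\mapsto A_t$ is well defined and isometric for the metric $d(b,c)=\mu(b\triangle c)$. It commutes with $\cap$, $\cup$, $\neg$ and each $g\in G$, because $A_{t\cap t'}=A_t\cap A_{t'}$, $A_{gt}=gA_t$, and so on. It therefore extends uniquely to an isometric $\cL_G$-embedding of $M^G_a$ into $M_{\mu_a}$, and the map is onto by the density of the $A_t$ noted above. This gives the isomorphism $\Theta_a$.

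For the final assertion, $\tp^\qf(a)$ is extreme if and only if $\mu_a$ is ergodic, by \autoref{prop:PMP-GH-qf} together with \autoref{prop:cMinv(X)-properties}. Moreover, $\mu_a$ is ergodic exactly when the measure algebra $M_{\mu_a}$ has no nontrivial $G$-invariant elements, that is, when the $G$-system $M_{\mu_a}\cong M^G_a$ is ergodic. The main obstacle is only bookkeeping: checking that $\tilde{t}$ matches $\Psi_a^{-1}$ on generators, including the role of $H$-invariance, which makes the coordinate $gH$ independent of the chosen representative $g$.
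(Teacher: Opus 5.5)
Your proposal is correct and follows essentially the same route as the paper, which derives the statement directly from the construction of $\Phi^{G/H}_x$ via the equivariant map $\Psi_a$ in the proof of \autoref{prop:PMP-GH-qf}. You make explicit what the paper leaves implicit: the identity $\mu(t(a))=\mu_a(A_t)$, the extension of $t(a)\mapsto A_t$ to an isomorphism by density, and the translation of extremality of $\tp^\qf(a)$ into ergodicity of $M^G_a$.
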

\begin{proof}
Follows from the construction in the proof of \autoref{prop:PMP-GH-qf}.
\end{proof}

\begin{cor}
\label{cor:PMP-GH-extremal-models}
A model $M$ of $\PMP_{G/H}$ is qf-extremal if and only if $M^G$ is ergodic.
\end{cor}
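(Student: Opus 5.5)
Both directions reduce to the definition of qf-extremality, the fact that $M^G=\cl{\langle M\rangle}_{\cL_G}$, and the last assertion of \autoref{prop:MGa-vs-mu-a}. Recall that $M$ is qf-extremal exactly when $\tp^\qf(a)\in\cE^\qf_x(\PMP_{G/H})$ for every finite tuple $a\in M^x$. By \autoref{prop:MGa-vs-mu-a}, this holds for a given $a$ if and only if the $G$-system $M^G_a=\cl{\langle a\rangle}_{\cL_G}$ is ergodic. So the statement amounts to showing that $M^G$ is ergodic if and only if every $M^G_a$, with $a$ a finite tuple from $M$, is ergodic.

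\emph{Ergodicity of $M^G$ implies qf-extremality.} Each $M^G_a$ is a $G$-invariant measure subalgebra, that is, a factor of $M^G$. A factor of an ergodic system is ergodic: a $G$-invariant element of the factor is a $G$-invariant element of $M^G$, hence equals $\bZero$ or $\bOne$.

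\emph{Qf-extremality implies ergodicity of $M^G$.} Let $b\in M^G$ be $G$-invariant. Since $M^G=\cl{\langle M\rangle}_{\cL_G}$, we have
\[
M^G=\cl{\textstyle\bigcup_a M^G_a},
\]
where $a$ ranges over finite tuples from $M$. This union is directed, because concatenating two tuples gives a tuple that generates both substructures. So for every $\epsilon>0$ there are a finite tuple $a$ and an element $c\in M^G_a$ with $\mu(b\triangle c)<\epsilon$.

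Let $E_a$ denote conditional expectation onto the factor $M^G_a$. Because $M^G_a$ is a $G$-invariant subalgebra, $E_a$ commutes with the $G$-action, so $E_a\chi_b$ is a $G$-invariant function in $L^2(M^G_a)$. Ergodicity of $M^G_a$ then forces $E_a\chi_b$ to be the constant $\mu(b)$. It follows that $\|\chi_b-\mu(b)\|_2\le\|\chi_b-\chi_c\|_2<\sqrt\epsilon$. Letting $\epsilon\to 0$ gives $\chi_b=\mu(b)$ almost everywhere, hence $b\in\{\bZero,\bOne\}$, and $M^G$ is ergodic.

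This conditional-expectation step is the only part that is not purely formal. If one prefers to stay inside the measure algebra, an alternative is a limiting argument with \autoref{lem:Ex(T)-partial-closed}, but the $L^2$ argument above is the most direct.
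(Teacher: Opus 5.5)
Your proof is correct, but it handles the passage between finite tuples and all of $M$ in a different place than the paper does.

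The paper applies \autoref{prop:MGa-vs-mu-a} once, to an infinite tuple $a$ enumerating $M$. For that tuple $M^G_a=M^G$, so $\tp^\qf(a)$ is extreme if and only if $M^G$ is ergodic. Relating this single type to the types of finite subtuples is then done on the type-space side. This uses the ``finite (equivalently, arbitrary)'' clause in the definition of qf-extremality together with \autoref{lem:inverse-limit-ext}, whose non-trivial direction uses that $\PMP_{G/H}$ is face-preserving (\autoref{cor:PMP-GH-basic-properties}).

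You instead apply the proposition only to finite tuples and do the finite-to-infinite passage dynamically. ``A factor of an ergodic system is ergodic'' plays the role of face-preservation; this is in fact where face-preservation of $\PMP_{G/H}$ comes from, via \autoref{prop:cMinv(X)-properties}. Your conditional-expectation argument, showing that a system in which a directed union of ergodic factors is dense is itself ergodic, plays the role of the easy direction of \autoref{lem:inverse-limit-ext}.

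The paper's route is a one-liner given the machinery already in place. Yours is self-contained and makes the ergodic-theoretic content explicit.

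Your closing aside is not needed, and it is unclear how \autoref{lem:Ex(T)-partial-closed} would help. That lemma concerns metric limits of finite tuples from $M$, whereas the invariant element $b$ lives in $M^G$ and need not belong to $M$. The type-theoretic counterpart of your $L^2$ step is \autoref{lem:inverse-limit-ext}.
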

\begin{proof}
By \autoref{prop:MGa-vs-mu-a}, applied to any enumeration $a\in M^x$ of $M$.
\end{proof}

\section{A Bauer--Poulsen dichotomy for permutation groups}
\label{sec:pmp}

For this final section, we fix a countable set $\cS$ and a permutation group $G\leq \Sym(\cS)$.
We assume $G$ acts transitively on $\cS$, and we denote by $H$ the stabilizer of a point $s_0\in \cS$ in $G$ (the particular choice of point is irrelevant).
In particular, we may identify $\cS = G/H$ as $G$-sets.

The full symmetric group $\Sym(\cS)$ carries the topology of pointwise convergence, which makes it a Polish group.
For our purposes, it would be natural to assume $G$ is closed in $\Sym(\cS)$.
However, in the exchangeability literature, it is common to formulate the relevant invariance property in terms of a \emph{countable} group of transformations (if the underlying group of the problem is mentioned at all).
One of the aims of the present work is to emphasize the importance of this topology for exchangeability theory.

We denote by $\cl{G}$ and $\cl{H}$ the closures of $G$ and $H$ in $\Sym(\cS)$, which are therefore Polish groups themselves.
Note that $\cl{H}$ is the stabilizer of $s_0$ inside $\cl{G}$.
In particular, $\cl{H}$ is an open subgroup of $\cl{G}$, and again we have $\cS = \cl{G}/\cl{H}$ as $\cl{G}$-sets.
We also note that a probability measure $\mu$ on $2^\cS$ is $G$-invariant if and only if it is $\cl{G}$-invariant, so the notation $\cM_\inv(2^\cS)$ is unambiguous.

\begin{rmk}
\label{rmk:PMP-GH-vs-clGclH}
The theories $\PMP_{G/H}$ and $\PMP_{\cl{G}/\cl{H}}$ are interdefinable in a trivial manner (every atomic $\cL_{\cl{G}/\cl{H}}$-formula is equivalent to an atomic $\cL_{G/H}$-formula).
\end{rmk}

In \autoref{sec:Glasner-Weiss}, we saw how the Glasner--Weiss Theorem for the simplex $\cM_\inv(2^G)$ (for countable $G$) can be deduced from the model-theoretic Bauer--Poulsen dichotomy established in \autoref{sec:dichotomy}, applied to the theory $\PMP_G$.
In that case, the key hypothesis (D) of \autoref{thm:abstract-dichotomy} holds unconditionally and in the strongest possible manner.
Our goal in this section is to apply \autoref{thm:abstract-dichotomy} to the theory $\PMP_{G/H}$.
We will see that $\PMP_{G/H}$ satisfies property (D) provided that the pair of Polish groups $(\cl{G},\cl{H})$ has Property (T).

\begin{dfn}
A model $M\models \PMP_{G/H}$ is \emph{full} if $M = \Fix_H(M^G)$, where $M^G$ is the model of $\PMP_G$ generated by $M$, as in \autoref{dfn:MGH}.
\end{dfn}

For the rest of the section and by an abuse of notation, when writing $\cL_{G/H}$-formulas we may replace the occurrences of basic predicates of the form $P_\varphi$ by the corresponding $\cL_{G,H}$-formulas $\varphi$.
Moreover, for further simplicity, we may omit the symbol $\iota$.
Thus, for instance, we will write $\inf_y\mu(gx\triangle y)$ instead of $\inf_y P_{\mu(g\iota(x)\triangle \iota(y))}$.

\begin{lem}
\label{lem:aec-implies-full}
Every affinely existentially closed model of $\PMP_{G/H}$ is full.
\end{lem}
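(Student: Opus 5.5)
Let $M\models_\aec \PMP_{G/H}$ and let $N = \Fix_H(M^G)$. The plan is to view $N$ as a model of $\PMP_{G/H}$ extending $M$, and then use affine existential closedness to show every element of $N$ is a limit of elements of $M$. Since $M$ is closed in $M^G$, this gives $N = M$.

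\textbf{Setting up the extension.} I would take the $\cL_{G,H}$-structure $(M^G, N)$, with $\iota$ the inclusion. This is a model of $\PMP_{G,H}$, because $N$ is a measure subalgebra of $M^G$ consisting of $H$-invariant elements. Its reduct to the sort $S_H$, with the predicates $P_\varphi$ interpreted in $M^G$, is therefore a model of $\PMP_{G/H}$. Moreover $M\subseteq N$ as $\cL_{G/H}$-structures, since the predicates on $M$ are computed inside the same system $M^G$. Note also that $N^G=\cl{\langle N\rangle}_{\cL_G}=M^G$, as $M\subseteq N\subseteq M^G$ and $M$ already generates $M^G$.

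\textbf{Approximating elements of $N$.} Fix $b\in N$ and $\epsilon>0$. Since $M^G=\cl{\langle M\rangle}_{\cL_G}$, there is an $\cL_{G,H}$-term $t$ and a finite tuple $a\in M^x$ with $\mu(b\triangle t(a))<\epsilon$ in $M^G$. Consider the quantifier-free affine formula $\varphi(x,y)=\mu(t(x)\triangle y)$; it is one of the basic predicates $P_\varphi$ of $\cL_{G/H}$, where $y$ is a variable of sort $S_H$. In $N$ we have
\[
\inf_y\varphi(a,y)\leq \varphi(a,b)<\epsilon .
\]
Since $M\preceq^\aec N$, there is $b'\in M$ with $\varphi(a,b')<\epsilon$. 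By the triangle inequality in $M^G$, $\mu(b\triangle b')<2\epsilon$. Hence $b$ lies in the closure of $M$, so $b\in M$ because $M$ is complete. This gives $N=M$, i.e.\ $M$ is full.

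\textbf{Main point to check.} The only delicate step is that $N$, with the induced structure, is genuinely a model of $\PMP_{G/H}$ extending $M$. Two facts need verifying. First, the predicates $P_\varphi$ on $N$ restrict to those on $M$; this holds because both are computed in $M^G$, which is consistent with \autoref{lem:PMP-GH-extensions}. Second, $N$ satisfies the universal affine theory $\PMP_{G/H}$; this holds because $(M^G,N)$ is a model of $\PMP_{G,H}$, hence of its definitional expansion $\PMP^+_{G,H}$, and therefore of the reduct's universal affine part. Everything else is routine.
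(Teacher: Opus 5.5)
Your proposal is correct and is essentially the paper's own proof. You view $N=\Fix_H(M^G)$, via the model $(M^G,N)$ of $\PMP_{G,H}$, as an $\cL_{G/H}$-extension of $M$, and approximate each $b\in N$ by some $t(a)$ with $a$ from $M$. You then use $M\preceq^\aec N$ to obtain $b'\in M$ with $\mu(t(a)\triangle b')<\epsilon$, hence $d(b,b')<2\epsilon$, and conclude $M=N$ by density and completeness.
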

\begin{proof}
Let $M\models_\aec \PMP_{G/H}$, and consider $N = \Fix_H(M^G)$.
The pair $(M^G,N)$ forms a model of $\PMP_{G,H}$ (and of $\PMP_{G,H}^+$), and with the induced structure $N$ becomes a model of $\cL_{G/H}$.
Moreover, $M$ is an $\cL_{G/H}$-substructure of $N$.

Let $b\in N$ and $\epsilon>0$.
Since $M^G$ is generated by $M$, we may find an $\cL_{G,H}$-term $t\colon (S_H)^x\to S_G$ and a tuple $a\in M^x$ such that
\begin{equation*}
N\models \mu\big(t(a)\triangle b\big) < \epsilon.
\end{equation*}
Since $M$ is affinely existentially closed in $N$, the same condition is satisfied for some $b'\in M$.
It follows that $d(b,b') < 2\epsilon$, showing that $M$ is dense in $N$.
We conclude that $M = N$, as desired.
\end{proof}

\begin{lem}
\label{lem:relative-(T)+full}
Assume the pair $(\cl{G},\cl{H})$ has Property (T).
Then for every $\eta>0$ there is $K>0$ with the following property: for every $\cL_{G,H}$-term of the form $t\colon (S_H)^x\to S_G$ there is a finite set $F\subseteq G$ such that for every full model $M\models\PMP_{G/H}$ and every $a\in M^x$, we have:
\begin{equation}
\label{eq:relative-(T)+full}
\inf_{b\in M} \mu\big(t(a)\triangle b\big) \leq K\max_{g\in F}\mu\big(t(a)\triangle gt(a)\big) + \eta.
\end{equation}
\end{lem}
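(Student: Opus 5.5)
We reduce the statement to \autoref{lem:relative-(T)}\autoref{i:relative-(T)-measures}, applied to the open subgroup $\cl{H}$ of $\cl{G}$, and transport it to the model $M^G$ via the isomorphism $\Theta_a$ of \autoref{prop:MGa-vs-mu-a}. Given $\eta>0$, set $\delta=\sqrt{\eta}$. Relative Property (T) for $(\cl{G},\cl{H})$ provides a compact $Q\subseteq\cl{G}$ and $\epsilon>0$ as in \autoref{dfn:relative-(T)}, and we put $K=4/\epsilon^2$. Both constants depend only on $\eta$, as the statement requires.

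Now fix a term $t\colon (S_H)^x\to S_G$. Since $t$ involves only finitely many variables, we may assume $x$ is finite. The set $A_t\subseteq (2^x)^{G/H}$ is clopen, so \autoref{lem:relative-(T)}\autoref{i:relative-(T)-measures}, applied to $\cl{G}$ acting on $\cl{G}/\cl{H}=\cS=G/H$, yields a finite $F_0\subseteq Q\subseteq\cl{G}$ such that for every $\nu\in\cM_\inv\big((2^x)^{G/H}\big)$,
\begin{equation*}
\inf_{[B]_\nu\in\Fix_{\cl{H}}(M_\nu)}\nu(A_t\triangle B)\leq K\max_{g\in F_0}\nu(A_t\triangle gA_t)+\eta.
\end{equation*}
The set $F$ must lie in $G$ rather than $\cl{G}$. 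Because $A_t$ depends on only finitely many coordinates, for each $g\in F_0$ we pick $g'\in G$ agreeing with $g$ on those finitely many points of $\cS$; then $g'A_t=gA_t$. Let $F$ be the set of these $g'$. This choice is uniform in the measure, since it depends only on $t$.

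Let $M$ be full and $a\in M^x$, and put $\nu=\mu_a$. By \autoref{prop:MGa-vs-mu-a}, $\Theta_a\colon M^G_a\cong M_{\nu}$ is $G$-equivariant and sends $t(a)$ to $A_t$. Hence $\mu(t(a)\triangle gt(a))=\nu(A_t\triangle gA_t)$ for $g\in F$, and the right-hand sides of the two inequalities agree. For the left-hand side, we take $B$ with $[B]_\nu\in\Fix_{\cl{H}}(M_\nu)=\Fix_H(M_\nu)$ that nearly attains the infimum. We set $b=\Theta_a^{-1}([B]_\nu)\in M^G_a\subseteq M^G$, which is $H$-invariant because $\Theta_a$ is equivariant. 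Here $\Fix_{\cl{H}}=\Fix_H$ holds because the action on $M_\nu$ is continuous and $H$ is dense in $\cl{H}$; alternatively, the inequality stated for $\Fix_{\cl{H}}$ already gives elements fixed by $H$.

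Fullness, $M=\Fix_H(M^G)$, gives $b\in M$. Since $\mu(t(a)\triangle b)=\nu(A_t\triangle B)$, taking the infimum yields \eqref{eq:relative-(T)+full}.

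The main point of care is the passage from $\cl{G}$ to $G$, namely replacing $F_0$ by elements of $G$ and checking the continuity of the $\cl{G}$-action on $M_\nu$ needed by \autoref{lem:relative-(T)}. Both hold because $A_t$ is clopen and $\cl{H}$ is open, so that $\cl{G}$ acts on $(2^x)^{\cS}$ continuously by homeomorphisms. The remaining steps are routine bookkeeping.
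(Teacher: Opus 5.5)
Your proposal is correct and follows the paper's proof essentially step by step. It uses the same choices $\delta=\sqrt{\eta}$ and $K=4/\epsilon^2$, and applies \autoref{lem:relative-(T)}\autoref{i:relative-(T)-measures} to $(\cl{G},\cl{H})$. It then replaces $F_0\subseteq Q$ by elements of $G$ that agree on the finitely many relevant coordinates, and transfers everything through $\Theta_a$ from \autoref{prop:MGa-vs-mu-a}, with fullness giving $\Fix_H(M^G_a)\subseteq M$. Your remark that $\Fix_{\cl{H}}(M_\nu)=\Fix_H(M_\nu)$, or that only the inclusion $\Fix_{\cl{H}}(M_\nu)\subseteq\Fix_H(M_\nu)$ is needed, makes explicit a point the paper leaves tacit.
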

\begin{proof}
Let $\delta = \sqrt{\eta}$, and choose $Q\subseteq \cl{G}$ and $\epsilon>0$ satisfying the conditions of \autoref{dfn:relative-(T)} with respect to $\delta$.
We let then $K = 4/\epsilon^2$.

Fix an $\cL_{G,H}$-term $t(x)$ as in the statement, and consider the clopen set $A_t\subseteq (2^x)^{G/H}$ as defined before \autoref{prop:MGa-vs-mu-a}.
Let $F_0\subseteq Q$ be a finite set satisfying the property of \autoref{lem:relative-(T)}\autoref{i:relative-(T)-measures} for the pair $(\cl{G},\cl{H})$, with respect to $A_t$.
Since $A_t$ depends on finitely many coordinates, for every $g\in F_0$ there is $h\in G$ with $gA_t=hA_t$.
Therefore, we may find a finite set $F\subseteq G$ satisfying the inequality \autoref{eq:relative-(T)-measures} from \autoref{lem:relative-(T)}.
Let $M$ be a full model of $\PMP_{G/H}$,  and consider any tuple $a\in M^x$.
Letting $\mu_a = \Phi^{G/H}_x(\tp^\qf(a))$ be the invariant measure on $(2^x)^{G/H}$ given by \autoref{prop:PMP-GH-qf}, we have, by choice of $F$,
\begin{equation*}
\inf_{[B]_{\mu_a}\in\Fix_H(M_{\mu_a})} \mu_a(A_t\triangle B) \leq K\max_{g\in F}\mu_a(A_t\triangle gA_t) + \eta.
\end{equation*}
On the other hand, by \autoref{prop:MGa-vs-mu-a},
\begin{equation*}
\mu_a(A_t\triangle gA_t) = \mu\big(t(a)\triangle gt(a)\big)
\end{equation*}
for every $g\in G$, and
\begin{equation*}
\inf_{[B]_{\mu_a}\in\Fix_H(M_{\mu_a})} \mu_a(A_t\triangle B) = \inf_{b\in\Fix_H(M^G_a)} \mu(t(a)\triangle b).
\end{equation*}
We conclude by observing that since $M$ is full, $\Fix_H(M_a^G) \subseteq M$.
\end{proof}

\begin{dfn}
We will denote by
\begin{equation*}
\PMP_{G/H}^\full = \bigcap \{\Th^\cont_{\forall\exists}(M) : M\models \PMP_{G/H},\ M\ \text{full}\}
\end{equation*}
the common $\forall\exists$-continuous theory of the full models of $\PMP_{G/H}$.
\end{dfn}

A model of $\PMP_{G/H}^\full$ need not be full.
However, we have the following.

\begin{lem}
\label{lem:Tfull-Fix-cap-MG}
Assume the pair $(\cl{G},\cl{H})$ has Property (T).
Then for every model $M\models \PMP_{G/H}^\full$ we have $\Fix_G(M^G)\subseteq M$.
\end{lem}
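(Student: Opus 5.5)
The plan is to turn the inequality of \autoref{lem:relative-(T)+full} into a $\forall\exists$ condition. That condition then transfers from full models to every model of $\PMP_{G/H}^\full$. Applying it to terms that approximate a $G$-invariant element will show that such an element is a limit of elements of $M$.

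\textbf{Step 1 (the axioms).} Fix $\eta>0$ and let $K$ be given by \autoref{lem:relative-(T)+full}. For each $\cL_{G,H}$-term $t\colon (S_H)^x\to S_G$, let $F\subseteq G$ be the corresponding finite set, and consider the $\cL_{G/H}$-sentence
\begin{equation*}
\sup_x\inf_y\Big(\mu\big(t(x)\triangle y\big) - K\max_{g\in F}\mu\big(t(x)\triangle gt(x)\big)\Big) \leq \eta,
\end{equation*}
written with the abuse of notation fixed before \autoref{lem:aec-implies-full}. The expression inside the quantifiers is a quantifier-free continuous $\cL_{G/H}$-formula: each $\mu(\cdot)$ term is a basic predicate $P_\varphi$, and $\max$ and subtraction are continuous connectives. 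So this is a $\forall\exists$-condition. Inequality \eqref{eq:relative-(T)+full} says exactly that it holds in every full model of $\PMP_{G/H}$. Hence it belongs to $\PMP_{G/H}^\full$ and holds in $M$, when interpreted in $M^{G,H}$.

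\textbf{Step 2 (approximating an invariant element).} Let $c\in\Fix_G(M^G)$ and let $\epsilon>0$. Since $M^G=\cl{\langle M\rangle}_{\cL_G}$ by \autoref{lem:M-induces-MGH}, there are a term $t$ as above and a tuple $a\in M^x$ with $\mu(c\triangle t(a))<\epsilon$. Because $gc=c$ and $g$ acts as a measure-preserving automorphism, the triangle inequality gives, for every $g\in G$,
\begin{equation*}
\mu\big(t(a)\triangle gt(a)\big)\leq \mu\big(t(a)\triangle c\big)+\mu\big(gc\triangle gt(a)\big) < 2\epsilon .
\end{equation*}
This bound is uniform in $g$, so it does not matter that $F$ depends on $t$.

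\textbf{Step 3 (conclusion).} By Step 1 there is $b\in M$ with $\mu(t(a)\triangle b)\leq 2K\epsilon+\eta+\epsilon$. Therefore $d(c,b)=\mu(c\triangle b)\leq (2+2K)\epsilon+\eta$. The order of choices is important here: first fix $\eta$ and hence $K$, and only then choose $\epsilon$ small relative to $K$. With this order, $c$ lies at arbitrarily small distance from $M$. The embedding $\iota$ is an isometry, since it preserves $\mu$ of symmetric differences, and $M$ is complete. So $M$ is closed in $M^G$, and $c\in M$.

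The only delicate point I anticipate is Step 1. One must check that the condition is genuinely a $\forall\exists$-continuous sentence of $\cL_{G/H}$, which holds because every $\mu(\cdot)$ expression is a predicate $P_\varphi$. One must also check that the dependence of $F$ on $t$ and of $K$ on $\eta$ is harmless, which is handled by the order of choices above. Once this is settled, the rest is the routine approximation argument.
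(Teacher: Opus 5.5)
Your proposal is correct and follows the paper's proof essentially step for step. In both, you fix $\eta$ and then $K$ from \autoref{lem:relative-(T)+full}, approximate the invariant element by some $t(a)$ (the paper uses error $\eta/K$, which is exactly your choice of $\epsilon$ after $K$), observe that the inequality is a $\forall\exists$-continuous $\cL_{G/H}$-condition and so transfers to $M$, and conclude by density. Your explicit remark that $\iota(M)$ is closed in $M^G$ spells out the final step that the paper leaves implicit.
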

\begin{proof}
Let $M\models \PMP_{G/H}^\full$ and $b\in \Fix_G(M^G)$.
Fix an arbitrary $\eta>0$, and let $K>1$ satisfy the property of \autoref{lem:relative-(T)+full} with respect to $\eta$.
Since $M^G = \cl{\langle M\rangle}_{\cL_G}$, we may find an $\cL_{G,H}$-term $t\colon (S_H)^x\to S_G$ and a tuple $a\in M^x$ such that $d(t(a),b) < \eta/K$.
Since $b$ is $G$-invariant, for every $g\in G$ we have:
\begin{equation*}
\mu\big(t(a)\triangle gt(a)\big) < 2\eta/K.
\end{equation*}
Now let $F\subseteq G$ be a finite set satisfying the inequality \autoref{eq:relative-(T)+full} of \autoref{lem:relative-(T)+full} for $x$-tuples in full models of $\PMP_{G/H}$.
We observe that this inequality, quantified universally over $a$, can be expressed by a $\forall\exists$-continuous $\cL_{G/H}$-condition.
Therefore, \autoref{eq:relative-(T)+full} also holds in $M$, for our given tuple $a$.
It follows that there is $b'\in M$ with
\begin{equation*}
\mu\big(t(a)\triangle b'\big) < 3\eta,
\end{equation*}
which in turn implies $d(b,b') < 4\eta$.
We conclude that $b\in M$, as desired.
\end{proof}

The following proposition and corollary show how the main hypothesis of \autoref{thm:abstract-dichotomy} for the theory $\PMP_{G/H}$ follows from relative Property (T).
The proof of the proposition requires a number of technical reminders and verifications concerning direct integrals.
The core of the argument, however, is a simple application of the preceding lemma.

\begin{prop}
\label{prop:Tfull-decomposability}
Assume the pair $(\cl{G},\cl{H})$ has Property (T).
Then every model of $\PMP_{G/H}^\full$ is a decomposable model of $\PMP_{G/H}$.
\end{prop}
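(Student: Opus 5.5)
The plan is to decompose the ambient $G$-system and then restrict the decomposition to the sort $M$. Let $M\models\PMP_{G/H}^\full$ and consider the model $M^G\models\PMP_G$ generated by $M$, as in \autoref{dfn:MGH}. By the Ergodic Decomposition Theorem in the model-theoretic form of \cite[Cor.~28.4]{BITaffine} (equivalently, \autoref{thm:decomposition-for-aec-models} together with the fact that every model of $\PMP_G$ is decomposable), write
\begin{equation*}
M^G\cong\int^\oplus_\Omega N_\omega\,d\mu(\omega),
\end{equation*}
where:
\begin{itemize}
\item each $N_\omega$ is an ergodic $G$-system;
\item the field is non-degenerate;
\item the probability space $(\Omega,\mu)$ is a boundary-measure space on an extreme boundary, and so has point-like atoms by \autoref{lem:point-like-atoms-Choquet-simplex}.
\end{itemize}
Under this isomorphism, $\MALG(\Omega)$ is identified with the invariant algebra $\Fix_G(M^G)$: elements $c\in\Fix_G(M^G)$ correspond to sections that are $\bOne$ on a measurable set $C_c\subseteq\Omega$ and $\bZero$ off it. This identification follows from \autoref{thm:decomposition-uniqueness} and from the fact that a measurable set of ergodic fibers is exactly an invariant element.

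Next, define the field for $M$. For each $\omega$, let $M_\omega\subseteq\Fix_H(N_\omega)$ be the closure of the set of values $a(\omega)$, for $a\in M$ viewed as a measurable section of $(N_\Omega)$. Each $M_\omega$ is a probability subalgebra fixed by $H$. With the induced $\cL_{G/H}$-structure, $M_\omega$ is a model of $\PMP_{G/H}$. Since $M$ generates $M^G$, terms applied to elements of $M$ are dense in $M^G$. Hence, after discarding a null set, $M_\omega$ generates $N_\omega$ as an $\cL_G$-structure, that is, $M_\omega^G=N_\omega$. By \autoref{cor:PMP-GH-extremal-models}, every $M_\omega$ is then a qf-extremal model of $\PMP_{G/H}$.

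The remaining conditions of \autoref{dfn:decomposable-models} are checked as follows:
\begin{itemize}
\item As pointwise generating family, take a family of sections given by elements of $M$. The qf-measurability of the resulting field is inherited from the measurability of the field $N_\Omega$, because atomic $\cL_{G/H}$-formulas are atomic $\cL_{G,H}$-formulas evaluated in $N_\omega$.
\item Non-degeneracy follows from \autoref{rmk:non-degenerate-two-constants}, since the constants $\bZero$ and $\bOne$ are distinct in every fiber.
\item Point-like atoms come for free from the choice of $\Omega$.
\end{itemize}

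The core step is to show that the canonical embedding $M\hookrightarrow\int^\oplus_\Omega M_\omega\,d\mu$ is onto. By \autoref{lem:simple-sections-are-dense}, it suffices to show that every simple section belongs to $M$. A simple section has the form $\omega\mapsto a_j(\omega)$ for $\omega\in A_j$, where $\Omega=\bigsqcup_j A_j$ is a finite measurable partition and $a_j\in M$ are terms in the generators. Each $A_j$ corresponds to some $c_j\in\Fix_G(M^G)$. By \autoref{lem:Tfull-Fix-cap-MG}, which is where relative Property (T) enters, we have $c_j\in M$. The section in question is then exactly the element $\bigcup_j(a_j\cap c_j)$ of $M^G$, and this element lies in $M$ because $M$ is closed under the Boolean operations of its $\cL^H_\PrA$-reduct. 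Hence $M$ is dense in the direct integral, and since $M$ is complete, the embedding is onto.

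I expect the main obstacle to be the bookkeeping in identifying $\MALG(\Omega)$ with $\Fix_G(M^G)$ and matching the direct integral of the $M_\omega$ with the sub-direct-integral of the $N_\omega$ when $M$ is non-separable. My first approach is to take $\Omega$ to be the set of extreme qf-types of a generating tuple of $M^G$, with its boundary measure, exactly as in the proof of \autoref{thm:decomposition-for-aec-models}. I would then check directly that a Baire set of extreme types yields a $G$-invariant element, using ergodicity of the fibers together with \autoref{thm:decomposition-uniqueness}.
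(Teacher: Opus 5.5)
Your overall route is the paper's. You decompose $M^G$ into ergodic systems, restrict fibrewise to the $H$-invariant part generated by $M$, and prove surjectivity on simple sections. That last step applies \autoref{lem:Tfull-Fix-cap-MG} to the indicator sections of the partition and then forms $\bigcup_j (c_j\cap a_j)$, and it is exactly right.

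There is, however, a genuine gap in the construction of the fibres: you assert $M_\omega\subseteq\Fix_H(N_\omega)$ for every $\omega$, and this does not follow. An element $a\in M\subseteq\Fix_H(M^G)$ only satisfies $ha=a$ in the direct integral. That means $h\,a(\omega)=a(\omega)$ for $\mu$-almost every $\omega$, with an exceptional null set depending on $h$ and on the chosen representative. For arbitrary representatives the values $a(\omega)$ can fail to be $H$-fixed on a null set. If $H$ is uncountable, you cannot even intersect the exceptional sets over all $h\in H$.

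This matters because without $M_\omega\subseteq\Fix_H(N_\omega)$ the pair $(N_\omega,M_\omega)$ is not a model of $\PMP_{G,H}$. The induced $\cL_{G/H}$-structure on $M_\omega$ then violates axioms of $\PMP_{G/H}$ such as $\sup_x\mu(hx\triangle x)\leq 0$ for $h\in H$. As a result, \autoref{cor:PMP-GH-extremal-models} cannot be applied.

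A related omission is that you never reduce to countable $G$. The decomposition of $M^G$ you quote from \cite{BITaffine} rests on simpliciality of $\PMP_G$ (\cite[Thm.~28.3]{BITaffine}), which is proved only for countable $G$. Here $G$ may be uncountable, for instance $G=\cl{G}$.

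The paper closes both holes in two moves.
\begin{enumerate}
\item Using \autoref{rmk:PMP-GH-vs-clGclH}, it replaces $G$ by a countable dense subgroup whose intersection with $H$ is dense in $H$, so $H$ becomes countable.
\item For representatives $e_i$ of an enumeration of $M$, it sets $Z_i=\bigcap_{h\in H}\{\omega : d^{N_\omega}(e_i(\omega),he_i(\omega))=0\}$, which is measurable and conull because $H$ is countable. It then defines $e'_i$ to equal $e_i$ on $Z_i$ and $\bZero$ off $Z_i$, and lets $M_\omega$ be the probability algebra generated by $e'_I(\omega)$.
\end{enumerate}
This makes $M_\omega\subseteq\Fix_H(N_\omega)$ hold everywhere. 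Taking the generated subalgebra also repairs your ``closure of the set of values'', which need not be a subalgebra when representatives are chosen incoherently.

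Two of your steps can then be simplified:
\begin{itemize}
\item Ergodicity of $(M_\omega)^G$ follows simply because it is a subsystem of the ergodic system $N_\omega$, so your claim $M_\omega^G=N_\omega$ is unnecessary.
\item In the simple-section step you only need the trivial direction: an indicator section is pointwise $G$-invariant, hence lies in $\Fix_G$ of the direct integral. Neither the full identification of $\MALG(\Omega)$ with $\Fix_G(M^G)$ nor \autoref{thm:decomposition-uniqueness} is needed.
\end{itemize}
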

\begin{proof}
By \autoref{rmk:PMP-GH-vs-clGclH}, up to passing to a dense subgroup of $G$ (in the topology inherited from $\Sym(\cS)$) whose intersection with $H$ be dense in $H$, we may assume that $G$ is countable.

Let $M\models \PMP_{G/H}^\full$, and consider the associated model $M^G\models \PMP_G$.
As recalled earlier, all models of $\PMP_G$ are decomposable.
More precisely, the affine theory $\PMP_G$ is simplicial by \cite[Thm.~28.3]{BITaffine} (the proof of this result uses $G$ is countable), and therefore, by the Extremal Decomposition Theorem (\cite[Thm.~12.3]{BITaffine}), $M^G$ is isomorphic to the direct integral
\begin{equation*}
N_{\Omega,J} = \int^\oplus_\Omega N_\omega d\mu_\Omega(\omega)
\end{equation*}
of a measurable field $(N_\Omega,e_J)$ of extremal models of $\PMP_G$.
Moreover, also by \cite[Thm.~28.3]{BITaffine}, the extremal models of $\PMP_G$, in the sense of \cite{BITaffine}, are precisely the ergodic $G$-systems (in particular, they coincide with the qf-extremal models).

Let us recall the construction of the measurable field $(N_\Omega,e_J)$ in more detail, as given in \cite[\textsection10]{BITaffine}. (We already considered and adapted this construction in the proof of \autoref{thm:decomposition-for-aec-models}.)
First we fix any tuple $a\in (M^G)^J$ enumerating the model $M^G$ or a dense subset thereof.
In particular, we may choose $a$ so that for some set of indices $I\subseteq J$, the subtuple $a|_I$ enumerates $M$.
We then fix a tuple of variables $x = (x_j)_{j\in J}$ and let 
\begin{equation*}
\Omega = \cE^\fM_x(\PMP_G)\subseteq \tS^\aff_x(\PMP_G)
\end{equation*}
be the set of types of tuples that enumerate (dense subsets of) ergodic $G$-systems.
For each $\omega\in \Omega$, we let $e_J(\omega) = (e_j(\omega))_{j\in J}$ be a realization of $\omega$ in an ergodic model $N_\omega$ with $N_\omega = \cl{\{e_j(\omega):j\in J\}}$.
The tuple $e_J$ is a pointwise enumeration of the field $N_\Omega = (N_\omega : \omega\in \Omega)$.

On the other hand, let $\mu$ denote the unique boundary measure on the simplex $\tS^\aff_x(\PMP_G)$ whose barycenter is $\tp^\aff(a)$.
We see it as defined on $\cB$, the $\sigma$-algebra of Baire subsets of $\tS^\aff_x(\PMP_G)$.
Then $\mu$ \emph{concentrates} on $\Omega$ (in the sense of \cite[Def.~1.15]{BITaffine}), and thereby defines a probability measure $\mu_\Omega$ on $\Omega$, endowed with the $\sigma$-algebra $B_\Omega = \{B\cap\Omega : B\in\cB\}$, given by $\mu_\Omega(B\cap\Omega) = \mu(B)$.
It follows from \cite[Prop.~3.31(i)]{BITaffine} and \autoref{lem:point-like-atoms-Choquet-simplex} that the probability space $(\Omega,\cB_\Omega,\mu_\Omega)$ has point-like atoms.
Equipping $\Omega$ with this measure, the pair $(N_\Omega,e_J)$ becomes a measurable field of models of $\PMP_G$, and the map $a\mapsto e_J$ extends uniquely to an isomorphism $M^G\cong N_{\Omega,J}$.

In order to produce a decomposition for $M$, let us first modify the tuple $e_I = e_J|_I$ slightly.
For every $i\in I$, let
\begin{equation*}
Z_i = \bigcap_{h\in H} \left\{\omega\in\Omega : d^{N_\omega}\big(e_i(\omega),he_i(\omega)\big) = 0\right\}.
\end{equation*}
Since $H$ is countable, the sets $Z_i$ are measurable.
Moreover, since
\begin{equation*}
0 = d^{M^G}(a_i,h(a_i)) = d^{N_{\Omega,J}}(e_i,he_i) = \int_\Omega d^{N_\omega}\big(e_i(\omega),he_i(\omega)\big) d\mu_\Omega(\omega)
\end{equation*}
for every $i\in I$ and $h\in H$, we have $\mu_\Omega(Z_i) = 1$.
We will then replace $e_i$ by the section:
\begin{equation*}
e'_i(\omega) =
\begin{cases*}
e_i(\omega) & if $\omega\in Z_i$ \\
\bZero & if $\omega\notin Z_i$.
\end{cases*}
\end{equation*}

Now, for each $\omega\in \Omega$, we let
\begin{equation*}
M_\omega = \cl{\langle \{e'_i(\omega) : i\in I\} \rangle}_{\cL_\PrA}\subseteq N_\omega
\end{equation*}
be the probability algebra generated by $e'_I(\omega)$.
By construction, $M_\omega\subseteq \Fix_H(N_\omega)$ for every $\omega\in\Omega$.
In particular, the pairs $(N_\omega,M_\omega)$ form models of $\PMP_{G,H}$, and with the induced structure we may see each $M_\omega$ as a model of $\PMP_{G/H}$.
Note that since the $G$-systems $N_\omega$ are ergodic, so are the generated subsystems $(M_\omega)^G \subseteq N_\omega$.
Therefore, by \autoref{cor:PMP-GH-extremal-models}, each $M_\omega$ is a qf-extremal model of $\PMP_{G/H}$.

The tuple of sections $e'_I$ is a pointwise generating family for the field of $\cL_{G/H}$-structures $M_\Omega = (M_\omega : \omega\in \Omega)$, and the pair $(M_\Omega,e'_I)$ is a qf-measurable field.
As per \autoref{rmk:non-degenerate-two-constants}, it is also non-degenerate.
Moreover, by construction and recalling \autoref{prop:qf-Los}, we have the following closely related facts:
\begin{itemize}
\item $M$ is $\cL_{G/H}$-embedded in the direct integral $M_{\Omega,I} = \int^\oplus_\Omega M_\omega d\mu_\Omega(\omega)$, via the map $\zeta\colon a|_I\mapsto e'_I$;
\item the direct integral $M_{\Omega,I}$ is a probability measure subalgebra of $N_{\Omega,J}\cong M^G$;
\item the pair $(M^G,M)$ is $\cL_{G,H}$-embedded in $(N_{\Omega,J},M_{\Omega,I})$.
\end{itemize}
To conclude the proof, we will show that the embedding of $\zeta\colon M\to M_{\Omega,I}$ is surjective.
By \autoref{lem:simple-sections-are-dense}, we just need to prove that every simple section $b$ in $M_{\Omega,I}$ belongs to $\zeta(M)$.
Let $b$ be given by $b(\omega) = t_k(e'_I(\omega))$ for $\omega\in C_k$ and $k<n$, for some finite measurable partition $\Omega = \bigsqcup_{k<n}C_k$ and $\cL_{\PrA}$-terms $t_k(x|_I)$.

For each $k<n$, let $\tilde{c}_k\in M_{\Omega,I}\subseteq N_{\Omega,J}$ be the simple section defined by
\begin{equation*}
\tilde{c}_k(\omega) =
\begin{cases*}
\bOne & if $\omega\in C_k$ \\
\bZero & if $\omega\notin C_k$.
\end{cases*}
\end{equation*}
Since the sections $\tilde{c}_k$ are pointwise $G$-invariant, they are $G$-invariant in the direct integral, i.e., $\tilde{c}_k\in \Fix_G(N_{\Omega,J})$ for every $k<n$.
Therefore, since the pair $(\cl{G},\cl{H})$ has Property (T) and $M\models \PMP^\full_{G/H}$, by \autoref{lem:Tfull-Fix-cap-MG} we have
\begin{equation*}
\tilde{c}_k\in \zeta(M)
\end{equation*}
for every $k<n$.
Say $\tilde{c}_k = \zeta (c_k)$ with $c_k\in M$.

Finally, we consider the element
\begin{equation*}
c = \bigcup_{k<n} c_k\cap t_k(a|_I) \in M,
\end{equation*}
and we claim that $\zeta(c) = b$.
Indeed, for every $\omega\in\Omega$ and $\ell<n$, we have
\begin{equation*}
\zeta(c)(\omega) = \bigcup_{k<n} \tilde{c}_k(\omega)\cap t_k(e'_I(\omega)) = t_\ell(e'_I(\omega))\quad\text{if and only if}\quad \omega\in C_\ell,
\end{equation*}
just as $b(\omega)$.
This shows that $M\cong \int^\oplus_\Omega M_\omega d\mu_\Omega(\omega)$, and hence that $M$ is a decomposable model of $\PMP_{G/H}$.
\end{proof}

\begin{cor}
\label{cor:PMPGH-property-D}
Assume the pair $(\cl{G},\cl{H})$ has Property (T).
Let $M$ be an affinely existentially closed model of $\PMP_{G/H}$.
Then every model of $\Th^\cont_{\forall\exists}(M)$ is a decomposable model of $\PMP_{G/H}$, in the sense of \autoref{dfn:decomposable-models}.
\end{cor}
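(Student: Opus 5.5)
The plan is to reduce the statement to \autoref{prop:Tfull-decomposability}. Concretely, I will show that every model of $\Th^\cont_{\forall\exists}(M)$ is a model of $\PMP^\full_{G/H}$, and also a model of $\PMP_{G/H}$.

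First, by \autoref{lem:aec-implies-full}, the affinely existentially closed model $M$ is full. Hence, by the very definition of $\PMP^\full_{G/H}$ as the intersection of the $\forall\exists$-continuous theories of all full models, we have $\PMP^\full_{G/H}\subseteq \Th^\cont_{\forall\exists}(M)$. Consequently, every model $N\models \Th^\cont_{\forall\exists}(M)$ satisfies $\PMP^\full_{G/H}$.

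Second, I check that such an $N$ is a model of $\PMP_{G/H}$. The theory $\PMP_{G/H}$ is universal and affine, so its axioms are universal continuous conditions. Each of them is, trivially, a $\forall\exists$-condition with empty block of existential variables. These conditions hold in $M$ and therefore belong to $\Th^\cont_{\forall\exists}(M)$, so $N\models\PMP_{G/H}$. Alternatively, this already follows from $N\models\PMP^\full_{G/H}$, since the trivial model is full and universal axioms lie in every full model's theory.

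Finally, I apply \autoref{prop:Tfull-decomposability}, using the standing assumption that $(\cl{G},\cl{H})$ has Property (T). It gives that $N$ is a decomposable model of $\PMP_{G/H}$ in the sense of \autoref{dfn:decomposable-models}, which concludes the argument. The only step requiring any thought is the observation that a.e.c.\ models are full, and that is already provided by \autoref{lem:aec-implies-full}. The substantive work (relative Property (T) ensuring $\Fix_G(N^G)\subseteq N$, and hence surjectivity of the decomposition map) has been done in \autoref{lem:Tfull-Fix-cap-MG} and \autoref{prop:Tfull-decomposability}. So I expect no real obstacle beyond correctly matching the definitions.
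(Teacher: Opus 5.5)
Your proposal is correct and follows the paper's own proof: the paper likewise combines \autoref{lem:aec-implies-full} (so that $\PMP^\full_{G/H}\subseteq \Th^\cont_{\forall\exists}(M)$) with \autoref{prop:Tfull-decomposability}. Your check that such models also satisfy $\PMP_{G/H}$, because its universal axioms lie in $\Th^\cont_{\forall\exists}(M)$, is exactly the bookkeeping the paper leaves implicit.
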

\begin{proof}
By \autoref{lem:aec-implies-full} and \autoref{prop:Tfull-decomposability}.
\end{proof}

\begin{rmk}
In the preceding corollary, we need not suppose that $M$ is qf-extremal.
\end{rmk}

\begin{rmk}
If the pair $(\cl{G},\cl{H})$ has \emph{strong} Property (T), in the sense that the set $Q$ in \autoref{dfn:relative-(T)} can always be taken finite, then in the preceding arguments it is enough to consider the $\forall\exists$-\emph{affine} theory of the full models of $\PMP_{G/H}$.
(The crucial point is that in \autoref{lem:Tfull-Fix-cap-MG}, the size of the set $F$ does no longer depend on the term $t$.)
In particular, in that case, the conclusion of \autoref{cor:PMPGH-property-D} holds for all models of $\Th^\aff_{\forall\exists}(M)$.
\end{rmk}

The following is our main theorem concerning simplices of invariant measures.

\begin{thm}
\label{thm:main-final}
Let $G$ be a transitive permutation group on a countable set $\cS$, and let $H$ be the stabilizer of a point of $\cS$.
Suppose $\cl{H}$ has relative Property (T) in the Polish group $\cl{G}$.
Then for every compact metrizable space $K$ with $|K|>1$, the Choquet simplex $\cM_\inv(K^\cS)$ is Bauer if and only if $\cl{G}$ has Property (T), and is Poulsen otherwise.
\end{thm}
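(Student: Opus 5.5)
The plan is to first settle the case $K = 2^{\aleph_0}$, i.e.\ $2^x$ with $x$ a countably infinite tuple of variables, and then reduce general $K$ to it.

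\textbf{The case $K=2^{\aleph_0}$.} By \autoref{rmk:PMP-GH-vs-clGclH} we may work with $\PMP_{G/H}$, where $\cS = G/H$. By \autoref{cor:PMP-GH-basic-properties}, this theory is qf-simplicial, face-preserving, irreducible and Robinson. By \autoref{cor:PMPGH-property-D}, relative Property (T) of $(\cl{G},\cl{H})$ gives hypothesis (D) of \autoref{thm:abstract-dichotomy}, since every a.e.c.\ model, in particular every qf-extremal one, satisfies its conclusion. Hence $\PMP_{G/H}$ is qf-Bauer or qf-Poulsen. By \autoref{prop:PMP-GH-qf}, and by the remark after the definition of qf-Bauer/qf-Poulsen that these conditions extend to arbitrary tuples, the simplex $\tS^\qf_x(\PMP_{G/H}) \cong \cM_\inv\big((2^x)^{G/H}\big)$ is then Bauer, or has dense extreme points. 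It is metrizable, being the space of invariant measures on a compact metrizable space since $\cS$ is countable. It also has at least two points, namely the Dirac masses at the two constant configurations. So in the second case it is the Poulsen simplex.

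\textbf{General $K$.} Take a continuous surjection $\pi\colon 2^{\aleph_0}\to K$. Exactly as in the proof of \autoref{thm:Glasner-Weiss-revisited}, a universally measurable section shows that $\pi^\cS_{*\inv}\colon \cM_\inv\big((2^{\aleph_0})^\cS\big)\to\cM_\inv(K^\cS)$ is a continuous affine surjection. By \autoref{prop:cMinv(X)-properties} it preserves extreme points, and in fact it maps $\cE$ onto $\cE$: the preimage of an extreme point is a closed face, whose extreme points are extreme in the domain. Consequently, if the domain is Bauer, the image of its closed extreme set is compact and equals $\cE(\cM_\inv(K^\cS))$, so the target is Bauer. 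If the domain is Poulsen, the target has dense extreme points and, since $|K|>1$, at least two elements, so it is Poulsen. Its being a Choquet simplex is \autoref{prop:cMinv(X)-properties}.

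\textbf{Identifying which case occurs.} If $\cl{G}$ has Property (T), then $\cM_\inv(K^\cS)$ is Bauer by \autoref{prop:prop-T-implies-Bauer}, since $G$- and $\cl{G}$-invariance coincide. Conversely, suppose $\cl{G}$ fails (T). Then $\cM_\inv(2^\cS)$ is not Bauer by \autoref{prop:prop-T-iff-Bauer}, applied to the Polish group $\cl{G}$ and its open subgroup $\cl{H}$. Now embed $2\hookrightarrow K$ via two distinct points. The sequence $\nu_k$ of ergodic measures converging to a non-ergodic measure, constructed in that proof, pushes forward to ergodic measures on $K^\cS$. Their limit is a nontrivial combination of two distinct Dirac measures at constant configurations, hence not ergodic. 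So $\cM_\inv(K^\cS)$ is not Bauer, and by the dichotomy it is Poulsen.

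\textbf{Main obstacle.} The substantive input is entirely in the cited results, above all hypothesis (D) via \autoref{cor:PMPGH-property-D}. The only delicate points here are the routine transfer from $2^{\aleph_0}$ to arbitrary $K$, including surjectivity of the pushforward on extreme points, and the check that non-Bauerness survives the embedding $2\subseteq K$.
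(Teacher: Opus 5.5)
Your proposal is correct and follows the paper's proof. You apply \autoref{thm:abstract-dichotomy} to $\PMP_{G/H}$ via \autoref{cor:PMP-GH-basic-properties} and \autoref{cor:PMPGH-property-D}, transfer from $(2^{\aleph_0})^\cS$ to $K^\cS$ by pushforward as in \autoref{thm:Glasner-Weiss-revisited}, and decide the side with \autoref{prop:prop-T-implies-Bauer} and \autoref{prop:prop-T-iff-Bauer}.

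The one harmless difference is at the end. The paper argues that non-Bauerness of $\cM_\inv(2^\cS)$ makes the whole theory qf-Poulsen, so $\cM_\inv((2^{\aleph_0})^\cS)$, and hence $\cM_\inv(K^\cS)$, is Poulsen. You instead verify non-Bauerness of $\cM_\inv(K^\cS)$ directly by pushing the sequence $\nu_k$ along $2\hookrightarrow K$. You also helpfully make explicit that the pushforward maps extreme points onto extreme points.
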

\begin{proof}
By \autoref{cor:PMP-GH-basic-properties} and \autoref{cor:PMPGH-property-D}, the theory $\PMP_{G/H}$ satisfies all the hypotheses of our abstract dichotomy result, \autoref{thm:abstract-dichotomy}.
Therefore, $\PMP_{G/H}$ is either a qf-Bauer or a qf-Poulsen theory.
By the description of the quantifier-free type spaces in \autoref{prop:PMP-GH-qf}, this means that the compact convex sets $\cM_\inv\big((2^x)^\cS\big)$, for countable tuples $x$, are either all Bauer simplices or are all the Poulsen simplex.

Now, if $\cl{G}$ has Property (T), then $\cM_\inv(K^\cS)$ is a Bauer simplex for any compact metrizable $K$, by \autoref{prop:prop-T-implies-Bauer}.
Otherwise, by \autoref{prop:prop-T-iff-Bauer}, the simplex $\cM_\inv(2^\cS)$ is not Bauer, and it follows from our preceding conclusion that $\cM_\inv\big((2^{\aleph_0})^\cS\big)$ is the Poulsen simplex.
But then, exactly as in the proof of \autoref{thm:Glasner-Weiss-revisited}, this implies that $\cM_\inv(K^\cS)$ is the Poulsen simplex for every compact metrizable space $K$ with $|K|>1$, as desired.
\end{proof}

We end with an application of the preceding theorem to a family of examples whose status was left open in \cite[\textsection5.5]{Austin2008} and which serves as a counterpart to the cases discussed in \autoref{rmk:oligomorphic-groups-exchangeability-theory}.

Let $\Sym_0(\bN)$ be the group of finitely supported permutations of $\bN$, acting on $(\bZ/m\bZ)^{\oplus\bN}$ by permuting coordinates.

\begin{cor}
\label{cor:finer-grained-Poulsen}
Let $m>1$.
Consider the standard action of the semidirect product $G = (\bZ/m\bZ)^{\oplus\bN}\rtimes \Sym_0(\bN)$ on $\cS = (\bZ/m\bZ)^{\oplus\bN}$.
Then for every compact metrizable space $K$ with $|K|>1$, the set of $G$-invariant measures on $K^\cS$ is the Poulsen simplex.
\end{cor}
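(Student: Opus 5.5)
The plan is to apply \autoref{thm:main-final} to this permutation group, which reduces the statement to three checks: transitivity, relative Property (T) for the pair $(\cl{G},\cl{H})$, and failure of Property (T) for $\cl{G}$.

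Transitivity is clear, since the translation subgroup $(\bZ/m\bZ)^{\oplus\bN}$ already acts simply transitively on $\cS$. Take $s_0=0$. Its stabilizer in $G$ is $H=\Sym_0(\bN)$, acting on $\cS$ by permuting coordinates.

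\textbf{The closures.} I would first identify $\cl{H}$ inside $\Sym(\cS)$. Pointwise convergence on $\cS$ is determined by the action on the basis vectors $e_n$, because each element of $\cS$ is a finite combination of them. Hence $\cl{H}$ is the image of $\Sym(\bN)$, acting by permutation of coordinates, and this action is faithful and homeomorphic onto its image. By Tsankov's theorem \cite{TsankovUnitary}, $\Sym(\bN)$ has Property (T). By the first item of the remark following \autoref{dfn:relative-(T)}, the pair $(\cl{G},\cl{H})$ then has Property (T).

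Similarly, $\cl{G}$ is the group $(\bZ/m\bZ)^{\oplus\bN}\rtimes\Sym(\bN)$. The translation part stays the discrete countable group $(\bZ/m\bZ)^{\oplus\bN}$, because a limit of maps $s\mapsto \sigma_k s+v_k$ must have $v_k$ eventually equal to the image of $0$. So $\cl{G}$ is a Polish group containing the open subgroup $\cl{H}$, with normal subgroup $V=(\bZ/m\bZ)^{\oplus\bN}$.

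\textbf{Failure of (T) for $\cl{G}$.} This is the main step. I would show that $\cl{G}$ has a continuous unitary representation with almost invariant vectors but no invariant vectors, and the plan is to exploit the infinite abelian normal subgroup $V$. Consider the Pontryagin dual $\widehat V=(\bZ/m\bZ)^{\bN}$, a compact group, with $\Sym(\bN)$ acting by permuting coordinates. Take the Koopman-type representation of $\cl{G}$ on $L^2(\widehat V,\nu)$ for a $\Sym(\bN)$-invariant measure $\nu$, where $V$ acts by multiplication by characters and $\Sym(\bN)$ acts by composition. Continuity should follow because $V$ is open in $\cl{G}$ modulo $\Sym(\bN)$.

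Choose $\nu_k$ to be the i.i.d.\ product of a measure on $\bZ/m\bZ$ giving mass $1-1/k$ to $0$. The vector $1$ is then $\Sym(\bN)$-invariant, and for fixed $v\in V$ we have $\|v\cdot 1-1\|\to0$, which yields almost invariance over every compact set $Q\subseteq\cl{G}$. Compactness of $Q$ implies its projection to the discrete group $V$ is finite.

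To remove invariant vectors, restrict to the orthogonal complement of the $V$-invariant vectors. The $V$-invariant functions are those supported on $\{0\}$, which has measure $\prod(1-1/k)=0$ for the product measure, so there are none. Then $\|1-P1\|$ is small and $1$ still works as an almost-invariant vector.

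A quicker alternative argument would be this: if $\cl{G}$ had (T), then its quotient by the closed normal subgroup $\Sym(\bN)$ would have (T). But $\Sym(\bN)$ is not normal here, so I would prefer the explicit representation above and check that it is a continuous direct sum of suitable $\cl{G}$-representations.

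Finally, apply \autoref{thm:main-final}. Since $\cl{G}$ fails Property (T), $\cM_\inv(K^\cS)$ is the Poulsen simplex for every $K$ with $|K|>1$.
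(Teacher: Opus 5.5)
Your proposal is correct and is essentially the paper's proof. Both arguments apply \autoref{thm:main-final}, using Tsankov's theorem for $\cl{H}\cong\Sym(\bN)$. Both then refute (T) for $\cl{G}$ with the same representation: translations act by characters and $\Sym(\bN)$ by coordinate permutations, on $L^2$ of an i.i.d.\ product measure concentrated near $0$. This yields no invariant vectors, yet a vector that is almost invariant under the finitely many translations appearing in a compact $Q$.

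The only differences are cosmetic. The paper uses $L^2(2^{\bN},\mu_p)$ with the normalized indicator of $A_N$, whereas you use the full dual $(\bZ/m\bZ)^{\bN}$ with the constant function $1$. Your step about $\|1-P1\|$ is vacuous: you already showed there are no $V$-invariant vectors, so $P$ is the identity.
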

\begin{proof}
The group $H = \Sym_0(\bN)$ is the stabilizer of $0\in \cS$ in $G$.
Seen as a subgroup of $\Sym(\cS)$, its closure $\cl{H}$ ($\cong \Sym(\bN)$) has Property (T) (\cite[Thm.~6.7]{TsankovUnitary}).
Therefore, by \autoref{thm:main-final}, it is enough to show that $\cl{G}$ ($\cong (\bZ/m\bZ)^{\oplus\bN}\rtimes \Sym(\bN)$) does not.

For this, let $Q\subseteq\cl{G}$ be any compact subset.
Then there is $N\in\bN$ such that $a_n=0$ for every $(a,\sigma)\in Q$ and $n\geq N$.
Choose $0<p<1$ and consider the unitary representation $\cl{G}\actson^\pi L^2\big(2^\bN,\mu_p\big)$ where $\mu_p$ is the product measure $(p,1-p)^\bN$, and the action is given by
\begin{equation*}
\big(\pi(a,\sigma)\cdot f\big)\big((x_n)_n\big) = \exp\Big({\textstyle \frac{2\pi i}{m}}{\sum_n a_nx_n}\Big) f\big((x_{\sigma(n)})_n\big).
\end{equation*}
Since $m>1$, this representation does not have invariant unit vectors.
Let $f\in L^2(2^\bN,\mu_p)$ be the characteristic function of the set $A_N=\{(x_n)_n:\forall n<N,x_n=0\}$, normalized to become a unit vector.
Then given $\epsilon>0$, if $p$ is chosen sufficiently close to $1$ (so that $\mu_p(A_N)$ is close to 1), $f$ is a $(Q,\epsilon)$-invariant unit vector.
\end{proof}

\appendix

\section{Open maps and face-preserving maps}
\label{sec:appendix:open-and-face-preserving}

In this appendix, we identify and study open, face-preserving maps in the category of compact convex sets as the appropriate counterparts to open maps in the category of compact Hausdorff spaces.
In doing so we complement, and in a sense complete, a classical theorem of Vesterstr{\o}m \cite{Vesterstrom}, and we revisit and highlight several results of Villadsen \cite{Villadsen}.

The following general definition will play a crucial role.

\begin{dfn}
Let $(A,\leq_A)$ and $(B,\leq_B)$ be partially ordered sets, and let $\beta\colon A\to B$ and $\alpha\colon B\to A$ be order-preserving maps.
We say that $\alpha$ is an \emph{upper adjoint} of $\beta$, and that $\beta$ is a \emph{lower adjoint} of $\alpha$, if for every $a\in A$ and $b\in B$ we have:\begin{equation*}
\beta(a) \leq_B b \iff a\leq_A \alpha(b).\footnote{In order theory, the pair $(\alpha,\beta)$ is called a \emph{Galois connection}.}
\end{equation*}
\end{dfn}

In the category of compact Hausdorff spaces, a continuous surjective map $\pi\colon X\to Y$ is open if and only if the dual map $\pi^*\colon C(Y)\to C(X)$
admits an upper adjoint (see \autoref{prop:open-maps} below).
We will thus study the existence of upper adjoints for the Kadison dual maps
\begin{equation*}
\pi^{*\aff}\colon \cA(Y)\to \cA(X)
\end{equation*}
arising from affine continuous surjections $X\to Y$ between compact convex sets.

Note that in general, an order-preserving map $\beta\colon A\to B$ admits at most one upper adjoint.
Also, if $\beta$ is a positive linear map between ordered vector spaces, and $\alpha\colon B\to A$ is an upper adjoint for $\beta$, then the map $\gamma\colon B\to A$ defined by $\gamma(b) = -\alpha(-b)$, is a lower adjoint for $\beta$.
Similarly, in the converse direction.
Therefore, for some of the maps we are interested in, the existence of an upper adjoint and the existence of a lower adjoint are equivalent properties.

We start by introducing a notation that we shall use throughout.

\begin{dfn}
\label{dfn:upper-adjoint}
Let $\pi\colon X\to Y$ be a surjective map between two sets.
For every bounded function $f\colon X\to \bR$, we define
\begin{equation*}
\check{f}^\pi\colon Y\to \bR,\quad \check{f}^\pi(y) = \inf f(\pi^{-1}(y)) = \inf\{f(x) : \pi(x)=y\}.
\end{equation*}
\end{dfn}

\begin{rmk}
\label{rmk:theta_f}
On $X$ we have $\check{f}^\pi\circ\pi \leq f$, and if $f_0\leq f_1$ on $X$ then $\check{f_0}^\pi \leq \check{f_1}^\pi$ on $Y$. Moreover, for every pair of bounded functions $f\colon X\to \bR$, $g\colon Y\to\bR$, we have
\begin{equation}\label{eq:check-f-pi}
g\circ\pi \leq f \iff g\leq \check{f}^\pi.
\end{equation}
In other words, the map $f\mapsto\check{f}^\pi$ is the upper adjoint of the precomposition map $\ell^\infty(Y)\to\ell^\infty(X)$, $g\mapsto g\circ\pi$.
\end{rmk}

For a compact Hausdorff space $X$, we will denote by $\ell^\infty_\lsc(X)$ the set of bounded, lower semicontinuous functions $f\colon X\to \bR$.
If $X$ is a compact convex set, we will also use the notation $\cA_\lsc(X)$ for the set of functions $f\in\ell^\infty_\lsc(X)$ that are moreover affine.

We recall the following basic facts.

\begin{lem}
\label{lem:lsc-sup}
\begin{enumerate}[wide]
\item\label{item:lsc-sup} Let $X$ be a compact Hausdorff space. Then for every $f\in\ell^\infty_\lsc(X)$ and~$x\in X$,
\begin{equation*}
f(x) = \sup\{g(x) : g\in C(X),\, g\leq f\}.
\end{equation*}
In particular, since the set $\{g\in C(X) : g\leq f\}$ is directed upward, $f$ is the pointwise limit of an increasing net from $C(X)$.
\item\label{item:lsc-aff-sup} Let $X$ be a compact convex set. Then for every convex $f\in \ell^\infty_\lsc(X)$ and $x\in X$,
\begin{equation*}
f(x) = \sup\{g(x) : g\in\cA(X),\, g\leq f\}.
\end{equation*}
If moreover $f\in\cA_\lsc(X)$, then the set $\{g\in\cA(X) : g\leq f\}$ is directed upward, and therefore $f$ is the pointwise limit of an increasing net from $\cA(X)$.
\end{enumerate}
\end{lem}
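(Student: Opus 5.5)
For \autoref{item:lsc-sup}, I will use the standard Urysohn/Tietze argument. Fix $f\in\ell^\infty_\lsc(X)$, a point $x\in X$ and a real $r<f(x)$. The set $U=\{z : f(z)>r\}$ is open and contains $x$, so by Urysohn's lemma there is a continuous $h\colon X\to[0,1]$ with $h(x)=1$ and $h=0$ off $U$. Put $m=\inf f$ (finite, since $f$ is bounded; we may assume $r>m$) and $g=m+(r-m)h$. Then $g\in C(X)$ and $g\leq f$: on $U$ we have $g\leq r<f$, and off $U$ we have $g=m\leq f$. Moreover $g(x)=r$. Letting $r\uparrow f(x)$ gives the formula. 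The set $\{g\in C(X): g\leq f\}$ is closed under $\max$, so it is directed upward, and the supremum formula says exactly that the corresponding net converges pointwise and increasingly to $f$.

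For \autoref{item:lsc-aff-sup}, I will apply Hahn--Banach separation in $E\times\bR$, where $E$ is the ambient locally convex space. Let $f$ be convex, lsc and bounded, and take $r<f(x)$. The epigraph $\{(z,t)\in X\times\bR : t\geq f(z)\}$ is convex (by convexity of $f$) and closed (by lower semicontinuity and compactness of $X$), and it does not contain $(x,r)$. Strict separation yields a continuous linear functional $\ell$ on $E$ and reals $\alpha,c$ with $\ell(z)+\alpha t>c>\ell(x)+\alpha r$ for every $(z,t)$ in the epigraph. Taking $z=x$ and $t$ large forces $\alpha>0$. Hence $g(z)=(c-\ell(z))/\alpha$ is affine and continuous on $X$, satisfies $g<t$ for all $t\geq f(z)$, so $g\leq f$, and has $g(x)>r$. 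This proves the formula.

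The main obstacle is directedness in the affine case, because the maximum of two affine functions is no longer affine. Suppose $f$ is affine, lsc and bounded, and let $g_1,g_2\in\cA(X)$ with $g_1,g_2\leq f$. The function $\max(g_1,g_2)$ is convex, continuous, and lies below $f$. I will use the classical Edwards separation theorem for a compact convex set: if $k\leq f$ with $k$ convex usc and $f$ concave lsc, then there is $g\in\cA(X)$ with $k<g<f$, or, in the non-strict version, $k\leq g\leq f$ after a small perturbation. An affine function is concave, so this applies with $k=\max(g_1,g_2)-\epsilon$. It gives $g\in\cA(X)$ with $g\leq f$ and $g\geq\max(g_1,g_2)-\epsilon$. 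To obtain an exact upper bound, I will use the fact that affine functions are determined up to constants and shift. Alternatively, I will note that pointwise approximation only needs the weak form: for every finite set of $g_i$ and every $\epsilon>0$ there is an element dominating each $g_i-\epsilon$. Directedness in the exact sense then holds after replacing the family by $\{g\in\cA(X) : g<f\}$, to which Edwards' theorem in its strict form applies directly. This is the step I would cite rather than reprove; if needed, it can be derived by a Hahn--Banach separation in $E\times\bR$ between the hypograph of $\max(g_1,g_2)$ and the epigraph of $f$, which requires some care because these sets are not compact. The increasing-net conclusion then follows exactly as in \autoref{item:lsc-sup}.
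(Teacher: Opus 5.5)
Your part (i) is the paper's Urysohn argument, and your Hahn--Banach proof of the supremum formula in (ii) is the standard argument behind the paper's citation of Alfsen. Both are correct, and your separation in fact produces $g\in\cA(X)$ with $g<f$ strictly and $g(x)>r$.

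The gap is in the directedness step. Edwards' separation theorem is a statement about Choquet simplices (it characterizes them), whereas here $X$ is an arbitrary compact convex set. For a merely concave upper function it fails even in strict form, already on the square $[0,1]^2$ with $k=|x-y|-\epsilon$ and $h=\min(x+y,2-x-y)+\epsilon$. Your fallback does not work either. Non-compactness is harmless (one can truncate); the real problem is that the hypograph of $\max(g_1,g_2)$ is not convex. The missing idea is to convexify, and this is exactly where affineness of $f$ enters:
\begin{itemize}
\item Given $g_1,g_2\in\cA(X)$ with $g_1,g_2<f$, let $H_j=\{(z,t): z\in X,\ -M\le t\le g_j(z)\}$ for $M$ large.
\item Let $C$ be the convex hull of $H_1\cup H_2$. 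It is compact and convex, and its points are $(\lambda z_1+(1-\lambda)z_2,\lambda t_1+(1-\lambda)t_2)$ with $(z_j,t_j)\in H_j$.
\item Since $\lambda t_1+(1-\lambda)t_2<\lambda f(z_1)+(1-\lambda)f(z_2)=f(\lambda z_1+(1-\lambda)z_2)$, the set $C$ misses the closed convex epigraph of $f$.
\item Take a strictly separating functional $(z,t)\mapsto \ell(z)+\alpha t$ with constant $c$. Then $\alpha\geq 0$ (vertical rays in the epigraph) and $\alpha\neq 0$ (both sets project onto $X$).
\item Hence $a=(c-\ell)/\alpha\in\cA(X)$ satisfies $g_1,g_2<a<f$.
\end{itemize}
Thus $\{g\in\cA(X): g<f\}$ is directed upward with pointwise supremum $f$, which gives the increasing net.

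Your remark on obtaining an exact upper bound is not an argument, and it cannot be completed. You were right to pass to the strict family: when $X$ is not a simplex, the set $\{g\in\cA(X): g\leq f\}$ appearing in the statement need not be directed. Here is a counterexample.
\begin{itemize}
\item Let $Z$ be the union of two isolated points $y_1,y_2$ and two convergent sequences $z_n\to z_\infty$, $w_n\to w_\infty$. Fix $t_n\in(0,1/2]$ with $t_n\to 0$, and set $b=\sup_n t_n$.
\item Let $A\subseteq C(Z)$ consist of the $\phi$ with $\frac{1}{2}(\phi(y_1)+\phi(y_2))=t_n\phi(z_n)+(1-t_n)\phi(w_n)$ for all $n$, and let $X=S(A)$, so $\cA(X)=A$.
\item A direct computation shows every $\rho\in A^\perp$ satisfies $\rho(\{z_n\})=\frac{t_n}{1-t_n}\rho(\{w_n\})$ and $\rho(\{z_\infty\})=0$.
\item Let $\psi$ be the bounded lower semicontinuous function with $\psi(z_n)=1$, $\psi(w_n)=-t_n/(1-t_n)$, and $\psi=0$ at $y_1,y_2,z_\infty,w_\infty$. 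By the previous item $\psi$ annihilates $A^\perp$, so $\mu\mapsto\mu(\psi)$ descends to some $f\in\cA_\lsc(X)$. For $a\in A$, the condition $a\le f$ on $X$ means $a\le\psi$ on $Z$.
\item Let $g_1,g_2\in A$ vanish at all $z_n$ and at $z_\infty$, equal $-b/(1-t_n)$ at $w_n$ and $-b$ at $w_\infty$, with $(g_1(y_1),g_1(y_2))=(0,-2b)$ and $(g_2(y_1),g_2(y_2))=(-2b,0)$. These satisfy $g_j\le\psi$.
\item Suppose $a\in A$ had $g_1,g_2\le a\le\psi$. Then $a(y_1)=a(y_2)=0$, so $0=t_na(z_n)+(1-t_n)a(w_n)\le t_n\psi(z_n)+(1-t_n)\psi(w_n)=0$. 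This forces $a(z_n)=1$ for every $n$, hence $a(z_\infty)=1>\psi(z_\infty)$, a contradiction.
\end{itemize}
So the correct claim is directedness of $\{g<f\}$. That is all that is needed for the increasing net, and the paper's later uses of the lemma only invoke the supremum formula.
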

\begin{proof}
For \autoref{item:lsc-sup}, using Urysohn's Lemma, for every $x\in X$ and real number $r<f(x)$ we may find $g\in C(X)$ such that $g \leq r$ on $X$, $g(x) = r$, and $g = \inf f$ on $f^{-1}(\bR_{\leq r})$. In particular, $g\leq f$, and the equality of the statement holds.
The rest is clear.

For \autoref{item:lsc-aff-sup}, see \cite[Prop.~I.1.2, Cor.~I.1.4]{Alfsen1971}.
\end{proof}

\begin{lem}
\label{lem:theta-continuity}
Let $\pi\colon X\to Y$ be a continuous surjection between compact Hausdorff spaces.
Then for every $f\in\ell^\infty_\lsc(X)$ we have $\check{f}^\pi\in \ell^\infty_\lsc(Y)$, and for every $y\in Y$,
\begin{equation}\label{eq:lower-semicont-sup}
\check{f}^\pi(y) = \sup\{g(y) : g\in C(Y),\, g\circ\pi\leq f\}.
\end{equation}
If moreover $f$ is continuous and $\pi$ is open, then $\check{f}^\pi$ is continuous.
\end{lem}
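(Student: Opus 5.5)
We will prove the three assertions in order, starting with lower semicontinuity of $\check{f}^\pi$ for $f\in\ell^\infty_\lsc(X)$. Boundedness is clear from the definition. Fix $r\in\bR$; we must show $\{y : \check{f}^\pi(y)>r\}$ is open, i.e.\ $\{y : \check{f}^\pi(y)\leq r\}$ is closed. The key observation is that, since $f$ is lower semicontinuous and each fiber $\pi^{-1}(y)$ is compact, the infimum defining $\check{f}^\pi(y)$ is attained. Hence
\begin{equation*}
\{y\in Y : \check{f}^\pi(y)\leq r\} = \pi\big(\{x\in X : f(x)\leq r\}\big).
\end{equation*}
The set $\{f\leq r\}$ is closed in $X$, hence compact, so its image under the continuous map $\pi$ is compact, hence closed in the Hausdorff space $Y$. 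This gives $\check{f}^\pi\in\ell^\infty_\lsc(Y)$.

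For the formula \autoref{eq:lower-semicont-sup}, we will combine \autoref{lem:lsc-sup}\autoref{item:lsc-sup} with the adjunction \autoref{eq:check-f-pi} of \autoref{rmk:theta_f}. Since $\check{f}^\pi$ is bounded lower semicontinuous, \autoref{lem:lsc-sup}\autoref{item:lsc-sup} gives $\check{f}^\pi(y)=\sup\{g(y): g\in C(Y),\ g\leq\check{f}^\pi\}$. By \autoref{eq:check-f-pi}, the condition $g\leq\check{f}^\pi$ is equivalent to $g\circ\pi\leq f$, which yields \autoref{eq:lower-semicont-sup}.

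Finally, suppose $f$ is continuous and $\pi$ is open. We already know $\check{f}^\pi$ is lower semicontinuous, so it remains to show upper semicontinuity, i.e.\ that $\{y : \check{f}^\pi(y)<r\}$ is open for each $r$. We will use
\begin{equation*}
\{y : \check{f}^\pi(y)<r\} = \pi\big(\{x : f(x)<r\}\big).
\end{equation*}
The inclusion $\supseteq$ is immediate. For $\subseteq$, if $\check{f}^\pi(y)<r$ then some $x\in\pi^{-1}(y)$ has $f(x)<r$; attainment of the infimum is not needed here. The set $\{f<r\}$ is open by continuity of $f$, and its image is open because $\pi$ is open. The only point requiring care in the whole argument is the attainment of the infimum on compact fibers in the first step, which uses lower semicontinuity of $f$. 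Apart from that, everything is a formal consequence of compactness, the adjunction, and the lemmas already stated.
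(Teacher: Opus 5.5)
Your proposal is correct and follows the paper's proof essentially step for step. Both use the sublevel-set identities $\{\check{f}^\pi\leq r\}=\pi(\{f\leq r\})$ and $\{\check{f}^\pi<r\}=\pi(\{f<r\})$, together with the adjunction $g\circ\pi\leq f\iff g\leq\check{f}^\pi$ and the approximation of lower semicontinuous functions from below by continuous ones. The only difference is that you explicitly justify the first identity by the attainment of the infimum of a lower semicontinuous function on the compact, non-empty fibers; the paper uses this without comment.
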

\begin{proof}
For every $r\in\bR$ we have $(\check{f}^\pi)^{-1}(\bR_{\leq r}) = \pi\bigl(f^{-1}(\bR_{\leq r})\bigr)$, which is closed, so $\check{f}^\pi$ is lower semicontinuous.
By \autoref{eq:check-f-pi}, the right-hand side in \autoref{eq:lower-semicont-sup} can be written as $\sup\{g(y) : g\in C(Y),\, g\leq\check{f}^\pi\}$.
Then \autoref{eq:lower-semicont-sup} reduces to the equality of \autoref{lem:lsc-sup}\autoref{item:lsc-sup}.

Finally, $(\check{f}^\pi)^{-1}(\bR_{<r}) = \pi\bigl(f^{-1}(\bR_{<r})\bigr)$, which is open if $f$ is continuous and $\pi$ is open. Therefore, in that case, $\check{f}^\pi$ is continuous.
\end{proof}

\begin{prop}
\label{prop:open-maps}
Let $\pi\colon X\to Y$ be a continuous surjection between compact Hausdorff spaces.
Then the following are equivalent:
\begin{enumerate}
\item $\pi$ is open.
\item\label{i:prop:open-maps:theta} For every $f\in C(X)$ we have $\check{f}^\pi \in C(Y)$.
\item\label{i:prop:open-maps:adjoint} The dual map $\pi^*\colon C(Y) \to C(X)$ admits an upper adjoint.
\end{enumerate}
Moreover, if these hold then the upper adjoint of $\pi^*$ is given by the map $f\mapsto \check{f}^\pi$.
\end{prop}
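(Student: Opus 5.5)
I will prove the cycle (i)$\Rightarrow$(ii)$\Rightarrow$(iii)$\Rightarrow$(i), together with the identification of the adjoint, relying on \autoref{rmk:theta_f} and \autoref{lem:theta-continuity}.

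\emph{(i)$\Rightarrow$(ii).} This is the last assertion of \autoref{lem:theta-continuity}: for $f\in C(X)$ and $\pi$ open, $\check{f}^\pi$ is continuous.

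\emph{(ii)$\Rightarrow$(iii) and the moreover part.} Assume $\check{f}^\pi\in C(Y)$ for all $f\in C(X)$. Then $f\mapsto\check{f}^\pi$ is an order-preserving map $C(X)\to C(Y)$ (monotonicity by \autoref{rmk:theta_f}). By the equivalence \autoref{eq:check-f-pi}, restricted to $g\in C(Y)$ and $f\in C(X)$, we get $\pi^*(g)\leq f\iff g\leq \check{f}^\pi$. So this map is an upper adjoint of $\pi^*$. Since upper adjoints are unique, this also gives the moreover clause once (iii) is known.

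\emph{(iii)$\Rightarrow$(i).} This is the step with content. Let $\alpha\colon C(X)\to C(Y)$ be an upper adjoint of $\pi^*$. First I show $\alpha(f)=\check{f}^\pi$ for every $f\in C(X)$. By \autoref{eq:lower-semicont-sup},
\begin{equation*}
\check{f}^\pi(y)=\sup\{g(y): g\in C(Y),\ g\circ\pi\leq f\}=\sup\{g(y): g\in C(Y),\ g\leq\alpha(f)\}.
\end{equation*}
The last supremum equals $\alpha(f)(y)$, since $\alpha(f)$ itself is admissible. Hence $\check{f}^\pi$ is continuous for every $f\in C(X)$.

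Now let $U\subseteq X$ be open and $y_0=\pi(x_0)$ with $x_0\in U$. By Urysohn, choose $f\in C(X)$ with $0\le f\le1$, $f(x_0)=0$ and $f=1$ off $U$. Then $\check{f}^\pi(y_0)=0$, so $V=\{y:\check{f}^\pi(y)<1/2\}$ is an open neighbourhood of $y_0$. For $y\in V$ there is $x\in\pi^{-1}(y)$ with $f(x)<1/2$, forcing $x\in U$. Thus $V\subseteq\pi(U)$, and $\pi(U)$ is open.

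The only delicate point is identifying the abstract adjoint with $\check{f}^\pi$. The formula in \autoref{lem:theta-continuity} handles this directly, because it expresses $\check{f}^\pi$ as a supremum over precisely the set that the adjunction converts into $\{g\le\alpha(f)\}$.
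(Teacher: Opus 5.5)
Your proof is correct and follows essentially the same route as the paper. Both arguments get (i)$\Rightarrow$(ii) from \autoref{lem:theta-continuity} and (ii)$\Rightarrow$(iii) from \autoref{rmk:theta_f}, identify the abstract adjoint with $\check{f}^\pi$ via the supremum formula of \autoref{lem:theta-continuity}, and obtain openness from a Urysohn argument; the only cosmetic difference is that you use a single function per point with threshold $1/2$, whereas the paper writes $\pi(U)$ as the union of the sets $(\check{f_x}^\pi)^{-1}(\bR_{<0})$.
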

\begin{proof}
\begin{cycprf}
\item By \autoref{lem:theta-continuity}.

\item By \autoref{rmk:theta_f}.

\item[\impprev] Suppose $\pi^*$ admits an upper adjoint $\alpha\colon C(X) \to C(Y)$.
Then by \autoref{lem:theta-continuity}, for every $f\in C(X)$ and $y\in Y$,
\begin{equation*}
\check{f}^\pi(y) = \sup\{g(y) : g\in C(Y),\, g\leq \alpha(f)\} = \alpha(f)(y).
\end{equation*}
Therefore, $\check{f}^\pi = \alpha(f) \in C(Y)$.

\item[\impprev] Let $U\subseteq X$ be an open subset.
By Urysohn's Lemma, for every $x\in U$ we may find $f_x\in C(X)$ such that $f_x=0$ on $X\setminus U$ and $f_x(x) < 0$.
Then $U = \bigcup_{x\in U} f_x^{-1}(\bR_{<0})$, and
\begin{equation*}
\pi(U) = \bigcup_{x\in U} \pi\bigl(f_x^{-1}(\bR_{<0})\bigr) = \bigcup_{x\in U} \big(\check{f_x}^\pi\big)^{-1}(\bR_{<0}).
\end{equation*}
If the functions $\check{f_x}^\pi$ are continuous, then $\pi(U)$ is open.
\end{cycprf}
\end{proof}

Before turning our attention to the category of compact convex sets, we state the result of Vesterstr{\o}m mentioned at the beginning of the section, which mixes both categories.
See \cite[Thm.~2.1]{Vesterstrom}.
We include his nice proof here, for the convenience of the reader.

\begin{thm}
\label{thm:Vesterstrom}
Let $\pi\colon X\to Y$ be an affine, continuous surjection between compact convex sets.
Then the following are equivalent:
\begin{enumerate}
\item $\pi$ is open.
\item For every $f\in\cA(X)$ we have $\check{f}^\pi\in C(Y)$.
\end{enumerate}
\end{thm}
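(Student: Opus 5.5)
The direction (i)$\Rightarrow$(ii) is immediate from \autoref{lem:theta-continuity}, since $\cA(X)\subseteq C(X)$. The substance is the converse, which I would prove in two stages: first upgrade the hypothesis from affine continuous $f$ to convex continuous $f$, and then invoke \autoref{prop:open-maps}, which in turn needs all continuous $f$.

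\textbf{Stage 1: convex continuous functions.} Let $f\in C(X)$ be convex. By \autoref{lem:lsc-sup}\autoref{item:lsc-aff-sup}, $f=\sup\{g\in\cA(X): g\leq f\}$. For each $y\in Y$ the fibre $\pi^{-1}(y)$ is compact and convex, and $f$ is convex and continuous on it. A minimax argument then gives
\begin{equation*}
\check{f}^\pi(y)=\inf_{x\in\pi^{-1}(y)}\ \sup_{g\leq f} g(x)=\sup_{g\leq f}\ \check{g}^\pi(y).
\end{equation*}
This interchange is justified either by Ky Fan's minimax theorem (the family $\{g\in\cA(X):g\le f\}$ is convex, and each $g$ is affine and continuous on a compact convex set), or by Dini's theorem applied to the upward-directed family after a max/convex-combination trick. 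So $\check{f}^\pi$ is a supremum of continuous functions and hence lower semicontinuous, which we already knew from \autoref{lem:theta-continuity}. For upper semicontinuity, use that $\check{f}^\pi(y)<r$ means some $x$ with $\pi(x)=y$ has $f(x)<r$. The cleaner route is a direct openness argument: show that $\pi(U)$ is open for the basic open sets $U=\{x: g_1(x)<r_1,\dots,g_n(x)<r_n\}$ with $g_i\in\cA(X)$. These form a base of the topology, since affine continuous functions separate points. The set $U$ is convex, and $\pi(U)=\{y:\exists x\in\pi^{-1}(y),\ \max_i(g_i(x)-r_i)<0\}$. By a Hahn--Banach/minimax argument on the compact convex fibre,
\begin{equation*}
\inf_{\pi^{-1}(y)}\max_i (g_i-r_i)=\max_{\lambda\in\Delta_n}\ \inf_{\pi^{-1}(y)}\sum_i\lambda_i(g_i-r_i)=\max_{\lambda\in\Delta_n}\big(\textstyle\sum_i\lambda_i g_i\big)^{\vee\pi}(y)-\textstyle\sum_i\lambda_i r_i,
\end{equation*}
where $h^{\vee\pi}$ denotes $\check{h}^\pi$ and $\Delta_n$ is the standard simplex.

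\textbf{Stage 2: conclude openness.} Each map $y\mapsto(\sum_i\lambda_i g_i)^{\vee\pi}(y)$ is continuous by hypothesis (ii). It also depends $\|\cdot\|_\infty$-Lipschitzly on $\lambda$, since $h\mapsto\check h^\pi$ is $1$-Lipschitz for the sup norm. Hence the maximum over the compact set $\Delta_n$ is continuous in $y$. Therefore
\begin{equation*}
\pi(U)=\Big\{y:\max_{\lambda\in\Delta_n}\big(\big(\textstyle\sum_i\lambda_ig_i\big)^{\vee\pi}(y)-\textstyle\sum_i\lambda_ir_i\big)<0\Big\}
\end{equation*}
is open. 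Since every open set of $X$ is a union of such basic sets $U$, the map $\pi$ is open.

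\textbf{Main obstacle.} The key step is the minimax interchange on each fibre. The function $(x,\lambda)\mapsto\sum_i\lambda_i(g_i(x)-r_i)$ is affine and continuous in each variable, and both $\pi^{-1}(y)$ and $\Delta_n$ are compact convex, so von Neumann's/Sion's minimax theorem applies directly. I expect that to be the only nontrivial input; the remaining checks (that these $U$ form a base, and the Lipschitz continuity in $\lambda$) are routine.
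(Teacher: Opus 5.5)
Your direct argument (basic sets $U$ defined by finitely many affine functions, a minimax interchange on the compact convex fibre, then continuity of $y\mapsto\max_{\lambda\in\Delta_n}$ via the $1$-Lipschitz estimate) is correct and is essentially the paper's proof. The paper uses hyperplane separation in $\bR^n$ where you use a minimax theorem, and it writes $Y\setminus\pi(U)$ as the projection of the compact set $\{(v,y):\check{f_v}^\pi(y)\geq 0\}$ where you use continuity of the maximum. Your opening plan via convex functions and \autoref{prop:open-maps} is never used and can be dropped.
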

\begin{proof}
The downward implication follows from \autoref{lem:theta-continuity}.
For the converse, it is enough to show that for every open set $U = \bigcap_{i<n} \{x\in X : f_i(x) <0 \}$ defined by some $f_i\in \cA(X)$, $i<n$, the set $\pi(U)$ is open in $Y$.
Fix such functions $f_i$, and for each $y\in Y$ consider the compact convex set
\begin{equation*}
C(y) = \left\{\bigl(f_i(x)\bigr)_{i<n} : x\in \pi^{-1}(y)\right\} \subseteq \bR^n.
\end{equation*}
Thus, $y\in \pi(U)$ if and only if $C(y)\cap (\bR_{<0})^n \neq \emptyset$.
On the other hand, by the Hyperplane Separation Theorem, $C(y)\cap (\bR_{<0})^n = \emptyset$ if and only if there exists $v\in (\bR_{\geq 0})^n$ with $\|v\| = 1$ such that
\begin{equation*}
\bigl\langle v,\bigl(f_i(x)\bigr)_{i<n}\bigr\rangle = \sum_{i<n}v_i f_i(x) \geq 0
\end{equation*}
for all $i<n$ and $x\in \pi^{-1}(y)$.
Therefore, letting $f_v = \sum_{i<n}v_if_i \in \cA(X)$, we have
\begin{equation*}
Y\setminus \pi(U) = \proj_Y \left\{(v,y) \in (\bR_{\geq 0})^n \times Y: \|v\| = 1,\, \check{f_v}^\pi(y)\geq 0\right\}.
\end{equation*}
We observe that under the assumption that the functions $\check{f_v}^\pi$ are continuous, the map $(v,y) \mapsto \check{f_v}^\pi(y)$ is continuous.
Indeed, since in general $\|\check{f}^\pi - \check{h}^\pi\|_\infty \leq \|f-h\|_\infty$, we have
\begin{equation*}
\bigl|\check{f_v}^\pi(y) - \check{f_{v'}}^\pi(y')\bigr| \leq \bigl|\check{f_v}^\pi(y) - \check{f_v}^\pi(y')\bigr| + \|v-v'\|_1\cdot \max_{i<n}\|f_i\|_\infty.
\end{equation*}

Hence $Y\setminus \pi(U)$ is compact and $\pi(U)$ is open, as desired.
\end{proof}

\begin{dfn}
\label{dfn:convex-lifting}
Let $\pi\colon X\to Y$ be a function between compact convex sets.
We will say $\pi$ has the \emph{convex lifting property} if whenever the image of an element $x\in X$ can be written as a convex combination
\begin{equation*}
\pi(x) = \lambda y_0 + (1-\lambda)y_1,\quad y_0,y_1\in Y,\ 0< \lambda< 1,
\end{equation*}
there exist $x_0,x_1\in X$ such that
\begin{equation*}
x = \lambda x_0 + (1-\lambda)x_1,\ \pi(x_0)=y_0\ \text{and}\ \pi(x_1)=y_1.
\end{equation*}
\end{dfn}

\begin{rmk}
\label{rmk:CLP-induction}
By induction, if $\pi$ has the convex lifting property and $\pi(x) = \sum_{i<n}\lambda_i y_i$ is a finite convex combination of elements $y_i\in Y$ and coefficients $\lambda_i>0$, then there exist $x_i\in X$ such that $x = \sum_{i<n}\lambda_i x_i$ and $\pi(x_i) = y_i$ for each $i<n$.
\end{rmk}

\begin{lem}
\label{lem:theta-affineness}
Let $\pi\colon X\to Y$ be an affine surjection between compact convex sets.
If $f\in\ell^\infty_\lsc(X)$ is convex, then so is $\check{f}^\pi$.
If moreover $f$ is affine and $\pi$ has the convex lifting property, then $\check{f}^\pi$ is affine.
\end{lem}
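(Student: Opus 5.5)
We argue directly from the definition $\check{f}^\pi(y)=\inf f(\pi^{-1}(y))$, splitting the statement into its two assertions. The standing facts we use are: $\pi$ is affine and surjective, so each fibre $\pi^{-1}(y)$ is a non-empty convex subset of $X$.

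\emph{Convexity.} Let $f\in\ell^\infty_\lsc(X)$ be convex, and take $y=\lambda y_0+(1-\lambda)y_1$ with $0<\lambda<1$. Fix $\epsilon>0$ and choose $x_0\in\pi^{-1}(y_0)$ and $x_1\in\pi^{-1}(y_1)$ with $f(x_j)\le \check{f}^\pi(y_j)+\epsilon$.
\begin{itemize}
\item Since $\pi$ is affine, the point $x=\lambda x_0+(1-\lambda)x_1$ lies in $\pi^{-1}(y)$.
\item By convexity of $f$,
\[
\check{f}^\pi(y)\le f(x)\le \lambda f(x_0)+(1-\lambda)f(x_1)\le \lambda\check{f}^\pi(y_0)+(1-\lambda)\check{f}^\pi(y_1)+\epsilon .
\]
\item Letting $\epsilon\to 0$ gives convexity of $\check{f}^\pi$.
\end{itemize}
Neither lower semicontinuity nor compactness is needed here; boundedness alone makes the infima finite. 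Lower semicontinuity of $\check{f}^\pi$ is anyway known from \autoref{lem:theta-continuity} in the continuous case.

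\emph{Affineness.} Now let $f$ be affine and suppose $\pi$ has the convex lifting property. By the first part it suffices to prove the reverse (concave) inequality
\[
\check{f}^\pi(y)\ge \lambda\check{f}^\pi(y_0)+(1-\lambda)\check{f}^\pi(y_1).
\]
\begin{itemize}
\item Take any $x\in\pi^{-1}(y)$.
\item Since $\pi(x)=\lambda y_0+(1-\lambda)y_1$, the convex lifting property (\autoref{dfn:convex-lifting}) gives $x_0,x_1$ with $x=\lambda x_0+(1-\lambda)x_1$ and $\pi(x_j)=y_j$.
\item Since $f$ is affine,
\[
f(x)=\lambda f(x_0)+(1-\lambda)f(x_1)\ge \lambda\check{f}^\pi(y_0)+(1-\lambda)\check{f}^\pi(y_1).
\]
\item Taking the infimum over $x\in\pi^{-1}(y)$ yields the inequality.
\end{itemize}
The endpoint cases $\lambda\in\{0,1\}$ are trivial.

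The only real ingredient is the convex lifting property in the second step, which lets an arbitrary point of the fibre over $y$ be split along the given decomposition of $y$. Without it the concave inequality can fail, so no step is expected to be a serious obstacle.
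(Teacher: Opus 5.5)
Your proof is correct and follows essentially the same argument as the paper: convexity comes from $\epsilon$-approximate minimizers in the fibres over $y_0,y_1$ together with affineness of $\pi$, and concavity comes from applying the convex lifting property to a point of the fibre over $y$ together with affineness of $f$. The only difference is cosmetic: you lift an arbitrary point of $\pi^{-1}(y)$ and then take the infimum, whereas the paper lifts an $\epsilon$-approximate minimizer.
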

\begin{proof}
If $y = \lambda y_0 + (1-\lambda) y_1$ for $y,y_0,y_1\in Y$ and $\lambda\in [0,1]$, then for every $\epsilon>0$ we may find $x_0,x_1\in X$ such that $f(x_i) \leq \check{f}^\pi(y_i) + \epsilon$ and $\pi(x_i) = y_i$ for $i=0,1$.
Therefore $\pi(\lambda x_0 + (1-\lambda)x_1) = y$, and
\begin{align*}
\check{f}^\pi(y) & \leq f(\lambda x_0 + (1-\lambda)x_1) \leq \lambda f(x_0) + (1-\lambda)f(x_1) \\
& \leq \lambda \check{f}^\pi(y_0) + (1-\lambda)\check{f}^\pi(y_1) + \epsilon.
\end{align*}
We deduce that $\check{f}^\pi$ is convex.
If moreover $f$ is affine and $\pi$ has the convex lifting property, let $x\in X$ be such that $f(x) \leq \check{f}^\pi(y) + \epsilon$ and $\pi(x) = y$, and choose $z_0,z_1\in X$ with $x=\lambda z_0 + (1-\lambda)z_1$ and $\pi(z_i) = y_i$.
Then we have
\begin{align*}
\check{f}^\pi(y) +\epsilon & \geq f(\lambda z_0 + (1-\lambda)z_1) = \lambda f(z_0) + (1-\lambda)f(z_1) \\
& \geq \lambda \check{f}^\pi(y_0) + (1-\lambda)\check{f}^\pi(y_1).
\end{align*}
This shows that $\check{f}^\pi$ is also concave, and therefore affine.
\end{proof}

\begin{lem}\label{lem:check-f-equivalent-def-aff}
Let $\pi\colon X\to Y$ be an affine, continuous surjection between compact convex sets.
Let $f\in\ell^\infty_\lsc(X)$ be a convex function.
Then for every $y\in Y$,
\begin{equation*}
\check{f}^\pi(y) = \sup\{g(y) : g\in \cA(Y),\, g\circ\pi\leq f\}.
\end{equation*}
\end{lem}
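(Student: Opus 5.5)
We will prove the two inequalities separately, following the template of \autoref{lem:theta-continuity}, with \autoref{lem:lsc-sup}\autoref{item:lsc-aff-sup} playing the role that \autoref{lem:lsc-sup}\autoref{item:lsc-sup} played there.

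The inequality $\geq$ is immediate from \autoref{rmk:theta_f}. If $g\in\cA(Y)$ satisfies $g\circ\pi\leq f$, then by \autoref{eq:check-f-pi} we have $g\leq\check{f}^\pi$, and in particular $g(y)\leq\check{f}^\pi(y)$.

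For the inequality $\leq$, we first record the regularity of $\check{f}^\pi$.
\begin{enumerate}
\item By \autoref{lem:theta-continuity}, since $\pi$ is a continuous surjection between compact Hausdorff spaces and $f$ is bounded and lower semicontinuous, we get $\check{f}^\pi\in\ell^\infty_\lsc(Y)$.
\item By the first assertion of \autoref{lem:theta-affineness}, which uses only that $\pi$ is an affine surjection and $f$ is convex, $\check{f}^\pi$ is convex.
\end{enumerate}
So $\check{f}^\pi$ is a bounded, lower semicontinuous, convex function on the compact convex set $Y$. Applying \autoref{lem:lsc-sup}\autoref{item:lsc-aff-sup} to it gives
\begin{equation*}
\check{f}^\pi(y) = \sup\{g(y) : g\in\cA(Y),\ g\leq\check{f}^\pi\}
\end{equation*}
for every $y\in Y$. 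Using \autoref{eq:check-f-pi} once more, the condition $g\leq\check{f}^\pi$ is equivalent to $g\circ\pi\leq f$, which yields the stated formula.

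The main point to check is that \autoref{lem:lsc-sup}\autoref{item:lsc-aff-sup} really applies without an affineness hypothesis. Only convexity and lower semicontinuity are needed for the sup formula; the affineness assumption there concerns only the extra directedness claim, which we do not use. This is the classical fact from Alfsen that a lower semicontinuous convex function is the supremum of the continuous affine functions below it. Its proof is a Hahn--Banach separation of the point $(y,r)$ with $r<\check{f}^\pi(y)$ from the closed convex epigraph in $E\times\bR$, where the separating functional is non-vertical because $\check{f}^\pi$ is finite-valued and bounded.
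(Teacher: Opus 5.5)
Your proposal is correct and is essentially the paper's own proof: the paper likewise uses \autoref{lem:theta-continuity} and \autoref{lem:theta-affineness} to see that $\check{f}^\pi$ is lower semicontinuous and convex, applies \autoref{lem:lsc-sup}\autoref{item:lsc-aff-sup} to it, and concludes with \autoref{eq:check-f-pi}. Your separate argument for the inequality $\geq$ is redundant, since it is already contained in the final chain of equalities, but it is harmless.
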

\begin{proof}
By \autoref{lem:theta-continuity} and \autoref{lem:theta-affineness}, $\check{f}^\pi$ is a lower semicontinuous convex function.
As such, it is the pointwise supremum of the affine continuous functions below it (\autoref{lem:lsc-sup}\autoref{item:lsc-aff-sup}).
The equality of the statement then follows from \autoref{eq:check-f-pi}.
\end{proof}

Clearly, a map with the convex lifting property is face-preserving.
For continuous affine maps, the two properties are equivalent, as observed by Villadsen.

\begin{lem}
\label{lem:Villadsen}
Let $\pi\colon X\to Y$ be an affine, continuous map between compact convex sets.
Then the following are equivalent:
\begin{enumerate}
\item $\pi$ has the convex lifting property.
\item $\pi$ is face-preserving.
\end{enumerate}
\end{lem}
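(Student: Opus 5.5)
The implication from the convex lifting property to face-preservation is immediate: if $F\subseteq X$ is a face and $\lambda y_0+(1-\lambda)y_1=\pi(x)$ with $x\in F$ and $0<\lambda<1$, we lift to $x=\lambda x_0+(1-\lambda)x_1$ with $\pi(x_i)=y_i$. Since $F$ is a face, $x_0,x_1\in F$, so $y_0,y_1\in\pi(F)$. Convexity of $\pi(F)$ follows from affineness of $\pi$.

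For the converse, assume $\pi$ is face-preserving and fix $x\in X$ with $\pi(x)=\lambda y_0+(1-\lambda)y_1$, $0<\lambda<1$. We look for $x_0\in\pi^{-1}(y_0)$ and $x_1\in\pi^{-1}(y_1)$ with $\lambda x_0+(1-\lambda)x_1=x$. The set
\begin{equation*}
C=\lambda\pi^{-1}(y_0)+(1-\lambda)\pi^{-1}(y_1)
\end{equation*}
is compact and convex, being the image of a compact convex set under a continuous affine map, and it is contained in $\pi^{-1}(\pi(x))$. We must show $x\in C$. Suppose not. By Hahn--Banach, together with the fact that $\cA(X)$ separates points from closed convex sets in $X$, there is $f\in\cA(X)$ with
\begin{equation*}
f(x)<\min_C f=\lambda\,\check f^\pi(y_0)+(1-\lambda)\,\check f^\pi(y_1).
\end{equation*}
It therefore suffices to prove that $\check f^\pi$ is concave along the segment, i.e.\ $\check f^\pi(\pi(x))\ge \lambda\check f^\pi(y_0)+(1-\lambda)\check f^\pi(y_1)$. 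This contradicts $\check f^\pi(\pi(x))\le f(x)$.

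To get this concavity, let $m=\check f^\pi(\pi(x))$; it is attained, since $f$ is continuous and fibres are compact. Consider the exposed face $E=\{z\in X: f(z)-m\,\text{``relative''}\}$. Working directly with a fibre-dependent minimum is awkward, so instead we pass to the product. Put $Z=\{(z,t)\in X\times\bR: t\ge f(z)\}$, truncated to a compact set, and consider $\Pi\colon Z\to Y\times\bR$, $(z,t)\mapsto(\pi(z),t)$. The lower boundary of $\Pi(Z)$ is the graph of $\check f^\pi$. The set $F=\{x'\in\pi^{-1}(\pi(x)) : f(x')=m\}$ is a face of $X$ only relative to the fibre, and the fibre is itself a face only if $\pi(x)$ is extreme. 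So we cannot apply face-preservation naively.

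The fix is to use a face of $X$ containing the minimizer that maps into a small face of $Y$. Let $x^*$ be a minimizer of $f$ on the fibre, and let $F_{x^*}$ be the smallest face of $X$ containing $x^*$. Its image $\pi(F_{x^*})$ is a face containing $\pi(x)$, hence it contains $y_0$ and $y_1$. Thus there are $z_i\in F_{x^*}$ with $\pi(z_i)=y_i$. Since $x^*$ lies in the relative algebraic interior of $F_{x^*}$, we can write $x^*=s w+(1-s)(\lambda z_0+(1-\lambda)z_1)$ with $w\in F_{x^*}$ and $s<1$ small; in finite dimensions this is straightforward, and in general one first reduces to finite-dimensional faces via the finitely many functionals involved.

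This step is the main obstacle: turning "$\pi(F_{x^*})$ contains $y_0,y_1$" into an exact decomposition $x^*=\lambda x_0+(1-\lambda)x_1$. I would try an iterative/compactness argument. Repeatedly apply the above to split $x^*$ into pieces over points near $y_0$ and $y_1$, then take limits in the compact set $X\times X$, using closedness of the relation $\{(x_0,x_1): \pi(x_i)=y_i\}$. If that fails, I would fall back on reproving Villadsen's original argument via the dual map $\pi^{*\aff}$ and \autoref{lem:theta-affineness}, \autoref{lem:check-f-equivalent-def-aff}.
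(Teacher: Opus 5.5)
Your easy direction is correct, and so is your reduction of the hard one. If $x\notin C=\lambda\pi^{-1}(y_0)+(1-\lambda)\pi^{-1}(y_1)$, a separating $f\in\cA(X)$ reduces everything to the inequality $\check{f}^\pi(\pi(x))\geq\lambda\check{f}^\pi(y_0)+(1-\lambda)\check{f}^\pi(y_1)$. (The fibres are nonempty because $\pi(X)$ is a face of $Y$.)

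\textbf{The gap.} You never prove that inequality. After obtaining $z_i\in F_{x^*}$ with $\pi(z_i)=y_i$, you declare that the remaining task is an exact decomposition $x^*=\lambda x_0+(1-\lambda)x_1$. You then leave this to an iterative/compactness scheme that is not carried out, and it has no evident way to work. Each splitting step puts only some weight $t\in(0,1]$, with no lower bound, on the decomposed part. Moreover, an exact decomposition of a point of the fibre is no easier than the lemma itself.

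The proposed fallback is circular. The affineness statement in \autoref{lem:theta-affineness} assumes the convex lifting property, and \autoref{lem:check-f-equivalent-def-aff} says nothing about concavity. In the paper, affineness of $\check{f}^\pi$ is obtained from face-preservation (\autoref{thm:convex-lifting-maps}) only by invoking the very lemma at hand, which the paper itself simply cites from Villadsen. So as written, the hard direction is not proved.

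\textbf{The missing observation.} No exact decomposition is needed; the minimality of $x^*$ already gives the inequality. Let $u=\lambda z_0+(1-\lambda)z_1\in F_{x^*}$.

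In any convex set, with no finite-dimensional reduction, the face generated by $x^*$ is
\begin{equation*}
F_{x^*}=\{z\in X : x^*=tz+(1-t)z' \text{ for some } z'\in X,\ t\in(0,1]\}.
\end{equation*}
This holds because the right-hand set is a face of $X$ containing $x^*$ and is contained in every face of $X$ containing $x^*$.

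So $x^*=tu+(1-t)w$ with $t\in(0,1]$ and $w\in X$. If $t=1$ then $u=x^*$. If $t<1$, applying $\pi$ gives $\pi(w)=\pi(x)$, hence $f(w)\geq m$, and then $m=tf(u)+(1-t)f(w)$ forces $f(u)\leq m$. Either way,
\begin{equation*}
\check{f}^\pi(\pi(x))=m\geq f(u)=\lambda f(z_0)+(1-\lambda)f(z_1)\geq\lambda\check{f}^\pi(y_0)+(1-\lambda)\check{f}^\pi(y_1),
\end{equation*}
and your separation argument concludes.

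With this step inserted, and the abandoned detours removed (the undefined ``exposed face $E$'', the product space $Z$, the unjustified ``$s<1$ small''), your outline becomes a complete, self-contained proof of the lemma. The paper does not provide such a proof.
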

\begin{proof}
For the non-obvious implication see \cite[Lemma~1.1]{Villadsen}.
\end{proof}

\begin{rmk}
If $\pi\colon X\to Y$ is a face-preserving map then, in particular, $\pi(\cE(X))\subseteq \cE(Y)$.
The converse inclusion holds for every affine continuous surjection.
Therefore, if $\pi$ is an affine, continuous, face-preserving surjection, we have the equality:
\begin{equation*}
\pi\bigl(\cE(X)\bigr) = \cE(Y).
\end{equation*}
Preserving the extreme points is in general weaker than preserving all faces, as exemplified by an appropriate projection of a square pyramid onto its base.
However, it is equivalent when $X$ is a metrizable Choquet simplex (see \autoref{prop:ext-preserving-simplex}).
\end{rmk}

\begin{thm}
\label{thm:convex-lifting-maps}
Let $\pi\colon X\to Y$ be an affine, continuous surjection between compact convex sets.
Then the following are equivalent:
\begin{enumerate}
\item\label{item:convex-lifting:lifting-prop}
$\pi$ is face-preserving.

\item\label{item:convex-lifting:theta}
For every $f\in \cA_\lsc(X)$ we have $\check{f}^\pi \in \cA_\lsc(Y)$.

\item\label{item:convex-lifting:adjoint}
The dual map $\pi^{*\aff,\lsc}\colon \cA_\lsc(Y) \to \cA_\lsc(X)$ admits an upper adjoint.

\item\label{item:convex-lifting:Aff-to-Aff-lsc}
For every $f\in \cA(X)$ we have $\check{f}^\pi \in \cA_\lsc(Y)$.
\end{enumerate}
Moreover, if these hold then the upper adjoint of $\pi^{*\aff,\lsc}$ is given by the map $f\mapsto \check{f}^\pi$.
\end{thm}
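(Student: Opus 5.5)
We will prove the cycle (i)$\Rightarrow$(ii)$\Rightarrow$(iii)$\Rightarrow$(iv)$\Rightarrow$(i), treating the identification of the upper adjoint along the way.

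For (i)$\Rightarrow$(ii), suppose $\pi$ is face-preserving. By \autoref{lem:Villadsen}, $\pi$ has the convex lifting property. Take $f\in\cA_\lsc(X)$. Then \autoref{lem:theta-continuity} gives that $\check{f}^\pi$ is bounded and lower semicontinuous, and \autoref{lem:theta-affineness} gives that it is affine. Hence $\check{f}^\pi\in\cA_\lsc(Y)$.

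For (ii)$\Rightarrow$(iii), define $\alpha(f)=\check{f}^\pi$. This map is order-preserving by \autoref{rmk:theta_f}. The adjunction $g\circ\pi\le f\iff g\le \check{f}^\pi$ holds for all bounded functions by \autoref{eq:check-f-pi}, so in particular it holds for $g\in\cA_\lsc(Y)$. Thus $\alpha$ is the upper adjoint of $\pi^{*\aff,\lsc}$.

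For (iii)$\Rightarrow$(iv), let $\alpha$ be the upper adjoint and take $f\in\cA(X)\subseteq\cA_\lsc(X)$. The function $\alpha(f)\in\cA_\lsc(Y)$ is lsc, convex and affine. By \autoref{lem:lsc-sup}\autoref{item:lsc-aff-sup} it is the pointwise supremum of those $g\in\cA(Y)$ with $g\le\alpha(f)$. For $g\in\cA(Y)$, adjunction gives $g\le\alpha(f)\iff g\circ\pi\le f$. By \autoref{lem:check-f-equivalent-def-aff}, the supremum of such $g$ is exactly $\check{f}^\pi$, so $\check{f}^\pi=\alpha(f)\in\cA_\lsc(Y)$. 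The same argument applied to $f\in\cA_\lsc(X)$ yields the moreover clause, once we know the lemma holds for lsc convex $f$, which it does.

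For (iv)$\Rightarrow$(i), we will verify the convex lifting property and then invoke \autoref{lem:Villadsen}. Let $\pi(x)=\lambda y_0+(1-\lambda)y_1$ and consider the compact convex set
\[
L=\{(x_0,x_1)\in\pi^{-1}(y_0)\times\pi^{-1}(y_1)\}.
\]
We want $x\in\lambda\pi^{-1}(y_0)+(1-\lambda)\pi^{-1}(y_1)=:C$, which is a compact convex subset of $X$. Suppose $x\notin C$. By Hahn--Banach, together with density of $\cA(X)$ restricted to functionals (continuous linear functionals plus constants), there is $f\in\cA(X)$ with
\[
f(x)<\min_C f=\lambda\check{f}^\pi(y_0)+(1-\lambda)\check{f}^\pi(y_1).
\]
On the other hand, $\check{f}^\pi$ is affine by (iv), so the right-hand side equals $\check{f}^\pi(\pi(x))\le f(x)$, which is a contradiction. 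Therefore $x=\lambda x_0+(1-\lambda)x_1$ for suitable $x_i\in\pi^{-1}(y_i)$.

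The main obstacle is this last step. We need the infima over the fibers to be attained, so that $\min_C f$ splits as stated; this holds by compactness of the fibers. We also need the separating functional to come from $\cA(X)$ in the ambient locally convex space, which is standard.
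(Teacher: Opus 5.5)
Your proof is correct. Implications (i)$\Rightarrow$(ii)$\Rightarrow$(iii)$\Rightarrow$(iv), and the identification of the upper adjoint, match the paper step for step.

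For (iv)$\Rightarrow$(i) you take a different route, and the two can be compared as follows.

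\begin{itemize}
\item \textbf{The paper's route.} It applies the sandwich form of the Hahn--Banach theorem on $\cA(X)$ to $\ell_0(f)=\lambda\check{f}^\pi(y_0)$ and $\ell_1(f)=f(x)-(1-\lambda)\check{f}^\pi(y_1)$. This yields a state $p_0$, which by Kadison duality is evaluation at some $x_0$. Then $x_1$ is read off from $p_1=(1-\lambda)^{-1}(\hat{x}-\lambda p_0)$.
\item \textbf{Your route.} You strictly separate $x$ from the compact convex set $C=\lambda\pi^{-1}(y_0)+(1-\lambda)\pi^{-1}(y_1)$ in the ambient locally convex space.
\item \textbf{Common core.} Both arguments turn on the same inequality: $\lambda\check{f}^\pi(y_0)+(1-\lambda)\check{f}^\pi(y_1)\le f(x)$ for all $f\in\cA(X)$. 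In the paper this is $\ell_0\le\ell_1$. Your computation of $\min_C f$ shows it is in fact equivalent to $x\in C$. The two proofs differ only in which form of Hahn--Banach converts this inequality into the lifted points.
\item \textbf{What yours buys.} It is shorter and more geometric, and needs only the standard separation of a point from a compact convex set. It is essentially the mechanism of \autoref{prop:strong-CLP}. There, however, the auxiliary set $\{z : f(z)=\check{f}^\pi(\pi(z))\}$ must be closed, which requires continuity of $\check{f}^\pi$ and hence openness of $\pi$. By computing $\min_C f$ directly you avoid this.
\item \textbf{What the paper's buys.} It stays inside the order-unit-space duality that organizes the appendix, producing $x_0,x_1$ directly as states of $\cA(X)$.
\end{itemize}

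Three small points on your write-up:
\begin{itemize}
\item Attainment of the infima is not actually needed. The infimum of $\lambda f(x_0)+(1-\lambda)f(x_1)$ over $\pi^{-1}(y_0)\times\pi^{-1}(y_1)$ splits as a sum of infima regardless.
\item No density statement is needed for the separating functional. Any continuous linear functional restricted to $X$ already lies in $\cA(X)$.
\item The set $L$ you introduce is never used.
\end{itemize}
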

\begin{proof}
\begin{cycprf}
\item By \autoref{lem:theta-continuity}, \autoref{lem:Villadsen} and \autoref{lem:theta-affineness}.

\item By \autoref{rmk:theta_f}.

\item Let $\alpha\colon \cA_\lsc(X)\to \cA_\lsc(Y)$ be an upper adjoint for $\pi^{*\aff,\lsc}$.
By \autoref{lem:check-f-equivalent-def-aff} and \autoref{lem:lsc-sup}\autoref{item:lsc-aff-sup}, for every $f\in \cA_\lsc(X)$ (in particular, for $f\in \cA(X)$) and every $y\in Y$ we have
\begin{equation*}
\check{f}^\pi(y) = \sup\{g(y) : g\in\cA(Y),\, g\leq\alpha(f)\} = \alpha(f)(y).
\end{equation*}
It follows that $\check{f}^\pi \in \cA_\lsc(Y)$.

\item[\impfirst] 
We prove the convex lifting property for $\pi$.
Let $x\in X$ and $y_0,y_1\in Y$ be such that $\pi(x) = \lambda y_0 + (1-\lambda)y_1$ for some $0<\lambda<1$.
Consider the maps $\ell_0,\ell_1\colon \cA(X) \to \bR$ given by:
\begin{equation*}
\ell_0(f) = \lambda\check{f}^\pi(y_0),\quad \ell_1(f) = f(x) - (1-\lambda)\check{f}^\pi(y_1).
\end{equation*}
We observe that $-\ell_0$ and $\ell_1$ are sublinear functionals.
Moreover, since by assumption $\check{f}^\pi$ is affine for every $f\in\cA(X)$, and since $\check{f}^\pi\big(\pi(x)\big)\leq f(x)$, we have
\begin{equation*}
\ell_0(f) \leq \ell_1(f)
\end{equation*}
for all $f\in \cA(X)$.
By the sandwich form of the Hahn--Banach Theorem (see \cite[Cor.~6]{Simons}), there is a linear functional $p_0\colon \cA(X)\to \bR$ with
\begin{equation}
\label{eq:sandwich-convex-lifting}
\lambda^{-1}\ell_0 \leq p_0 \leq \lambda^{-1}\ell_1.
\end{equation}
It follows immediately that $p_0$ is positive and unital, and in particular, bounded.
That is, $p_0$ is a state of $\cA(X)$, and hence there is $x_0\in X$ such that $p_0 = \hat{x}_0$, i.e., $p_0(f) = f(x_0)$ for every $f\in\cA(X)$.

On the other hand, let $p_1\colon \cA(X)\to \bR$ be the linear functional given by $p_1(f) = (1-\lambda)^{-1}\big(f(x) - \lambda p_0(f)\big)$. From the second inequality in \autoref{eq:sandwich-convex-lifting}, for every $f\in\cA(X)$ we have
\begin{equation*}
p_1(f)\geq \check{f}^\pi(y_1).
\end{equation*}
It follows that $p_1$ is positive and unital, and so there is $x_1\in X$ such that $p_1 = \hat{x}_1$.
Therefore, $f(x) = \lambda f(x_0) + (1-\lambda) f(x_1)$ for every $f\in\cA(X)$, i.e., $x = \lambda x_0 + (1-\lambda)x_1$.

Finally, since $\check{f}^\pi = g$ whenever $g\in\cA(Y)$ and $f = g\circ\pi$, the inequalities \autoref{eq:sandwich-convex-lifting} yield $g(\pi(x_0)) = g(y_0)$ for every $g\in\cA(Y)$. We deduce that $\pi(x_0) = y_0$ and $\pi(x_1) = y_1$, completing the proof.
\end{cycprf}
\end{proof}

We now give a version of the convex lifting property for measures. This property was established for variable restriction maps between affine type spaces in \cite[Lemma~3.6]{BITaffine}. See also \autoref{prop:strong-CLP} below for a more general form.

\begin{rmk}
\label{rmk:pushforward-image}
If $\pi\colon X\to Y$ is a continuous map between compact Hausdorff spaces, the image of the pushforward map $\pi_*\colon \cM(X)\to \cM(Y)$ is precisely $\pi_*\big(\cM(X)\big) = \{\mu\in\cM(Y) : \mu(\pi(X))=1\} \cong \cM\big(\pi(X)\big)$.
\end{rmk}

\begin{prop}
\label{prop:CLP-with-measures}
Let $\pi\colon X\to Y$ be an affine, continuous map between compact convex sets.
Then the following are equivalent:
\begin{enumerate}
\item $\pi$ is face-preserving.
\item\label{item:CLP-measure-lifting} For every $x\in X$ and every probability measure $\mu\in\cM_{\pi(x)}(Y)$, there exists $\nu\in\cM_x(X)$ such that $\pi_*(\nu) = \mu$.
\end{enumerate}
Moreover, in \autoref{item:CLP-measure-lifting}, if $\mu$ is a boundary measure, then $\nu$ can be taken to be a boundary measure as well.
\end{prop}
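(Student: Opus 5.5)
The plan is to prove the two directions separately, each reduced to the convex lifting property through \autoref{lem:Villadsen}. Then I handle the boundary refinement using a maximality argument in the Choquet order.

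For (ii)$\Rightarrow$(i), I would verify the convex lifting property. Suppose $\pi(x)=\lambda y_0+(1-\lambda)y_1$. Apply (ii) to the measure $\mu=\lambda\delta_{y_0}+(1-\lambda)\delta_{y_1}\in\cM_{\pi(x)}(Y)$ to get $\nu\in\cM_x(X)$ with $\pi_*\nu=\mu$. Disintegrate $\nu$ along the two closed fibres by setting $\nu_i=\nu(\,\cdot\cap\pi^{-1}(y_i))/\nu(\pi^{-1}(y_i))$; the denominators are $\lambda$ and $1-\lambda$, hence positive. Put $x_i=R(\nu_i)$. Then $\nu=\lambda\nu_0+(1-\lambda)\nu_1$, so $x=\lambda x_0+(1-\lambda)x_1$. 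Since $\pi$ is affine and continuous, it commutes with barycenters, so $\pi(x_i)=R(\pi_*\nu_i)=R(\delta_{y_i})=y_i$. \autoref{lem:Villadsen} then gives face-preservation.

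For (i)$\Rightarrow$(ii), I would mimic the sandwich argument in the proof of \autoref{thm:convex-lifting-maps}, now applied to $C(X)$. Fix $x$ and $\mu$ with $R(\mu)=\pi(x)$. I look for a state $\nu$ on $C(X)$ with two properties: $\nu(f)=f(x)$ for $f\in\cA(X)$, and $\nu(g\circ\pi)=\mu(g)$ for $g\in C(Y)$. These two constraints agree on the overlap $\{g\circ\pi: g\in\cA(Y)\}$, because $R(\mu)=\pi(x)$. Set $E=\cA(X)+\pi^*C(Y)\subseteq C(X)$ and define $\ell(f+g\circ\pi)=f(x)+\mu(g)$.

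The key step is positivity of $\ell$ on $E$. Suppose $f+g\circ\pi\ge0$ with $f\in\cA(X)$. Then $g\ge -\check f^\pi$ on $Y$, and it suffices to show $\mu(-\check f^\pi)\ge -f(x)$, that is, $\mu(\check f^\pi)\le f(x)$. By \autoref{thm:convex-lifting-maps}(iv), $\check f^\pi$ is affine and lower semicontinuous, so by \autoref{lem:lsc-sup}(ii) it is an increasing limit of functions in $\cA(Y)$. Monotone convergence together with $R(\mu)=\pi(x)$ gives $\mu(\check f^\pi)=\check f^\pi(\pi(x))\le f(x)$.

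Positivity also shows that $\ell$ is well defined: if $f+g\circ\pi=0$, applying positivity to both $\pm(f+g\circ\pi)$ forces $\ell=0$. Since $E$ contains the constants, the Hahn–Banach/M.~Riesz extension for positive functionals on cofinal subspaces extends $\ell$ to a state on $C(X)$, which is the required $\nu$. This positivity step is where face-preservation is essential, and I expect it to be the main obstacle, chiefly the semicontinuity bookkeeping and the monotone convergence for nets, which is legitimate for Radon measures.

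For the boundary refinement, the set of $\nu\in\cM_x(X)$ with $\pi_*\nu=\mu$ is nonempty and compact. By Zorn's lemma, and because Choquet-increasing chains converge, I pick a $\nu$ that is maximal in the Choquet order within this set. The remaining task is to show that $\nu$ is a boundary measure whenever $\mu$ is. Suppose $\nu\prec\nu'$ in $\cM(X)$. Then $\pi_*\nu\prec\pi_*\nu'$, because composing a convex function with the affine map $\pi$ gives a convex function. Maximality of $\mu$ then forces $\pi_*\nu'=\mu$. Finally, Choquet comparability preserves barycenters, so $\nu'\in\cM_x(X)$ and hence lies in the set; maximality of $\nu$ within the set gives $\nu'=\nu$, so $\nu$ is maximal in $\cM(X)$.
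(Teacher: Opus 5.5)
There is one genuine gap, in (i)$\Rightarrow$(ii): you tacitly treat $\pi$ as surjective, which the statement does not assume. The function $\check f^\pi$ and \autoref{thm:convex-lifting-maps} are only available for surjections, and in general $f+g\circ\pi\geq 0$ constrains $g$ only on $\pi(X)$. This matters. Suppose $\mu$ charged a compact set $C\subseteq Y\setminus\pi(X)$. Take $f=0$ and a continuous $g\colon Y\to[-1,0]$ that vanishes on $\pi(X)$ and equals $-1$ on $C$. Then $f+g\circ\pi=0$ but $\ell(f+g\circ\pi)=\mu(g)<0$, so $\ell$ would not even be well defined.

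The missing step, which is exactly how the paper's proof begins, is this. Since $\pi$ is face-preserving, $\pi(X)$ is a closed face of $Y$ (the image of the face $X$). Hence $R(\mu)=\pi(x)\in\pi(X)$ forces $\mu(\pi(X))=1$, and you may replace $Y$ by $\pi(X)$; the corestriction $X\to\pi(X)$ is still face-preserving. After this reduction your positivity argument is sound. In (ii)$\Rightarrow$(i) you should also set aside the trivial case $y_0=y_1$, where the fiber masses are not $\lambda$ and $1-\lambda$.

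With that repair, your route to (i)$\Rightarrow$(ii) is genuinely different from the paper's.
\begin{itemize}
\item The paper approximates $\mu$ by simple measures $\mu_i$ with barycenter $\pi(x)$ (\cite[Prop.~I.2.3]{Alfsen1971}). It lifts each $\mu_i$ to a simple measure with barycenter $x$ using the finite convex lifting property (\autoref{lem:Villadsen}, \autoref{rmk:CLP-induction}), and takes a weak$^*$ cluster point.
\item You instead solve a single positive extension problem on $\cA(X)+\pi^*C(Y)$. Its positivity is precisely the affineness of the lower semicontinuous envelope $\check f^\pi$ from \autoref{thm:convex-lifting-maps}. This is the same duality as in the sandwich argument of that theorem and in \autoref{prop:strong-CLP}, but without any openness hypothesis.
\end{itemize}
Your route avoids the approximation lemma and the limiting procedure. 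The price is the envelope machinery and a monotone convergence step for nets, which is legitimate for Radon measures or can be replaced by a Dini argument on compact $Y$.

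Your boundary refinement is correct and rests on the same facts as the paper's: push-forward is monotone for the Choquet order, $\mu$ is maximal, and Choquet-comparable measures share barycenters. The Zorn argument inside the fiber is unnecessary. You can directly take any $\nu'\in\partial\cM(X)$ with $\nu'\succeq\nu$ and conclude $\pi_*\nu'=\mu$ and $R(\nu')=x$.
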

\begin{proof}
\begin{cycprf}
\item Let $x$ and $\mu$ be as in \autoref{item:CLP-measure-lifting}.
Since $\pi(X)$ is a closed face of $Y$, we must have $\mu(\pi(X)) = 1$ (see, e.g., \cite[1.18]{BITaffine}).
Hence, by \autoref{rmk:pushforward-image}, we may assume without loss of generality that $\pi$ is surjective.
Let $(\mu_i : i\in I)\subseteq \co(Y) \subseteq \cM(Y)$ be a net of simple probability  measures converging to $\mu$.
By \cite[Prop.~I.2.3]{Alfsen1971}, we may choose the $\mu_i$ so that $R(\mu_i) = \pi(x)$ for every $i\in I$.
Using \autoref{lem:Villadsen} and the form of the convex lifting property given by \autoref{rmk:CLP-induction}, and using that $\pi$ is surjective, we may find simple measures $\nu_i\in \co(X)\subseteq \cM(X)$ with $\pi_*(\nu_i) = \mu_i$ and $R(\nu_i) = x$.
Up to passing to a subnet, we may assume that the $\nu_i$ converge to some $\nu\in\cM(X)$.
Therefore, by continuity, $\pi_*(\nu) = \mu$ and $R(\nu) = x$, as desired.

If moreover $\mu\in \partial\cM_{\pi(x)}(Y)$, take any $\nu'\in \partial\cM_x(X)$ with $\nu'\succeq \nu$.
Then for every continuous convex function $f\colon Y\to\bR$ we have $\pi_*(\nu')(f) = \nu'(f\circ\pi) \geq \nu(f\circ\pi) = \mu(f)$.
In other words, $\pi_*(\nu')\succeq \mu$, and by maximality of $\mu$, $\pi_*(\nu') = \mu$.

\item[\impfirst] If $\pi(x) = \sum_{i<n}\lambda_i y_i$ for pairwise distinct $y_i\in Y$, we may consider $\mu = \sum_{i<n}\lambda_i\delta_{y_i}$ and choose a lifting $\nu\in\cM_x(X)$ as given by \autoref{item:CLP-measure-lifting}.
Let $X_i=\pi^{-1}(y_i)$ (hence $\nu(X_i) = \lambda_i$) and let $\nu_i$ denote the conditional measure induced by $X_i$, i.e., $\nu_i(A)=\lambda_i^{-1}\nu(A\cap X_i)$ for every Borel subset $A\subseteq X$.
Note that $\pi_*(\nu_i) = \delta_{y_i}$.
If $x_i = R(\nu_i)$, then $x=\sum_{i<n}\lambda_i x_i$ and $\pi(x_i) = R(\pi_*(\nu_i)) = y_i$. We conclude that $\pi$ has the convex lifting property, and hence is face-preserving.
\end{cycprf}
\end{proof}

Next, we review some fundamental properties of face-preserving and extreme point-preserving maps, essentially due to Villadsen.

Recall first that given a bounded function $f\colon X\to \bR$, the \emph{lower envelope} of $f$ is the function $\check{f}\colon X\to\bR$ defined by
\begin{equation*}
\check{f}(x) = \sup\{g(x) : g\in\cA(X),\, g\leq f\}.
\end{equation*}
(By \autoref{lem:check-f-equivalent-def-aff}, for $f\in \ell^\infty_\lsc(X)$ the lower envelope of $f$ is the same as $\check{f}^{\operatorname{id}}$.)

\begin{lem}
\label{lem:boundary-measure-preservation}
Let $X$ be a metrizable compact convex set, and let $\pi\colon X\to Y$ be an affine, continuous map into another compact convex set with $\pi\big(\cE(X)\big) \subseteq \cE(Y)$.
Then for every $\mu\in \partial\cM(X)$ we have $\pi_*(\mu)\in \partial\cM(Y)$.
\end{lem}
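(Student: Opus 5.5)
We have to show that $\pi_*(\mu)$ is maximal in the Choquet order whenever $\mu$ is. Since $X$ is metrizable, the boundary measures on $X$ are exactly the probability measures concentrated on the $G_\delta$ set $\cE(X)$, i.e.\ with $\mu(\cE(X))=1$. On $Y$ we may not have metrizability, so the aim there is Mokobodzki's characterization: $\nu\in\partial\cM(Y)$ iff $\nu(f)=\nu(\check f)$ for every $f\in C(Y)$. (Equivalently, it suffices to check this for continuous convex $f$, with $\check f$ the lower envelope. In general $\check f$ denotes the concave upper envelope; conventions vary, and I will use whichever envelope the characterization needs. The key point is only that the envelope agrees with $f$ at extreme points.)

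The plan proceeds in four steps. Fix $\mu\in\partial\cM(X)$, so that $\mu$ is concentrated on $\cE(X)$, and fix a continuous convex $f$ on $Y$. First, use the classical fact that for a continuous convex $f$ on a compact convex set $Y$ we have $\hat f(y)=f(y)$ at every extreme point $y\in\cE(Y)$, where $\hat f$ is the concave upper envelope (the inf of continuous affine functions above $f$). Second, since $\pi(\cE(X))\subseteq\cE(Y)$, the composite $\hat f\circ\pi$ coincides with $f\circ\pi$ on $\cE(X)$, hence $\mu$-almost everywhere. Third, $\hat f$ is upper semicontinuous, hence Borel, so
\[
\pi_*\mu(\hat f)=\mu(\hat f\circ\pi)=\mu(f\circ\pi)=\pi_*\mu(f).
\]
Fourth, apply Mokobodzki's criterion (the form $\nu(f)=\nu(\hat f)$ for all continuous convex $f$ implies $\nu$ maximal, cf.\ \cite[Prop.~I.4.5]{Alfsen1971}) to conclude that $\pi_*\mu\in\partial\cM(Y)$.

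The main obstacle is the measure-theoretic point in the second and third steps. The set $\cE(X)$ is Borel (indeed $G_\delta$) because $X$ is metrizable; this is exactly where the metrizability hypothesis enters. The function $\hat f\circ\pi$ is Borel on $X$ because it is upper semicontinuous, as the composite of a u.s.c.\ function with a continuous map. So the integrals make sense and the a.e.\ equality is legitimate, even though $\cE(Y)$ itself need not be measurable in the non-metrizable $Y$. One should also double-check that $\mu(\cE(X))=1$ characterizes boundary measures in the metrizable case (Choquet's theorem / \cite[Thm.~I.4.8]{Alfsen1971}), and that in the criterion on $Y$ one uses Borel measurability of $\hat f$ rather than of $\cE(Y)$.
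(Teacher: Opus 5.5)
Your proposal is correct and follows essentially the same route as the paper: you use metrizability to get $\mu(\cE(X))=1$, then that the envelope of $f$ agrees with $f$ at the extreme points of $Y$ (\cite[Prop.~I.4.1]{Alfsen1971}), and conclude with Mokobodzki's criterion \cite[Prop.~I.4.5]{Alfsen1971}. The only difference is cosmetic. The paper tests all $f\in C(Y)$ against the lower envelope $\check f$ (the supremum of continuous affine minorants), whereas you test continuous convex $f$ against the upper envelope $\hat f$. Note that your parenthetical pairing of convex $f$ with the lower envelope would make the condition vacuous; the upper-envelope version is the one you actually, and correctly, use.
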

\begin{proof}
Let $\mu\in \partial\cM(X)$ and $f\in C(Y)$.
Since $X$ is metrizable and $\mu$ is a boundary measure, $\cE(X)$ is measurable with $\mu(\cE(X)) = 1$.
On the other hand, by \cite[Prop.~I.4.1]{Alfsen1971}, for every extreme point $y\in\cE(Y)$ we have $\check{f}(y) = f(y)$.
Hence, since $\pi\big(\cE(X)\big) \subseteq \cE(Y)$, we have
\begin{equation*}
\pi_*(\mu)(\check{f}) = \int_{\cE(X)}\check{f}(\pi(x)) d\mu = \int_{\cE(X)} f(\pi(x)) d\mu = \pi_*(\mu)(f).
\end{equation*}
We conclude that $\pi_*(\mu)$ is a boundary measure using \cite[Prop.~I.4.5]{Alfsen1971}.
\end{proof}

\begin{rmk}
\label{rmk:missing-hypothesis-Villadsen}
I am not sure whether the preceding lemma holds in the general non-metrizable case (which we will not need).
The argument contained in the proof of \cite[Prop.~1.3]{Villadsen} seems to have a gap in that case.
\end{rmk}

\begin{prop}
\label{prop:M(X)-convex-lifting}
Let $\pi\colon X\to Y$ be a continuous map between compact Hausdorff spaces.
Then the pushforward map $\pi_*\colon \cM(X) \to \cM(Y)$ has the convex lifting property.
\end{prop}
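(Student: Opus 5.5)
We will verify the convex lifting property of \autoref{dfn:convex-lifting} directly, using Radon--Nikodym derivatives, as was anticipated in the proof of \autoref{prop:cMinv(X)-properties}. Let $\mu\in\cM(X)$, and suppose $\pi_*(\mu) = \lambda\nu_0 + (1-\lambda)\nu_1$ with $\nu_0,\nu_1\in\cM(Y)$ and $0<\lambda<1$. Write $\nu = \pi_*(\mu)$. Then $\lambda\nu_0\leq\nu$, so $\nu_0\ll\nu$. By the Radon--Nikodym theorem for regular Borel measures on compact Hausdorff spaces (applied to the finite measures, with a Borel density; if regularity is a concern, work with Baire measures and extend uniquely), there is a Borel function $h_0\colon Y\to[0,\lambda^{-1}]$ with $d\nu_0 = h_0\,d\nu$. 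Set $h_1 = (1-\lambda)^{-1}(1-\lambda h_0)$, which is also non-negative and satisfies $d\nu_1 = h_1\,d\nu$ and $\lambda h_0 + (1-\lambda)h_1 = 1$.

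Next we define $\mu_i = (h_i\circ\pi)\,\mu$ for $i=0,1$. These are positive measures, and they are regular, since they are bounded-density reweightings of the regular measure $\mu$. Their total mass is computed by the change of variables formula:
\begin{equation*}
\mu_i(X) = \int_X h_i\circ\pi\,d\mu = \int_Y h_i\,d\nu = \nu_i(Y) = 1,
\end{equation*}
so $\mu_i\in\cM(X)$. The identity $\lambda h_0+(1-\lambda)h_1=1$ gives $\lambda\mu_0 + (1-\lambda)\mu_1 = \mu$. For every $f\in C(Y)$ we have
\begin{equation*}
\pi_*(\mu_i)(f) = \int_X (f h_i)\circ\pi\,d\mu = \int_Y f h_i\,d\nu = \nu_i(f),
\end{equation*}
and therefore $\pi_*(\mu_i) = \nu_i$, which is the required lifting.

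The main obstacle is purely measure-theoretic: making sure that the Radon--Nikodym derivative is available and that the pushforward behaves correctly in the non-metrizable setting. Composing a Borel function with the continuous map $\pi$ yields a Borel function, and the change of variables formula $\int (g\circ\pi)\,d\mu = \int g\,d\pi_*\mu$ holds for bounded Borel $g$ by the usual monotone class argument. The case where $\nu_0$ and $\nu$ are merely Baire-regular is handled by working throughout with Baire $\sigma$-algebras, where all three measures are determined by their values on $C(Y)$, so nothing beyond these standard facts is needed.
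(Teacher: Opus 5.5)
Your proof is correct and is essentially the paper's argument: take the Radon--Nikodym derivatives $h_0,h_1$ of the two summands with respect to $\pi_*(\mu)$, so that $\lambda h_0+(1-\lambda)h_1=1$, reweight $\mu$ by $h_i\circ\pi$, and use the change of variables formula to check that the reweighted measures push forward to the given ones. The differences are cosmetic: the paper verifies $\pi_*(\mu_i)=\nu_i$ directly on Borel sets rather than against $C(Y)$, and it omits the regularity and Baire/Borel bookkeeping you spell out.
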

\begin{proof}
Suppose $\pi_*(\nu) = \lambda\mu_0 + (1-\lambda)\mu_1$ for $\nu\in\cM(X)$, $\mu_0,\mu_1\in \cM(Y)$ and $0<\lambda<1$.
Then $\mu_0$ and $\mu_1$ are absolutely continuous with respect to $\pi_*(\nu)$.
Let $h_0,h_1\in L^1(Y,\pi_*(\nu))$ be the corresponding Radon--Nikodym derivatives, noting that $\lambda h_0 + (1-\lambda)h_1 = 1$.
Consider the measures $\nu_0,\nu_1\in \cM(X)$ defined by $\nu_i(A) = \int_A (h_i\circ\pi) d\nu$ for every Borel set $A\subseteq X$.
Then $\nu = \lambda\nu_0 + (1-\lambda)\nu_1$.
Moreover, for every Borel set $B\subseteq Y$, $\pi_*(\nu_i)(B) = \int_{\pi^{-1}(B)}(h_i\circ\pi)d\nu = \int_B h_i d(\pi_*\nu) = \mu_i(B)$.
That is, $\pi_*(\nu_0)=\mu_0$ and $\pi_*(\nu_1)=\mu_1$, as desired.
\end{proof}

\begin{rmk}
One may also explain the preceding proposition by duality, using \autoref{thm:convex-lifting-maps}.
Indeed, as one can show, the map
\begin{equation*}
\ell^\infty_\lsc(X) \to \cA_\lsc\big(\cM(X)\big),\quad f\mapsto \big(f'\colon \mu\to \mu(f)\big),
\end{equation*}
is an isomorphism of ordered convex cones.
Therefore, if $\pi\colon X\to Y$ is a continuous surjection (the non-surjective case follows easily), the map $\cA_\lsc\big(\cM(Y)\big) \to \cA_\lsc\big(\cM(X)\big)$, $f\mapsto f\circ\pi_*$ can be identified with the map
\begin{equation*}
\ell^\infty_\lsc(Y) \to \ell^\infty_\lsc(X),\quad f\mapsto f\circ\pi.
\end{equation*}
By \autoref{rmk:theta_f} and \autoref{lem:theta-continuity}, the latter map has an upper adjoint, and therefore $\pi_*$ is face-preserving.
\end{rmk}

The following nice criterion is precisely \cite[Prop.~1.3]{Villadsen}, up to the added metrizability assumption (see \autoref{rmk:missing-hypothesis-Villadsen}).

\begin{prop}
\label{prop:ext-preserving-simplex}
Let $X$ be a metrizable compact convex set, and let $\pi\colon X \to Y$ be an affine, continuous map preserving the extreme points, i.e., $\pi\big(\cE(X)\big) \subseteq \cE(Y)$.
Assume $Y$ is a Choquet simplex.
Then $\pi$ is face-preserving.
\end{prop}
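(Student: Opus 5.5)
The plan is to verify the convex lifting property and then conclude with \autoref{lem:Villadsen}. By \autoref{prop:CLP-with-measures}, it is enough to show the following: for every $x\in X$ and every $\mu\in\cM_{\pi(x)}(Y)$ there is some $\nu\in\cM_x(X)$ with $\pi_*\nu=\mu$. In fact it suffices to do this for finitely supported $\mu=\sum_i\lambda_i\delta_{y_i}$, since the converse direction of that proposition only uses such measures.

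Fix $x\in X$ and write $\pi(x)=\sum_{i<n}\lambda_i y_i$. The construction goes as follows.
\begin{enumerate}
\item Let $\nu_x$ be a boundary measure on $X$ with barycenter $x$. Because $X$ is metrizable, $\nu_x$ is concentrated on $\cE(X)$.
\item By \autoref{lem:boundary-measure-preservation}, the pushforward $\pi_*\nu_x$ is a boundary measure on $Y$, and its barycenter is $\pi(x)$. Since $Y$ is a Choquet simplex, $\pi_*\nu_x$ is the unique such boundary measure $\mu_{\pi(x)}$.
\item For each $i$, let $\mu_{y_i}$ be the boundary measure of $y_i$. By uniqueness of boundary measures on $Y$, and because the boundary measures form a face of $\cM(Y)$ (a convex combination of maximal measures is maximal in a simplex), we get $\mu_{\pi(x)}=\sum_i\lambda_i\mu_{y_i}$.
\item Each $\mu_{y_i}$ is therefore absolutely continuous with respect to $\pi_*\nu_x$. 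Let $h_i$ be the Radon--Nikodym derivatives, so that $\sum_i\lambda_i h_i=1$.
\item Define $\nu_i=(h_i\circ\pi)\,\nu_x$, exactly as in the proof of \autoref{prop:M(X)-convex-lifting}. Then $\nu_x=\sum_i\lambda_i\nu_i$ and $\pi_*\nu_i=\mu_{y_i}$.
\item Set $x_i=R(\nu_i)$. This gives $x=\sum_i\lambda_i x_i$ and $\pi(x_i)=R(\pi_*\nu_i)=R(\mu_{y_i})=y_i$.
\end{enumerate}
This is the convex lifting property, so \autoref{lem:Villadsen} shows that $\pi$ is face-preserving.

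The main obstacle is step 2, the identification $\pi_*\nu_x=\mu_{\pi(x)}$. It relies on \autoref{lem:boundary-measure-preservation}, which is exactly where metrizability and extreme-point preservation enter, and on uniqueness of representing boundary measures in the simplex $Y$. I will also check in step 3 that a finite convex combination of boundary measures is again a boundary measure. I plan to cite Choquet theory for this: maximal measures in a simplex are characterized by $\mu(f)=\mu(\check f)$, and that condition is linear in $\mu$. Combined with uniqueness, this gives the decomposition of $\mu_{\pi(x)}$. Everything after that is routine Radon--Nikodym bookkeeping, already carried out in \autoref{prop:M(X)-convex-lifting}.
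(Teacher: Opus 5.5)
Your proof is correct and follows essentially the same route as the paper. You take a boundary measure representing $x$, use \autoref{lem:boundary-measure-preservation} and uniqueness of boundary measures in the simplex $Y$ to identify its pushforward with $\sum_i\lambda_i\mu_{y_i}$, split it via the Radon--Nikodym argument of \autoref{prop:M(X)-convex-lifting}, and pass to barycenters. The differences are cosmetic: you treat $n$ points at once, you redo the Radon--Nikodym step inline instead of citing the proposition, and the opening reduction through \autoref{prop:CLP-with-measures} is unnecessary since you verify the convex lifting property directly.
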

\begin{proof}
We prove $\pi$ has the convex lifting property.
Suppose $\pi(x) = \lambda y_0 + (1-\lambda)y_1$ for some $x\in X$, $y_0,y_1\in Y$ and $0< \lambda< 1$.
Let $\mu_0$, $\mu_1$ and $\mu$ be the unique boundary measures on $Y$ representing $y_0$, $y_1$ and $\pi(x)$, respectively.
In particular, $\mu = \lambda\mu_0 + (1-\lambda)\mu_1$.
On the other hand, take any $\nu\in \partial\cM_x(X)$.
By \autoref{lem:boundary-measure-preservation}, $\pi_*(\nu)$ is a boundary measure.
Since $R(\pi_*(\nu)) = \pi(x)$, it must be $\pi_*(\nu) = \mu$.
Now, by \autoref{prop:M(X)-convex-lifting} (and \autoref{lem:Villadsen}), we can find $\nu_0,\nu_1\in\cM(X)$ with $\pi_*(\nu_0) = \mu_0$, $\pi_*(\nu_1) = \mu_1$ and $\nu = \lambda\nu_0 + (1-\lambda)\nu_1$.
Letting $x_0 = R(\nu_0)$, $x_1 = R(\nu_1)$, we have $\pi(x_0) = y_0$, $\pi(x_1) = y_1$ and $x = \lambda x_0 + (1-\lambda)x_1$, as desired.
\end{proof}

The following lemma shows that for face-preserving maps, \autoref{lem:boundary-measure-preservation} holds with no metrizability assumption.

\begin{lem}
\label{lem:boundary-measure-preservation-face-preserving}
Let $\pi\colon X\to Y$ be an affine, continuous, face-preserving map between compact convex sets.
Then the following hold:
\begin{enumerate}
\item
\label{item:bound-meas-pres:envelope}
For every $f\in C(Y)$, $\check{f}\circ\pi = \widecheck{f\circ\pi}$.
\item
\label{item:bound-meas-pres:measures}
For every $\mu\in \partial\cM(X)$ we have $\pi_*(\mu)\in \partial\cM(Y)$.
\end{enumerate}
\end{lem}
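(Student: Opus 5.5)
The plan is to prove \autoref{item:bound-meas-pres:envelope} by comparing two infima over representing measures. Item \autoref{item:bound-meas-pres:measures} then follows from the Mokobodzki-type characterization of boundary measures in \cite[Prop.~I.4.5]{Alfsen1971}, which was already used in the proof of \autoref{lem:boundary-measure-preservation}.

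For \autoref{item:bound-meas-pres:envelope}, the inequality $\check{f}\circ\pi \leq \widecheck{f\circ\pi}$ is immediate. If $g\in\cA(Y)$ satisfies $g\leq f$, then $g\circ\pi\in\cA(X)$ and $g\circ\pi\leq f\circ\pi$. Taking the supremum over such $g$ gives the claim.

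For the reverse inequality I will use the classical description of the lower envelope of a continuous function by representing measures:
\begin{equation*}
\check{h}(z) = \inf\{\mu(h) : \mu\in\cM_z(Z)\}\quad\text{for } h\in C(Z),\ z\in Z.
\end{equation*}
This is standard Choquet theory (see \cite[\textsection I.3]{Alfsen1971}). Applying it on $X$ with $h=f\circ\pi$ gives
\begin{equation*}
\widecheck{f\circ\pi}(x) = \inf\{\pi_*(\nu)(f) : \nu\in\cM_x(X)\}.
\end{equation*}
Now fix $x\in X$ and take any $\mu\in\cM_{\pi(x)}(Y)$. Since $\pi$ is face-preserving, \autoref{prop:CLP-with-measures} provides $\nu\in\cM_x(X)$ with $\pi_*(\nu)=\mu$; that proposition does not require surjectivity. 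Hence every value $\mu(f)$ occurring in the infimum defining $\check{f}(\pi(x))$ also occurs in the infimum above, so $\widecheck{f\circ\pi}(x)\leq\check{f}(\pi(x))$. This measure-lifting step is the only place where face-preservation enters, and it is the key point of the argument. The one remaining thing to confirm is that the representing-measure formula for $\check{h}$ holds for arbitrary (non-metrizable) compact convex sets, which it does for $h$ continuous.

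For \autoref{item:bound-meas-pres:measures}, let $\mu\in\partial\cM(X)$ and $f\in C(Y)$. The function $\check{f}$ is a supremum of continuous affine functions, so it is lower semicontinuous and bounded, and $\pi_*(\mu)(\check{f})=\mu(\check{f}\circ\pi)$ makes sense. By \autoref{item:bound-meas-pres:envelope} and the boundary property of $\mu$ (namely $\mu(\check{h})=\mu(h)$ for all $h\in C(X)$, by \cite[Prop.~I.4.5]{Alfsen1971}), we get
\begin{equation*}
\pi_*(\mu)(\check{f}) = \mu(\check{f}\circ\pi) = \mu\big(\widecheck{f\circ\pi}\big) = \mu(f\circ\pi) = \pi_*(\mu)(f).
\end{equation*}
Since this holds for every $f\in C(Y)$, the same criterion shows that $\pi_*(\mu)$ is a boundary measure on $Y$. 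Unlike \autoref{lem:boundary-measure-preservation}, no metrizability is needed here, because we never use that $\cE(X)$ is measurable.
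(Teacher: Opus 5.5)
Your proof is correct. Item (ii) is the argument the paper intends: the paper derives it from (i) ``exactly as in the proof of \cite[Prop.~3.24]{BITaffine}''. That is the envelope criterion \cite[Prop.~I.4.5]{Alfsen1971}, used as in \autoref{lem:boundary-measure-preservation} but with (i) replacing the appeal to extreme points and to metrizability.

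Item (i) is where your route differs. The paper gives no argument of its own there and simply cites \cite[Lemma~1.8]{Villadsen}. You instead derive (i) internally from two ingredients:
\begin{enumerate}
\item the representing-measure formula $\check h(z)=\inf\{\mu(h):\mu\in\cM_z(Z)\}$ for $h\in C(Z)$, which does hold on arbitrary, possibly non-metrizable, compact convex sets by a standard Hahn--Banach argument;
\item the measure-lifting property of \autoref{prop:CLP-with-measures}.
\end{enumerate}
This is not circular. \autoref{prop:CLP-with-measures} is proved earlier from \autoref{lem:Villadsen} and \cite[Prop.~I.2.3]{Alfsen1971} without using the present lemma. You are also right that it needs no surjectivity, since $\pi(X)$ is a closed face of $Y$.

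What your route buys is self-containedness. Face-preservation enters only through measure lifting, and nothing depends on Villadsen's own arguments, which matters because the author flags a gap in a neighbouring one in \autoref{rmk:missing-hypothesis-Villadsen}. The citation buys brevity.

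One routine point deserves a line in the non-metrizable setting: the change of variables $\pi_*(\mu)(\check f)=\mu(\check f\circ\pi)$ for the bounded lsc function $\check f$. It follows by writing $\check f$ as the supremum of the upward directed family $\{g\in C(Y):g\le\check f\}$ (\autoref{lem:lsc-sup}) and using that Radon measures commute with such directed suprema.
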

\begin{proof}
The first point is precisely \cite[Lemma~1.8]{Villadsen}.
The second is a consequence of the first, exactly as in the proof of \cite[Prop.~3.24]{BITaffine}.
\end{proof}

Finally, the following is a more general form of \cite[Prop.~1.9]{Villadsen}.

\begin{prop}
Let $\pi\colon X\to Y$ be an affine, continuous, face-preserving function between compact convex sets.
Then for every $x\in X$, the pushforward map $\pi_*\colon\cM(X)\to \cM(Y)$ restricts to an affine surjection
\begin{equation*}
\pi_*^{\partial\cM_x} \colon \partial\cM_x(X)\to \partial\cM_{\pi(x)}(Y).
\end{equation*}
In particular, if $\pi$ is surjective and $X$ is a Choquet simplex, then so also is $Y$.
\end{prop}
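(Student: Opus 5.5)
Fix $x\in X$. I would first verify that $\pi_*$ maps $\partial\cM_x(X)$ into $\partial\cM_{\pi(x)}(Y)$. Boundary measures go to boundary measures by \autoref{lem:boundary-measure-preservation-face-preserving}\autoref{item:bound-meas-pres:measures}, which needs no metrizability. The barycenter is preserved because $\pi$ is affine and continuous: for $g\in\cA(Y)$ we have $g\circ\pi\in\cA(X)$, hence $g(R(\pi_*\nu)) = \nu(g\circ\pi) = g(\pi(R(\nu)))$, so $R(\pi_*\nu)=\pi(x)$. Affineness of the restriction is immediate, since pushforward is linear on measures.

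Surjectivity is the main step, and I would take it from the moreover part of \autoref{prop:CLP-with-measures}. Let $\mu\in\partial\cM_{\pi(x)}(Y)$. Since $\pi$ is face-preserving, item \autoref{item:CLP-measure-lifting} of that proposition gives $\nu\in\cM_x(X)$ with $\pi_*\nu=\mu$, and it can be chosen in $\partial\cM(X)$. The construction there is explicit: take $\nu'\in\partial\cM_x(X)$ dominating $\nu$ in the Choquet order. Then $\pi_*\nu'\succeq\mu$, because $f\circ\pi$ is convex whenever $f$ is. Maximality of $\mu$ then forces $\pi_*\nu'=\mu$. This gives a preimage $\nu'\in\partial\cM_x(X)$, so the restriction is onto.

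For the final assertion, suppose $\pi$ is surjective and $X$ is a Choquet simplex. Let $y\in Y$ and choose $x$ with $\pi(x)=y$. The set $\partial\cM_x(X)$ is a singleton, and by surjectivity of $\pi_*^{\partial\cM_x}$ so is $\partial\cM_y(Y)$. As recalled in \autoref{sec:background-compact-convex}, this uniqueness of boundary representing measures at every point characterizes Choquet simplices, so $Y$ is a Choquet simplex.

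The only delicate point is the existence of a Choquet-order majorant in $\partial\cM_x(X)$, which is standard (Zorn's lemma), and the fact that $\pi_*$ respects the Choquet order. Everything else follows from the results already quoted.
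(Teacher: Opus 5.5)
Your proposal is correct and follows essentially the same route as the paper: well-definedness via \autoref{lem:boundary-measure-preservation-face-preserving}, surjectivity via the boundary-measure clause of \autoref{prop:CLP-with-measures}, and the simplex assertion from uniqueness of boundary representing measures. The barycenter and affineness checks you spell out are left implicit in the paper.
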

\begin{proof}
By \autoref{lem:boundary-measure-preservation-face-preserving}, the restriction $\pi_*^{\partial\cM_x}$ is well-defined.
On the other hand, by \autoref{prop:CLP-with-measures}, $\pi_*^{\partial\cM_x}$ is surjective.

If $\pi$ is surjective and $X$ is a Choquet simplex, then $\partial\cM_y(Y)$ must be a singleton for every $y\in Y$. Therefore, $Y$ is a Choquet simplex.
\end{proof}

We now combine our previous analyses of open and face-preserving maps.

\begin{thm}
\label{thm:affinely-open-maps}
Let $\pi\colon X\to Y$ be an affine, continuous surjection between compact convex sets.
Then the following are equivalent:
\begin{enumerate}
\item\label{item:open-and-convex-lifting:lifting-prop}
$\pi$ is open and face-preserving.

\item\label{item:open-and-convex-lifting:theta}
For every $f\in \cA(X)$ we have $\check{f}^\pi \in \cA(Y)$.

\item\label{item:open-and-convex-lifting:adjoint}
The dual map $\pi^{*\aff}\colon \cA(Y) \to \cA(X)$ admits an upper adjoint.
\end{enumerate}
Moreover, if these hold then the upper adjoint of $\pi^{*\aff}$ is given by the map $f\mapsto \check{f}^\pi$.
\end{thm}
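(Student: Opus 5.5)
I will prove the cycle (i)$\Rightarrow$(ii)$\Rightarrow$(iii)$\Rightarrow$(i), reusing the two characterizations already established: Vesterstr{\o}m's \autoref{thm:Vesterstrom} for openness and \autoref{thm:convex-lifting-maps} for face-preservation. The key observation is that $\cA(Y) = \cA_\lsc(Y)\cap C(Y)$, so condition (ii) is exactly the conjunction of condition (ii) of Vesterstr{\o}m's theorem (restricted to $f\in\cA(X)$) and condition \autoref{item:convex-lifting:Aff-to-Aff-lsc} of \autoref{thm:convex-lifting-maps}.

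For (i)$\Rightarrow$(ii): given $f\in\cA(X)$, openness of $\pi$ and \autoref{lem:theta-continuity} (or \autoref{thm:Vesterstrom}) give $\check{f}^\pi\in C(Y)$. Face-preservation together with \autoref{lem:Villadsen} and \autoref{lem:theta-affineness} give that $\check{f}^\pi$ is affine. Hence $\check{f}^\pi\in\cA(Y)$.

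For (ii)$\Rightarrow$(iii): by \autoref{rmk:theta_f}, for $g\in\cA(Y)$ and $f\in\cA(X)$ we have $g\circ\pi\leq f\iff g\leq\check{f}^\pi$. Since by (ii) the map $f\mapsto\check{f}^\pi$ takes values in $\cA(Y)$ and is order-preserving, it is an upper adjoint of $\pi^{*\aff}$. This also yields the \emph{moreover} clause once uniqueness is noted.

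For (iii)$\Rightarrow$(i), the main step is to identify the abstract upper adjoint $\alpha$ with $f\mapsto\check{f}^\pi$. I would apply \autoref{lem:check-f-equivalent-def-aff} to $f\in\cA(X)$ (which is convex and lsc):
\begin{equation*}
\check{f}^\pi(y)=\sup\{g(y) : g\in\cA(Y),\ g\circ\pi\leq f\}=\sup\{g(y): g\in\cA(Y),\ g\leq\alpha(f)\}=\alpha(f)(y),
\end{equation*}
where the last equality holds because $\alpha(f)$ itself belongs to the set. Thus $\check{f}^\pi=\alpha(f)\in\cA(Y)$ for all $f\in\cA(X)$. Continuity then gives openness by \autoref{thm:Vesterstrom}, and affineness plus lower semicontinuity gives face-preservation via \autoref{thm:convex-lifting-maps}, \autoref{item:convex-lifting:Aff-to-Aff-lsc}$\Rightarrow$\autoref{item:convex-lifting:lifting-prop}. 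The moreover part follows from this identification together with uniqueness of upper adjoints.

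I expect no serious obstacle here. The only subtle point is this identification step, which requires the lemma that the sup over continuous affine minorants recovers $\check{f}^\pi$; that lemma is already available.
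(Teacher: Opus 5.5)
Your proposal is correct and follows essentially the same argument as the paper. The paper also identifies an abstract upper adjoint $\alpha$ with $f\mapsto\check{f}^\pi$ via \autoref{lem:check-f-equivalent-def-aff}, and then obtains openness from \autoref{thm:Vesterstrom} and face-preservation from \autoref{thm:convex-lifting-maps}; the only difference is that you fold the paper's separate steps (iii)$\Rightarrow$(ii)$\Rightarrow$(i) into a single implication.
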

\begin{proof}
\begin{cycprf}
\item By \autoref{lem:theta-continuity} and \autoref{lem:theta-affineness}.

\item By \autoref{rmk:theta_f}.

\item[\impprev] If $\alpha\colon \cA(X)\to \cA(Y)$ is an upper adjoint for $\pi^{*\aff}$, it follows from \autoref{lem:check-f-equivalent-def-aff} that $\check{f}^\pi = \alpha(f)\in\cA(Y)$ for every $f\in\cA(X)$.

\item[\impprev] By \autoref{thm:Vesterstrom}, $\pi$ is open, and by \autoref{thm:convex-lifting-maps}\autoref{item:convex-lifting:Aff-to-Aff-lsc}, $\pi$ has the convex lifting property.
\end{cycprf}
\end{proof}

It seems worth mentioning the following result of Ditor and Eifler \cite{DitorEifler} (seemingly obtained independently by Vesterstr{\o}m, see \cite[Thm.~4.2]{Vesterstrom}), which we can derive immediately from the preceding results.

\begin{cor}
Let $\pi\colon X\to Y$ be a continuous surjection between compact Hausdorff spaces, and let $\pi_*\colon \cM(X) \to \cM(Y)$ be the corresponding pushforward map.
Then $\pi$ is open if and only if $\pi_*$ is open.
\end{cor}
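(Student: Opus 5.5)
The corollary has two directions, and both should follow from the earlier results on $\pi_*$ together with the duality between open maps and upper adjoints. I write $\pi_*\colon \cM(X)\to\cM(Y)$ throughout. It is surjective because $\pi$ is surjective (\autoref{rmk:pushforward-image}). By \autoref{prop:M(X)-convex-lifting} and \autoref{lem:Villadsen}, it is face-preserving.

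For the forward direction, assume $\pi$ is open. By \autoref{thm:affinely-open-maps}, it suffices to show that for every $F\in\cA(\cM(X))$ the function $\check{F}^{\pi_*}$ lies in $\cA(\cM(Y))$. Under Kadison duality, $\cA(\cM(X))\cong C(X)$, so $F=f'$ for some $f\in C(X)$. The candidate is
\begin{equation*}
\check{F}^{\pi_*}=(\check{f}^\pi)',
\end{equation*}
and $\check{f}^\pi\in C(Y)$ by \autoref{prop:open-maps}. Two inequalities are needed.

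First, for $\nu$ with $\pi_*\nu=\mu$ we have $\nu(f)\ge\nu(\check{f}^\pi\circ\pi)=\mu(\check{f}^\pi)$. Hence $\check{F}^{\pi_*}(\mu)\ge\mu(\check{f}^\pi)$.

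Second, we need a lifting $\nu$ with $\nu(f)\le\mu(\check{f}^\pi)+\epsilon$. The fibers $\pi^{-1}(y)$ are compact, and on each one $f$ attains its minimum $\check{f}^\pi(y)$. The plan is to use a measurable selection, or alternatively to argue by Hahn--Banach. For a Dirac measure $\mu=\delta_y$ we can choose a minimizer of $f$ in $\pi^{-1}(y)$. For a convex combination of Dirac measures we take the corresponding combination of these minimizers. For general $\mu$, approximate it by simple measures, use continuity of $\check{f}^\pi$, and apply the compactness argument of \autoref{prop:CLP-with-measures}: limits of liftings are liftings, and $f'$ is continuous. This gives equality, so $\check{F}^{\pi_*}$ is affine and continuous, and $\pi_*$ is open.

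For the converse, assume $\pi_*$ is open. By \autoref{thm:Vesterstrom}, every $\check{F}^{\pi_*}$ with $F\in\cA(\cM(X))$ is continuous. Given $f\in C(X)$, I will show that $\check{f}^\pi=\check{F}^{\pi_*}|_Y$, where $F=f'$ and $Y$ is identified with the Dirac measures. The inequality $\le$ comes from the argument above. For $\ge$, use a minimizer of $f$ on the fiber to obtain a point $x$ with $\pi(x)=y$ and $f(x)=\check{f}^\pi(y)$, and take the lifting $\delta_x$ of $\delta_y$. Thus $\check{f}^\pi$ is continuous, and $\pi$ is open by \autoref{prop:open-maps}. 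The converse can also be proved more directly: $\pi(U)$ corresponds to $\pi_*(\{\nu:\nu(U)>1-\epsilon\})\cap Y$ for $U$ open.

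I expect the main obstacle to be the reverse inequality in the forward direction for general, non-atomic $\mu$, that is, producing near-optimal liftings. My first attempt will be the approximation-by-simple-measures and weak$^*$ compactness argument described above. If that fails, I will use the Hahn--Banach sandwich in the style of \autoref{thm:convex-lifting-maps}.
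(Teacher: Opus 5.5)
Your proof is correct, but it takes a more hands-on route than the paper. The paper argues purely formally. Under the order isomorphisms $C(Y)\cong\cA(\cM(Y))$ and $C(X)\cong\cA(\cM(X))$, $f\mapsto f'$, the dual map $\pi^*$ is identified with $(\pi_*)^{*\aff}$, so one admits an upper adjoint if and only if the other does. Then \autoref{prop:open-maps}, \autoref{thm:affinely-open-maps} and \autoref{prop:M(X)-convex-lifting} give the equivalence without computing anything; the last of these supplies face-preservation of $\pi_*$ for free.

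You instead compute the relevant functions explicitly. In the forward direction you show $\check{F}^{\pi_*}=(\check{f}^\pi)'$ for $F=f'$ by lifting finitely supported measures through fiberwise minimizers and passing to weak$^*$ limits. This is sound, since $\check{f}^\pi$ is continuous when $\pi$ is open and finitely supported measures are dense in $\cM(Y)$; after that only Vesterstr{\o}m's theorem is needed. In the converse you restrict $\check{F}^{\pi_*}$ to Dirac measures, or more directly observe that $\pi(U)$ is the trace on $Y$ of the open set $\pi_*(\{\nu:\nu(U)>0\})$.

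The paper's route is shorter, because upper adjoints are unique and order-theoretically defined, so they transfer automatically along the isomorphism. Yours is more elementary and shows that the infimum defining $\check{F}^{\pi_*}$ is attained. It also never uses that $\pi_*$ is face-preserving, so your opening appeal to \autoref{prop:M(X)-convex-lifting} is superfluous. The paper, by contrast, needs that fact for the implication from openness of $\pi_*$ to openness of $\pi$.
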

\begin{proof}
Up to composing with the canonical vector lattice isomorphisms $C(Y)\cong \cA\bigl(\cM(Y)\bigr)$ and $C(X) \cong \cA\bigl(\cM(X)\bigr)$, we may identify the dual map $\pi^*\colon C(Y) \to C(X)$ with the dual map
\begin{equation*}
(\pi_*)^{*\aff}\colon \cA\bigl(\cM(Y)\bigr) \to \cA\bigl(\cM(X)\bigr).
\end{equation*}
In particular, $\pi^*$ has an upper adjoint if and only if so does $(\pi_*)^{*\aff}$.
Thus, the result follows from \autoref{prop:open-maps}, \autoref{thm:affinely-open-maps} and \autoref{prop:M(X)-convex-lifting}.
\end{proof}

We end this appendix with a general version of the convex lifting property satisfied by open, face-preserving maps.

\begin{prop}
\label{prop:strong-CLP}
Let $\pi\colon X\to Y$ be an affine, continuous, open, face-preserving surjection between compact convex sets.
Let
\begin{equation*}
      \begin{tikzcd}
        X' \arrow{d}{\rho_X} \arrow{r}{\pi'} & Y' \arrow{d}{\rho_Y} \\
        X \arrow{r}{\pi} & Y
      \end{tikzcd}
\end{equation*}
be a commutative diagram of affine, continuous maps with the property that for every closed convex subset $C\subseteq X$ such that $\pi(C) = Y$, we have $\pi'\big(\rho_X^{-1}(C)\big) = Y'$.
Then for every $x\in X$ and every $y'\in Y'$, if $\pi(x) = \rho_Y(y')$ then there exists $x'\in X'$ with $\rho_X(x') = x$ and $\pi'(x') = y'$.
\end{prop}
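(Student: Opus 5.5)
Fix $x\in X$ and $y'\in Y'$ with $\pi(x)=\rho_Y(y')$. The idea is to produce a closed convex set $C\subseteq X$ with $\pi(C)=Y$ such that every $x'\in\rho_X^{-1}(C)$ mapping onto $y'$ can be pushed to a preimage of $x$ itself. Concretely, I would first try to take $C$ to be a closed convex set built from $x$ and the fibres of $\pi$. The natural candidate uses the upper adjoint $f\mapsto\check f^\pi$ from \autoref{thm:affinely-open-maps}: for $f\in\cA(X)$ we have $\check f^\pi\in\cA(Y)$ and $\check f^\pi(\pi(x))\le f(x)$. Let
\begin{equation*}
C=\{z\in X : f(z)-\check f^\pi(\pi(z))\le f(x)-\check f^\pi(\pi(x))\ \text{for all } f\in\cA(X)\}.
\end{equation*}
This is closed and convex because each $f-\check f^\pi\circ\pi$ is affine and continuous. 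However, it is not clear that it has full image or that it pins down $x$. So I would instead work by duality and Hahn--Banach, as in the proof of \autoref{thm:convex-lifting-maps}.

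The dual formulation is this. We want a state $p$ on $\cA(X')$ with $p(h\circ\rho_X)=h(x)$ for $h\in\cA(X)$ and $p(g\circ\pi')=g(y')$ for $g\in\cA(Y')$. By Hahn--Banach and Kadison duality, such a $p$ exists if and only if the linear map on the subspace $V=\{h\circ\rho_X+g\circ\pi'\}$ given by $h\circ\rho_X+g\circ\pi'\mapsto h(x)+g(y')$ is well defined and positive. Concretely, we need: whenever $h\circ\rho_X+g\circ\pi'\ge 0$ on $X'$, we have $h(x)+g(y')\ge0$. The order unit $1$ lies in $V$, so positivity of the functional suffices for extension to a state (M.~Riesz/Kadison). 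Well-definedness then follows from positivity applied to $\pm$.

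To verify positivity, suppose $h\circ\rho_X+g\circ\pi'\ge0$. Set $\phi=\check h^\pi\in\cA(Y)$ and $C=\{z\in X : h(z)\le \phi(\pi(z))+\epsilon\}$, which is closed and convex. Since $\phi(y)=\inf h(\pi^{-1}(y))$ and fibres are compact, $\pi(C)=Y$. By hypothesis, $\pi'(\rho_X^{-1}(C))=Y'$, so for each $w\in Y'$ there is $z'$ with $\rho_X(z')\in C$ and $\pi'(z')=w$. This gives
\begin{equation*}
0\le h(\rho_X z')+g(w)\le \phi(\pi\rho_X z')+\epsilon+g(w)=\phi(\rho_Y w)+g(w)+\epsilon.
\end{equation*}
Hence $\phi\circ\rho_Y+g\ge -\epsilon$ on $Y'$ for every $\epsilon>0$, so $\phi\circ\rho_Y+g\ge0$. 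Evaluating at $y'$ and using $\rho_Y(y')=\pi(x)$ and $\phi(\pi(x))\le h(x)$, we get $h(x)+g(y')\ge\phi(\pi x)+g(y')\ge0$, as needed. The resulting state is $\hat{x'}$ for some $x'\in X'$. Since $\cA(X)$ separates points of $X$, we get $\rho_X(x')=x$, and since $\cA(Y')$ separates points of $Y'$, we get $\pi'(x')=y'$.

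The main point to check is that the sets $C$ depending on $\epsilon$ are closed convex with $\pi(C)=Y$. This is exactly where openness and face preservation of $\pi$ enter: they make $\phi$ continuous and affine, which is what makes $C$ closed and convex. The remaining steps are routine Hahn--Banach bookkeeping.
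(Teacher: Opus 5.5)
Your argument is correct and is essentially the paper's. The key ingredient is the same: the set $C=\{z : h(z)\le \check{h}^\pi(\pi(z))+\epsilon\}$, which is closed and convex by \autoref{thm:affinely-open-maps}; the paper takes $\epsilon=0$, which already suffices because the infimum over a compact fibre is attained. The hypothesis on the diagram is likewise used in the same way (in fact you only need it at the point $w=y'$).

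The only difference is how Hahn--Banach is applied. The paper separates $x$ from the compact convex set $\rho_X(\pi'^{-1}(y'))$ by a single $f\in\cA(X)$ and derives a contradiction. You instead construct $x'$ directly as a state on $\cA(X')$, extending a positive unital functional from $\rho_X^*\cA(X)+\pi'^*\cA(Y')$, in the style of \autoref{thm:convex-lifting-maps}.
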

\begin{proof}
The argument is essentially the same as in \cite[Lemma~3.6(i)]{BITaffine}.
Suppose $\pi(x) = \rho_Y(y')$, and let $C' = \pi'^{-1}(y')$ be the fiber of $\pi'$ over $y'$.
We want to show that $x\in \rho_X(C')$.
Otherwise, by the Hahn--Banach Separation Theorem, there is $f\in \cA(X)$ with $f(x) < 0$ and $f\big(\rho_X(x')\big) \geq 0$ for every $x' \in C'$.
Now let
\begin{equation*}
    C = \bigl\{ z \in X : f(z) = \check{f}^\pi\big(\pi(z)\big) \bigr\}.
\end{equation*}
By \autoref{thm:affinely-open-maps}, $\check{f}^\pi$ is continuous and affine, and therefore $C$ is closed and convex.
Moreover, $\pi(C) = Y$ because the infimum in the definition of $\check{f}^\pi$ is attained.
By our assumption on the commuting diagram, $\pi'(\rho_X^{-1}(C)) = Y'$.
In particular, there is $x' \in \rho_X^{-1}(C)\cap C'$.
Denoting $z = \rho_X(x')$, we have $\pi(z) = \rho_Y\big(\pi'(x')\big) = \rho_Y(y') = \pi(x)$. But then
\begin{equation*}
f(x) < 0 \leq f(z) = \check{f}^\pi\big(\pi(x)\big) \leq f(x),
\end{equation*}
a contradiction.
\end{proof}

\begin{rmk}\label{rmk:strong-CLP-to-CLP}
The convex lifting property corresponds to the diagram of \autoref{prop:strong-CLP} with $X' = X\times X$, $\rho_X(x_0,x_1) = \lambda x_0 + (1-\lambda)x_1$, similarly for $Y'$ and $\rho_Y$, and $\pi' = \pi\times\pi$. In turn, the lifting property for measures (\autoref{prop:CLP-with-measures}) corresponds to the diagram with $X'=\cM(X)$, $Y'=\cM(Y)$, $\pi' = \pi_*$ the pushforward map, and $\rho_X$, $\rho_Y$ the barycenter maps.
\end{rmk}

\section{Variable restriction maps and model completions}
\label{sec:appendix:model-completions}

We start by recalling the following basic fact.

\begin{lem}
\label{lem:pi-cont-open}
Let $T$ be a continuous logic theory.
Then for every pair of tuples of variables $x$ and $y$, the variable restriction map
\begin{equation*}
\pi^\cont_{x,y}\colon \tS^\cont_{xy}(T)\to \tS^\cont_x(T)
\end{equation*}
is continuous, surjective and open.
\end{lem}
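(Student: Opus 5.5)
Continuity is routine: a basic open set in $\tS^\cont_x(T)$ has the form $\{q : \varphi(q)<r\}$ with $\varphi\in\cL^\cont_x$. Its preimage under $\pi^\cont_{x,y}$ is $\{p : \varphi(p)<r\}$, where $\varphi$ is now read as a formula in the variables $xy$. That set is open. Equivalently, $\pi^\cont_{x,y}$ is the Gelfand dual of the inclusion of order unit vector lattices $\cL^\cont_x(T)\to\cL^\cont_{xy}(T)$.

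For surjectivity, take $q\in\tS^\cont_x(T)$ and realize it by a tuple $a$ in some model $M\models T$. Any $b\in M^y$ then gives a type $\tp^\cont(ab)$ restricting to $q$. If the sort of $y$ is empty, adjoin dummy elements; models are non-empty in the relevant sorts, so this is harmless.

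For openness I will use the criterion of \autoref{prop:open-maps}. It suffices to show that for every $f\in C(\tS^\cont_{xy}(T))$, the function $\check f^\pi(q)=\inf\{f(p):\pi(p)=q\}$ is continuous. The key observation is that formulas $\varphi(x,y)\in\cL^\cont_{xy}$ are dense in $C(\tS^\cont_{xy}(T))$, by Stone--Weierstrass or directly by the definition of $\Spec$. For such $\varphi$ I claim
\begin{equation*}
\check\varphi^\pi = \inf_y\varphi(x,y),
\end{equation*}
and the right-hand side is a formula, hence continuous on $\tS^\cont_x(T)$.

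The inequality $\check\varphi^\pi\geq\inf_y\varphi$ is clear. For the reverse inequality, realize $q$ by $a$ in an $\aleph_1$-saturated model, or use compactness directly: the set of conditions $\{\varphi(a,y)\le (\inf_y\varphi)(a)\}$ together with the type of $a$ is finitely satisfiable, so the infimum is attained by some $b$. Then $p=\tp(ab)$ satisfies $\pi(p)=q$ and $\varphi(p)=\inf_y\varphi(q)$. Since $\|\check f^\pi-\check g^\pi\|_\infty\le\|f-g\|_\infty$, uniform limits preserve continuity, so $\check f^\pi$ is continuous for all $f\in C(\tS^\cont_{xy}(T))$. Openness then follows from \autoref{prop:open-maps}.

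For infinite tuples the same argument works, since any formula only involves finitely many variables. The only step needing care is attaining the infimum, which is a standard compactness argument, so no real obstacle is expected.
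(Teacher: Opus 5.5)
Your proof is correct. For openness, though, it follows a different route from the paper's proof of this lemma.

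\textbf{The paper's argument.} The paper argues directly. The sets $\llbracket\varphi<0\rrbracket$, for $\varphi\in\cL^\cont_{xy}$, form a basis of $\tS^\cont_{xy}(T)$. Moreover, $\pi^\cont_{x,y}(\llbracket\varphi<0\rrbracket)=\llbracket\inf_y\varphi(x,y)<0\rrbracket$, which is open.

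\textbf{Your argument.} You go through \autoref{prop:open-maps}. You compute $\check\varphi^\pi=\inf_y\varphi(x,y)$ for formulas, then extend continuity of $\check f^\pi$ to all of $C(\tS^\cont_{xy}(T))$ using density and the estimate $\|\check f^\pi-\check g^\pi\|_\infty\le\|f-g\|_\infty$. This is the alternative the paper itself mentions right after the lemma: quantification $\varphi\mapsto\inf_y\varphi$ is the upper adjoint of the inclusion $\cL^\cont_x(T)\subseteq\cL^\cont_{xy}(T)$.

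\textbf{What each buys.} The paper's basis argument is shorter. It needs neither the density of formulas nor the Urysohn-based direction of \autoref{prop:open-maps}. Your adjoint-style argument is the one that survives in the affine setting (\autoref{lem:pi-aff-open}), where the sets $\llbracket\varphi<0\rrbracket$ no longer form a basis. Even there it needs an adjustment: affine formulas are dense only in $\cA(\tS^\aff_{xy}(T))$, not in $C(\tS^\aff_{xy}(T))$. So \autoref{prop:open-maps} must be replaced by Vesterstr{\o}m's \autoref{thm:Vesterstrom}, which only requires continuity of $\check f^\pi$ for affine $f$.

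\textbf{A small simplification.} You do not need compactness to attain the infimum. Choosing $b\in M^y$ with $\varphi(a,b)<(\inf_y\varphi)(a)+\epsilon$ already gives the reverse inequality $\check\varphi^\pi\le\inf_y\varphi$.
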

\begin{proof}
Continuity and surjectivity are obvious.
The sets
\begin{equation*}
\llbracket\varphi<0\rrbracket = \{p\in\tS^\cont_{xy}(T) : p(\varphi)<0\}
\end{equation*}
for $\varphi\in\cL^\cont_{xy}$ form a basis of open neighborhoods of $\tS^\cont_{xy}(T)$, and the sets $\pi^\cont_{x,y}(\llbracket\varphi<0\rrbracket) = \llbracket\inf_y\varphi(x,y)<0\rrbracket$ are open in $\tS^\cont_x(T)$.
Hence $\pi^\cont_{x,y}$ is open.
\end{proof}

From the viewpoint developed in \autoref{sec:appendix:open-and-face-preserving}, and specifically, by \autoref{prop:open-maps}, the previous lemma can also be derived from the self-evident fact that the quantification map
\begin{equation*}
\cL^\cont_{xy}(T) \to \cL^\cont_x(T),\quad \varphi(x,y)\mapsto \inf_y\varphi(x,y)
\end{equation*}
is the upper adjoint of the inclusion $\cL^\cont_x(T) \subseteq \cL^\cont_{xy}(T)$.
This alternative argument will be worthwhile in the affine setting, as we shall see in \autoref{lem:pi-aff-open}.

We first state a criterion for the existence of model completions in continuous logic.
This may be considered folklore.
The special case of the theory $\PMP_G$ was recently put forward, and exploited remarkably, in \cite{GST2025}.

\begin{prop}
\label{prop:model-completion-main-cont}
Let $T$ be a Robinson theory in continuous logic.
Then the following are equivalent:
\begin{enumerate}
\item\label{i:main:T*} $T$ admits a model completion.
\item\label{i:main:piqf-open} For every pair of finite tuples of variables $x$ and $y$, the variable restriction map $\pi^\qf_{x,y} \colon \tS^\qf_{xy}(T) \to \tS^\qf_x(T)$ is open.
\end{enumerate}
\end{prop}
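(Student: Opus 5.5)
The plan is to use the classical characterization of model completions through quantifier-free type spaces, routing it through the correspondence between openness of $\pi^\qf_{x,y}$ and the existence of a quantifier-free ``upper adjoint'' for the inclusion $\cL^\qf_x(T)\subseteq\cL^\qf_{xy}(T)$. This correspondence is available via \autoref{prop:open-maps}, after completing the order unit vector lattices to $C(\tS^\qf_x(T))$.

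For \autoref{i:main:T*}$\Rightarrow$\autoref{i:main:piqf-open}, let $T^*$ be the model completion. Since $(T^*)_\forall=T$, \autoref{rmk:Sqf(Tforall)} gives $\tS^\qf_x(T)=\tS^\qf_x(T^*)$. Quantifier elimination then identifies $\tS^\qf_x(T^*)$ with $\tS^\cont_x(T^*)$ compatibly with variable restriction. Hence $\pi^\qf_{x,y}$ is identified with $\pi^\cont_{x,y}$ for $T^*$, which is open by \autoref{lem:pi-cont-open}.

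For the converse, assume each $\pi^\qf_{x,y}$ is open, with $x,y$ finite. By \autoref{prop:open-maps}, for every $\varphi\in\cL^\qf_{xy}$ the function $\check\varphi^{\pi}(p)=\inf\{\varphi(q):\pi(q)=p\}$ is continuous on $\tS^\qf_x(T)$. It is therefore a uniform limit of quantifier-free formulas $\psi_n(x)$ modulo $T$, that is, a quantifier-free definable predicate $\psi_\varphi$.

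Define $T^*=T\cup\{\sup_x|\inf_y\varphi(x,y)-\psi_\varphi(x)|=0\}$, expressed as a family of conditions with formulas approximating $\psi_\varphi$ to within $1/n$. The key claim is that the models of $T^*$ are exactly the e.c.\ models of $T$. We prove this using \autoref{lem:types-with-parameters-vs-fibers}: if $M\models_\ec T$ and $a\in M^x$, the fiber of $\pi^\qf_{x,y}$ over $\tp^\qf(a)$ equals the set of $\tp^\qf(ab)$ with $b$ in extensions of $M$. Since $T$ is Robinson, this holds for continuous types as well, because the continuous analogue of that lemma goes through verbatim. Existential closedness then gives $(\inf_y\varphi(a,y))^M=\check\varphi^\pi(\tp^\qf(a))=\psi_\varphi(a)$. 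Conversely, if $M\models T^*$ and $M\subseteq N\models T$, then $(\inf_y\varphi(a,y))^N\ge\check\varphi^\pi(\tp^\qf(a))=\psi_\varphi(a)=(\inf_y\varphi(a,y))^M$, using that $\psi_\varphi$ is quantifier-free and so preserved. Hence $M\preceq^\ec N$.

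It remains to show that $T^*$ has quantifier elimination and that $(T^*)_\forall=T$. The second follows from \autoref{lem:embedding-into-ec-model}, since every model of $T$ embeds into an e.c.\ model, which is a model of $T^*$. For quantifier elimination, an induction on formulas reduces everything to eliminating one $\inf_y$ applied to a quantifier-free formula, and the axioms of $T^*$ provide exactly this. Connectives and uniform limits pass through.

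The main obstacle is the careful step above: handling continuous (non-affine) quantifier-free formulas, and checking that the fiber description over $\tp^\qf(a)$ in an e.c.\ model uses amalgamation correctly. In particular, one must check that the fiber $(\pi^\qf_{x,y})^{-1}(\tp^\qf(a))$ is realized over $M$ itself rather than over a copy of $a$.
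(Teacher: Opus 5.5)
Your proposal is correct and follows the paper's proof. For (i)$\Rightarrow$(ii) you identify $\pi^\qf_{x,y}$ with $\pi^\cont_{x,y}$ for $T^*$, and for (ii)$\Rightarrow$(i) you axiomatize $\inf_y\varphi(x,y)=\check\varphi^{\pi}(x)$, which gives quantifier elimination directly, just as in the paper. The only variation is how each model of $T$ is embedded into a model of $T^*$: the paper uses an explicit chain of amalgamations over $\cl{\langle a\rangle}$, while you show that existentially closed models of $T$ satisfy $T^*$ and then apply \autoref{lem:embedding-into-ec-model}. Your step uses the same amalgamation over $\cl{\langle a\rangle}$, which settles the concern you raise at the end. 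It is a repackaging of the paper's idea, with the small bonus that the models of $T^*$ are exactly the e.c.\ models of $T$.
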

\begin{proof}
\begin{cycprf}
\item If $T^*$ is a model completion of $T$, then the canonical projections $\tS^\cont_x(T^*)\to \tS^\qf_x(T^*)$ are homeomorphisms, because $T^*$ has quantifier elimination.
On the other hand, by \autoref{rmk:Sqf(Tforall)} and since $T$ is the universal part of $T^*$, we can identify $\tS^\qf_x(T^*) = \tS^\qf_x(T)$.
These homeomorphisms commute with the variable restriction maps:
\begin{equation*}
      \begin{tikzcd}
        \tS^\cont_{xy}(T^*) \arrow{d}{\cong} \arrow{r}{\pi^\cont_{x,y}} & \tS^\cont_x(T^*) \arrow{d}{\cong} \\
        \tS^\qf_{xy}(T) \arrow{r}{\pi^\qf_{x,y}} & \tS^\qf_x(T)
      \end{tikzcd}
\end{equation*}
As the maps $\pi^\cont_{x,y}$ are open (\autoref{lem:pi-cont-open}), so are the maps $\pi^\qf_{x,y}$.

\item[\impfirst] For each formula $\varphi\in\cL^\qf_{xy}$ let us consider the function
\begin{equation*}
\alpha_\varphi = \check{\varphi}^{\pi^\qf_{x,y}} \colon \tS^\qf_x(T) \to \bR,\quad \alpha_\varphi(p) = \inf\bigl\{q(\varphi) : q\in \tS^\qf_{xy}(T),\, \pi^\qf_{x,y}(q) = p\bigr\}.
\end{equation*}
By \autoref{i:main:piqf-open} and \autoref{lem:theta-continuity}, the functions $\alpha_\varphi$ are continuous.
We may thus see them as quantifier-free definable predicates on models of $T$. By definition, we have:
\begin{equation*}
\label{eq:theta-leq-inf-phi}
T \models \sup_x \bigl(\alpha_\varphi(x) - \inf_y\varphi(x,y)\bigr) \leq 0.
\end{equation*}
Now let us consider the theory $T^*$ consisting of $T$ together with all the conditions of the form
\begin{equation}
\label{eq:inf-phi-leq-theta}
\sup_x \bigl(\inf_y \varphi(x,y) - \alpha_\varphi(x) \bigr) \leq 0,
\end{equation}
with $\varphi\in\cL^\qf_{xy}$.
Modulo $T^*$, every formula of the form $\inf_y\varphi(x,y)$ with $\varphi$ quantifier-free is equivalent to the quantifier-free definable predicate $\alpha_\varphi$.
It follows that $T^*$  has quantifier elimination.

To conclude that $T^*$ is the model completion of $T$, it remains to prove that every model $M\models T$ embeds into a model of $T^*$.
For each formula $\varphi\in\cL^\qf_{xy}$ and tuple $a\in M^x$, we may by compactness find a model $M_{\varphi,a}\models T$ containing the substructure $A$ generated by $a$, and a tuple $b\in M^y$ such that
\begin{equation*}
\alpha_\varphi(a) = \varphi(a,b).
\end{equation*}
Since $T$ has the amalgamation property, we may amalgamate $M$ and the structures $M_{\varphi,a}$ (as $\varphi$ and $a$ vary) over $A$ to get a model $M_1\models T$.
We may iterate this construction to get a countable chain $M \subseteq M_1\subseteq M_2\subseteq\dots$ of models of $T$ with the property that for every $\varphi\in\cL^\qf_{xy}$ and $a\in M_n^x$,
\begin{equation*}
M_{n+1} \models \inf_y\varphi(a,y) \leq \alpha_\varphi(a).
\end{equation*}
The direct limit $N = \cl{\bigcup_{n\in\bN}M_n}$ is a model of $T^*$ extending $M$, as desired. 
\end{cycprf}
\end{proof}

We now turn to affine logic.

\begin{lem}
\label{lem:pi-aff-open}
Let $T$ be an affine theory.
Then for every pair of tuples of variables $x$ and $y$, the variable restriction map
\begin{equation*}
\pi^\aff_{x,y}\colon \tS^\aff_{xy}(T)\to \tS^\aff_x(T)
\end{equation*}
is affine, continuous, surjective, open and face-preserving.
\end{lem}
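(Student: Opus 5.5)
The plan is to prove each property separately, using the appendix machinery. Affineness, continuity and surjectivity are immediate: $\pi^\aff_{x,y}$ is the Kadison dual of the inclusion of order unit spaces $\cL^\aff_x(T)\subseteq\cL^\aff_{xy}(T)$, which is positive and unital. Surjectivity holds because every affine type in $x$ is realized in a model of $T$, and any realization extends to an $xy$-tuple in the same model.

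For openness and face-preservation together, I would use \autoref{thm:affinely-open-maps}. It suffices to show that the dual map $\pi^{*\aff}\colon\cA(\tS^\aff_x(T))\to\cA(\tS^\aff_{xy}(T))$ admits an upper adjoint. The candidate, mirroring the remark after \autoref{lem:pi-cont-open}, is quantification $\varphi(x,y)\mapsto\inf_y\varphi(x,y)$. This is available because in affine logic $\inf_y$ is an allowed operation, so $\inf_y\varphi$ is again an affine formula. The map is $1$-Lipschitz for the sup norm, so it extends to affine definable predicates, i.e.\ to the completion $\cA(\tS^\aff_{xy}(T))$. The adjunction is then the tautology
\begin{equation*}
T\models \psi(x)\leq\varphi(x,y)\iff T\models \psi(x)\leq \inf_y\varphi(x,y)
\end{equation*}
for $\psi\in\cL^\aff_x$ and $\varphi\in\cL^\aff_{xy}$. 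This passes to uniform limits by continuity. The inequality on type spaces coincides with the inequality modulo $T$, since every type is realized in a model.

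Alternatively, one could check the two properties directly. Openness would follow as in \autoref{lem:pi-cont-open}, using that $\pi(\llbracket\varphi<0\rrbracket)=\llbracket\inf_y\varphi<0\rrbracket$ and that the sets $\llbracket\varphi<0\rrbracket$ with $\varphi$ affine form a subbasis; images of finite intersections would then need the Vesterstr{\o}m argument. Face-preservation would need the convex lifting property, which is \cite[Lemma~3.6]{BITaffine}. I prefer the adjoint route because it yields both properties at once.

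The main obstacle is the passage to definable predicates. One must ensure that the extended map still takes values in $\cA(\tS^\aff_x(T))$ and that the adjunction inequality is preserved under limits. The monotonicity and Lipschitz bound of $\inf_y$ handle both points. The remaining check is that $\check f^\pi(p)$ equals the value of $\inf_y f$ at $p$, which holds by realizing $p$ in an $\aleph_1$-saturated model, or by the identification in \autoref{thm:affinely-open-maps}.
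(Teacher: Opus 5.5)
Your proposal is correct and follows essentially the same route as the paper. The map $\varphi(x,y)\mapsto\inf_y\varphi(x,y)$ is the upper adjoint of the inclusion $\cL^\aff_x(T)\subseteq\cL^\aff_{xy}(T)$, extends as a contraction to an upper adjoint of the dual map on affine definable predicates, and \autoref{thm:affinely-open-maps} then yields openness and face-preservation at once; your final check identifying $\check{f}^\pi$ with $\inf_y f$ is unnecessary, since the ``moreover'' clause of that theorem already gives it.
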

\begin{proof}
The quantification map
\begin{equation*}
\cL^\aff_{xy}(T) \to \cL^\aff_x(T),\quad \varphi(x,y)\mapsto \inf_y\varphi(x,y)
\end{equation*}
is the upper adjoint of the inclusion $\cL^\aff_x(T) \subseteq \cL^\aff_{xy}(T)$.
It is a contraction, and extends by continuity to an upper adjoint for the dual map $(\pi^\aff_{x,y})^*\colon \cA(\tS^\aff_x(T)) \to \cA(\tS^\aff_{xy}(T))$ (i.e., the inclusion of the space of affine definable predicates in the variables $x$ into the space of affine definable predicates in the variables $xy$).
Therefore, by \autoref{thm:affinely-open-maps}, $\pi^\aff_{x,y}$ is open and face-preserving.
\end{proof}

The fact that the variable restriction maps $\pi_{x,y}^\aff\colon \tS^\aff_{xy}(T)\to \tS^\aff_x(T)$ have the convex lifting property was first observed in \cite[Lemma~3.6(iii)]{BITaffine}.
From that fact, it was deduced that their restriction to the corresponding extreme type spaces,
\begin{equation*}
\pi^\cE_{x,y} \colon \cE_{xy}(T)\to \cE_x(T),
\end{equation*}
are well-defined, continuous, surjective, open maps (\cite[Cor.~5.5]{BITaffine}).
The question of whether the maps $\pi^\aff_{x,y}$ themselves are open did not arise explicitly in~\cite{BITaffine}.\footnote{We may remark, however, that the openness of the maps $\pi^\aff_{x,y}$ does follow from our results in \cite{BITaffine}, and more precisely from the theory of \emph{convex realization completions} (which also plays a role below, in \autoref{thm:affine-vs-continuous-model-completion}).
Indeed, by \cite[Thm.~17.7]{BITaffine}, the affine part maps $\rho^\aff_x\colon \tS^\cont_x(T_\crc) \to \tS^\aff_x(T)$ are homeomorphisms, and since they commute with the appropriate variable restriction maps, the openness of the maps $\pi^\aff_{x,y}$ reduces to the openness of the maps $\pi^\cont_{x,y}$.}
Note that the sets of the form $\llbracket\varphi<0\rrbracket$ do not form a basis of the whole affine type spaces $\tS^\aff_x(T)$, and therefore the basic argument of \autoref{lem:pi-cont-open} does not go through in this setting.
Instead, one needs Vesterstr{\o}m's argument (\autoref{thm:Vesterstrom}).

We now prove an affine version of \autoref{prop:model-completion-main-cont}.

\begin{thm}
\label{thm:model-completion-main-aff}
Let $T$ be an affine Robinson theory.
The following are equivalent:
\begin{enumerate}
\item\label{i:main:T*aff} $T$ admits an affine model completion, $T^{*\aff}$.
\item\label{i:main:piqf-open-lifting} For every pair of finite tuples of variables $x$ and $y$, the variable restriction map $\pi^\qf_{x,y} \colon \tS^\qf_{xy}(T) \to \tS^\qf_x(T)$ is open and face-preserving.
\end{enumerate}
Moreover, if these conditions hold, then $T^{*\aff}$ is a face of $T$.
In particular, if $T$ is a simplicial theory, then so is $T^{*\aff}$.
\end{thm}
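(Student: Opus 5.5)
The plan is to mirror the proof of \autoref{prop:model-completion-main-cont} in the affine setting, with \autoref{thm:affinely-open-maps} playing the role of \autoref{prop:open-maps}. The key point is that, by \autoref{thm:affinely-open-maps}, condition \autoref{i:main:piqf-open-lifting} is equivalent to: for every $\varphi\in\cL^{\qf,\aff}_{xy}$, the function $\alpha_\varphi=\check{\varphi}^{\pi^\qf_{x,y}}$ is continuous and affine on $\tS^\qf_x(T)$. Such a function is then a quantifier-free affine definable predicate, that is, a uniform limit of formulas from $\cL^{\qf,\aff}_x$.

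\textbf{From \autoref{i:main:T*aff} to \autoref{i:main:piqf-open-lifting}.} Assume $T^{*\aff}$ exists. By affine quantifier elimination, the canonical maps $\tS^\aff_x(T^{*\aff})\to\tS^\qf_x(T^{*\aff})$ are affine homeomorphisms. By \autoref{rmk:Sqf(Tforall)} and the equality $(T^{*\aff})_\forall=T$, we have $\tS^\qf_x(T^{*\aff})=\tS^\qf_x(T)$. These identifications commute with the variable restriction maps. Hence $\pi^\qf_{x,y}$ is conjugate to $\pi^\aff_{x,y}$ for $T^{*\aff}$, which is open and face-preserving by \autoref{lem:pi-aff-open}.

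\textbf{From \autoref{i:main:piqf-open-lifting} to \autoref{i:main:T*aff}.} Let $T^{*\aff}$ be $T$ together with the affine conditions $\sup_x(\inf_y\varphi(x,y)-\alpha_\varphi(x))\le0$. These are legitimate affine conditions, since $\alpha_\varphi$ is a uniform limit of qf-affine formulas. The reverse inequality $\alpha_\varphi\le\inf_y\varphi$ already holds in every model of $T$, by the definition of $\alpha_\varphi$. So modulo $T^{*\aff}$, each $\inf_y\varphi$ with $\varphi$ quantifier-free affine is equivalent to a qf-affine definable predicate. An induction on formulas then gives affine quantifier elimination, using that affine connectives and $\sup=-\inf(-\cdot)$ preserve this class and that uniform limits pass through quantifiers.

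It remains to show that every $M\models T$ embeds into a model of $T^{*\aff}$; this gives $(T^{*\aff})_\forall=T$. We repeat the chain construction of \autoref{prop:model-completion-main-cont}. For each $\varphi$ and $a\in M^x$, compactness of the fiber gives a type attaining $\alpha_\varphi(\tp^\qf(a))$, realized over the structure generated by $a$. Amalgamation puts all these witnesses into a single model $M_1$. Iterating $\omega$ times and taking the union yields a model of $T$ satisfying the new axioms, since they are $\forall\exists$ with infimum attained at a later stage.

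\textbf{The moreover part.} The completions of $T^{*\aff}$ should form a face of $\tS^\aff_0(T)$. My plan is to use \autoref{lem:model-completion-vs-ec}: $T^{*\aff}$ axiomatizes the a.e.c.\ models. It then suffices to show that if a direct convex combination $\lambda M_0\oplus(1-\lambda)M_1$ satisfies $T^{*\aff}$, and the $M_i\models T$ realize the two completions, then each $M_i$ can be replaced by a model of $T^{*\aff}$ with the same affine theory.

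Concretely, I would take an affine elementary extension of the combination, decompose it, and check via \autoref{lem:aec-ec-convex-preservation}\autoref{i:aec-convex-bipreservation} that each summand is a.e.c. An alternative is to note directly that each axiom $\sup_x(\inf_y\varphi-\alpha_\varphi)\le0$ has value $\ge0$ in every model of $T$. Since its value is affine on completions and is $0$ at the combination, it must be $0$ on each convex summand. This second route seems cleanest.

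Simpliciality then follows, since a closed face of a simplex is a simplex and the type spaces of a face are faces of those of $T$. The main obstacle I expect is the quantifier-elimination induction, specifically handling definable predicates rather than formulas, and the face argument, where I must ensure that $\alpha_\varphi$, a priori defined only on qf-types of $T$, behaves affinely over completions.
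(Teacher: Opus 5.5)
Your proposal is correct and follows the paper's proof. Both directions go through \autoref{lem:pi-aff-open} and \autoref{thm:affinely-open-maps}, with the same axioms $\sup_x(\inf_y\varphi-\alpha_\varphi)\le 0$ and the same amalgamation chain; your second route for the moreover part is exactly the paper's argument, in which each such sentence is a nonnegative affine continuous function on $\tS^\aff_0(T)$, so its zero set is an exposed face and $T^{*\aff}$ is their intersection, which makes the first route via a.e.c.\ summands unnecessary.
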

\begin{proof}
\begin{cycprf}
\item If the affine model completion $T^{*\aff}$ exists, we may identify $\tS^\qf_x(T)$ with $\tS^\aff_x(T^{*\aff})$.
Hence, the conclusion follows from \autoref{lem:pi-aff-open}.

\item[\impfirst] We proceed as in \impref{i:main:piqf-open}{i:main:T*} of \autoref{prop:model-completion-main-cont}, but considering only affine quantifier-free formulas $\varphi\in\cL^{\qf,\aff}_{xy}$. 
Indeed, by \autoref{i:main:piqf-open-lifting} and \autoref{thm:affinely-open-maps}, the associated functions
\begin{equation*}
\alpha_\varphi \colon \tS^\qf_x(T) \to \bR,\quad \alpha_\varphi(p) = \inf\bigl\{q(\varphi) : q\in \tS^\qf_{xy}(T),\, \pi^\qf_{x,y}(q) = p\bigr\},
\end{equation*}
are continuous and affine, and can thus be seen as quantifier-free affine definable predicates on models of $T$. We have by construction
\begin{equation}
\label{eq:theta-leq-inf-phi-affine}
T \models \sup_x \bigl(\alpha_\varphi(x) - \inf_y\varphi(x,y)\bigr) \leq 0,
\end{equation}
and we may consider the $\cL$-theory $T^{*\aff}\supseteq T$ obtained by adding the affine conditions
\begin{equation}
\label{eq:inf-phi-leq-theta-affine}
\sup_x \bigl(\inf_y \varphi(x,y) - \alpha_\varphi(x) \bigr) \leq 0
\end{equation}
for each $\varphi\in\cL^{\qf,\aff}_{xy}$. Clearly,  $T^{*\aff}$ has affine quantifier elimination. On the other hand, every model of $T$ embeds into a model of $T^{*\aff}$, by the same amalgamation argument as in the continuous case. Therefore, $T^{*\aff}$ is the affine model completion of $T$.
\smallskip

For the moreover part, since $T$ satisfies the conditions in \autoref{eq:theta-leq-inf-phi-affine}, each of the conditions in \autoref{eq:inf-phi-leq-theta-affine} defines an (exposed) face of $\tS^\aff_0(T)$.
Their intersection is the theory $T^{*\aff}$, which is thus a face of $T$.
As a consequence, if the theory $T$ is simplicial, then so is $T^{*\aff}$, by \cite[Prop.~11.4]{BITaffine}.
\end{cycprf}
\end{proof}

From \autoref{prop:model-completion-main-cont} and \autoref{thm:model-completion-main-aff}, we deduce the following.

\begin{cor}
Let $G$ be any group.
Then the theory $\PMP_G$ has a model completion in continuous logic if and only if it has a model completion in affine logic.
\end{cor}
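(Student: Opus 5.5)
The plan is to reduce both sides of the equivalence to a single condition on the quantifier-free type spaces, using the two criteria established just above. The theory $\PMP_G$ is a universal affine theory, so it is in particular a universal continuous logic theory. Its models have the amalgamation property via relatively independent joinings, as noted in \autoref{sec:Glasner-Weiss}. Hence $\PMP_G$ is a Robinson theory both in the continuous sense and in the affine sense, so \autoref{prop:model-completion-main-cont} and \autoref{thm:model-completion-main-aff} both apply to it.

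By \autoref{prop:model-completion-main-cont}, $\PMP_G$ admits a (continuous) model completion if and only if, for all finite tuples $x,y$, the variable restriction maps $\pi^\qf_{x,y}\colon \tS^\qf_{xy}(\PMP_G)\to\tS^\qf_x(\PMP_G)$ are open. By \autoref{thm:model-completion-main-aff}, $\PMP_G$ admits an affine model completion if and only if the same maps are open \emph{and} face-preserving. Here I will make sure the space $\tS^\qf_x$ is the same object in both criteria; this is justified by the remark identifying $\Spec(\cL^\qf_x(T))$ with $S(\cL^{\qf,\aff}_x(T))$ for affine theories.

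It then remains to observe that face-preservation holds automatically. By \autoref{cor:PMPG-basics}, $\PMP_G$ is face-preserving for every group $G$. Concretely, under the identification $\Phi^G_x$ of \autoref{prop:PMPG-qf}, the map $\pi^\qf_{x,y}$ corresponds to the pushforward $(\proj_{x,y})_*$ restricted to invariant measures. That pushforward is face-preserving by \autoref{prop:cMinv(X)-properties}\autoref{i:cMinv(X)-properties:face-pres}. Therefore the two conditions coincide, and the equivalence follows.

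There is no real obstacle here. The only point requiring care is the bookkeeping in the first paragraph: checking that $\PMP_G$ qualifies as a Robinson theory in both frameworks, and that no countability assumption on $G$ is needed. Neither \autoref{prop:PMPG-qf} nor \autoref{prop:cMinv(X)-properties} requires $G$ to be countable.
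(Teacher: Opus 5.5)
Your proposal is correct and is the paper's argument: both criteria (\autoref{prop:model-completion-main-cont} and \autoref{thm:model-completion-main-aff}) reduce to openness of the maps $\pi^\qf_{x,y}$, and the affine criterion's extra requirement of face-preservation holds automatically for $\PMP_G$ by \autoref{cor:PMPG-basics}. Your additional bookkeeping is sound and only makes explicit what the paper's one-line proof leaves implicit: identifying the two versions of $\tS^\qf_x$, checking the Robinson property in both settings, and noting that no countability of $G$ is needed.
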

\begin{proof}
Since $\PMP_G$ is face-preserving (\autoref{cor:PMPG-basics}), this is indeed a consequence of the preceding criteria.
\end{proof}

In fact, the previous corollary holds for \emph{any} irreducible, affine Robinson theory.

In order to see this, let us first recall a notion introduced in \cite[\textsection17]{BITaffine}.
Given an affine theory $T$, the \emph{convex realization completion} of $T$, denoted by $T_\crc$, is the continuous logic theory obtained from $T$ by adding the axiom scheme of the convex realization property.
The theory $T_\crc$ can be equivalently defined as the common continuous theory of the direct integrals of models of $T$ over atomless probability spaces, or as the common continuous theory of the affinely $\aleph_0$-saturated models of~$T$.

It is always the case that $(T_\crc)_\aff = T$, and one may therefore consider the maps
\begin{equation*}
\rho^\aff_x\colon \tS^\cont_x(T_\crc) \to \tS^\aff_x(T)
\end{equation*}
sending a continuous type to its affine part.
One of the main properties of the convex realization completion is that these maps are homeomorphisms.
Another important property is that if $T$ is affinely complete, then $T_\crc$ is complete in continuous logic.
See \cite[Thm.~17.7]{BITaffine}.

We also have the following basic fact.

\begin{lem}
\label{lem:Tcr-forall}
Let $T$ be an affine theory.
Then $(T_\crc)_\forall = T_\forall$.
\end{lem}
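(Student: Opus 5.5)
The plan is to prove the two inclusions $T_\forall\subseteq (T_\crc)_\forall$ and $(T_\crc)_\forall\subseteq T_\forall$ separately, using only that $T_\crc$ extends $T$ and the characterization of $T_\crc$ as the common continuous theory of direct integrals of models of $T$ over atomless probability spaces (equivalently, of direct multiples $L^1(\Omega,M)$).

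The inclusion $T_\forall\subseteq (T_\crc)_\forall$ is immediate, since $T\subseteq T_\crc$, so every universal consequence of $T$ is a universal consequence of $T_\crc$.

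For the converse, I will show that every model $M\models T$ is a substructure of a model of $T_\crc$. Then any universal condition $\sup_x\varphi(x)\leq 0$ implied by $T_\crc$ holds in that model, hence in $M$, because universal conditions pass to substructures. This gives $T\models \sup_x\varphi(x)\leq 0$. For the embedding, fix an atomless probability space $(\Omega,\mu)$ and take $N=L^1(\Omega,M)$, the direct multiple of $M$. By \cite[Lemma~17.2]{BITaffine}, $N$ has the convex realization property. Being a direct integral of models of $T$, it is also a model of $T$ (by {\L}os's theorem for direct integrals, \cite[Thm.~8.11]{BITaffine}). Hence $N\models T_\crc$. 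The constant sections give an embedding of $M$ into $N$, and this embedding is in fact affine elementary, $M\preceq^\aff N$, so in particular it is an $\cL$-embedding.

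The only point needing care is that $T$ is an arbitrary affine theory rather than a universal one. So I must make sure to use the full affine {\L}os theorem for $L^1(\Omega,M)$, and not just \autoref{cor:Los-universal-theories}, which only covers universal theories. Since constant fields are elementarily measurable (as noted after \autoref{cor:Los-universal-theories}), this is available. No further obstacle is expected.
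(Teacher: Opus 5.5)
Your proof is correct and is essentially the paper's argument: both get the nontrivial inclusion by embedding a model into a direct multiple $L^1(\Omega,M)$ over an atomless probability space, which is a model of $T_\crc$. The only difference is that the paper starts from a model of $T_\forall$ and first embeds it into a model of $T$, whereas you work directly with models of $T$. This is equivalent, since the inclusion only concerns universal consequences of $T$.
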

\begin{proof}
Clearly, $T_\forall\subseteq (T_\crc)_\forall$. Conversely, if $M\models T_\forall$ then $M$ embeds into a model of $T$, which in turn embeds into a model of $T_\crc$ (e.g., by taking any direct multiple over an atomless probability space). Hence the other inclusion holds as well.
\end{proof}

\begin{thm}
\label{thm:affine-vs-continuous-model-completion}
Let $T$ be an irreducible, affine Robinson theory.
Then the following are equivalent:
\begin{enumerate}
\item $T$ admits a continuous logic model completion, $T^*$.
\item $T$ admits an affine model completion, $T^{*\aff}$.
\end{enumerate}
Moreover, if these hold, then $T^* = (T^{*\aff})_\crc$ and $T^{*\aff} = (T^*)_\aff$.
\end{thm}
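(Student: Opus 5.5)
We prove the two implications separately, using the criteria \autoref{prop:model-completion-main-cont} and \autoref{thm:model-completion-main-aff}, together with the convex realization completion. Throughout, $T$ is identified with $T_\forall$.

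For (ii)$\Rightarrow$(i), suppose $T^{*\aff}$ exists. By \autoref{rmk:irreducible-model-completion} it is affinely complete, so $(T^{*\aff})_\crc$ is a complete continuous theory and the maps $\rho^\aff_x\colon \tS^\cont_x((T^{*\aff})_\crc)\to\tS^\aff_x(T^{*\aff})$ are homeomorphisms (\cite[Thm.~17.7]{BITaffine}). By affine quantifier elimination, $\tS^\aff_x(T^{*\aff})\cong\tS^\qf_x(T^{*\aff})$. By \autoref{lem:Tcr-forall}, $((T^{*\aff})_\crc)_\forall=(T^{*\aff})_\forall=T$, so by \autoref{rmk:Sqf(Tforall)} this last space equals $\tS^\qf_x(T)$. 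These identifications compose to the canonical map $\tS^\cont_x((T^{*\aff})_\crc)\to\tS^\qf_x(T)$ sending a type to its quantifier-free part. That map is therefore injective, which is quantifier elimination for $(T^{*\aff})_\crc$. Hence $T^*:=(T^{*\aff})_\crc$ is the continuous model completion. (Alternatively, the diagram shows that $\pi^\qf_{x,y}$ is conjugate to $\pi^\aff_{x,y}$, which is open by \autoref{lem:pi-aff-open}; then apply \autoref{prop:model-completion-main-cont}.)

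For (i)$\Rightarrow$(ii), \autoref{prop:model-completion-main-cont} shows that the maps $\pi^\qf_{x,y}$ are open. By \autoref{thm:model-completion-main-aff}, it remains to show they are face-preserving, and this is the main obstacle. The plan is to realize $\tS^\qf_x(T)$ as a full affine type space. Let $S=(T^*)_\aff$. This is an affine theory with $S_\forall\supseteq T$. Conversely, every model of $T$ embeds in a model of $T^*$, so $S_\forall=T$ by \autoref{lem:forall-aff}. The key step is to show that $S$ has affine quantifier elimination, i.e.\ that $\rho\colon\tS^\aff_x(S)\to\tS^\qf_x(S)=\tS^\qf_x(T)$ is injective. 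By \autoref{lem:E(Taff)-S(T)-S(Taff)}, $\cl{\cE^\qf_x(S)}\subseteq\tS^\qf_x(T^*)\cong\tS^\cont_x(T^*)$. Each affine formula $\psi$ is, modulo $T^*$, a continuous quantifier-free predicate $\theta_\psi$ on $\tS^\qf_x(T)$. Since $T^*$ is complete (\autoref{rmk:irreducible-model-completion}), it has the convex realization property inherited from saturated models, and this should make $\theta_\psi$ affine, so that $\psi\equiv\theta_\psi$ modulo $S$. The argument here would use that $T^*=(T^*)_{\aff,\crc}$, via the fact that $T^*$ is complete and is the theory of saturated models, together with \cite[Thm.~17.7]{BITaffine} applied to the complete affine theory $S$.

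Once $\tS^\aff_x(S)=\tS^\qf_x(T)$ is established, \autoref{lem:pi-aff-open} gives that $\pi^\qf_{x,y}$ is open and face-preserving. Then \autoref{thm:model-completion-main-aff} yields $T^{*\aff}$, and by uniqueness $T^{*\aff}=S=(T^*)_\aff$. Finally, $T^*=(T^{*\aff})_\crc$, because both are continuous model completions of $T$ (the first half of the argument) and model completions are unique.
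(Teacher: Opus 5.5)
Your direction (ii)$\Rightarrow$(i) is fine and is essentially the paper's argument: \autoref{lem:Tcr-forall}, then composing the homeomorphism of \cite[Thm.~17.7(iii)]{BITaffine} with the one given by affine quantifier elimination.

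The direction (i)$\Rightarrow$(ii), however, has a genuine gap exactly at the step you call the main obstacle. You correctly reduce to showing that $S=(T^*)_\aff$ has affine quantifier elimination. You also correctly sense that this should follow from $T^*=S_\crc$. But the reason you give, namely that $T^*$, being complete, ``has the convex realization property inherited from saturated models'', is not valid. Completeness does not give the convex realization property. For instance, take a structure $M$ in which a unary predicate $P$ takes only the values $0$ and $1$, both attained. Then $\Th^\cont(M)$ is complete but fails $\sup_{x_0,x_1}\inf_z|P(z)-\frac12(P(x_0)+P(x_1))|=0$. Continuous saturation only lets you realize affine parts of continuous types of $T^*$; that these exhaust $\tS^\aff_x(S)$ is precisely what must be proved. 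For the same reason, your claim that $\theta_\psi$ is affine is circular. Testing it at $\lambda q_0+(1-\lambda)q_1$ means evaluating $\psi$ in $\lambda M_0\oplus(1-\lambda)M_1$, which is a model of $S$ but not known to be a model of $T^*$. (Your appeal to \autoref{lem:E(Taff)-S(T)-S(Taff)} gives nothing here, since $\tS^\qf_x(T^*)=\tS^\qf_x(S)=\tS^\qf_x(T)$.)

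The missing idea is to use that $T^*$ is a model \emph{companion}, not merely that it is complete. Every $M\models T^*$ is an existentially closed model of $T$. It embeds into $L^1(\Omega,M)$ with $\Omega$ atomless, which is a model of $T$ with the convex realization property. That property is $\forall\exists$-axiomatizable (\autoref{cor:crp-forall-exists-axiomatizable}, which is what \autoref{sec:convex-realization} is for), so \autoref{lem:ec-vs-models-AE} shows that $M$ has it. Hence $T^*\models S_\crc$. Both theories are complete (\autoref{rmk:irreducible-model-completion} and \cite[Thm.~17.7(iv)]{BITaffine}), so $T^*=S_\crc$. Now the quantifier-elimination homeomorphism $\tS^\cont_x(T^*)\to\tS^\qf_x(T)$ factors through the homeomorphism $\tS^\cont_x(S_\crc)\to\tS^\aff_x(S)$ of \cite[Thm.~17.7(iii)]{BITaffine}. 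Therefore $\tS^\aff_x(S)\to\tS^\qf_x(T)$ is a homeomorphism, i.e., $S$ has affine quantifier elimination. Together with $S_\forall=T$, this makes $S$ the affine model completion directly. The detour through openness of $\pi^\qf_{x,y}$, \autoref{lem:pi-aff-open} and \autoref{thm:model-completion-main-aff} is then unnecessary.
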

\begin{proof}
\begin{cycprf}
\item Let us show that $(T^*)_\aff$ is the affine model completion of $T$.
First, by \autoref{lem:forall-aff}, $\bigl((T^*)_\aff\bigr)_\forall = (T^*)_{\forall^\aff} = \bigl((T^*)_\forall\bigr)_{\forall^\aff} = T$.

Let $M\models T^*$. Since any model of $T$ embeds into a model of $T_\crc$, and since $M\models_\ec T$, it follows from \autoref{lem:ec-vs-models-AE} and \autoref{cor:crp-forall-exists-axiomatizable} that $M$ has the convex realization property.
This shows that $T^* \models ((T^*)_\aff)_\crc$.
On the other hand, since $T^*$ is complete in continuous logic (\autoref{rmk:irreducible-model-completion}), so is $((T^*)_\aff)_\crc$, by \cite[Thm.~17.7(iv)]{BITaffine}.
Therefore, $T^* = ((T^*)_\aff)_\crc$.

Now, since $T^*$ eliminates quantifiers in continuous logic, given a tuple of variables $x$, the composition of the canonical projections
\begin{equation*}
\tS^\cont_x(T^*) = \tS^\cont_x\bigl(((T^*)_\aff)_\crc\bigr) \to
\tS^\aff_x\bigl((T^*)_\aff\bigr) \to \tS^\qf_x(T)
\end{equation*}
is a homeomorphism.
By \cite[Thm.~17.7(iii)]{BITaffine}, the first projection is also a homeomorphism, and therefore so is the second projection.
Hence $(T^*)_\aff$ has affine quantifier elimination, and it is thus the affine model completion of $T$.

\item[\impfirst] The implication follows from the criteria given earlier, but let us show directly that $(T^{*\aff})_\crc$ is the continuous model completion of $T$.

By \autoref{lem:Tcr-forall}, $\bigl((T^{*\aff})_\crc\bigr)_\forall = (T^{*\aff})_\forall = T$.
On the other hand, the first of the canonical projections
\begin{equation*}
\tS^\cont_x\bigl((T^{*\aff})_\crc\bigr) \to \tS^\aff_x(T^{*\aff}) \to \tS^\qf_x(T)
\end{equation*}
is a homeomorphism, again by \cite[Thm.~17.7(iii)]{BITaffine}.
Since $T^{*\aff}$ has affine quantifier elimination, the second projection is a homeomorphism as well, and so is their composition. Hence $(T^{*\aff})_\crc$ has quantifier elimination in continuous logic, and therefore it is the model completion of $T$.
\end{cycprf}
\end{proof}

\begin{cor}
Let $T$ be an irreducible, affine Robinson theory admitting continuous and affine model completions, $T^*$ and $T^{*\aff}$.
If $T$ is qf-Poulsen, then $T^* = T^{*\aff}$.
\end{cor}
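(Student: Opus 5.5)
The goal is $T^* = T^{*\aff}$: the affine model completion is already complete as a continuous theory, or equivalently it proves the convex realization property. By \autoref{thm:affine-vs-continuous-model-completion} we know $T^* = (T^{*\aff})_\crc$. It therefore suffices to show that every model of $T^{*\aff}$ is a model of $T^*$. Since $(T^{*\aff})_\crc$ is $T^{*\aff}$ plus the convex realization axiom scheme, this amounts to showing that every model of $T^{*\aff}$ has the convex realization property.

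The plan has three steps.

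First, identify the models of $T^{*\aff}$. By \autoref{lem:model-completion-vs-ec}, they are exactly the affinely existentially closed models of $T$.

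Second, use the qf-Poulsen hypothesis to upgrade affine existential closedness to existential closedness. Fix $M\models_\aec T$ and an extension $M\subseteq N$ with $N\models T$; we want $M\preceq^\ec N$. Take a tuple $a\in M^x$ and $b\in N^y$, with finitely many conditions $\varphi_i(a,b)<0$ where $\varphi_i\in\cL^{\qf,\aff}_{xy}$. Since $M$ is a.e.c., $N\models D^\aff_\forall(M)$. By \autoref{lem:types-with-parameters-vs-fibers}, $\tS^{\qf,\aff}_y(a)$ is affinely homeomorphic to the fiber of $\pi^\qf_{x,y}$ over $\tp^\qf(a)$. This fiber is a closed face of $\tS^\qf_{xy}(T)$ when $\tp^\qf(a)$ is extreme, but in general it is not. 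To avoid that issue I would argue through $T^*$ instead.

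Third, the cleaner route through $T^*$. Since $(T^*)_\forall = T$ and $T^*$ has quantifier elimination, the type spaces satisfy $\tS^\cont_x(T^*) = \tS^\qf_x(T)$, the latter viewed as the continuous qf-type space. By \autoref{lem:E(Taff)-S(T)-S(Taff)} and the Remark identifying the continuous and affine qf-type spaces, both coincide with the affine $\tS^\qf_x(T)$, which is also $\tS^\aff_x(T^{*\aff})$. So the maps $\tS^\cont_x(T^*)\to \tS^\aff_x(T^{*\aff})$ are bijections. For the models, take $M\models T^{*\aff}$, that is, an a.e.c. model. In a model of $T^*$ extending $M$, every existential continuous formula is equivalent to a quantifier-free continuous one. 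By qf-Poulsen, the extreme qf-types are dense, and \autoref{lem:irreducible-T-ec-model-Th-forall} puts the extreme types inside the closure of the types realized in $M$; together these should give that every type of $\tS^\qf_x(T)$ is approximately realized in $M$. This is exactly the conclusion of \autoref{lem:extremal-model-with-CRP}. I would then derive the qf-convex realization property directly. A convex combination $\sum_j\lambda_j\tp^\qf(c_j,d)$ of types realized in $M$ lies in $\tS^\qf_{zy}(T)$ and is realized in some extension of $M$. Since all types, extreme or not, are limits of types realized in $M$, we get a $z$ in $M$ approximating it, though this $z$ still has to be taken over the fixed parameter $d$. Over the parameter $d$, the same density argument runs in the parameter theory $D^\aff_\forall(d)$. \autoref{lem:types-with-parameters-vs-fibers} identifies its type space with the fiber over $\tp^\qf(d)$, and the step needed is that density of extreme points passes to the fibers.

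The main obstacle is this fiberwise density: showing that qf-Poulsen for $T$ yields density of realized types over parameters in an a.e.c. model. To get it, I would use the observation that a closed face of a Poulsen-type simplex that is the preimage under a face-preserving open map again has dense extreme points; for this I would invoke \autoref{thm:model-completion-main-aff}, which gives that the $\pi^\qf_{x,y}$ are open and face-preserving, together with \autoref{prop:strong-CLP} to lift density. With fiberwise density in hand, \autoref{lem:qfCR-vs-CR} finishes the proof.
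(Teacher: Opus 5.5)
Your reduction is sound up to a point. Since $T^*=(T^{*\aff})_\crc$ and the models of $T^{*\aff}$ are exactly the a.e.c.\ models of $T$, it suffices that every a.e.c.\ model $M$ has the qf-convex realization property, after which \autoref{lem:qfCR-vs-CR} applies. That property in turn follows if, for each finite tuple $d$ from $M$, the types $\tp^\qf(c/d)$ with $c\in M$ are dense in $\tS^{\qf,\aff}_z(d)$. Applying \autoref{lem:irreducible-T-ec-model-Th-forall} to $D^\aff_\forall(d)$, of which $M$ is an a.e.c.\ model, correctly reduces this to density of $\cE^\qf_z(d)$ in the fiber over $\tp^\qf(d)$.

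The gap is that this fiberwise density is never proved, and it is not an intermediate lemma: it is the whole content of the statement. If $M$ is a qf-extremal a.e.c.\ model, every realized $\tp^\qf(c/d)$ is already extreme in $\tS^{\qf,\aff}_z(d)$. So for such $M$, the qf-convex realization property is \emph{equivalent} to density of extreme types over all its parameters.

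The tools you name do not supply this density. Fibers of $\pi^\qf_{z,y}$ over non-extreme types are not faces at all. Fibers over extreme types are closed faces of a Poulsen simplex, and such faces can be arbitrary metrizable simplices \cite{Lindenstrauss1978}, so the Poulsen property of $\tS^\qf_{zy}(T)$ does not pass to them by itself. Density of $\cE^\qf_{zy}(T)$ only shows that a point over $e$ is a limit of extreme types lying over nearby $e_n\to e$. Openness and face-preservation give you no way to move those into the fiber over $e$ while keeping them extreme. Finally, \autoref{prop:strong-CLP} lifts individual points through a commuting square and says nothing about extreme points of fibers.

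The missing ingredient is \cite[Thm.~20.4]{BITaffine}: a Poulsen theory equals its convex realization completion. Because $T^{*\aff}$ has affine quantifier elimination, $\tS^\aff_x(T^{*\aff})=\tS^\qf_x(T)$. Hence $T^{*\aff}$ is an affinely complete, simplicial, Poulsen theory, and so $T^{*\aff}=(T^{*\aff})_\crc=T^*$. This is exactly how the paper concludes.
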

\begin{proof}
If $T$ is qf-Poulsen then $T^{*\aff}$ is Poulsen in the sense of \cite{BITaffine}.
Therefore, using \cite[Thm.~20.4]{BITaffine}, $T^{*\aff} = (T^{*\aff})_\crc = T^*$.
\end{proof}

\begin{cor}
Let $T$ be an irreducible, affine Robinson theory.
If the variable projection maps $\pi^\qf_{x,y}\colon \tS^\qf_{xy}(T)\to \tS^\qf_x(T)$ are open, then they are face-preserving.

If moreover $T$ is qf-simplicial, then it is qf-Bauer or qf-Poulsen.
\end{cor}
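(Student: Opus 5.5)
Assume the maps $\pi^\qf_{x,y}$ are open. My plan is to show that $T$ has an affine model completion and then read off face-preservation from the forward direction of \autoref{thm:model-completion-main-aff}.

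First, since $T$ is a Robinson theory and the maps $\pi^\qf_{x,y}$ are open for all finite $x,y$, \autoref{prop:model-completion-main-cont} shows that $T$ has a continuous logic model completion $T^*$. By \autoref{thm:affine-vs-continuous-model-completion}, which uses irreducibility, $T$ then also admits an affine model completion $T^{*\aff}$, namely $T^{*\aff} = (T^*)_\aff$. Applying the implication \impref{i:main:T*aff}{i:main:piqf-open-lifting} of \autoref{thm:model-completion-main-aff} to $T^{*\aff}$, the maps $\pi^\qf_{x,y}$ are open and face-preserving. Concretely, through the identification $\tS^\qf_x(T)\cong\tS^\aff_x(T^{*\aff})$ they become the maps $\pi^\aff_{x,y}$ of \autoref{lem:pi-aff-open}. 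This gives the first assertion.

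For the second assertion, suppose moreover that $T$ is qf-simplicial. Then $T$ is face-preserving, irreducible and Robinson, and it has an affine model completion. So \autoref{cor:dichotomy-model-completion} applies directly and gives that $T$ is qf-Bauer or qf-Poulsen.

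The only step needing care is the passage from a continuous to an affine model completion, which is where irreducibility is used. That step is already packaged in \autoref{thm:affine-vs-continuous-model-completion}, so I expect no real obstacle. One detail to check is that $T$ is assumed affine and universal, so it is a legitimate input to \autoref{prop:model-completion-main-cont}, which is stated for continuous Robinson theories. Since affine theories are continuous theories, this holds.
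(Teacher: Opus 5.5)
Your proposal is correct and follows the paper's own proof: the paper derives the corollary through the same chain. It uses \autoref{prop:model-completion-main-cont} to get a continuous model completion, then \autoref{thm:affine-vs-continuous-model-completion} (where irreducibility is used) to get an affine one, then the forward direction of \autoref{thm:model-completion-main-aff} for face-preservation, and finally \autoref{cor:dichotomy-model-completion} for the Bauer--Poulsen dichotomy.
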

\begin{proof}
By \autoref{prop:model-completion-main-cont}, \autoref{thm:affine-vs-continuous-model-completion}, \autoref{thm:model-completion-main-aff}, and \autoref{cor:dichotomy-model-completion}.
\end{proof}

\vspace{0.2em}

\bibliographystyle{amsalpha}
\bibliography{affinely-ec-models}

\providecommand{\bysame}{\leavevmode\hbox to3em{\hrulefill}\thinspace}
\providecommand{\MR}{\relax\ifhmode\unskip\space\fi MR }
\providecommand{\MRhref}[2]{%
  \href{http://www.ams.org/mathscinet-getitem?mr=#1}{#2}
}
\providecommand{\href}[2]{#2}
\begin{thebibliography}{BBHU08}

\bibitem[Ald85]{Aldous1985}
D.~J. Aldous, \emph{Exchangeability and related topics}, \'Ecole d'\'et\'e{} de
  probabilit\'es de {S}aint-{F}lour, {XIII}---1983, Lecture Notes in Math.,
  vol. 1117, Springer, Berlin, 1985, pp.~1--198. \MR{883646}

\bibitem[Alf71]{Alfsen1971}
E.~M. Alfsen, \emph{Compact convex sets and boundary integrals},
  Springer-Verlag, New York-Heidelberg, 1971, Ergebnisse der Mathematik und
  ihrer Grenzgebiete, Band 57. \MR{0445271}

\bibitem[Aus08]{Austin2008}
T.~Austin, \emph{On the geometry of a class of invariant measures and a problem
  of {A}ldous}, preprint \texttt{arXiv:0808.2268}, 2008.

\bibitem[AW13]{AbertWeiss}
M.~Ab\'ert and B.~Weiss, \emph{Bernoulli actions are weakly contained in any
  free action}, Ergodic Theory Dynam. Systems \textbf{33} (2013), no.~2,
  323--333. \MR{3035287}

\bibitem[Bag14]{Bagheri2014}
S.-M. Bagheri, \emph{Linear model theory for {L}ipschitz structures}, Arch.
  Math. Logic \textbf{53} (2014), no.~7-8, 897--927. \MR{3271367}

\bibitem[Bag24]{Bagheri2024p}
\bysame, \emph{Extreme types and extremal models}, Ann. Pure Appl. Logic
  \textbf{175} (2024), no.~7, Paper No. 103451, 18. \MR{4735719}

\bibitem[BBHU08]{BBHU}
I.~{Ben Yaacov}, A.~Berenstein, C.~W. Henson, and A.~Usvyatsov, \emph{Model
  theory for metric structures}, Model theory with applications to algebra and
  analysis. {V}ol. 2, London Math. Soc. Lecture Note Ser., vol. 350, Cambridge
  Univ. Press, Cambridge, 2008, pp.~315--427.

\bibitem[BGK15]{BowenLamplighter2015}
L.~Bowen, R.~Grigorchuk, and R.~Kravchenko, \emph{Invariant random subgroups of
  lamplighter groups}, Israel J. Math. \textbf{207} (2015), no.~2, 763--782.
  \MR{3359717}

\bibitem[BH]{Berenstein2018p}
A.~Berenstein and C.~W. Henson, \emph{Model theory of probability spaces with
  some countable groups of automorphisms}, unpublished.

\bibitem[BHI24]{BerHenIba}
A.~Berenstein, C.~W. Henson, and T.~Ibarluc\'ia, \emph{Existentially closed
  measure-preserving actions of free groups}, Fund. Math. \textbf{264} (2024),
  no.~3, 241--282. \MR{4731992}

\bibitem[BIT]{BITaffine}
I.~{Ben Yaacov}, T.~Ibarluc{\'i}a, and T.~Tsankov, \emph{Extremal models and
  direct integrals in affine logic}, preprint, arXiv:2407.13344.

\bibitem[Bow15]{BowenIRS2015}
L.~Bowen, \emph{Invariant random subgroups of the free group}, Groups Geom.
  Dyn. \textbf{9} (2015), no.~3, 891--916. \MR{3420547}

\bibitem[BS14]{Bagheri2014a}
S.-M. Bagheri and R.~Safari, \emph{Preservation theorems in linear continuous
  logic}, MLQ Math. Log. Q. \textbf{60} (2014), no.~3, 168--176. \MR{3207206}

\bibitem[BU10]{BenYaacov2010}
I.~{Ben Yaacov} and A.~Usvyatsov, \emph{Continuous first order logic and local
  stability}, Trans. Amer. Math. Soc. \textbf{362} (2010), no.~10, 5213--5259.
  \MR{2657678}

\bibitem[BV93]{BekkaValette}
B.~Bekka and A.~Valette, \emph{Kazhdan's property {$({\rm T})$} and amenable
  representations}, Math. Z. \textbf{212} (1993), no.~2, 293--299. \MR{1202813}

\bibitem[CS23]{Candela2023}
P.~Candela and B.~Szegedy, \emph{Nilspace factors for general uniformity
  seminorms, cubic exchangeability and limits}, Mem. Amer. Math. Soc.
  \textbf{287} (2023), no.~1425, v+101. \MR{4608886}

\bibitem[CW80]{Connes1980}
A.~Connes and B.~Weiss, \emph{Property {${\rm T}$} and asymptotically invariant
  sequences}, Israel J. Math. \textbf{37} (1980), no.~3, 209--210.

\bibitem[DE72]{DitorEifler}
S.~Z. Ditor and L.~Q. Eifler, \emph{Some open mapping theorems for measures},
  Trans. Amer. Math. Soc. \textbf{164} (1972), 287--293. \MR{477729}

\bibitem[Dow91]{Downarowicz}
T.~Downarowicz, \emph{The {C}hoquet simplex of invariant measures for minimal
  flows}, Israel J. Math. \textbf{74} (1991), no.~2-3, 241--256. \MR{1135237}

\bibitem[ET16]{EvaTsa}
D.~M. Evans and T.~Tsankov, \emph{Free actions of free groups on countable
  structures and property ({T})}, Fund. Math. \textbf{232} (2016), no.~1,
  49--63. \MR{3417738}

\bibitem[Gir19]{Giraud2019p}
A.~Giraud, \emph{Hyperfinite measure-preserving actions of countable groups and
  their model theory}, preprint \texttt{arXiv:1910.07985}, 2019.

\bibitem[GIV26]{GaoCharacters2026}
D.~Gao, A.~Ioana, and I.~Vigdorovich, \emph{Characters of surface groups},
  preprint \texttt{arXiv:2605.02242}, 2026.

\bibitem[Gla03]{Glasner2003}
E.~Glasner, \emph{Ergodic theory via joinings}, Mathematical Surveys and
  Monographs, vol. 101, American Mathematical Society, Providence, RI, 2003.

\bibitem[GLM24]{GlasnerYair2024}
Y.~Glasner, Y.~F. Lin, and T.~Meyerovitch, \emph{Extensions of invariant random
  orders on groups}, Groups Geom. Dyn. \textbf{18} (2024), no.~4, 1377--1401.
  \MR{4797633}

\bibitem[GST25]{GST2025}
I.~Goldbring, B.~Seward, and R.~{Tucker-Drob}, \emph{Existentially closed
  measure-preserving actions of approximately treeable groups}, preprint
  \texttt{arXiv:2507.03195}, 2025.

\bibitem[GW97]{Glasner1997}
E.~Glasner and B.~Weiss, \emph{Kazhdan's property {T} and the geometry of the
  collection of invariant measures}, Geom. Funct. Anal. \textbf{7} (1997),
  no.~5, 917--935.

\bibitem[GW05]{Glasner2005}
\bysame, \emph{Spatial and non-spatial actions of {P}olish groups}, Ergodic
  Theory Dynam. Systems \textbf{25} (2005), no.~5, 1521--1538. \MR{2173431}

\bibitem[Hay75]{Haydon1975}
R.~Haydon, \emph{A new proof that every {P}olish space is the extreme boundary
  of a simplex}, Bull. London Math. Soc. \textbf{7} (1975), 97--100.
  \MR{358312}

\bibitem[Hru19]{HruPatterns}
E.~Hrushovski, \emph{Definability patterns and their symmetries}, preprint
  \texttt{arXiv:1911.01129}, 2019.

\bibitem[Iba21]{Iba2021}
T.~Ibarluc\'{\i}a, \emph{Infinite-dimensional {P}olish groups and {P}roperty
  ({T})}, Invent. Math. \textbf{223} (2021), no.~2, 725--757. \MR{4209862}

\bibitem[ISV25]{IoanaTrace2025}
A.~Ioana, P.~Spaas, and I.~Vigdorovich, \emph{Trace spaces of full free product
  {$C^*$}-algebras}, Compos. Math. \textbf{161} (2025), no.~11, 2947--2989.
  \MR{5012844}

\bibitem[IT21]{IbarluciaTsankov}
T.~Ibarluc\'{\i}a and T.~Tsankov, \emph{A model-theoretic approach to rigidity
  of strongly ergodic, distal actions}, Ann. Sci. \'{E}c. Norm. Sup\'{e}r. (4)
  \textbf{54} (2021), no.~3, 751--777. \MR{4311098}

\bibitem[Jol05]{Jolissaint2005}
P.~Jolissaint, \emph{On property ({T}) for pairs of topological groups},
  Enseign. Math. (2) \textbf{51} (2005), no.~1-2, 31--45. \MR{2154620}

\bibitem[Kal05]{Kallenberg2005}
O.~Kallenberg, \emph{Probabilistic symmetries and invariance principles},
  Probability and its Applications (New York), Springer, New York, 2005.
  \MR{2161313}

\bibitem[Kec95]{Kechris1995}
A.~S. Kechris, \emph{Classical descriptive set theory}, Graduate Texts in
  Mathematics, vol. 156, Springer-Verlag, New York, 1995.

\bibitem[Kec10]{KechrisGlobal}
\bysame, \emph{Global aspects of ergodic group actions}, Mathematical Surveys
  and Monographs, vol. 160, American Mathematical Society, Providence, RI,
  2010. \MR{2583950}

\bibitem[LOS78]{Lindenstrauss1978}
J.~Lindenstrauss, G.~Olsen, and Y.~Sternfeld, \emph{The {P}oulsen simplex},
  Ann. Inst. Fourier (Grenoble) \textbf{28} (1978), no.~1, vi, 91--114.

\bibitem[Mac62]{Mackey1962}
G.~W. Mackey, \emph{Point realizations of transformation groups}, Illinois J.
  Math. \textbf{6} (1962), 327--335. \MR{143874}

\bibitem[Phe01]{Phelps2001}
R.~R. Phelps, \emph{Lectures on {C}hoquet's theorem}, second ed., Lecture Notes
  in Mathematics, vol. 1757, Springer-Verlag, Berlin, 2001.

\bibitem[Pou61]{Poulsen1961}
E.~T. Poulsen, \emph{A simplex with dense extreme points}, Ann. Inst. Fourier
  (Grenoble) \textbf{11} (1961), 83--87, XIV. \MR{123903}

\bibitem[Ram71]{Ramsay1971}
A.~Ramsay, \emph{Virtual groups and group actions}, Advances in Math.
  \textbf{6} (1971), 253--322. \MR{281876}

\bibitem[Sim68]{Simons}
S.~Simons, \emph{Extended and sandwich versions of the {H}ahn-{B}anach
  theorem}, J. Math. Anal. Appl. \textbf{21} (1968), 112--122. \MR{222601}

\bibitem[Slu26]{Slutsky2026}
R.~Slutsky, \emph{Invariant trace simplices and relative property ({T})},
  preprint \texttt{arXiv:2604.24738}, 2026.

\bibitem[Tsa12]{TsankovUnitary}
T.~Tsankov, \emph{Unitary representations of oligomorphic groups}, Geom. Funct.
  Anal. \textbf{22} (2012), no.~2, 528--555. \MR{2929072}

\bibitem[Ves73]{Vesterstrom}
J.~Vesterstr{\o}m, \emph{On open maps, compact convex sets, and operator
  algebras}, J. London Math. Soc. (2) \textbf{6} (1973), 289--297. \MR{315464}

\bibitem[Vil95]{Villadsen}
J.~Villadsen, \emph{The range of the {E}lliott invariant}, J. Reine Angew.
  Math. \textbf{462} (1995), 31--55. \MR{1329901}

\end{thebibliography}

\end{document}